\providecommand*{\twoheadrightarrowfill@}{%
	\arrowfill@\relbar\relbar\twoheadrightarrow
}
\providecommand*{\twoheadleftarrowfill@}{%
	\arrowfill@\twoheadleftarrow\relbar\relbar
}
\providecommand*{\xtwoheadrightarrow}[2][]{%
	\ext@arrow 0579\twoheadrightarrowfill@{#1}{#2}%
}
\providecommand*{\xtwoheadleftarrow}[2][]{%
	\ext@arrow 5097\twoheadleftarrowfill@{#1}{#2}%
}
\newcommand{\C}{{\mathfrak C}}
\newcommand{\N}{{\mathbb{N}}}
\newcommand{\R}{{\mathbb{R}}}
\newcommand{\F}{{\mathbb{F}}}
\newcommand{\Z}{{\mathbb{Z}}}
\DeclareMathOperator{\tp}{{tp}}
\DeclareMathOperator{\Th}{{Th}}
\DeclareMathOperator{\cl}{{cl}}
\DeclareMathOperator{\id}{{id}}
\DeclareMathOperator{\aut}{{Aut}}
\DeclareMathOperator{\topo}{{top}}
\DeclareMathOperator{\defi}{{def}}
\DeclareMathOperator{\Sl}{{SL}}
\DeclareMathOperator{\Stab}{{Stab}}
\DeclareMathOperator{\St}{{St}}
\DeclareMathOperator{\Sym}{{Sym}}
\DeclareMathOperator{\sgn}{{sgn}}
\DeclareMathOperator{\graph}{{graph}}
\DeclareMathOperator{\im}{{Im}}
\DeclareMathOperator{\odd}{{odd}}
\DeclareMathOperator{\Core}{{Core}}
\def\lL{\ell}
\def\dD{\mathcal D}
\def\iI{\mathcal I}
\def\Nn{{\mathbb{N}}}
\def\tT{\mathcal{T}}
\def\eE{\mathcal{E}}
\def\fF{\mathcal{F}}
\newtheorem{thm}{Theorem}[section]
\newtheorem{conj}[thm]{Conjecture}
\newtheorem{ques}[thm]{Question}
\newtheorem{lem}[thm]{Lemma}
\newtheorem{fct}[thm]{Fact}
\newtheorem{cor}[thm]{Corollary}
\newtheorem{prop}[thm]{Proposition}
\theoremstyle{remark}
\newtheorem{rem}[thm]{Remark}
\theoremstyle{definition}
\newtheorem{dfn}[thm]{Definition}
\newtheorem{clm*}{Claim}
\newtheorem{ex}[thm]{Example}
\newcounter{claimcounter}[thm]
\newenvironment{clm}{\stepcounter{claimcounter}{\noindent {\textbf{Claim}} \theclaimcounter:}}{}
\newenvironment{clmproof}[1][\proofname]{\proof[#1]}{\endproof}
\title{Amenability, connected components, and definable actions}
\author{Ehud Hrushovski}
\email[E. Hrushovski]{Ehud.Hrushovski@maths.ox.ac.uk}
\address[E.\ Hrushovski]{Mathematical Institute, University of Oxford\\
Andrew Wiles Building\\
Radcliffe Observatory Quarter (550)\\
Woodstock Road\\
Oxford OX2 6GG, UK}
\author{Krzysztof Krupi\'nski}
\email[K.\ Krupi\'{n}ski]{kkrup@math.uni.wroc.pl}
\address[K.\ Krupi\'nski]{
	Instytut Matematyczny, Uniwersytet Wroc\l awski\\
	pl. Grunwaldzki 2/4\\
	50-384 Wroc\l aw, Poland}
\address{
	ORCID (K.\ Krupi\'{n}ski): \href{https://orcid.org/0000-0002-2243-4411}{0000-0002-2243-4411}}
\thanks{The second author is supported by National Science Center, Poland, grants 2015/19/B/ST1/01151, 2016/22/E/ST1/00450, and 2018/31/B/ST1/00357}
\author{Anand Pillay}
\email[A.\ Pillay]{apillay@nd.edu}
\address[A.\ Pillay]{Department of Mathematics, University of Notre Dame\\
	255 Hurley Hall\\
	Notre Dame, IN 46556, USA}
\thanks{The third author was supported by NSF grants DMS-136702,  DMS-1665035, and DMS-1760212}
\keywords{Amenability, model-theoretic connected components, approximate subgroups}
\subjclass[2010]{03C45, 20A15, 20N99, 43A07, 54H20, 54H11}
\date{}
\begin{document}
\maketitle

\begin{abstract}
We study  amenability of definable groups and topological groups, and prove various results, briefly described below.

Among our main technical tools, of interest in its own right,  is  an elaboration on and  strengthening of the Massicot-Wagner version \cite{MaWa}  of  the stabilizer theorem \cite{Hrushovski-approximate}, and also some results about measures and measure-like functions (which we call means and pre-means).

As an application we show that if $G$ is an amenable topological group,  then the Bohr compactification of $G$ coincides with a certain ``weak Bohr compactification'' introduced in \cite{KrPi}. In other words, the conclusion says that certain connected components of $G$ coincide: $G^{00}_{\topo} = G^{000}_{\topo}$. We also prove wide generalizations of this result, implying in particular its extension to a ``definable-topological'' context,  confirming the main conjectures from \cite{KrPi}. 
%
We also introduce $\bigvee$-definable group topologies on a given $\emptyset$-definable group $G$ (including group topologies induced by type-definable subgroups as well as uniformly definable group topologies),  
and prove that the existence of a mean on the lattice of closed, type-definable subsets of $G$ implies (under some assumption) that $\cl(G^{00}_M) = \cl(G^{000}_M)$ for any model $M$.

Secondly, we study the relationship between (separate) definability of an action of a definable group on a compact space (in the sense of \cite{GPP}), weakly almost periodic (wap) actions of $G$ (in the sense of \cite{Ellis-Nerurkar}), and stability.  We conclude that any group $G$ definable in a sufficiently saturated structure is ``weakly definably amenable''  in the sense of \cite{KrPi}, namely  any definable action of $G$ on a compact space supports a $G$-invariant probability measure. This  gives negative solutions to some questions and conjectures raised in \cite{Kr} and \cite{KrPi}. 
Stability in continuous logic will play a role in some proofs in this part of the paper.

Thirdly, we give an example of a $\emptyset$-definable approximate subgroup $X$ in a saturated extension of the group $\F_2 \times \Z$ in a suitable language (where $\F_2$ is the free group in 2-generators) for which 
the $\bigvee$-definable group $H:=\langle X \rangle$ contains no  type-definable subgroup of bounded index.
This refutes a conjecture by Wagner and shows that the Massicot-Wagner approach to prove that a locally compact (and in consequence also  Lie) ``model'' exists for each approximate subgroup does not work in general (they proved in \cite{MaWa} that it works for definably amenable approximate subgroups).

\end{abstract}

\section{Introduction}

The general motivation standing behind this research is to understand relationships between dynamical and model-theoretic properties of definable [topological] groups. 
More specifically, similarly to \cite{KrPi}, in this paper our goal is to understand model-theoretic consequences of various notions of amenability.

The consequences that we consider in this paper are mainly the equalities between the appropriate versions of the connected components $G^{000}$ and $G^{00}$ of a definable group $G$ in various categories (e.g. in the category of topological groups). 

The notions of amenability are those considered in \cite{KrPi}, and they come from certain natural categories of flows which are described below.

Let us first give some motivation for studying model-theoretic connected components of a definable group $G$ (the definitions can be found in Section \ref{Section 1}, or below in topological contexts). One of the main objects of interest in model theory are definable, type-definable, and, more generally, invariant sets. These contexts lead to the naturally defined connected components $G^0$, $G^{00}$, and $G^{000}$, which yield interesting invariants $G/G^0$, $G/G^{00}$, $G/G^{000}$ of the definable group $G$. In an affine copy of $G$ added as a new sort, the orbits under the action of these components are exactly Shelah, Kim-Pillay, and Lascar strong types, respectively, so any example where $G^{00} \ne G^{000}$ yields an example of a non-$G$-compact theory. The component $G^0$ plays a fundamental role in stability theory, while $G^{00}$ takes over its role in generalizations to wider classes of theories (e.g. NIP theories). The groups $G/G^0$ and $G/G^{00}$ are topologically very nice (when equipped with the logic topology, where by definition the closed sets are those whose preimages under the relevant quotient map are type-definable): the former is a profinite and the latter a compact topological group (in fact, they are universal compactifications in appropriate categories). In this way, with a definable group $G$ we associate interesting classical mathematical objects. The rather influential Pillay's conjecture (now theorem) gives more precise information about $G/G^{00}$ in an o-minimal context \cite{HrPePi}. On the other hand, the group $G/G^{000}$ is a more mysterious, canonical object associated with the definable group $G$. It is a quasi-compact (so not necessarily Hausdorff) topological group, and it is Hausdorff if and only if $G^{00}=G^{000}$. In recent years, a new approach to look at $G/G^{000}$ (and more generally at Lascar (or even arbitrary) strong types) was developed, namely to measure the complexity of such quotients using Borel cardinalities in the sense of descriptive set theory (see e.g. \cite{KrPiSo,KaMiSi,KrPiRz}). This also involved very interesting connections with and applications of topological dynamics \cite{KrPiRz,KrRz}. Another remark is \cite[Proposition 2.18]{KrPi}, which describes the quotient $G/G^{000}$ as a universal object (say ``weak definable Bohr compactification'') in a certain category. If $G(M)$ (where $M$ is a model) is equipped with the full structure (i.e. all subsets of all finite Cartesian powers are $\emptyset$-definable), it is a new universal object in topological dynamics, say ``weak Bohr compactification'' of the discrete group $G(M)$. It may lead to an interesting extension of the class of almost periodic functions on $G(M)$, but it has not been studied yet.  These comments also apply to the connected components $G^{00}_{\topo}$ and $G^{000}_{\topo}$ for a topological group $G$. 
In Section \ref{Section 4}, we clarify the connections between the equality $G^{00}=G^{000}$ and definable approximate subgroups. In particular, we explain that examples with $G^{00} \ne G^{000}$ show some limitations of the Massicot-Wagner approach to find suitable ``Lie models'' for arbitrary approximate subgroups. 
We also give an example which shows that this approach does not work in general (which is explained later in this introduction).

Below we briefly recall and explain the contexts (or ``categories'') which we are interested in (including the relevant notions of amenability). More detailed discussions can be found in Section \ref{Section 1}; for the proofs the reader is referred to Section 2 of \cite{KrPi}. But before that, recall some issues concerning definability. A function $f$ from a set $D(M)$ definable in a structure $M$ to a compact (Hausdorff) space $X$ is said to be {\em definable} if the preimages of any two disjoint closed subsets of $X$ can be separated by a definable set; equivalently, $f$ is induced by a continuous map from the type space $S_{D}(M)$ to $X$. Let $G(M)$ be a definable group. In \cite{GPP}, a $G(M)$-flow (i.e. an action by homeomorphisms of $G(M)$ on a compact space $X$) is called {\em definable} if for every $x \in X$ the function $g \mapsto gx$ is a definable map from $G(M)$ to $X$ in the above sense. In fact, this should rather be called a {\em separately} (or {\em elementwise}) {\em definable flow}, and only for simplicity we will further  write ``definable flow''. Recall that in the classical topological situation, if $G$ is a topological group, then a $G$-flow is a jointly continuous action of $G$ on a compact space. 
In the model-theoretic context of a definable group G(M), it is natural to ask, what if anything is the right analogue of a jointly continuous action on a compact space.  One might want to call such an action ``jointly definable''.
Finally, recall that an ambit is a flow with a distinguished point with dense orbit.

\begin{enumerate}
\item {\em Definable context.} Here, $G(M)$ is a group definable in a structure $M$. By $G$ we mean the interpretation of $G(M)$ in the monster model. Then the quotient map $G(M) \to G/G^{00}_M$ turns out to be the {\em definable Bohr compactification} of $G(M)$ (i.e. the universal compactification among the definable group compactifications). The {\em definable amenability} of $G(M)$ means that there exists a left-invariant, Borel probability measure on the $G(M)$-ambit $S_G(M)$. (One has to be careful here: 
$(G(M),S_G(M),\tp(e/M))$ is not in general the universal definable $G(M)$-ambit; it is universal but in the category of $G(M)$-ambits $(G(M),X,x_0)$ such that the map from $G$ to $X$ given by $g \mapsto gx_0$ is definable (note that this holds for the distinguished $x_0$, but not necessarily for every $x \in X$).)

\item {\em Topological context.} Let $H$ be a topological group. Then there is a unique (up to isomorphism) Bohr compactification of $H$. The classical notion of {\em amenability} of $H$ is defined by saying that there exists a left-invariant, Borel probability measure on the universal (topological) $H$-ambit. To recover these notions model-theoretically, we treat $H$ as a group $G(M)$ definable in a structure $M$ in such a way that all open subsets of $H=G(M)$ are definable (e.g. $M=G(M)$ is the group $H$ expanded by predicates for all open subsets of $H$). Then, if we define $G^{00}_{\topo}$ as $\mu G^{00}_M$ (where $\mu$ is the group of infinitesimals), then the quotient map $G(M) \to G/G^{00}_{\topo}$ is exactly the Bohr compactification of $G(M)$. Analogously, $G^{000}_{\topo}$ is defined as $\langle \mu^G \rangle G^{000}_M$. The universal topological $G(M)$-ambit is described as  $S^{\mu}_G(M):= S_G(M)/\!\!\sim_\mu$, where $p \sim_\mu q \iff \mu \cdot p = \mu \cdot q$. 

\item {\em Definable topological context.} Here, $G(M)$ is a group definable in a structure $M$ which is also a topological group. But we do not assume any relationship between the topology on $G(M)$ and the structure $M$ (i.e. the topology need not be definable in any way). 
Considering a monster model of an expansion of $M$ in which all open subsets of $G(M)$ become $\emptyset$-definable, we define $G^{00}_{\defi, \topo}$ as $\mu G^{00}_M$ (where $G^{00}_M$ is computed in the original language); similarly,  $G^{000}_{\defi, \topo} := \langle \mu^G \rangle G^{000}_M$. Then the quotient map $G(M) \to G/G^{00}_{\defi, \topo}$ is the universal definable, continuous compactification of $G(M)$. In order to define the definable topological amenability of $G(M)$, we assume additionally that there is a basis of neighborhoods of the identity in $G(M)$ consisting of definable sets in the original structure $M$. Under this assumption, the $G(M)$-ambit $S^{\mu}_G(M)$ is defined as in the previous item. We say that $G(M)$ is {\em definably topologically amenable} if $S^{\mu}_G(M)$ supports a left-invariant, Borel probability measure. (One has to be careful as in the first item:  $(G(M),S^{\mu}_G(M),\tp(e/M)/\!\!\sim_\mu)$ is not in general the universal definable (jointly continuous) $G(M)$-ambit; it is universal but in the category of (jointly continuous) $G(M)$-ambits $(G(M),X,x_0)$ such that the map from $G$ to $X$ given by $g \mapsto gx_0$ is definable.)

\item {\em Weak definable [topological] amenability} of a definable [resp. topological] group $G(M)$ means that there is a left-invariant, Borel probability measure on the universal definable [resp. jointly continuous] $G(M)$-ambit (see Section \ref{Section 1}).
\end{enumerate}

Notice that the definable topological context is a common generalization of both the definable and the topological context.
We have defined above the notions of amenability in various categories by saying that the universal ambit in a given category supports an invariant, Borel probability measure. Equivalently, one can say that any ambit (or flow) in the given category supports such a measure.



The following statement is Conjecture 0.4 in \cite{KrPi}.

\begin{conj}\label{conjecture: Conjecture 0.4 of KrPi}
Let $G(M)$ be a topological group and assume that the members of a basis of neighborhoods of the identity are definable. If $G$ is definably topologically amenable, then $G^{00}_{\defi,\topo}=G^{000}_{\defi,\topo}$.
\end{conj}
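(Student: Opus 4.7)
The plan is to deduce Conjecture~\ref{conjecture: Conjecture 0.4 of KrPi} from a definable-topological refinement of the Massicot--Wagner stabilizer theorem (the strengthening announced in the abstract), applied to an invariant measure supplied by definable topological amenability. The overall strategy mirrors the classical deduction of $G^{00}=G^{000}$ for definably amenable groups, but executed relative to the infinitesimal subgroup $\mu$ coming from the topology.

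The first step is to unpack the hypothesis: definable topological amenability gives a left $G(M)$-invariant Borel probability measure on $S_G(M)/\!\!\sim_\mu$. Pulling this back along the natural projection yields a left-invariant Keisler-type measure (or pre-mean) $\bar{\mu}$ on the definable subsets of $G(M)$ which is also right-$\mu$-invariant; crucially, any positive-measure definable set is saturated under the (right) action of the infinitesimal subgroup. Apply the strengthened stabilizer theorem to $\bar{\mu}$: from a suitably chosen set $X$ of positive pre-measure one extracts a type-definable bounded-index subgroup $H\subseteq X\cdot X^{-1}$. By construction $H$ contains the infinitesimals and is therefore type-definable in the definable-topological sense, so the minimality defining $G^{00}_{\defi,\topo}$ gives $G^{00}_{\defi,\topo}\subseteq H$.

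The decisive second step is to show $H\subseteq G^{000}_{\defi,\topo}$. Here one wants $X$ to be ``Lascar-small'' in a sense appropriate to the definable-topological context: every element of $X$ should have the same Lascar strong type over $\emptyset$ as $1$, up to a bounded product of $\mu$-small corrections. Granted this, $X\cdot X^{-1}$ lies in the Lascar-connected component, which is exactly $G^{000}_{\defi,\topo}$. Sandwiching $G^{00}_{\defi,\topo}\subseteq H\subseteq G^{000}_{\defi,\topo}$ together with the trivial reverse inclusion then yields the conclusion.

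The main obstacle --- and what forces the strengthening of Massicot--Wagner rather than a direct citation --- is carrying out the ``small $X$'' construction inside the quotient by $\mu$: the combinatorial thickness arguments that power the classical proof must be performed in a framework where the underlying object is only a pre-mean on definable subsets, and where the subgroup produced automatically contains $\mu$. The purely topological special case $G^{00}_{\topo}=G^{000}_{\topo}$ for amenable topological groups, also asserted in the abstract, should serve as a simpler test case for the same machinery, and one expects the definable-topological proof to follow the same blueprint with an additional layer of definable structure interacting cleanly with the infinitesimal subgroup.
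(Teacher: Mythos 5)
Your plan matches the paper's strategy closely: definable topological amenability gives a pre-mean, which is promoted to a mean on a lattice of type-definable sets saturated under (finite intersections of conjugates of) the infinitesimal subgroup $\mu$, and then a reworked Massicot--Wagner stabilizer argument produces an $M$-type-definable bounded-index subgroup inside $P^4$ for a wide $P$; the paper encapsulates this as Theorem \ref{theorem: indeed c4} (applied with $H=\mu$), and the corollary follows by the sandwich you describe. Two points in your write-up need sharpening, however. First, the ``Lascar-small'' requirement should not be that elements of $X$ have the same Lascar strong type \emph{as $1$} --- that set is $G^{000}$-type and may have pre-measure zero, so one cannot simply select a wide $X$ inside it. What the argument actually uses is a wide \emph{complete type} $p$ over $M$; any two realizations of $p$ are automatically Lascar-equivalent (being $M$-conjugate), so $pp^{-1}\subseteq G^{000}_M$ and hence $p^4\subseteq G^{000}_M$ without requiring $p$ to be near $1$. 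Second, you correctly flag that the stabilizer argument must be run against a pre-mean rather than a Keisler measure, but the harder half of the difficulty is not the combinatorics per se but rather ensuring that the generic, symmetric sets $Y$ it produces can be taken \emph{$M$-type-definable} --- otherwise one cannot iterate down to a bounded-index subgroup witnessing $G^{00}_M\subseteq P^4$. In the paper this is achieved by making the pre-mean $\emptyset$-definable in an expansion in which $M\prec M^*$ still holds, so that the parameters $g_1,\dots,g_n$ governing $\St_{\lL_{l-1}}(E')$ can be chosen in $G(M)$, and by tracking carefully that the resulting $Y_\nu$ are positively $M$-definable in $(G,\cdot,E)$ and invariant under the relevant $\mu$-translations. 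Your proposal would benefit from making both of these points explicit.
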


In the definable context described above, this conjecture specializes to the theorem (easily deduced in \cite{KrPi} from \cite{MaWa}) saying that each definably amenable group $G(M)$ satisfies $G^{00}_M=G^{000}_M$. On the other hand, in the topological context, Conjecture \ref{conjecture: Conjecture 0.4 of KrPi} specializes to Conjecture 0.2 from \cite{KrPi} which predicts that whenever $G(M)$ is an amenable topological group, then $G^{00}_{\topo}=G^{000}_{\topo}$. One of the main results of \cite{KrPi} is Theorem 0.5 there saying that Conjecture \ref{conjecture: Conjecture 0.4 of KrPi} is true if $G(M)$ has a basis of open neighborhoods of the identity consisting of definable, open subgroups.

In Subsection \ref{section: main results on components}, we will prove Conjecture \ref{conjecture: Conjecture 0.4 of KrPi} in  full generality (see Corollary \ref{corollary: main conjecture from KrPi}).  
In fact, we obtain much more general results (namely, Theorems \ref{theorem: c4} and \ref{theorem: indeed c4}) than Conjecture \ref{conjecture: Conjecture 0.4 of KrPi}, which do not assume any topology on $G(M)$. The main content of these results can be stated as the following 

\begin{thm}\label{theorem: main theorems in the introduction}
Let $H$ be an $M$-type-definable subgroup of a $\emptyset$-definable group $G$, normalized by $G(M)$. Let $N$ be  the normal subgroup generated by $H$. If $S_{G/H}(M)$ or $S_{H\backslash G}(M)$ carries a $G(M)$-invariant, Borel probability measure, then   
$G^{00}_M  \leq  N G^{000}_M$.
\end{thm}

Conjecture \ref{conjecture: Conjecture 0.4 of KrPi} follows immediately from Theorem \ref{theorem: main theorems in the introduction} applied to $H:=\mu$. 

Similarly to \cite{KrPi}, the proof is based on the Massicot-Wagner argument from \cite{MaWa}, but here we use {\em means} on certain lattices instead of measures on Boolean algebras. Moreover, in Subsection \ref{Subsection 2.3}, we give a less numerical variant of the argument from \cite{MaWa}, using a general notion of largeness, discussed in Subsection \ref{Subsection 2.2}, which coincides with non-forking in stable theories and seems interesting also outside the stable context. 
It is new and essential in the proofs of our main results that we work here in the category of $\bigvee$-positively definable sets.
The proofs of the main results also require some extension results concerning pre-means, means, and measures --- established in Subsections  \ref{Subsection 2.4} and \ref{subsection: means and measures} --- which additionally yield several corollaries concerning model-theoretic ``absoluteness'' (e.g. the existence of a $G(M)$-invariant, Borel probability measure on $S_{G/H}(M)$ does not depend on the choice of the model $M$) and may prove to be useful also in other situations. 
In Subsection \ref{Subsection 2.5}, we apply these kind of arguments to topological groups equipped with the so-called {\em $\bigvee$-definable group topologies} (including group topologies induced by type-definable subgroups as well as uniformly definable group topologies). The key property of a $\bigvee$-definable group topology on a $\emptyset$-definable group $G$ is that for any model $M$ the group $G(M)$ is also a topological group. We prove (using our version of the Massicot-Wagner theorem) that the existence of a left-invariant mean on the lattice of closed, type-definable subsets of the group $G=G(M^*)$ (where $M^*\succ M$ is a monster model and $G$ is a $\emptyset$-definable group) equipped with a $\bigvee$-definable group topology, such that the projections of closed, type-definable sets are closed, implies that $\cl(G^{00}_M)=\cl(G^{000}_M)$, where $\cl$ denotes closure with respect to the $\bigvee$-definable topology; this is  Proposition \ref{proposition: normal2}. 

We already recalled the notion of {\em definable action} of a definable group $G(M)$ on a compact space as well as the notion of weak definable topological amenability.
The following generalization of Conjecture \ref{conjecture: Conjecture 0.4 of KrPi} is stated as Conjecture 0.3 of \cite{KrPi}.

\begin{conj}\label{con: the most general}
Let $G(M)$ be a topological group definable in an arbitrary structure $M$. If $G$ is weakly definably topologically amenable, then $G^{00}_{\defi,\topo}=G^{000}_{\defi,\topo}$.
\end{conj}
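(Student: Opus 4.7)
The plan is to reduce Conjecture \ref{con: the most general} to Proposition \ref{proposition: normal2}. The topology on the topological group $G(M)$, together with the definability of a neighborhood basis of the identity that is implicit in the definition of $G^{00}_{\defi,\topo}$, lifts canonically to a $\bigvee$-definable group topology on the monster-level group $G=G(M^*)$, so the hypotheses of Subsection \ref{Subsection 2.5} are in force; in particular $G^{00}_{\defi,\topo}$ and $G^{000}_{\defi,\topo}$ coincide respectively with the closures of $G^{00}_M$ and $G^{000}_M$ in this $\bigvee$-definable topology, and the conjecture reduces to showing that these two closures coincide.

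The first main step is to produce, from the hypothesis of weak definable topological amenability, a left-invariant mean on the lattice $\lL$ of closed, type-definable subsets of $G$ equipped with the $\bigvee$-definable topology. By hypothesis there is a $G(M)$-invariant Borel probability measure $\nu$ on the universal definable continuous $G(M)$-ambit $Y$. Via the canonical surjection from $S_G(M)$ onto $Y$, $\nu$ pulls back to a $G(M)$-invariant finitely additive pre-mean on the algebra of $\sim_\mu$-saturated clopen subsets of $S_G(M)$, equivalently on a subalgebra of definable subsets of $G(M)$ closed under the infinitesimal relation. Applying the extension results of Subsections \ref{Subsection 2.4} and \ref{subsection: means and measures}, one then obtains a left-invariant mean on the full lattice $\lL$ by taking infima of pre-mean values over definable approximations from above.

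Granted such a mean on $\lL$, Proposition \ref{proposition: normal2} immediately yields $\cl(G^{00}_M) = \cl(G^{000}_M)$ in the $\bigvee$-definable topology, which by the preceding identifications gives $G^{00}_{\defi,\topo} = G^{000}_{\defi,\topo}$. The side condition in Proposition \ref{proposition: normal2} that projections of closed type-definable sets remain closed needs to be verified; it follows from the continuity of multiplication in the topological group $G(M)$, together with compactness of type-definable sets and the fact that the lift to $G(M^*)$ preserves these structural properties.

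The main obstacle will be the construction of the invariant mean on $\lL$ from the weaker hypothesis. The universal definable continuous $G(M)$-ambit is a genuine quotient of the Stone space $S_G(M)$ whose topology is strictly coarser than the Stone topology, so the pullback of $\nu$ is naturally defined only on a restricted algebra of definable sets; the pre-mean extension machinery must then be deployed in a way that preserves left-$G$-invariance while passing to the whole lattice of closed type-definable subsets of the monster, and it is precisely here that the technical refinements of the Massicot--Wagner argument developed in Subsections \ref{Subsection 2.3}--\ref{subsection: means and measures} are indispensable.
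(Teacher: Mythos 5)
The conjecture you are trying to prove is in fact \emph{refuted} in the paper (Section 3, Corollary \ref{corollary: conjecture 0.3 from [KrPi] is false}): there is a group $G(M)$ that is weakly definably amenable while $G^{00}_M \neq G^{000}_M$, already in the discrete case. So no proof can be correct, and it is worth pinpointing exactly where yours breaks.

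The gap is in your construction of the mean on the lattice of closed type-definable sets. Weak definable topological amenability gives an invariant measure $\nu$ on the \emph{universal definable topological} $G(M)$-ambit, which is $S_G(M)/E_1$ for a closed equivalence relation $E_1$. You claim $\nu$ pulls back to a pre-mean on the algebra of $\sim_\mu$-saturated sets, but the canonical surjection is the quotient by $E_1$, not by $\sim_\mu$, and $E_1$ is in general strictly coarser than $\sim_\mu$. The ambit $S_G^\mu(M) = S_G(M)/\!\!\sim_\mu$ is a $G(M)$-ambit but is \emph{not} universal among definable topological ambits, so it does not factor through $S_G(M)/E_1$; the pullback of $\nu$ lives only on the much smaller algebra of $E_1$-saturated sets. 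The pre-mean extension machinery of Subsections \ref{Subsection 2.4} and \ref{subsection: means and measures} then produces a mean on a correspondingly smaller lattice, not on $\dD_\tT$, and Proposition \ref{proposition: normal2} does not apply.

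This gap is not a technical nuisance but the essential obstruction: it is exactly why the (stronger) hypothesis of definable topological amenability (a measure on $S_G^\mu(M)$) suffices for Corollary \ref{corollary: main conjecture from KrPi}, while the (weaker) hypothesis of weak definable topological amenability does not. Indeed, Theorem \ref{theorem: each saturated group is weakly definably amenable} shows that when $M$ is $\omega_1$-saturated every definable action of $G(M)$ on a compact space is weakly almost periodic, hence supports an invariant measure, so weak definable amenability holds \emph{automatically} and carries no information about $G^{00}_M$ versus $G^{000}_M$; taking $G = \widetilde{\Sl(2,\R)}$ interpreted in a saturated model of the two-sorted NIP theory from \cite{CP}, one gets $G^{00}_M \neq G^{000}_M$, which is the counterexample.
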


In Section \ref{Section 3}, we will refute this conjecture by showing that it is already false in the ``discrete case''. 
In fact, we show 

\begin{thm} 
Suppose $M$ is $\omega_{1}$-saturated. Then $G(M)$ is weakly definably amenable: for any definable action of $G(M)$ on a compact space $X$, $X$ supports a $G(M)$-invariant, Borel probability measure.
\end{thm}

Our methods are as interesting as the refutation of the conjecture: under the saturation assumption, definable actions are weakly almost periodic, so  support invariant measures. Our proofs involve stable group theory in a continuous logic setting.   This will also give us a negative answer to the question stated in \cite[Problem 4.11(1)]{Kr}, namely whether the assignment $S_G(M)/E \to G/G^{000}_M$ given by $\tp(a/M)/E \mapsto a/G^{000}_M$ is well-defined, where $E$ is the equivalence relation on $S_G(M)$ such that $S_G(M)/E$ is the universal definable $G(M)$-ambit. In \cite[Proposition 4.10]{Kr}, it was noted that an analogous assignment to $G/G^{00}_M$ is a well-defined continuous semigroup epimorphism (with the natural semigroup structure on $S_G(M)/E$ coming from the fact that this is the universal definable $G(M)$-ambit). We also provide a description of the universal definable $G(M)$-ambit as the ``Gelfand space''  of the algebra of stable continuous functions from $S_G(M)$ to $\mathbb{R}$, and describe the universal minimal definable $G(M)$-flow as $G/G_{M}^{00}$.  In this section, we also discuss definable actions when $M$ is not  necessarily saturated, and make the connection between weakly almost periodic actions and continuous logic stability {\em in a model}.

While in Sections \ref{Section 2} and \ref{Section 3} we focused on definable groups and their various connected components, in Section \ref{Section 4} we study definable approximate subgroups and connected components in this context.
Recall that a symmetric subset $X$ of a group $G$ is called an {\em approximate subgroup} if $X^2:=XX$ is contained in finitely many left translates of $X$ by elements of $G$; if this number of translates equals $K$, we say that $X$ is a {\em $K$-approximate subgroup}.\label{page: approximate subgroup} The problem of classifying all approximate subgroups has became one of the  central topics in multiplicative combinatorics in the last 15 years, with essential contributions coming from model theory. Breuillard, Green and Tao classified all finite approximate subgroups \cite{BrGrTa}; the problem of classification of all infinite approximate subgroups remains open. One of the key steps in the classification of finite approximate subgroups was finding a so-called ``Lie model'' and for that a ``locally compact model'',  and this was first done in \cite{Hrushovski-approximate} using model theory (in particular, a version of the stabilizer theorem). 
A bit more precisely, in order to prove the classification of finite approximate subgroups, it is enough to work  with an ultraproduct $X$ of some finite approximate subgroups, so with a {\em pseudofinite} approximate subgroup. Such $X$ is definably amenable, i.e. there is a left invariant, finitely additive measure $\mu$ on definable subsets of $H:=\langle X \rangle$ with $\mu(X)=1$. 
 We will not define what a ``locally compact model'' for $X$ is, 
 but describe a special case where it certainly exists.
Working in a sufficiently saturated model, if $H$ contains a  $\emptyset$-type-definable, bounded index  subgroup,
the smallest such subgroup is denoted $H^{00}_\emptyset$; see \cite{HrPePi}, \S 7.  
Then the quotient $H/H^{00}_\emptyset$ is such a model (where $H/H^{00}_\emptyset$ is equipped with the logic topology, i.e. a set is closed iff its preimage in $H$ intersected with any $X^n$ is type-definable). 
The existence of $H^{00}_\emptyset$ 
was proved in 
 \cite{Hrushovski-approximate} assuming the existence of a definable invariant measure,  and Massicot and Wagner \cite{MaWa}     generalized it to the amenable case \cite{MaWa}, removing the definability assumption on the measure.
In order for $H/H^{00}_\emptyset$ to be an appropriate ``locally compact model'', it was also important that $H^{00}_\emptyset \subseteq X^8$ (in fact, it would be satisfactory to have it for some number $m$ in place of $8$ which depends only on $K$ for $X$ ranging over $K$ approximate subgroups). Strictly speaking, the connected component obtained in \cite{MaWa} was even contained in $X^4$, but it was defined over some small set of parameters $A$ rather than over $\emptyset$. However, by \cite[Theorem 5.2 and Corollary 5.14]{Mas}, we know that $H^{00}_A$ exists if and only if $H^{00}_\emptyset$ exists, and if $H^{00}_A\subseteq X^m$, then $H^{00}_\emptyset \subseteq X^{2m}$. Keeping this in mind, without loss of generality, we can  work with $H^{00}_\emptyset$.

In 2013, Wagner conjectured that whenever $X$ is a $\emptyset$-definable $K$-approximate subgroup in a sufficiently saturated model (note that we can always pass to such a model, as being a $K$-approximate subgroup is preserved under elementary extensions), then $H^{00}_\emptyset$ exists. And, moreover, there is an integer $m$ depending only on $K$ (maybe even $m=8$) such that $H^{00}_\emptyset \subseteq X^m$. This appears in  \cite[Conjecture 0.15]{Mas} and it was the main problem motivating \cite{MaWa} and \cite{Mas}. As discussed above, the conjecture was proved in \cite{MaWa} in the definably amenable case.

In Section \ref{Section 4}, we refute this conjecture. This shows that in order to find a ``locally compact model'' in general (without any amenability assumptions on the approximate subgroup $X$), one would have to use a completely different approach from the one involving $H^{00}_\emptyset$.


Observe that by compactness, if $H^{00}_\emptyset$ exists, then it is contained in $X^m$ for some $m \geq 1$. In Section \ref{Section 4}, we describe a quasi-homomorphism $f \colon \F_2 \to \Z$ such that for $X$ being the graph of $f$ computed in a sufficiently saturated elementary extension of $((\F_2,\cdot), (\Z,+), f)$ the last condition fails, and so $H^{00}_\emptyset$ does not exist. In fact, we show more about this example. Let $H^{000}_\emptyset$ be defined as the smallest invariant, bounded index subgroup of $H$ (which always exists in a $\bigvee$-definable group $H$, in contrast with $H^{00}_\emptyset$). In our example, we show that for every $m \geq 1$, $H^{000}_\emptyset$ is not contained in $X^m$ (see Proposition \ref{proposition: H000 not contained in Xm}).

Recall that a subset $Y$ of $H$ is {\em generic} if finitely many left  translates of it covers $X$. By compactness, this is equivalent to saying that boundedly (or even countably) many translates of $Y$ covers $H$.
In Proposition \ref{proposition: three equivalent conditions}, we will also show that the existence of $m \in \N$ such that $H^{00}_\emptyset \subseteq Y^m$ [resp.  $H^{000}_\emptyset \subseteq Y^m$] for every $\emptyset$-definable, generic, symmetric  $Y \subseteq H$ is equivalent to the equality $H^{000}_\emptyset= H^{00}_\emptyset$. Thus, any example where $H^{000}_\emptyset \ne H^{00}_\emptyset$ (\cite{Gismatullin-Krupinski} yields many such examples with definable $H$, e.g. $X=H$ being a sufficiently saturated extension of the free group $\F_2$ considered with the full structure) yields a family of $\emptyset$-definable approximate subgroups $Y_m$, $m \in \N$, (which are generic subsets of the definable group in question, so generate $\emptyset$-definable, finite index subgroups in finitely many steps) such that $(H_m)^{00}_\emptyset \nsubseteq Y_m^m$, where $H_m: = \langle Y_m \rangle$.\\


This paper contains the material in Sections 2, 3, and 5 of our preprint ``Amenability and definability''. Following the advice of editors and referees we have divided that preprint into two papers, the current paper being the first.

\section{Some notions and definitions}\label{Section 1}

We recall here the model-theoretic definitions of certain components of groups in various categories, and also the relevant variants of the notion of amenability; for more details, see Section 2 of \cite{KrPi}. The new notions which we introduce in this paper will appear in the relevant sections.

As usual, by a monster model of a given theory we mean a $\kappa$-saturated and strongly $\kappa$-homogeneous model for a sufficiently large cardinal $\kappa$ (typically, $\kappa>|T|$ is a strong limit cardinal).  Where recall that the (standard) expression ``strongly $\kappa$-homogeneous'' means that any partial elementary map between subsets of the model of cardinality $<\kappa$ extends to an automorphism of the model. 
A set [tuple] is said to be small [short] if it is of bounded cardinality (i.e. $<\kappa$). When $G$ is a $\emptyset$-definable group (in the monster model) and $A$ a (small) set of parameters, then $G^{00}_A$ denotes the smallest $A$-definable subgroup of $G$ of bounded index; and $G^{000}_A$ the smallest $A$-invariant subgroup of $G$ of bounded index.

Let us now discuss in more details the ``topological context'' from item (2) in the introduction.
Let $G(M)$ be a topological group $\emptyset$-definable in a structure $M$. Assume (from now until Conjecture 1.3) that all open subsets of $G(M)$ are also $\emptyset$-definable. By $G$ we denote the interpretation of $G(M)$ in a monster model $M^*$. Define $\mu$ to be the intersection of all $U=U(M^*)$ with $U(M)$ ranging over all open neighborhoods of the identity. So $\mu$ is the subgroup of infinitesimals of $G$; it is not necessarily normal, but it is normalized by $G(M)$.

\begin{dfn}
1) $G^{00}_{\topo} := \mu \cdot G^{00}_M:=\{ab: a \in \mu, \; b \in G^{00}_M$\}; equivalently, this is the smallest $M$-type-definable subgroup of $G$ of bounded index which contains $\mu$.\\
2) $G^{000}_{\topo}:= \langle \mu^{G}\rangle \cdot G^{000}_M$ (where $\mu^G:=\{gag^{-1}: a \in \mu, \;g \in G\}$ and $\langle \mu^{G}\rangle$ is the subgroup generated by $\mu^G$); equivalently, this is the smallest normal, invariant over $M$ subgroup of $G$ of bounded index which contains $\mu$.
\end{dfn}

It turns out that $G^{00}_{\topo}$ is a normal subgroup of $G$ and the map $G(M) \to G/G^{00}_{\topo}$ is the classical Bohr compactification of $G(M)$ as a topological group (i.e. the universal group compactification). For a description of $G/G^{000}_{\topo}$ as the initial object in a certain category see \cite[Proposition 2.18]{KrPi}. In particular, one gets that both quotients $G/G^{00}_{\topo}$ and $G/G^{000}_{\topo}$ are independent as topological groups (equipped with the logic topology) of the choice of the language (provided that all open subsets of $G(M)$ are $\emptyset$-definable) and of the choice of the monster model in which they are computed. Moreover, the closure of the identity in $G/G^{000}_{\topo}$ is exactly $G^{00}_{\topo}/G^{000}_{\topo}$, so the property $G^{00}_{\topo}=G^{000}_{\topo}$ is also independent of the choice of the language and the monster model.

There is also a model-theoretic description of the universal (left) $G(M)$-ambit as the quotient $S_G^{\mu}(M):= S_G(M)/\!\!\sim_\mu$, where $p \sim_\mu q \iff  \mu \cdot p = \mu \cdot q$ with the distinguished point $\tp(1/M)/\!\!\sim_\mu$ and the action of $G(M)$ given by $g*(\mu \cdot p):=\mu \cdot (g\cdot p)=g\cdot(\mu \cdot p)$. It is clear that this ambit is isomorphic to $S_{\mu\backslash G}(M)$ -- the space of complete types over $M$ of hyperimaginary elements from $\mu\backslash G$.

Recall the classical definition of amenability.

\begin{dfn}
The topological group $G(M)$ is {\em amenable} if for every $G(M)$-flow (equivalently, $G(M)$-ambit) $X$ there is a $G(M)$-invariant, Borel probability measure on $X$; equivalently, there is a $G(M)$-invariant, Borel probability measure on the universal ambit $S_G^{\mu}(M)$.
\end{dfn}

The following is \cite[Conjecture 0.2]{KrPi}.

\begin{conj}\label{conjecture: Conjecture 0.2 of KrPi}
Let $G(M)$ be a topological group. If $G(M)$ is amenable, then $G^{00}_{\topo}=G^{000}_{\topo}$.
\end{conj}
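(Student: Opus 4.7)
The plan is to reduce Conjecture \ref{conjecture: Conjecture 0.2 of KrPi} to the ``definable-topological'' case (Conjecture \ref{conjecture: Conjecture 0.4 of KrPi}) and then invoke the Massicot-Wagner stabilizer machinery in the strengthened, lattice-theoretic form advertised in the introduction. First I would expand the language of $M$ by adding predicates for a basis of open sets of $G(M)$ (and indeed for all open subsets, if needed) so that the hypothesis ``all open subsets of $G(M)$ are $\emptyset$-definable'' becomes literally true. Since the subgroup $\mu$ of infinitesimals depends only on the filter of open neighborhoods of $1$, and the components $G^{00}_M$, $G^{000}_M$ can only shrink under language expansion but are anyway quotiented by $\mu$ on the topological side, the resulting $G^{00}_\topo=\mu G^{00}_M$ and $G^{000}_\topo=\langle\mu^G\rangle G^{000}_M$ are the same before and after the expansion (and they coincide with $G^{00}_{\defi,\topo}$ and $G^{000}_{\defi,\topo}$ in this expanded language).

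Second, I would transfer amenability. The universal topological $G(M)$-ambit on which classical amenability gives a $G(M)$-invariant Borel probability measure is, by the description recalled in Section \ref{Section 1}, canonically identified with $S_G^\mu(M)=S_G(M)/\!\!\sim_\mu$, which is precisely the ambit appearing in definable topological amenability. Hence an amenable topological group $G(M)$ is, after the above expansion, definably topologically amenable in the sense of \cite{KrPi}. Now invoke Corollary \ref{corollary: main conjecture from KrPi} (the proof of Conjecture \ref{conjecture: Conjecture 0.4 of KrPi} in full generality) to conclude $G^{00}_{\defi,\topo}=G^{000}_{\defi,\topo}$, which translates back to $G^{00}_\topo=G^{000}_\topo$ as required.

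The real content — and the step I expect to be the main obstacle — is of course packaged inside the referenced corollary. The strategy there is to convert the invariant Borel measure on $S_G^\mu(M)$ into a left-invariant \emph{mean} on the lattice of closed type-definable subsets of $G$ (one cannot work on a Boolean algebra because $\mu$-invariant type-definable sets are not closed under complement), then extend this mean appropriately (this is what the pre-mean/mean/measure extension results in Subsections \ref{Subsection 2.4} and \ref{subsection: means and measures} are for), and finally run a Massicot-Wagner style argument to produce a symmetric type-definable set $X$ of positive mean such that the ``stabilizer'' $\St(X):=\{g: m(X\triangle gX)=0\}$ is type-definable, contains $\mu$, and sits inside $XX^{-1}$. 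One then shows that $\St(X)$ is a bounded-index normal subgroup sandwiched between $G^{000}_\topo$ and $G^{00}_\topo$, forcing the two to agree.

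The hard part is not the reduction but these internal steps: the asymmetry caused by $\mu$ failing to be normal, the need to work with means on lattices rather than measures on Boolean algebras, and the necessity of keeping the resulting stabilizer type-definable (rather than merely invariant). This is exactly why the paper devotes Subsections \ref{Subsection 2.3}--\ref{subsection: means and measures} to a careful reworking of \cite{MaWa} before the corollary can be cleanly extracted.
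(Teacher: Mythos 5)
Your reduction is exactly the paper's route: after passing to a language in which all open subsets of $G(M)$ are definable, Conjecture \ref{conjecture: Conjecture 0.2 of KrPi} becomes the ``all opens definable'' special case of Corollary \ref{corollary: main conjecture from KrPi}, and both parts of that corollary are obtained by applying Theorem \ref{theorem: indeed c4} with $H := \mu$ and using $G^{00}_{\topo}=\mu G^{00}_M$, $G^{000}_{\topo}=\langle\mu^G\rangle G^{000}_M$. Your sketch of the internals of the cited result is only approximately faithful --- Theorem \ref{theorem: indeed c4} does not produce a single type-definable bounded-index stabilizer subgroup sandwiched between $G^{000}_{\topo}$ and $G^{00}_{\topo}$, but rather shows $G^{00}_M \subseteq P^4$ for a wide partial type $P$, using Corollary \ref{corollary: good for applications} and the largeness notions $\lL_k$ --- but since you explicitly defer to the referenced corollary for those steps, this does not affect the correctness of your argument.
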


Now, we discuss in more details the more general ``definable topological context'' from item (3) in the introduction, which was studied in Subsection 2.2 of  \cite{KrPi}.
It is a bit subtle, so we try to be precise about the notions and definitions (although a full account is given in \cite{KrPi}).   So we start with an $\mathcal{L}$-structure $M$, and a group $G(M)$ $\emptyset$-definable in $M$.   We assume that $G(M)$ is also a topological group, although this is not necessarily ``seen'' by the structure $M$.  
Let $M'$ be an expansion of $M$ in a language $\mathcal{L}'$ containing $\mathcal{L}$ such that we have predicates for all open subsets of the topological group $G(M)$. Let  $(M')^*\succ M'$ be a monster model  of $\Th(M')$ whose reduct $M^*\succ M $ to $\mathcal{L}$ is also a monster model.  So the dynamics of $G(M)$ as a {\em topological group}  is seen through the model theory of $M'$ and $(M')^{*}$ as discussed earlier in this section.  But we are more interested in what is definable in $M$. So as to avoid too much unnecessary notation, we will rather talk about $M, M^{*}$ and distinguish between definability in $\mathcal{L}$ (which we just call definable)  and definability in the richer language $\mathcal{L}'$. 
$G^{00}_{M}$ and $G^{000}_{M}$ are computed in ${\mathcal L}$, and $S_G(M)$ denotes the space of complete types in the sense of ${\mathcal L}$.

\begin{dfn}
1) $G^{00}_{\defi,\topo} := \mu \cdot G^{00}_M = G^{00}_{\topo} \cdot G^{00}_M$; equivalently, this is the smallest $M$-type-definable (in the sense of $\mathcal{L}$) subgroup of $G$ of bounded index which contains $\mu$.\\
2)  $G^{000}_{\defi, \topo}:= \langle \mu^G \rangle \cdot G^{000}_{M} = G^{000}_{\topo} \cdot G^{000}_{M}$;
equivalently, this is the smallest normal, invariant over $M$ (in the sense of $\mathcal{L}$) subgroup of $G$ of bounded index which contains $\mu$.
\end{dfn}

Note that we need the $\mathcal{L}'$-structure to make sense of $\mu$, and $G^{00}_{\topo}$, etc., although $G^{00}_{\defi,\topo}$ is nevertheless still type-definable over $M$ in $\mathcal{L}$. Observe also that all the connected components  $G^{00}_{\topo}$,  $G^{000}_{\topo}$,  $G^{00}_{\defi,\topo}$, and  $G^{000}_{\defi,\topo}$ depend on $M$ which is suppressed from the notion for simplicity (and because $M$ is in a sense hidden in the subscript $\topo$, as the topology is given only on $G(M)$).

It turns out that  $G^{00}_{\defi,\topo}$ is a normal subgroup of $G$ and the map $G(M) \to G/G^{00}_{\defi, \topo}$ is the (unique up to isomorphism) universal compactification of $G(M)$ among definable (in the sense of $\mathcal{L}$), continuous group compactifications of $G(M)$. 

Note that the definitions  of  $G^{00}_{\defi,\topo} := \mu \cdot G^{00}_M$ and $G^{000}_{\defi, \topo}:= \langle \mu^G \rangle \cdot G^{000}_{M}$ make sense even in the wider context when ${\mathcal L'}$ is any extension of ${\mathcal L}$ such that all members of some basis of neighborhoods of the identity in $G(M)$ are definable in ${\mathcal L'}$ with parameters from $M$ (where $\mu$ is defined as the intersection of all $U=U(M^*)$ with $U(M)$ ranging over the definable in ${\mathcal L'}$ neighborhoods of the identity); the difference is that now more monster models are allowed, because we do not require ${\mathcal L'}$ to contain predicates for all open subsets of $G$. By a standard argument, we get that the quotients $G/{G}^{00}_{\defi, \topo}$ and $G/{G}^{000}_{\defi, \topo}$ do not depend on the choice of both the language $\mathcal{L}'$ and the monster model in which they are computed. The property $G^{00}_{\defi,\topo} = G^{000}_{\defi,\topo}$ is also independent of the choice of $\mathcal{L}'$ and the monster model, which follows directly from definitions.

\begin{rem}\label{rem: equality of various components}
i) If $G(M)$ is discrete, then $G^{000}_{\defi, \topo}=G^{000}_{M}\geq G^{000}_{\topo}$ and $G^{00}_{\defi, \topo}=G^{00}_{M}\geq G^{00}_{\topo}$.\\
ii) If all open subsets of $G(M)$ are definable in $M$ (in the language $\mathcal{L}$), then $G^{000}_{\defi, \topo}=G^{000}_{\topo} \geq G^{000}_{M}$ and $G^{00}_{\defi, \topo}=G^{00}_{\topo} \geq G^{00}_{M}$.
\end{rem}

Recall that a group $G(M)$ definable in $M$ is {\em definably amenable} if and only if there is a left-invariant, Borel probability measure on $S_G(M)$. 
In order to give a suitable generalization of this notion in the ``definable topological context'', one needs to assume that all members of some basis of (not necessarily open) neighborhoods of the identity in $G(M)$ are definable in $M$ (in the original language $\mathcal{L}$). In \cite{KrPi}, we assumed more, namely, that there is such a basis consisting of open neighborhoods of the identity, but in the more general context everything works in the same way. 
In particular, $\mu$ is well-defined, and  $S_G^{\mu}(M)$ defined as above is still a $G(M)$-ambit. The following definition was proposed in \cite[Section 3]{KrPi}.

\begin{dfn} Assume that all members of some basis of neighborhoods of the identity in the topological group $G(M)$ are definable in $M$ (in $\mathcal{L}$). 
We say that $G(M)$ is {\em definably topologically amenable} if there exists a left-invariant, Borel probability measure on the $G(M)$-ambit $S_G^{\mu}(M)$.
\end{dfn}



Conjecture \ref{conjecture: Conjecture 0.4 of KrPi} recalled in the introduction is the main conjecture of \cite{KrPi}.
As was recalled in the introduction, one of the main results of \cite{KrPi} was \cite[Theorem 0.5]{KrPi} saying that Conjecture \ref{conjecture: Conjecture 0.4 of KrPi} is true if $G(M)$ has a basis of open neighborhoods of the identity consisting of definable, open subgroups. This implies Conjecture \ref{conjecture: Conjecture 0.2 of KrPi} for groups possessing a basis of open neighborhoods of the identity consisting open subgroups.
In Subsection \ref{section: main results on components} of this paper (see Corollary \ref{corollary: main conjecture from KrPi}), we prove Conjecture \ref{conjecture: Conjecture 0.4 of KrPi} (and so also Conjecture \ref{conjecture: Conjecture 0.2 of KrPi}) in its full generality. 

The definition of amenability of a topological group is by saying that there is a well-behaved measure on the universal topological ambit. The definitions of definable amenability or definable topological amenability are by saying that there is a well-behaved measure on the $G(M)$-ambits $S_G(M)$ or $S_G^{\mu}(M)$, respectively. But these ambits are not universal in any of the categories of ambits considered in \cite{KrPi} 
(they are universal ambits in some other categories described in parentheses in items (1) and (3) in the introduction). So based on \cite{Kr}, we proposed in \cite{KrPi}  more general notions of amenability, which we recall now.

As was pointed out in \cite{Kr}, there is a unique closed equivalence relation $E$ on $S_G(M)$ such that $S_G(M)/E$ is the universal definable $G(M)$-ambit; a description of $E$ can be found in Section 3 of \cite{Kr}.  In \cite[Subsection 2.2]{KrPi}, we described a closed equivalence relation $E_1$ on $S_G(M)$ such that $S_G(M)/E_1$ is the universal definable topological $G(M)$-ambit (where $G(M)$ is a topological group definable in $M$).

\begin{dfn}\label{def: definition of weak definable amenability}
1) We say that $G(M)$ is {\em weakly definably amenable} if there exists a left-invariant, Borel probability measure on the universal definable $G$-ambit, i.e. on $S_G(M)/E$.\\
2) We say that $G$ is {\em weakly definably topologically amenable} if there exists a left-invariant, Borel probability measure on the universal definable topological $G$-ambit, i.e. on $S_G(M)/E_1$.
\end{dfn}



Conjecture \ref{con: the most general} from the introduction is the most general conjecture of \cite{KrPi}. In Section \ref{Section 3}, we will show that it is false, even in the case when $G(M)$ is discrete (i.e. working in the definable category). In Subsection 3.4 of \cite{KrPi}, a weaker form of this conjecture was proposed. Namely, let ${G}^{000+}_{\defi, \topo}$ be the normal subgroup generated by all products $ab^{-1}$ for $(a,b) \in E_1'$, where $aE_1'b \iff \tp(a/M) E_1 \tp(b/M)$. It is $M$-invariant, and by Proposition 3.10 of \cite{Kr}, we easily get that ${G}^{000}_{\defi,\topo} \leq {G}^{000+}_{\defi,\topo}\leq {G}^{00}_{\defi,\topo}$. 

\begin{conj}
Let $G(M)$ be a topological group definable in an arbitrary structure $M$. If $G$ is weakly definably topologically amenable, then $G^{00}_{\defi,\topo}=G^{000+}_{\defi,\topo}$.
\end{conj}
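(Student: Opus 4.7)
The plan is to adapt the strengthened Massicot--Wagner argument that proved Corollary \ref{corollary: main conjecture from KrPi} (hence Conjecture \ref{conjecture: Conjecture 0.4 of KrPi}) to this weaker setting, in which the invariant measure lives only on the universal definable topological ambit $S_G(M)/E_1$ rather than on the full ambit $S_G^\mu(M)$. The conceptual reason to expect the conjecture to hold is that $G^{000+}_{\defi,\topo}$ is defined precisely to absorb the information lost when one sees only $E_1'$-classes rather than full types; so the pushforward of any $E_1$-saturated invariant measure should live naturally on $G/G^{000+}_{\defi,\topo}$, and the stabilizer argument run there should produce a type-definable subgroup $H$ of bounded index with $\mu \subseteq H \subseteq G^{000+}_{\defi,\topo}$, from which the minimality of $G^{00}_{\defi,\topo}$ forces $G^{00}_{\defi,\topo} \subseteq H \subseteq G^{000+}_{\defi,\topo}$.

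Concretely, I would first pull back the given left-invariant Borel probability measure $\nu$ on $S_G(M)/E_1$ to an $E_1$-saturated left-invariant Borel probability measure $\hat\nu$ on $S_G(M)$, and convert it into a left-invariant Keisler measure on $\mathcal{L}$-definable subsets of $G$. Using the extension results for pre-means and means from Subsections \ref{Subsection 2.4} and \ref{subsection: means and measures}, I would lift this to a left-invariant mean $m$ on the lattice of closed, type-definable-over-$M$ subsets of $G$, arranged so that $m(X)$ depends only on $X \cdot G^{000+}_{\defi,\topo}$. One would then choose a generic type-definable set $X$ essentially concentrating on a single $G^{000+}_{\defi,\topo}$-coset, so that $XX^{-1} \subseteq G^{000+}_{\defi,\topo}$; the stabilizer argument of Subsection \ref{Subsection 2.3} applied to $X$ yields a closed, type-definable subgroup $H$ of bounded index in $G$ with $H \subseteq (XX^{-1})^n \subseteq G^{000+}_{\defi,\topo}$ for some fixed $n$. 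Since $H$ contains $\mu$ by the usual infinitesimal-containment step, the minimality of $G^{00}_{\defi,\topo}$ among type-definable subgroups of bounded index containing $\mu$ forces $G^{00}_{\defi,\topo} \subseteq H \subseteq G^{000+}_{\defi,\topo}$, which combined with the known reverse inclusion gives the desired equality.

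The main obstacle, I expect, is the selection of the generic set $X$: a priori, $G^{000+}_{\defi,\topo}$ is merely an $M$-invariant subgroup and its individual cosets need not be type-definable, so finding a type-definable $X$ of positive $m$-measure inside a single coset is not automatic. The cleanest workaround is to run the entire construction inside the quotient $G/G^{000+}_{\defi,\topo}$, viewed as a compact (possibly non-Hausdorff) topological group carrying the pushforward measure, using the $G^{000+}_{\defi,\topo}$-invariance of $m$ to circumvent the need for a type-definable transversal; this in turn requires that $G^{000+}_{\defi,\topo}$ be normal (immediate from its definition as a normally generated subgroup) and that the extension lemmas of Subsection \ref{Subsection 2.4} remain applicable when recast in terms of the quotient's logic topology. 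A secondary worry is that $E_1$ is only closed and not type-definable, so the $E_1'$-saturation property preserved by the mean must be formulated carefully in terms of the group $G^{000+}_{\defi,\topo}$ rather than in terms of $E_1$ directly; if the extension results do not transport $E_1'$-saturation cleanly, one may need to refine $E_1$ by intersection with type-definable equivalence relations and then verify that the resulting subgroup still equals $G^{000+}_{\defi,\topo}$.
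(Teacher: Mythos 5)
This statement is an open conjecture in the paper, not a theorem: immediately after stating it, the authors write ``At first glance it seems that this conjecture should be reachable by the methods of Section~\ref{Section 2}, but we do not quite see how to prove it.''  So there is no proof in the paper to compare against, and your proposal does not close the gap that the authors acknowledge.  The obstacle you flag as a ``secondary worry'' is in fact the primary one, and the workaround you sketch does not get around it.

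The machinery of Section~\ref{Section 2} that drives Theorems~\ref{theorem: c4} and~\ref{theorem: indeed c4} and Corollary~\ref{corollary: main conjecture from KrPi} is built on \emph{type-definable} equivalence relations: the pre-mean notion (Definition~\ref{definition: premean}), the extension from pre-means to means (Lemma~\ref{lemma: premeantomean}, Corollary~\ref{corollary: variant of premeantomean}), and the equivalence of measures, pre-means and means (Proposition~\ref{proposition: main proposition}) all require the relation $E$ to be given by a $\bigvee$-definable family. The relation $E_1$ on $S_G(M)$, and its lift $E_1'$ to $G$, are only \emph{closed}, not type-definable; consequently the cosets of $G^{000+}_{\defi,\topo}$ are not type-definable. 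This kills the step on which your argument pivots. In the proof of $(4)\to(5)$ in Theorem~\ref{theorem: c4}, one picks a wide type $p\in S_G(M)$ and uses that $pp^{-1}\subseteq G^{000}_M$, which holds automatically because realizations of a complete type over $M$ have the same Lascar strong type over $M$. Your analogue requires a wide $P$ with $PP^{-1}\subseteq G^{000+}_{\defi,\topo}$, i.e.\ $P$ contained in a single $E_1'$-class; but an $E_1'$-class is not a partial type, so it cannot be a member of the lattice $\dD$ in Proposition~\ref{proposition: variant of Massicot-Wagner}, and there is no family $\mathcal{B}$ to which Corollary~\ref{corollary: good for applications} applies.

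The proposed workaround of running the argument inside $G/G^{000+}_{\defi,\topo}$ does not restore the needed structure. The engine of Proposition~\ref{proposition: variant of Massicot-Wagner} is the largeness notion $\lL_k$ of Subsection~\ref{Subsection 2.2}, defined for $\bigvee$-\emph{positively definable} sets in a first-order structure $(G,\cdot,B)$; compactness and Remark~\ref{remark: St is a union f pos. def. symmetric sets} are used to extract a genuine definable generic $Y_\nu$ over $M$ at the end. The quotient $G/G^{000+}_{\defi,\topo}$ is only a compact (possibly non-Hausdorff) quasi-topological group with no ambient definability; ``positively definable over $M$'' has no meaning there, so the stabilizer theorem has no purchase. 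If, as you suggest at the end, one instead refines $E_1'$ by a type-definable equivalence relation $E''$ coarse enough that the mean remains $E''$-saturated and runs Theorem~\ref{theorem: indeed c4} for the associated subgroup, one obtains $G^{00}_M\leq N''G^{000}_M$ for the normal closure $N''$ of the subgroup attached to $E''$; but there is no reason for $N''$ to be contained in $G^{000+}_{\defi,\topo}$, and no candidate $E''$ is proposed for which the resulting $N''G^{000}_M$ could be identified with $G^{000+}_{\defi,\topo}$. That identification is precisely the missing idea that keeps the conjecture open.
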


At first glance, it seems that this conjecture should be reachable by the methods of Section \ref{Section 2}, but we do not quite see how to prove it.


\section{Means and connected components}\label{Section 2}

The main goal of this section is to prove the equality of various connected components under the existence of a suitable measure or mean. In particular, we will prove Conjecture \ref{conjecture: Conjecture 0.4 of KrPi}. As mentioned in the introduction, this conjecture was proved in \cite{KrPi} but under the stronger assumption that there is a basis of open neighborhoods of the identity consisting of definable open subgroups. Similarly to \cite{KrPi}, our proofs are based on the idea of the proof of the Massicot-Wagner version of the stabilizer lemma. Our key tricks to deal with the general case will be using means instead of measures (so something like measures but defined only on certain lattices of subsets), positively $\bigvee$-definable sets, and a notion of largeness. As to the Massicot-Wagner result, we will prove a variant of it (see Proposition \ref{proposition: variant of Massicot-Wagner} and Corollary \ref{corollary: good for applications}) which is applicable to various situations. The main results of this section are contained in Subsection \ref{section: main results on components}. In Subsection \ref{Subsection 2.5}, we study groups equipped with $\bigvee$-definable group topologies, also proving that existence of a mean on the appropriate lattice of subsets implies equality of the closures of the appropriate connected components.

\subsection{$\bigvee$-definable sets}\label{Subsection 2.1}

Let $T$ be any (complete) theory, $M \models T$, and $\C$ be a monster model of $T$. By a [type-]definable set we usually mean a set which is [type-]definable with parameters in $\C$. We can identify it with the corresponding formula [or set of formulas]. We will be often talking about sets which are $A$-type-definable, so using parameters from  a set $A$. One can often incorporate parameters into the language and work over $\emptyset$, e.g. in this and in the next subsection we work with $\emptyset$-definability, but sometimes parameters are essential (e.g. in Proposition \ref{proposition: variant of Massicot-Wagner} and the applications to Theorem \ref{theorem: c4} and Proposition \ref{proposition: normal2}).

We will now discuss the category of $\bigvee$-positively definable sets. Although all the fundamental observations that we make are valid for the category of $\bigvee$-definable sets, we focus on positive definability, as it is crucial in the main applications, i.e. in the proofs of Theorems \ref{theorem: c4} and \ref{theorem: indeed c4}.

By the category of {\em $\bigvee$-positively definable sets}, we mean the category whose objects are expressions of the form $\bigvee_{i \in \omega}  D_i$, where $D_0 \subseteq D_1 \subseteq \dots$ are positively definable sets, where two such expressions are considered to be equal if they agree in any model of $T$ (equivalently, in the monster model; so working in the monster model, any object can be identified with the corresponding subset of the model). A morphism $F \colon \bigvee_{i \in \omega}  D_i \to \bigvee_{i \in \omega}  E_i$ is a collection of definable functions $F_i \colon D_i \to E_{j_i}$, where $i$ ranges over $\omega$ and $j_i$ is some index in $\omega$, such that $\bigcup F_i$ is a well-defined function, and two such collections of functions are identified if they yield the same function from  $\bigvee_{i \in \omega}  D_i(M)$ to $\bigvee_{i \in \omega}  E_i(M)$ 
for every (equivalently, some) model $M$. 
We write $\bigvee_{i \in \omega}  D_i \subseteq \bigvee_{i \in \omega}  E_i$ if this holds in every model (equivalently, in $\C$); this is equivalent to saying that for every $i$ there is $j_i$ such that $D_i \subseteq E_{j_i}$. Whenever $\bigvee_{i \in \omega}  D_i(\bar x)$ is $\bigvee$-positively definable and $\bar a$ is a tuple of parameters, we say that $\bigvee_{i \in \omega}  D_i(\bar a)$ {\em holds} if there is $i$ such that $M \models D_i(\bar a)$ for some (any) model 
$M \prec \C$ containing $\bar a$.

In fact, we can consider any $\bigvee_{i \in I} D_i$ for a countable set $I$ and positively definable sets $D_i$, as then one can replace $I$ by $\omega$ and the $D_i$'s by the unions of initial sets $D_i$, $i <n$. We will be doing this freely without mentioning it. Also, one could extend the context to uncountable sets $I$, but countable families are sufficient for the purpose of our main theorems.  


Recall that a subset $D$ of a group $G$ is said to be {\em (left) generic} if finitely many left translates of it cover $G$; following \cite[Definition 3.1]{Gis}, $D$ is said to be {\em thick} if there is $n$ such that for every $g_1,\dots,g_n \in G$ there is $i<j$ such that $g_j^{-1}g_i \in D$. It is clear that each thick subset of $G$ is generic. As to the converse, if $D\subseteq G$ is generic, then $D^{-1}D$ is thick.


Let $G$ be a group definable in $T$. For a positively definable set $D(x,\bar y) \vdash G(x)$, by $(\exists^{\textrm{gen}} x) D(x,\bar y)$ we mean the $\bigvee$-positively definable set $\bigvee _{l \in \omega} (\exists^{l\textrm{-gen}}x) D(x,\bar y)$, where  
$$(\exists^{l\textrm{-gen}}x)D(x,\bar y) \;:= \; (\exists x_1,\dots,x_l)(\forall z) \bigvee_{i=1}^l D(x_iz,\bar y).$$
(Formally, the quantifiers in the last formula are restricted to $G$; if $(G,\cdot)$ is a sort, then this formula is clearly positive, so $(\exists^{\textrm{gen}} x) D(x,\bar y)$ is $\bigvee$-positively definable. Abusing terminology by allowing in positive formulas both the group operation on $G$ and quantification over $G$, we can say that $(\exists^{\textrm{gen}} x) D(x,\bar y)$ is $\bigvee$-positively definable also for any definable $G$.)

In particular, for any parameters $\bar b$, $(\exists^{\textrm{gen}} x) D(x,\bar b)$ holds iff $D(M,\bar b): = \{ a \in G(M) : M \models D(a,\bar b) \}$ is generic in $G(M)$ for some [any] model $M$ containing $\bar b$. For a $\bigvee$-positively definable set $D(x,\bar y)=\bigvee_{i \in \omega}  D_i(x,\bar y)$ such that $D(x,\bar y) \vdash G(x)$ by $(\exists^{\textrm{gen}} x) D(x,\bar y)$ we mean the $\bigvee$-positively definable set $\bigvee _{i \in \omega} (\exists^{\textrm{gen}}x) D_i(x,\bar y)$.
In particular, for any parameters $\bar b$, $(\exists^{\textrm{gen}} x) \bigvee_{i \in \omega}  D_i(x,\bar b)$ holds iff for some $i$ the set $D_i(M,\bar b):= \{ a \in G(M) : M\models D_i(a,\bar b) \}$ is generic in $G(M)$ for some [any] model $M$ containing $\bar b$. 
Working in the monster model, this is equivalent to saying that $D(\C,\bar b):= \{ a \in G(\C): \C \models D(a,\bar b)\} = \bigcup_{i \in \omega} D_i(\C,\bar b)$ is generic in $G(\C)$ (but this equivalence need not be true for a non $\aleph_0$-saturated model).

Analogous definitions apply when we replace ``generic'' by ``thick''. The only difference is, of course, that the displayed formula above  is now the following
$$(\exists^{l\textrm{-thick}}x)D(x,\bar y) \; := \; (\forall x_1,\dots,x_l) \bigvee_{i<j} D(x_j^{-1}x_i,\bar y).$$

\subsection{A largeness notion}\label{Subsection 2.2}

Throughout, $G$ is a group acting on $X$. We work in the language of group actions, $(G,\cdot,X,\cdot,\dots)$.   ($\cdot$ refers both to the group operation and the action, and $\dots$ to possible additional structure.) In the particular case when $G$ acts on itself via left translations, the results which we will obtain for $(G,\cdot,X,\cdot,\dots)$ transfer  automatically to the corresponding statements in the language of groups $(G,\cdot,\dots)$ (i.e. without the extra sort for $X$), just identifying $X$ with $G$.

We define a  largeness notion $\lL_k$ for subsets of  $X$, resembling ``rank $\geq k$'' for certain model-theoretic ranks. In fact, we define two largeness notions $\lL^{\textrm{gen}}_k$ and $\lL^{\textrm{thick}}_k$. The stronger notion $\lL^{\textrm{thick}}_k$ corresponds to non-forking in stable theories (see Remark \ref{remark: largeness = nonforking}).  For our purposes, 
the two notions work in the same way, so later we will just write $\lL_k$. It would be interesting to further investigate  $\lL^{\textrm{gen}}_k$ and $\lL^{\textrm{thick}}_k$ (and  variants)  for unstable theories.

 
In what follows, we deal with $\lL^{\textrm{gen}}_k$, but  everything works also for the  analogously defined $\lL^{\textrm{thick}}_k$.

\begin{dfn}   Let $Y(x,\bar y) \subseteq X(x)$ be a $\bigvee$-positively definable set $\bigvee_i Y_i(x,\bar y)$.   
\begin{enumerate} 
\item   $\lL^{\textrm{gen}}_0(Y(x,\bar y))$ is the $\bigvee$-positively definable set $\bigvee_i (\exists x)Y_i(x,\bar y)$ .     
\item For $k>0$, $\lL^{\text{gen}}_k(Y(x,\bar y))$ is the $\bigvee$-positively definable set in variables  $\bar y$
$$(\exists^{\textrm{gen}} z) \lL^{\textrm{gen}}_{k-1}(Y(x,\bar y) \cap Y(z^{-1}x,\bar y)).$$
\end{enumerate}
\end{dfn} 

In particular, using terminology from Subsection \ref{Subsection 2.1}, for a $\bigvee$-positively definable set $Y=Y(x) \subseteq X(x)$ we have a well-defined meaning of ``$\lL^{\text{gen}}_k(Y)$ holds'' (working in a given theory). 
Namely,  $\lL^{\textrm{gen}}_0(Y)$ holds iff $Y \neq \emptyset$, and $\lL^{\text{gen}}_k(Y)$ holds iff $\{g \in G :  \lL^{\textrm{gen}}_{k-1}(Y \cap gY) \;\, \mbox{holds}\}$ is generic as a $\bigvee$-definable set, i.e. writing it as a countable increasing union of definable sets, one of them must be generic.
The word ``hold'' will be often skipped from now on.


\begin{rem}\label{remark: lL is vee definable}   $\lL^{\textrm{gen}}_k(Y(x,\bar y))$ can be expressed by a disjunction of positive, translation invariant formulas $\psi_j(\bar y)$ of the language $(G,\cdot, X,\cdot,Y_i)_i$, where $Y= \bigvee Y_i$. (Here, by a translation invariant formula we mean a formula $\psi_{(Y_i: i<\omega)}(\bar y)$ depending on the  $Y_i$'s (and with variables $\bar y$ appearing only in the $Y_i(x,\bar y)$'s) such that  $\psi_{(Y_i: i<\omega)}(\bar y)$ is equivalent to $\psi_{(gY_i: i<\omega)}(\bar y)$ for any $g \in G$.)
\end{rem}
\begin{proof} 
The proof is by induction on $k$. Clearly $\lL^{\textrm{gen}}_0(Y(x,\bar y))$ can by expressed as $\bigvee_i (\exists x ) Y_i(x,\bar y)$ which does the job. Now, suppose that $\lL^{\textrm{gen}}_k(Y(x,\bar y))$ can be expressed as $\bigvee_{j \in \omega} \psi_{j,(Y_i:i<\omega)}(\bar y)$, where each $\psi_{j,(Y_i:i<\omega)}(\bar y)$ is positive and translation invariant. Then $\lL^{\textrm{gen}}_{k+1}(Y(x,\bar y))$ can be expressed as  $\bigvee_{j,l \in \omega} (\exists^{l \textrm{-gen}} z) \psi_{j,(Y_i \cap zY_i:i<\omega)}(\bar y, z)$. By induction hypothesis, it is clear that each $(\exists^{l \textrm{-gen}} z) \psi_{j,(Y_i \cap zY_i:i<\omega)}(\bar y,z)$ is positive and translation invariant.
\end{proof}





It is also easy to express the $\lL^{\textrm{gen}}_k$ directly,  e.g. $ \lL^{\textrm{gen}}_2(Y) \equiv \bigvee_{i,l,l'}  \lL^{\textrm{gen},l,l'}_2(Y_i)$, where
\[\lL^{\textrm{gen},l,l'}_2(Y_i) \equiv (\exists^{l\textrm{-gen}} z)(\exists^{l'\textrm{-gen}} z') \lL^{\textrm{gen}}_0( Y_i \cap zY_i \cap z'Y_i \cap z'z Y_i). \]

\begin{rem}\label{independence of largness of the language}
Let $L$ and $L'$ be two languages on a given universe, expanding the language of the action of $G$ on $X$. Suppose $Y =Y(x) \subseteq X(x)$ and $Y= \bigvee Y_i$ with $Y_i$ definable in both $L$ and $L'$. Then $\lL^{\textrm{gen}}_k(Y)$ holds with respect to $\Th^L(M)$ if and only if it holds with respect to $\Th^{L'}(M)$.
\end{rem}

Let $Y =Y(x) \subseteq X(x)$ and $Y= \bigvee Y_i$.
By Remark \ref{remark: lL is vee definable}, $\lL^{\textrm{gen}}_k$ is translation invariant, i.e. $\lL^{\textrm{gen}}_k(Y) \iff \lL^{\textrm{gen}}_k(hY)$.
 
 Define 
$$\St_{\lL^{\textrm{gen}}_k} (Y) := \{g: \lL^{\textrm{gen}}_k(gY \cap Y) \}.$$   
This is an operator from the class of $\bigvee$-positively definable sets to itself.   
Note that 
  $\lL^{\textrm{gen}}_{k+1}(Y)$ holds iff $\St_{\lL^{\textrm{gen}}_{k}}(Y)$ is generic as a $\bigvee$-definable set (which remember means that writing $\St_{\lL^{\textrm{gen}}_{k}}(Y)$  as a countable increasing union of definable sets $U_{n}$ say, one of the $U_{n}$'s is generic).  By Remark \ref{remark: lL is vee definable}, we get

\begin{rem}\label{remark: St is a union f pos. def. symmetric sets}
$S:=\St_{\lL^{\textrm{gen}}_k} (Y)$ satisfies $S=S^{-1}$. If additionally $\lL^{\textrm{gen}}_k (Y)$, then $1 \in S$, so $S$ is symmetric. 
Even more: $S$ can be expressed by a disjunction of positive formulas (with parameters over which $Y$ is defined) which are closed under inversion; if additionally  $\lL^{\textrm{gen}}_k (Y)$, then these formulas can be chosen to contain 1, so they are symmetric.
\end{rem}

The next basic remark shows in particular that (working in a given theory) the $\bigvee$-definable set $\lL^{\textrm{gen}}_k (Y(x,\bar y))$ does not depend on the choice of the presentation of $Y$ as a union of positively definable sets.

\begin{rem}
(1) Let $Y(x,\bar y) \subseteq Y'(x,\bar y) \subseteq X(x)$ be $\bigvee$-positively definable sets. Then $\lL_k(Y(x,\bar y)) \subseteq \lL_k(Y'(x,\bar y))$ (as $\bigvee$-definable sets in variables $\bar y$). In particular, if the tuple $\bar y$ is empty, then ``$\lL_k(Y)$ holds'' implies ``$\lL_k(Y')$ holds''.\\
(2)  Let $Y(x,\bar y) \subseteq X(x)$ be a $\bigvee$-positively definable set. Then for every $k \in \omega$, $\lL^{\textrm{thick}}_k(Y(x,\bar y)) \subseteq \lL^{\textrm{gen}}_k(Y(x,\bar y))$. In particular, if the tuple $\bar y$ is empty, then ``$\lL^{\textrm{thick}}_k(Y)$ holds'' implies ``$\lL^{\textrm{gen}}_k(Y)$ holds''.
\end{rem}

The whole discussion in Subsection \ref{Subsection 2.1} and above goes through working with $\bigvee$-definable sets instead of $\bigvee$-positively definable sets. However, the above observation that the operator $\St_{\lL^{\textrm{gen}}_k}$ preserves $\bigvee$-positive definability will be crucial in our applications.

As already mentioned, the above definitions and facts have obvious counterparts with ``generic'' replaced by ``thick''. In the rest of the paper, we can work with any of these two versions, so we will be writing $\lL$ in place of $\lL^{\textrm{gen}}$ or $\lL^{\textrm{thick}}$. An exception is the next remark which holds for $\lL^{\textrm{thick}}$.

\begin{rem}\label{remark: largeness = nonforking} 
When $G=\aut(\C)$ is the automorphism group of a monster model of a stable theory $T$, and $Y$ is definable (over $\C$), then $\lL^{\textrm{thick}}_k(Y)$ holds for all $k \in \omega$ if and only if $Y$ does not fork over $\emptyset$. (Here, $\lL^{\textrm{thick}}_k(Y)$ is computed in $(G,\cdot, \C, \cdot)$ with $\C$ equipped with its original stable structure.)
\end{rem}

\begin{proof}
Let $T = \Th(\C)$. 
The structure in which we will be working is $(G,\cdot, \C, \cdot)$, with $\C$ equipped with its original stable structure.\\[1mm]
($\leftarrow$).  It is enough to show this implication working in a monster model $(G^*,\cdot, \C^*, \cdot) \succ (G,\cdot, \C, \cdot)$.
We argue by induction on $k$.  

If $Y$ does not fork over $\emptyset$, then $Y \ne \emptyset$, so $\lL^{\textrm{thick}}_0(Y)$. For the induction step, consider any $Y$ which does not fork over $\emptyset$. By inductive hypothesis, it is enough to show that 
$$S: = \{g \in G^* : gY \cap Y \;\, \textrm{does not fork over}\;\, \emptyset\}$$
is thick. 
Take $p^* \in S(\C^*)$ which does not fork over $\emptyset$ and contains $Y$. By stability, we know that the orbit $G^* \cdot p^*$ is bounded (of cardinality at most $2^{|T|}$), so $\Stab_{G^*}(p^*)$ is a bounded index subgroup of $G^*$. Write explicitly $Y(x) = \varphi(x,\bar a)$. Then $\Stab_{G^*}(p^*)$ is contained in
$$S':= \{ g \in G^* : gY \in p^*\} = \{ g \in G^* : \varphi(x,g\bar a)  \in p^*\} =\{ g \in G^* : \C^*\models d_{p^*}\varphi(g \bar a)\}.$$
By stability, $S'$ is a definable subset of $G^*$ (in the sense of the structure $(G^*,\cdot, \C^*, \cdot)$). 
All of this implies that $S'$ is thick, as otherwise, by the sufficient saturation of $(G^*,\cdot, \C^*, \cdot)$, we would get a sequence $(g_i)_{i < (2^{|T|})^+}$ of elements of $G^*$ such that $g_j^{-1}g_i \notin S'$ for all $i<j< (2^{|T|})^+$, which contradicts the fact that $[G^*: \Stab_{G^*}(p^*)] < (2^{|T|})^+$. On the other hand, $S'$ is clearly contained in $S$.\\[1mm]
($\rightarrow$). Suppose  $Y$ forks over $\emptyset$. Then, by stability, $Y$  $k$-divides over $\emptyset$ for some $k$. Then one can easily check that $\lL^{\textrm{thick}}_{k-1}(Y)$ does not hold. We will check it for $k=2$ and $k=3$, leaving the general case for the reader. 

Suppose $Y$ 2-divides over $\emptyset$. Then, by the strong $\aleph_0$-homogeneity of $\C$, there are $g_0,g_1,\dots \in G$ such that for all $i<j$, $g_iY \cap g_jY = \emptyset$. If $\lL^{\textrm{thick}}_1(Y)$ holds, then $\{ g : gY \cap Y \ne \emptyset\}$ is thick, so there are $i<j$ such that $g_j^{-1}g_i Y \cap Y \ne \emptyset$, a contradiction.  (Note that this argument does not work for ``generic'' in place of ``thick''.)

Suppose $Y=Y(x,\bar a)$ 3-divides over $\emptyset$, i.e. there is an indiscernible sequence $\bar a_0=\bar a,\bar a_1,\bar a_2, \dots$ such that $(Y(x,\bar a_i))_{i<\omega}$ is 3-inconsistent. Take $g_0:=\id \in G$ and $g_1 \in G$ with $g_1(\bar a_i)=\bar a_{i+1}$ for all $i<\omega$. Put $g_i :=g_1^i$, $i<\omega$. Then for all $i<j<k$, $g_iY \cap g_jY \cap g_kY= \emptyset$, and for all $i$ and $j$, $g_i g_j =g_{i+j}$. Suppose for a contradiction that $\lL^{\textrm{thick}}_2(Y)$ holds. Then there are $i<j$ such that $\lL^{\textrm{thick}}_1(g_i^{-1}g_jY \cap Y)$ holds. Hence, we can find $k<l$ such that $(g_i^{-1}g_jY \cap Y) \cap (g_k^{-1}g_lg_i^{-1}g_j Y \cap g_k^{-1}g_lY) \ne \emptyset$. In particular, $g_jY \cap g_iY \cap g_{l-k+j} Y \ne \emptyset$, a contradiction as $i<j<l-k+j$.
\end{proof}




\subsection{Means and stabilizers}\label{Subsection 2.3}

Let $X$ be a $G$-set. By a {\em $G$-lattice} we mean a family of subsets of $X$ including $\emptyset$ and $X$, which is closed under $G$-translations,   and intersections and unions of pairs.

 \begin{dfn}  Let $G$ be a group acting on $X$, $\dD$ a $G$-lattice of subsets of $X$.
  A {\em   mean} is a monotone, (non-negative),  translation-invariant function $m\colon \dD \to \R $
satisfying $m(\emptyset)=0$, and  for $Y,Z \in \dD$
\[ m (Y \cup Z) = m (Y)+m (Z) - m(Y \cap Z).  \]
 The mean $m$ is {\em normalized}, if $m(X)=1$.

 Given a mean $m$ and $\epsilon \in \mathbb{R}$, the {\em $\epsilon$-stabilizer} of a set $Y \subseteq X$ is defined to be  
 \[ \St_\epsilon(Y) : = \{g \in G: m(gY \cap Y)> (1-\epsilon) m(Y) \}. \]
 \end{dfn}

\begin{lem}\label{lemma: key0} 
Let $X$ be a $G$-set and $\dD$ a $G$-lattice. Let $m$ be a mean on $\dD$ (so $m(X)<\infty$), and  let $W \in \dD$ satisfy $m(W)>0$.  Then: 
\begin{enumerate} 
\item $\St_1(W) = \{g \in G: m(gW \cap W) >0 \}$ is thick (so generic).   
\item  We have $\lL_k(W)$ for all $k$ (working in $(G,\cdot, X, \cdot, W,\dots)$).  
\end{enumerate} 
\end{lem}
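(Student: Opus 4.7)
I would prove (1) via a volume/packing estimate on translates of $W$, then feed it into an induction on $k$ for (2). The crucial final move in the inductive step is a standard compactness/saturation argument that converts the set-theoretic thickness of $\St_1(W)$ (established in (1)) into definable thickness of a single formula inside the $\bigvee$-decomposition of $\lL_k$.

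\textbf{Part (1).} Pick $n \in \N$ with $n\cdot m(W) > m(X)$ and consider arbitrary $g_1,\ldots,g_n \in G$. Translation invariance gives $m(g_iW) = m(W)$. If every pairwise intersection $g_iW \cap g_jW$ had $m$-value zero, an easy induction using the lattice identity $m(A\cup B)=m(A)+m(B)-m(A\cap B)$, together with $m\ge 0$ and the set-theoretic equality $\bigl(\bigcup_{i<q} g_iW\bigr)\cap g_qW = \bigcup_{i<q}(g_iW\cap g_qW)$, would force $m\bigl(\bigcup_i g_iW\bigr) = n\cdot m(W) > m(X)$, contradicting monotonicity. So some pair satisfies $m(g_iW\cap g_jW) > 0$; translating back gives $g_j^{-1}g_i\in\St_1(W)$, so $\St_1(W)$ is $n$-thick, hence generic.

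\textbf{Part (2).} I would induct on $k$ the stronger statement: \emph{every $W'\in\dD$ with $m(W')>0$ satisfies $\lL_k(W')$.} The base $k=0$ is immediate since $m(W')>0\Rightarrow W'\ne\emptyset$. For the step $k\to k+1$, unfold $\lL_{k+1}(W) \equiv (\exists^{\text{gen}} z)\,\lL_k(W\cap zW)$, and, by Remark~\ref{remark: lL is vee definable}, write $\lL_k(W\cap yW) = \bigvee_{j\in\omega}\psi_j(y)$ as an increasing chain of positive formulas in $y$ in the language augmented by a predicate for $W$. By part~(1), $\St_1(W)$ is $n$-thick; and for every $g\in\St_1(W)$, the set $W\cap gW$ lies in $\dD$ (closure under intersections and $G$-translations) with $m(W\cap gW)>0$, so the inductive hypothesis gives $\lL_k(W\cap gW)$, i.e.\ $g\in\bigcup_j\psi_j(\C)$. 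Hence $\St_1(W)\subseteq\bigcup_j\psi_j(\C)$, and this union is itself $n$-thick.

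\textbf{Main obstacle: the saturation step.} To conclude $\lL_{k+1}(W)$, I need a single $\psi_j$ to be $l$-generic for some $l$, which makes the disjunct $(\exists^{l\text{-gen}} z)\psi_j(z)$ of the $\bigvee$-decomposition of $\lL_{k+1}(W)$ hold. Arguing by contradiction, if no $\psi_j$ is $n$-thick, then for each $j$ the definable set $V_j$ of tuples $(h_1,\ldots,h_n)$ with $h_q^{-1}h_p\notin\psi_j$ for all $p<q$ is non-empty; since $\psi_j\subseteq\psi_{j+1}$, one has $V_j\supseteq V_{j+1}$, and $\aleph_1$-saturation of $\C$ produces $\bar h\in\bigcap_j V_j$, a tuple witnessing that $\bigcup_j\psi_j$ fails to be $n$-thick, contradicting the inclusion $\St_1(W)\subseteq\bigcup_j\psi_j(\C)$. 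So some $\psi_j$ is $n$-thick, hence $l$-generic for a suitable $l$, closing the induction. The essence of the proof is exactly this bridge between the non-definable mean-based stabilizer $\St_1(W)$ and the definable pieces of $\lL_k$; everything else is finite-additivity bookkeeping, and the parallel argument for $\lL^{\textrm{thick}}$ is immediate (in fact more direct, since the outcome is thickness rather than genericity).
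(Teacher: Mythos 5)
Your proof of (1) is correct and matches the paper's argument (the paper argues by contradiction from non-$n$-thickness; you run the contrapositive with a slightly more spelled-out inclusion--exclusion step, but it is the same estimate). Your induction in (2), including the strengthened induction hypothesis over all $W'\in\dD$ of positive mean and the correct identification of the crux — converting thickness of the non-definable set $\St_1(W)$ into thickness of a \emph{single} positively definable piece of $\St_{\lL_{k-1}}(W)$ — also matches the paper.

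There is, however, a genuine gap at exactly that crux. You invoke ``$\aleph_1$-saturation of $\C$,'' but the lemma is about an arbitrary $G$-set $X$ with a mean: the ambient structure $(G,\cdot,X,\cdot,W,\dots)$ is not assumed saturated, and you never reduce to the saturated case. The paper's proof fills this gap by a preliminary step: incorporate $m$ into the language (adding, for each formula $\varphi(x,\bar y)$, a function $m_{\varphi(x,\bar y)}$ recording the mean of the corresponding definable subset of $X$), pass to a monster elementary extension $(G^*,\cdot,X^*,\cdot,W^*,\dots)$, and take standard parts to obtain a mean on a $G^*$-lattice of subsets of $X^*$ with $m(X^*)=m(X)$ and $m(W^*)=m(W)>0$. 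Only then does the saturation argument you describe apply. One must also observe that this reduction is harmless: $\lL_k(W)$ holds iff some definable set in its $\bigvee$-decomposition is generic/thick, which is a first-order condition preserved between elementary equivalent structures, so establishing it in the monster gives it back in the original. Without this ``pass to a monster and extend the mean'' step, your appeal to saturation is unjustified.
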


\begin{proof}  
(1) For some $n\in \mathbb{N}$ we have $n \cdot m(W) > m(X)$.  Suppose $\St_1(W)$ is not $n$-thick. Then one can find $g_i \in G$, $i=1,\ldots,{n}$, satisfying
$g_j^{-1} g_i \notin \St_1(W)$ for all $i<j$. Therefore, $m(g_i W \cap g_j W) = m(g_j^{-1} g_i W \cap W ) = 0$ for all $i<j$.  Hence, $n \cdot m(W)\leq m(X)$, a contradiction.\\[1mm]
(2) Let us work with $\lL = \lL^{\textrm{thick}}$ which clearly implies the version with $\lL = \lL^{\textrm{gen}}$.
Without loss, we can work in a monster model  $(G^*,\cdot, X^*, \cdot, W^*,\dots) \succ (G,\cdot, X, \cdot, W,\dots)$. To see this, apply the standard construction by incorporating $m$ into the language (as the collection of functions $m_{\varphi(x,\bar y)}$, where $m_{\varphi(x,\bar y)}(\bar b) := m(\varphi(x,\bar b))$ when it is defined, and say symbol $\infty$ otherwise), extending to the monster model, and taking the standard part; this yields a mean (which we still denote by $m$) on a certain $G^*$-lattice of subsets of $X^*$, including $W^*$, and such that $m(X^*)=m(X)<\infty$ and $m(W^*) = m(W)>0$. So without loss $(G,\cdot, X, \cdot, W,\dots)$ is a monster model.

We argue by induction on $k$. For $k=0$, $m(W)>0$ ensures $\lL_0(W)$.  For higher $k$, we know by induction that $\lL_{k-1}(gW \cap W)$ holds whenever $m(gW \cap W) >0$. Thus, $\{g \in G: \lL_{k-1}(gW \cap W)\}$ is thick by (1), so $\lL_k(W)$ holds by the sufficient saturation of the model and the definition of $\lL_k$. (Note that $\lL_{k-1}(gW \cap W)$ is a $\bigvee$-positively definable set $\bigvee_i D_i(g)$, so saturation is needed to deduce that $\{ g \in G: D_i(g)\}$ is thick for some $i$.) 
\end{proof}

 \begin{rem} In fact, the ideal $\iI_m = \{Y: m(Y)=0\}$ is an S1-ideal, 
i.e. $\iI_m$ is a $G$-invariant ideal on the lattice $\dD$ such that whenever $W \in \dD$ and there are arbitrary long finite sequences $(g_i)$ of elements of $G$ such that $g_iW \cap g_jW \in I$, then $W \in I$.    
 The stabilizer $\St_1$ can be defined for any S1-ideal
 $\iI$ as $\{g: gW \cap W  \notin \iI\}$, and Lemma \ref{lemma: key0} continues to hold for $W \notin \iI$.   
 The assumption on $m'$ in Proposition \ref{proposition: variant of Massicot-Wagner} below can be replaced by:  $\dD'$ carries an S1-ideal.  
  \end{rem}

\begin{lem}\label{lemma: product of stabilizers}
Let $X$ be a $G$-set and $\dD$ a $G$-lattice. Let $m$ be a mean on $\dD$. Then,
for any $Z \in \dD$ and $\epsilon_1,\epsilon_2 \in \mathbb{R}$, $\St_{\epsilon_1}(Z) \St_{\epsilon_2}(Z) \subseteq \St_{\epsilon_1 + \epsilon_2}(Z)$.
\end{lem}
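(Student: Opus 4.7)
The plan is to pick $g_1 \in \St_{\epsilon_1}(Z)$ and $g_2 \in \St_{\epsilon_2}(Z)$ and show, via a standard ``inclusion-exclusion over a bounding translate'' trick, that $m(g_1 g_2 Z \cap Z) > (1 - \epsilon_1 - \epsilon_2)\, m(Z)$. The three ingredients available are: translation invariance of $m$, monotonicity, and the modular identity $m(A \cup B) = m(A) + m(B) - m(A \cap B)$ for $A,B \in \dD$.

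First I would introduce the two auxiliary sets
\[ A := g_1 g_2 Z \cap g_1 Z = g_1(g_2 Z \cap Z), \qquad B := g_1 Z \cap Z, \]
both of which lie in $\dD$ since $\dD$ is a $G$-lattice. Translation invariance gives $m(A) = m(g_2 Z \cap Z) > (1-\epsilon_2)\, m(Z)$ by the hypothesis $g_2 \in \St_{\epsilon_2}(Z)$, while directly $m(B) > (1-\epsilon_1)\, m(Z)$ by the hypothesis $g_1 \in \St_{\epsilon_1}(Z)$.

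The key observation is then that $A \cup B \subseteq g_1 Z$, so monotonicity combined with translation invariance yields $m(A \cup B) \le m(g_1 Z) = m(Z)$. Applying the modular identity and rearranging,
\[ m(A \cap B) \;=\; m(A) + m(B) - m(A \cup B) \;>\; (1-\epsilon_1)\, m(Z) + (1-\epsilon_2)\, m(Z) - m(Z) \;=\; (1-\epsilon_1-\epsilon_2)\, m(Z). \]
Since $A \cap B = g_1 g_2 Z \cap g_1 Z \cap Z \subseteq g_1 g_2 Z \cap Z$, monotonicity of $m$ transfers this lower bound to $m(g_1 g_2 Z \cap Z)$, which is exactly the statement that $g_1 g_2 \in \St_{\epsilon_1 + \epsilon_2}(Z)$.

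There is no serious obstacle here; the only thing to check carefully is that all the sets whose measures are invoked genuinely belong to $\dD$ (which follows from closure under binary intersections, binary unions, and $G$-translations), and that the non-strict ``$\le m(Z)$'' on $m(A \cup B)$ is compatible with the two strict inequalities for $m(A)$ and $m(B)$ to still produce a strict inequality at the end -- which it does, since $(1-\epsilon_1)m(Z) + (1-\epsilon_2)m(Z) - m(A \cup B) > (1-\epsilon_1-\epsilon_2)m(Z)$ regardless of whether $m(A \cup B) = m(Z)$ or not.
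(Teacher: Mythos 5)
Your proof is correct, and it takes a genuinely different (and in fact cleaner) route than the paper's. The paper introduces the reformulation $g \in \St_\epsilon(Z) \iff m(gZ) + m(Z) - 2m(gZ \cap Z) < 2\epsilon\, m(Z)$, explicitly ``mimicking means of symmetric differences'' because $\dD$ is not closed under set difference, and then has to establish by a second application of modularity that $m(g_1Z \cup (g_1g_2Z \cap Z)) \geq m(g_1Z \cap (g_1g_2Z \cup Z))$. You bypass the symmetric-difference detour entirely: you apply modularity just once to $A = g_1(g_2Z \cap Z)$ and $B = g_1Z \cap Z$, use the two inclusions $A \cup B \subseteq g_1 Z$ and $A \cap B \subseteq g_1 g_2 Z \cap Z$, and directly read off the lower bound on $m(g_1g_2Z \cap Z)$. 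The underlying lattice identities are the same, but your organization is more transparent and shorter. Your closing remark about strict vs.\ non-strict inequalities is correct but slightly overcautious: since both $m(A) > (1-\epsilon_2)m(Z)$ and $m(B) > (1-\epsilon_1)m(Z)$ are strict, subtracting the non-strict upper bound $m(A \cup B) \le m(Z)$ preserves strictness automatically.
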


\begin{proof}
The natural argument uses symmetric differences of sets, but here our lattice is not closed under set-theoretic difference, so we will mimic  means of symmetric differences.  (In fact, using Proposition \ref{proposition: extension of mean to a content}, we could work with the Boolean algebra generated by $\dD$ and use symmetric differences, but we do not do it here to keep this argument self-contained and completely elementary.)

Note that, by the invariance of $m$,  for any $\epsilon$ we have 
$$(\dag)\;\;\; g \in \St_{\epsilon}(Z) \iff m(gZ)+m(Z) -2m(gZ \cap Z) <2\epsilon m(Z).$$
Consider any $g_i \in  \St_{\epsilon_i}(Z)$ for $i=1,2$. Then, $m(g_iZ)+m(Z) -2m(g_iZ \cap Z) <2\epsilon_i m(Z)$ for $i=1,2$. Hence, by invariance, we easily get
$$m(g_1g_2Z) + 2m(g_1Z) + m(Z) - 2m(g_1g_2Z \cap g_1Z) -2m(g_1Z \cap Z)<2(\epsilon_1 + \epsilon_2)m(Z).$$ 
 By $(\dag)$, it is enough to show that the left hand side of the above inequality is greater than or equal to 
$m(g_1g_2Z) + m(Z) - 2m(g_1g_2Z \cap Z)$. By the modularity of $m$, this is easily seen to be equivalent to $m(g_1Z \cup(g_1g_2Z \cap Z)) \geq m(g_1Z \cap(g_1g_2Z \cup Z))$ which is true by the monotonicity of $m$.  
\end{proof}

 The following proposition is our strong version of the Massicot-Wagner elaboration of the stabilizer theorem of the first author.  It will be the engine for
 most of our main results.  We will actually need it only in case $X=G$, but the more general statement clarifies some aspects of the proof.   Note that when $X=G$, the conclusion ${{Y}}^{N} \subseteq \St_1(B A)$ implies that $Y^N \subseteq BAA^{-1} B^{-1}$. A suitable version also holds for approximate groups (yielding information on amenable approximate groups as in Massicot-Wagner), but we will stick with the global assumptions. 

\begin{prop} \label{proposition: variant of Massicot-Wagner} Let  $A \subseteq X$, $B \subseteq G$,  $N \in \Nn$. Let $\dD'$ be the set of finite intersections of translates $gB$. Let $\dD$ be a $G$-lattice including   $A$ and $B'A$ for $B' \in \dD'$.  Let    $m$  be an invariant mean on $\dD$, $m(A)>0$, and $m'$ an invariant mean on the lattice generated by $\dD'$, with $m'(B)>0$. Then   there exists a generic, symmetric set $Y \subseteq G$ that  is positively definable in  $(G,\cdot, B)$ over parameters from $G$, and such that $Y^{N} \subseteq \St_1(B A)$. 
\end{prop}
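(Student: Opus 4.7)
The strategy is a variant of the Massicot--Wagner stabilizer theorem, with $m'$ used to produce the set $Y$ and $m$ only to verify the final inclusion. First, two easy observations. Since $m'(B)>0$, the set $B$ is nonempty, and for any $b\in B$ one has $bA\subseteq BA$ with $m(bA)=m(A)>0$ by invariance of $m$, so $m(BA)\geq m(A)>0$. Next, $BB^{-1}\subseteq\St_1(BA)$: for $g=b_1b_2^{-1}$ with $b_1,b_2\in B$, the inclusion $g\cdot b_2A=b_1A\subseteq BA\cap gBA$ gives $m(BA\cap gBA)\geq m(A)>0$. So it suffices to produce $Y$ with the stated properties satisfying $Y^N\subseteq BB^{-1}$.

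The main construction is a Massicot--Wagner-style descending chain in $\dD'$. Passing to a sufficiently saturated elementary extension (lifting $m,m'$ via the standard-part construction used in the proof of Lemma~\ref{lemma: key0}(2)), I would inductively build $B=B_0\supseteq B_1\supseteq\cdots\supseteq B_N$ in $\dD'$, with $m'(B_n)>0$ at every step and the ``trebling'' closure $B_nB_n^{-1}B_n\subseteq B_{n-1}$. Each $B_n$ arises as a finite intersection $B_{n-1}\cap g_1B_{n-1}\cap\cdots\cap g_kB_{n-1}\in\dD'$, with the translating elements $g_i$ chosen from a sufficiently thick $m'$-near-stabilizer of $B_{n-1}$; positivity of $m'(B_n)$ is preserved via Lemma~\ref{lemma: key0}(1) applied to $m'$, and the trebling inclusion is forced by Lemma~\ref{lemma: product of stabilizers}.

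Setting $Y:=B_NB_N^{-1}$, the set $Y$ is symmetric; positively definable in $(G,\cdot,B)$ over the parameters used to present the $B_n$'s as finite intersections of translates of $B$; and generic, since $\St_1^{m'}(B_N)\subseteq B_NB_N^{-1}=Y$ is thick by Lemma~\ref{lemma: key0}(1) applied to $m'$. A downward induction on $k$, using the trebling condition, gives $Y^k\subseteq B_{N-k+1}B_{N-k+1}^{-1}$: for the inductive step, using $B_N\subseteq B_{N-k+1}\subseteq B_{N-k}$,
\[
Y^{k+1}\subseteq B_{N-k+1}B_{N-k+1}^{-1}\cdot B_NB_N^{-1}\subseteq B_{N-k+1}B_{N-k+1}^{-1}B_{N-k+1}\cdot B_N^{-1}\subseteq B_{N-k}B_{N-k}^{-1},
\]
the middle inclusion by trebling. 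Taking $k=N$ yields $Y^N\subseteq B_1B_1^{-1}\subseteq BB^{-1}\subseteq\St_1(BA)$, completing the proof.

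The main obstacle is the chain-construction step: producing $B_n\in\dD'$ with $m'(B_n)>0$ and trebling $B_nB_n^{-1}B_n\subseteq B_{n-1}$. The delicate point is that the measure-theoretic near-stabilizers $\St_\epsilon^{m'}(B_{n-1})$ need not lie in $\dD'$, so one cannot simply intersect with them. The Massicot--Wagner device gets around this via a near-minimality selection of $B_{n-1}$ in a saturated model, which forces every translate $gB_{n-1}$ with $m'(gB_{n-1}\cap B_{n-1})>0$ to have $m'$-measure close to $m'(B_{n-1})$, so that the required intersections in $\dD'$ preserve positive $m'$-measure. Transporting this device from the Boolean-algebra-and-probability-measure setting to a lattice-and-mean setting is precisely the purpose of the extension results developed in Subsections~\ref{Subsection 2.4} and~\ref{subsection: means and measures}.
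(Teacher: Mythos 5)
Your route is genuinely different from the paper's, but it has a gap that I don't see how to fill, and I believe it runs into a fundamental obstruction rather than a technical one.

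You aim for the stronger conclusion $Y^N\subseteq BB^{-1}$ (correctly observing that this implies $Y^N\subseteq\St_1(BA)$), via a descending chain $B=B_0\supseteq\cdots\supseteq B_N$ in $\dD'$ with $m'(B_n)>0$ and the trebling inclusion $B_nB_n^{-1}B_n\subseteq B_{n-1}$. But this conclusion, and that chain, depend \emph{only} on $B$, $N$, and $m'$ — the mean $m$, the lattice $\dD$, and the set $A$ have disappeared entirely after your opening remark. That should already be suspicious, because the proposition does make essential use of $m$, $\dD$, and $A$. And indeed the trebling step does not go through: you attribute it to Lemma~\ref{lemma: product of stabilizers}, but that lemma says $\St_{\epsilon_1}(Z)\St_{\epsilon_2}(Z)\subseteq\St_{\epsilon_1+\epsilon_2}(Z)$, which is a statement about $\epsilon$-stabilizers with respect to a mean, not a set-theoretic inclusion of the form $B_nB_n^{-1}B_n\subseteq B_{n-1}$. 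Near-minimality of $B_{n-1}$ (the device you cite in your final paragraph) forces elements of a thick set to be in $\St_\epsilon$ of $B_{n-1}$ — it bounds $m'(gB_{n-1}\cap B_{n-1})$ from below — but it does not force $g B_{n-1}\subseteq B_{n-1}$ or anything like a trebling. Trebling of the type you want is the hallmark of approximate subgroup structure; a general $B$ of positive $m'$-mean need not admit any $B_1\in\dD'$ of positive mean with $B_1B_1^{-1}B_1\subseteq B$, so the chain construction fails at the first step.

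The paper's proof (Subsection~\ref{Subsection 2.3}, not Subsections~\ref{Subsection 2.4}--\ref{subsection: means and measures}, which concern extending pre-means and are used elsewhere) avoids this entirely. The mean $m'$ is used for exactly one purpose: to certify $\lL_k(B)$ for all $k$ via Lemma~\ref{lemma: key0}. The genuine work is then done with $m$ on the lattice $\dD$ of sets of the form $B'A$: one sets $f(k)=\inf\{m(B'A):B'\in\dD',\ B'\subseteq B,\ \lL_k(B')\}$, notes $m(A)\leq f(k)\leq m(X)$, picks $l$ with $f(l)<\sqrt{1+\epsilon}\,f(l-1)$ (where $\epsilon=1/N$), and takes a near-minimal witness $B'$. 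The set $Y$ is not a product like $B_NB_N^{-1}$ but rather $\St_{\lL_{l-1}}(B')$, which is a generic $\bigvee$-definable set; and for $g\in Y$ one has $\lL_{l-1}(gB'\cap B')$ and hence $m(gB'A\cap B'A)\geq m((gB'\cap B')A)\geq f(l-1)>m(B'A)/(1+\epsilon)$, so $g\in\St_\epsilon(B'A)$. Only now does Lemma~\ref{lemma: product of stabilizers} enter — applied to $\St_\epsilon(B'A)$ with the mean $m$ — to give $Y^N\subseteq\St_{N\epsilon}(B'A)=\St_1(B'A)\subseteq\St_1(BA)$. The indirection through $B'A$ and $m$ is precisely the extra freedom that makes the argument close where yours cannot.
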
 

\begin{proof}

In this proof, both largeness and $\bigvee$-definability are considered with respect to the theory of the structure $(G,\cdot, B)$.
We use the mean $m'$ only for the largeness of $B$.  Namely, by Lemma \ref{lemma: key0}, we have $\lL_k(B)$ for all $k\in \omega$.   
We will show:\\[-3mm]

    (**)   for some $k$ and $B' \in \dD'$ with $B' \subseteq B$ and $\lL_{k+1}(B')$, the set ${Y} : = \St_{\lL_{k}}(B')$ is generic as a $\bigvee$-definable set, and ${{Y}}^{N} \subseteq \St_1(B'A)$.\\[-3mm]

This means that if we present  (using Remark \ref{remark: St is a union f pos. def. symmetric sets}) $Y$ as  $\bigvee_n Y_n$ with the $Y_n$'s increasing, symmetric and positively definable over $G$, then some $Y_n$ is generic, and, of course, $Y_n^N \subseteq Y^N$.  So (**) will suffice.

Let $\epsilon = 1/N$.  
   Let $f(k) $ be the infimum of $m(B'A)$ over all $B' \in \lL_k \cap \dD'$ with  $B' \subseteq B$. So $0<m(A) \leq f(k) \leq {m}(X)$.    
  Thus, we cannot have $f(l) \geq \sqrt{1+\epsilon}  f(l-1)$ for all $l>0$.  Fix $l>0$ with   
  $f(l) < \sqrt{1+\epsilon} f(l-1)$.   Let $\lambda = f(l) \sqrt{1+\epsilon}$.   Let $B' \in \dD'$ with $B' \subseteq B$ satisfy\\[-3mm]
  
  (***) $\lL_{l}(B')$ and  $  m(B'A) < \lambda$.\\[-3mm] 
  
  We will show that any such $B'$ satisfies (**) (with $k=l-1$.)   Let ${Y}=\St_{\lL_{l-1}}(B')$.
 Since $B' \in \lL_l$, ${Y}$ is generic as a $\bigvee$-definable set.  
For $g \in Y$ we have $\lL_{l-1}(gB' \cap B')$, so 
\[ m(gB' A\cap B'A)\geq m((gB' \cap B') A) \geq f(l-1) > f(l) /\sqrt{1+\epsilon} >   m(B'A) / (1+\epsilon). \]
   Hence,  $g \in \St_\epsilon (B'A)$.  So $Y \subseteq \St_\epsilon(B'A)$.  
%
By Lemma \ref{lemma: product of stabilizers}, for any $Z$, $\St_{\epsilon}(Z)^N \subseteq \St_{N \epsilon} (Z)$. Thus, we conclude that $Y^N \subseteq \St_{1}(B'A)$.
This proves (**). 
\end{proof}

We will also need the following corollary of the proof of Proposition \ref{proposition: variant of Massicot-Wagner}.

\begin{cor}\label{corollary: good for applications}
Let  $A \subseteq X=G$, $\mathcal{B} \subseteq \mathcal{P}(G)$,  $N \in \Nn$. Put $\dD'=\{g_1B \cap \dots \cap g_nB: B \in \mathcal{B}, n \in \mathbb{N}_{>0}, g_1,\dots,g_n \in G\}$. Let $\dD$ be a $G$-lattice containing $\dD'$ and including $A$ and $B'A$ for $B' \in \dD'$.  Let $m$  be an invariant mean on $\dD$ with $m(A)>0$ and $m(B)>0$ for $B \in \mathcal{B}$. 
Then there exist $l \in \mathbb{N}_{>0}$, $\lambda \in \mathbb{R}$, $B \in \mathcal{B}$, $n \in \mathbb{N}_{>0}$, and $g_1,\dots,g_n \in G$ such that for $B':= B \cap g_1B \cap \dots \cap g_nB$ we have
%
$$\lL_{l}(B')\; \textrm{(working in $(G,\cdot, B)$)} \;\; \textrm{and}\;\;  m(B'A) < \lambda,$$
and whenever $E \in \mathcal{B}$ and $h_1,\dots,h_{n'} \in G$ (for some $n' \in \mathbb{N}_{>0}$) are chosen so that for $E':= E \cap h_1E \cap \dots \cap h_{n'}E$ one has  $\lL_{l}(E')$ (working in $(G,\cdot, E)$) and  $m(E'A) < \lambda$, then $S:=\St_{\lL_{l-1}}(E')$ is generic (as a set $\bigvee$-definable in $(G,\cdot,E)$), symmetric, and 
$S^N\subseteq E'A(E'A)^{-1}\subseteq EA(EA)^{-1}$.
\end{cor}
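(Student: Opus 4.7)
My plan is to recast the proof of Proposition \ref{proposition: variant of Massicot-Wagner} in a more uniform form. In that proof one fixes a single $B \in \mathcal{B}$ from the outset and takes the infimum over intersections of translates of that $B$; here I would instead define
\[
f(k) \; := \; \inf\{\, m(B'A) : B' \in \dD' \cap \lL_k \,\}
\]
over the \emph{whole} family $\dD'$ allowed by the corollary. With this change, the threshold constants $l$ and $\lambda$ produced by the Massicot--Wagner counting step depend only on $f$ and not on any particular witness, so they can be applied simultaneously to every $E \in \mathcal{B}$. First I would verify that $f$ is non-decreasing (since $\lL_{k+1}(Y) \Rightarrow \lL_k(Y)$ by monotonicity), bounded above by $m(X) < \infty$, and bounded below by $m(A) > 0$ (any $B' \in \lL_0 \cap \dD'$ is nonempty, so contains some $b$, and then $m(bA) = m(A) \leq m(B'A)$ by translation invariance and monotonicity). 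Consequently $f(l) \geq \sqrt{1+\epsilon}\, f(l-1)$ with $\epsilon := 1/N$ cannot hold for all $l \geq 1$; I fix $l \geq 1$ where it fails and set $\lambda := f(l)\sqrt{1+\epsilon}$. The infimum then supplies some $B \in \mathcal{B}$ and $g_1,\dots,g_n \in G$ (after translating so that $B$ itself appears as a term) with $B' := B \cap g_1 B \cap \dots \cap g_n B$ satisfying $\lL_l(B')$ and $m(B'A) < \lambda$.

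For the ``whenever'' clause, I would take any other $E, h_1,\dots,h_m$ giving $E' = E \cap h_1 E \cap \dots \cap h_m E$ with $\lL_l(E')$ and $m(E'A) < \lambda$, and set $S := \St_{\lL_{l-1}}(E')$. For $g \in S$, the set $gE' \cap E'$ lies in $\dD'$ (being the intersection of the translates $gE, gh_1E,\dots, gh_mE, E, h_1E,\dots,h_mE$ of the single member $E$ of $\mathcal{B}$) and satisfies $\lL_{l-1}$, so
\[
m(gE'A \cap E'A) \;\geq\; m((gE' \cap E')A) \;\geq\; f(l-1) \;>\; \frac{f(l)}{\sqrt{1+\epsilon}} \;>\; \frac{m(E'A)}{1+\epsilon} \;\geq\; (1-\epsilon)\,m(E'A).
\]
Hence $S \subseteq \St_\epsilon(E'A)$, and Lemma \ref{lemma: product of stabilizers} yields $S^N \subseteq \St_{N\epsilon}(E'A) = \St_1(E'A) \subseteq E'A(E'A)^{-1} \subseteq EA(EA)^{-1}$. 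Genericity of $S$ as a $\bigvee$-definable set is immediate from $\lL_l(E')$ and the definition of $\lL_l$; symmetry and positive $\bigvee$-definability of $S$ in $(G,\cdot,E)$ with parameters follow from Remarks \ref{remark: lL is vee definable} and \ref{remark: St is a union f pos. def. symmetric sets}, once one notes that $\lL_{l-1}(E')$ holds by monotonicity from $\lL_l(E')$.

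The only genuinely new content beyond the proof of Proposition \ref{proposition: variant of Massicot-Wagner} is the observation that the threshold pair $(l,\lambda)$ may be chosen independently of the particular $E \in \mathcal{B}$, making the bound uniform across the family. The main technical point to verify is the membership $gE' \cap E' \in \dD'$, which justifies feeding this set into the infimum defining $f(l-1)$; this is the reason the statement must be phrased with intersections of translates of a \emph{single} member of $\mathcal{B}$ rather than with mixed intersections, and once one accepts this constraint the membership is automatic.
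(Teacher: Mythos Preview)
Your proof is correct and is essentially the same as the paper's intended argument: the paper simply cites this as a ``corollary of the proof of Proposition \ref{proposition: variant of Massicot-Wagner}'', and the natural way to extract the uniform bound is exactly what you do, namely redefine $f(k)$ as the infimum over all of $\dD'$ rather than over intersections of translates of a fixed $B$. The one point worth making more explicit is that $\lL_{k+1}(Y)\Rightarrow \lL_k(Y)$ (your ``by monotonicity'') is not literally Remark~\ref{remark: lL is vee definable} or the remark following it, but a short induction on $k$; once granted, everything else goes through as you wrote.
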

The above corollary will be used later for $N=8$; in \cite{HKP2}, we will use it for $N=16$.


\subsection{From pre-mean to mean}\label{Subsection 2.4}

We show how to extend a pre-mean to a mean canonically; if the pre-mean is $G$-invariant, the resulting mean
will therefore be $G$-invariant, too. This will be essential in the proofs of the main results of Section \ref{Section 2}.

By a {\em lattice} of subsets of a set $X$ we mean a family of subsets of $X$ including $\emptyset$ and $X$, which is closed under intersections and unions of pairs.

\begin{dfn}
  A {\em normalized mean} on a lattice $(L,\cup,\cap)$ of subsets of a set $X$ is a monotone function $\rho: L \to   [0,1]$, satisfying:
\[ \rho (Y \cup Y') = \rho (Y)+\rho (Y')- \rho(Y \cap Y'), \]
and $\rho(\emptyset)=0$, $\rho(X)=1$.
\end{dfn}


Whenever we present a type-definable set $Z$ as an intersection $\bigcap_{i}  Z_i$, we mean that the $Z_i$'s are definable, $i$ ranges over a directed set $(I,<)$, and $Z_j \subseteq Z_i$ for $i<j$.

Let $E= \bigcap_{i \in I} R_i$ be a type-definable equivalence relation on a definable set $X$, where without loss each $R_i$ is reflexive and symmetric.

Working in the monster model, we write $Y/E$ for the image of $Y \subseteq X$ in $X/E$, and $YE$ for the pullback of $Y/E$ in $X$. 
For a binary relation $R$ on $X$, and $Y \subseteq X$, by $R \circ Y$ we mean $\{ x \in X: (\exists y \in Y) R(y,x) \}$. In particular, $YE=E \circ Y$.

The following definition and lemma can be read over any base set of parameters.

\begin{dfn} \label{definition: premean} A {\em pre-mean} for $X/E$ is a  monotone  function $m$ from definable subsets of $X$
into $[0,1]$,  with $m(\emptyset)=0$, $m(X)=1$,  
and  $m (Y \cup Y') \leq m (Y)+m (Y') $, such that equality holds whenever $(R_i \circ Y) \cap Y' = \emptyset$ for some $i$.
\end{dfn}

By compactness, the condition ``$(R_i \circ Y) \cap Y' = \emptyset$ for some $i$'' is equivalent to ``$(E \circ Y) \cap Y' =\emptyset$''.

\begin{lem} \label{lemma: premeantomean}
Let $m$ be a pre-mean for $X/E$.  Then $m$ induces a normalized mean $\nu$ on the lattice of  sets $Y/E$, with $Y$ a type-definable subset of $X$, or equivalently on the lattice  of type-definable subsets $Y$ of $X$ with $YE=Y$, in the following way 
$${\nu}(Y) := \inf \{ m(D): D \hbox{ definable, } Y \subseteq D  \}.$$
\end{lem}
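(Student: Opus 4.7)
My plan is to verify the four defining properties of a normalized mean for $\nu$: $[0,1]$-valuedness, $\nu(\emptyset)=0$ and $\nu(X)=1$, monotonicity, and the modular identity $\nu(Y \cup Y') + \nu(Y \cap Y') = \nu(Y) + \nu(Y')$. The first three are immediate from the corresponding properties of $m$ together with the facts that $\emptyset$ and $X$ are themselves definable, and that any definable superset of an $E$-closed type-definable set contains every $E$-closed type-definable subset of it.

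The substantive step is the modular identity. It rests on the following compactness/saturation lemma: for any disjoint $E$-closed type-definable sets $Y_1,Y_2$ and arbitrary definable supersets $D_j^0 \supseteq Y_j$, there exist $i$ and definable $D_j \subseteq D_j^0$ with $D_j \supseteq Y_j$ satisfying $(R_i \circ D_1) \cap D_2 = \emptyset$. The proof writes $Y_j = \bigcap_\alpha D_\alpha^{(j)}$ as directed intersections inside $D_j^0$ and uses $(E \circ Y_1) \cap Y_2 = Y_1 \cap Y_2 = \emptyset$ (since $Y_1$ is $E$-closed) to conclude that the directed family of definable sets $R_i \circ D^{(1)}_\alpha \cap D^{(2)}_\beta$ has empty intersection, so by saturation some member is already empty. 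Iterating pairwise gives the analogous separation statement for any finite pairwise disjoint family of $E$-closed type-definable sets. Combined with the pre-mean additivity on $E$-separated pairs, this immediately yields that $\nu$ is additive on disjoint pairs of $E$-closed type-definable sets.

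For the general modular identity, the plan is to construct, for each $\epsilon>0$, definable supersets $D_Y \supseteq Y$ and $D_{Y'} \supseteq Y'$ on which $m$ is itself modular, with $m$-values matched to $\nu$-values up to $O(\epsilon)$. Starting from a near-optimal definable $D_C^0 \supseteq Y \cap Y'$, the three sets $Y \cap Y'$, $E \circ (Y \setminus D_C^0)$, $E \circ (Y' \setminus D_C^0)$ are pairwise disjoint $E$-closed type-definable (using that $E$ is an equivalence relation so $E \circ Z$ is $E$-closed, together with $Y \cap Y' \subseteq D_C^0$). The three-fold version of the separation lemma produces pairwise $E$-separated definable supersets $D_C, D_A, D_B$ with $D_C \subseteq D_C^0$, and chosen so that $D_A \supseteq Y \setminus D_C$ (using reflexivity of $E$ and the inclusion $Y \setminus D_C \subseteq E \circ (Y \setminus D_C)$); likewise $D_B \supseteq Y' \setminus D_C$. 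Setting $D_Y := D_C \cup D_A$ and $D_{Y'} := D_C \cup D_B$, the three pairwise $E$-separations yield
\[
m(D_Y) = m(D_C) + m(D_A), \qquad m(D_{Y'}) = m(D_C) + m(D_B),
\]
\[
m(D_Y \cup D_{Y'}) = m(D_C) + m(D_A) + m(D_B), \qquad m(D_Y \cap D_{Y'}) = m(D_C),
\]
by repeated application of the pre-mean additivity clause, hence genuine modularity $m(D_Y) + m(D_{Y'}) = m(D_Y \cup D_{Y'}) + m(D_Y \cap D_{Y'})$. Since $D_Y \supseteq Y$, $D_{Y'} \supseteq Y'$, $D_Y \cup D_{Y'} \supseteq Y \cup Y'$, and $D_Y \cap D_{Y'} \supseteq Y \cap Y'$, bounding each side against the corresponding $\nu$-values and letting $\epsilon \to 0$ delivers the modular identity for $\nu$.

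The main obstacle is the matching step: ensuring that in the limit $m(D_A)$ approaches $\nu(Y) - \nu(Y \cap Y')$, not merely some smaller quantity; equivalently, that $\nu\bigl(E \circ (Y \setminus D_C^0)\bigr) \to \nu(Y) - \nu(Y \cap Y')$ as $D_C^0$ shrinks around $Y \cap Y'$. Disjoint additivity applied to $(Y \cap Y') \cup E \circ (Y \setminus D_C^0) \subseteq Y$ gives the easy direction $\nu(E \circ (Y \setminus D_C^0)) \le \nu(Y) - \nu(Y \cap Y')$; the reverse inequality is a continuity-from-below property that I would establish by observing that the residual $\{y \in Y \setminus Y' : E \cdot y \subseteq D_C^0\}$ shrinks to $\emptyset$ as $D_C^0 \searrow Y \cap Y'$ (since $E \cdot y \subseteq Y \cap Y' \subseteq Y'$ would force $y \in Y'$), and then invoking compactness of $X/E$ to conclude that any definable superset of $Y$ contains $(Y \cap Y') \cup E \circ (Y \setminus D_C^0)$ for $D_C^0$ chosen small enough, forcing the $\nu$-values to converge.
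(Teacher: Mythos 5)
Your plan correctly identifies the four things to check, and your compactness/saturation separation lemma is sound (as is its $n$-fold iterate). Your route to the modular identity — build definable supersets $D_Y, D_{Y'}$ on which $m$ itself is modular and then pass to the infimum — differs in shape from the paper's, which instead proves an ``approximate complement'' claim (for $Y \subseteq Z$ in the lattice, find $Y' \in L$ disjoint from $Y$ inside $Z$ with $\nu(Y)+\nu(Y') \geq \nu(Z) - \epsilon$) and finishes by pure lattice arithmetic. However, the step you yourself flag as the main obstacle is not actually closed by your argument, and there is also a smaller slip along the way.

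The small slip: you want $D_Y := D_C \cup D_A \supseteq Y$, and justify $D_A \supseteq Y \setminus D_C$ by $Y\setminus D_C \subseteq E\circ(Y\setminus D_C)$. But $D_A$ was obtained as a definable superset of $E\circ(Y\setminus D_C^0)$, not of $E\circ(Y\setminus D_C)$, and since $D_C \subseteq D_C^0$ you have $Y\setminus D_C \supseteq Y\setminus D_C^0$ — the wrong inclusion. Points of $Y\cap(D_C^0\setminus D_C)$ need not lie in $D_A$, so $D_Y$ need not contain $Y$.

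The real gap is the continuity-from-below step. You argue that the residual $R(D_C^0)=\{y\in Y\setminus Y' : E\cdot y\subseteq D_C^0\}$ shrinks to $\emptyset$, and then that ``any definable superset of $Y$ contains $(Y\cap Y')\cup E\circ(Y\setminus D_C^0)$ for $D_C^0$ small enough''. That last statement is vacuous: $(Y\cap Y')\cup E\circ(Y\setminus D_C^0)$ is always a subset of $Y$, so it is contained in \emph{every} definable superset of $Y$ regardless of $D_C^0$. Nothing there controls $m$-values, and indeed means/contents are not continuous from below in general — the paper's Example immediately after the Lemma is a counterexample at the mean level. What you need, and what actually makes the argument work, is a quantitative estimate coming from the finite subadditivity of the pre-mean on definable sets: write $Y=\bigcap_l Y_l$ with $Y_l$ definable; then
\[
\nu\bigl(E\circ(Y\setminus D_C^0)\bigr)=\inf_{i,l} m\bigl(R_i\circ(Y_l\setminus D_C^0)\bigr)\geq \inf_l m(Y_l\setminus D_C^0)\geq \inf_l m(Y_l)-m(D_C^0)=\nu(Y)-m(D_C^0),
\]
using reflexivity of $R_i$, monotonicity, and $m(Y_l)\leq m(Y_l\setminus D_C^0)+m(D_C^0)$. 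This is precisely the content of the paper's ``approximate complement'' Claim. Once you have it (and similarly for $Y'$ and for $Y\cup Y'$), three applications of disjoint additivity to $Y\cap Y'$ against $E\circ(Y\setminus D_C^0)$, $E\circ(Y'\setminus D_C^0)$, and $E\circ(Y\setminus D_C^0)\cup E\circ(Y'\setminus D_C^0)=E\circ\bigl((Y\cup Y')\setminus D_C^0\bigr)$ already give the modular identity up to $O(m(D_C^0)-\nu(Y\cap Y'))\to 0$, without needing the $D_Y, D_{Y'}$ construction or the three-fold separation at all. So the gap is genuine, but once filled it also simplifies your argument.
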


\begin{proof}  
 Let $L$ be the lattice of all   $\bigwedge$-definable  sets $Y$ with $YE=Y$.
 For $Y \in L$, define   
 $${\nu}(Y) = \inf \{ m(D): D \hbox{ definable, } Y \subseteq D  \}.$$
Clearly  ${\nu}(\emptyset)=0$, ${\nu}(X)=1$, ${\nu}$ is monotone, and ${\nu}(Y \cup Y') \leq {\nu}(Y) + {\nu}(Y')$.
If $Y,Y' \in L$ are disjoint, then $\bigwedge_{i} R_i(y,y') \wedge y \in Y \wedge y' \in Y'$ is inconsistent.
By compactness, for some $i$ and some definable $D \supseteq Y$ and $D' \supseteq Y'$, we have
$(R_i \circ D) \cap D' = \emptyset$.  As $m$ is a pre-mean, we have $m(D \cup D') = m(D) +m(D')$,
and likewise for any definable subsets of $D,D'$.  Hence, in this case, ${\nu}(Y \cup Y') = {\nu}(Y)+{\nu}(Y')$.  

Now, $L$ is not complemented, but we do have:\\[-2mm]

\begin{clm}  
Let $Y \subseteq Z$ be both in $L$.  For any $\epsilon>0$ there exists $Y' \subseteq Z$, $Y' \in L$,
$Y'$ disjoint from $Y$, and  with ${\nu}(Y)+{\nu}(Y') \geq {\nu}(Z)-\epsilon$.  
\end{clm}

\begin{clmproof}
Write $Y=\bigcap Y_k$ and $Z = \bigcap_l Z_l$ with definable $Y_k$ and $Z_l$.
Find $k$ such that $\nu(Y) \geq m(Y_k)-\epsilon$. 
Let \[  Y' = E\circ (Z \setminus Y_k) =E \circ \left(\bigcap_{l} Z_l \setminus Y_k \right)  =  \bigcap_{i,l} R_i \circ (Z_l \setminus Y_k). \]
Then  $Y' \in L$, and $Y' \subseteq E \circ Z=Z$. 
Also, $Y' \subseteq E \circ (X \setminus Y_k) \subseteq E \circ (X \setminus Y) =X \setminus Y$ (as $Y$ and $X \setminus Y$ are unions of $E$-classes), so $Y \cap Y' = \emptyset$.
 Finally, $\nu(Y') = \inf_{i,l} m(R_i \circ (Z_l \setminus Y_k)) \geq \inf_{l} m( Z_l \setminus Y_k) $,  so  
 \[\nu(Y') +  m(Y_k)   \geq \inf_l m(Z_l \setminus Y_k)  + m(Y_k)  \geq \inf_l m(Z_l) = \nu(Z) \]
As $m(Y_k) \leq \nu(Y)+\epsilon$, we obtain $\nu(Y')+\nu(Y)+\epsilon \geq \nu(Z)$ as required.  
\end{clmproof}

From this, the  equality $\nu(Y \cup Z) = \nu(Y)+\nu(Z) - \nu(Y \cap Z) $ can be shown as follows. Take any $\epsilon >0$. Find $Y' \in L$ such that $Y' \subseteq Y$, $Y'$ disjoint from $Y \cap Z$, and $\delta_1 := \nu(Y) - \nu(Y') - \nu(Y \cap Z) \leq \frac{1}{2}\epsilon$. Similarly, find $Z' \in L$ such that $Z' \subseteq Z$, $Z'$ disjoint from $Y \cap Z$, and $\delta_2 := \nu(Z) - \nu(Z') - \nu(Y \cap Z) \leq \frac{1}{2}\epsilon$.  Then $Y \cap Z, Y',Z'$ are pairwise disjoint subsets of $Y \cup Z$. Finally, find $T \in L$ such that $T \subseteq Y  \cup Z$, $T$ disjoint from $Y \cap Z$, and $\delta:= \nu(Y \cup Z) - \nu(T) - \nu(Y \cap Z) \leq \epsilon$. Put $Y'' = Y' \cup(T \cap Y) \in L$ and $Z'' =Z' \cup(T \cap Z)\in L$. Then $Y''$, $Z''$ and $Y \cap Z$ are pairwise disjoint subsets of $Y \cup Z$. Put $T'' = Y'' \cup Z''\in L$. We see that $\delta'':=\nu(Y \cup Z) - \nu(T'') - \nu(Y \cap Z) \leq \delta \leq \epsilon$ and $T''$ is disjoint from $Y \cap Z$. Moreover, $\delta_1'' := \nu(Y) - \nu(Y'') - \nu(Y \cap Z) \leq \delta_1\leq \frac{1}{2}\epsilon$ and $\delta_2'' := \nu(Z) - \nu(Z'') - \nu(Y \cap Z) \leq \delta_2\leq \frac{1}{2}\epsilon$. Note also that $\delta_1'',\delta_2'',\delta'' \geq 0$.

We get $|\nu(Y \cup Z) -\nu(Y)-\nu(Z) +\nu(Y \cap Z)| = |\delta'' + \nu(T'') - \nu(Y) - \nu(Z) +2\nu(Y \cap Z)|= |\delta'' -(\nu(Y)-\nu(Y'')-\nu(Y\cap Z)) - (\nu(Z) - \nu(Z'') - \nu(Y \cap Z))|=|\delta'' - \delta_1'' - \delta_2''|$. Since $\delta'' \in [0,\epsilon]$ and $\delta_1'',\delta_2'' \in [0,\frac{1}{2}\epsilon]$, we see that $|\delta'' - \delta_1'' - \delta_2''|\leq \epsilon$. Letting $\epsilon \to 0$, we obtain the desired equality. 
\end{proof}

Lemma \ref{lemma: premeantomean} will be sufficient to deal with Case 1 in Subsection \ref{section: main results on components}, i.e. to prove Theorem \ref{theorem: c4}. In order to deal with Case 2 and prove Theorem  \ref{theorem: indeed c4}, we will need some variant of this lemma. Namely,  suppose that the type-definable equivalence relation $E$ is on a definable group $G$.

\begin{dfn} \label{definition: G-premean} A {\em $G$-pre-mean} for $G/E$ is a  pre-mean for $G/E$ such that  
$m (Y \cup Y') = m (Y)+m (Y') $ whenever $((g_1R_i \cap \dots \cap g_n R_i) \circ Y) \cap Y' = \emptyset$ for some $g_1,\dots,g_n \in G$ and some $i \in I$ (where $gR_i:=\{(ga,gb): (a,b) \in R_i\}$).
\end{dfn}

The following variant of Lemma \ref{lemma: premeantomean} follows from Lemma \ref{lemma: premeantomean}.

\begin{cor} \label{corollary: variant of premeantomean}
Let $m$ be a $G$-pre-mean for $G/E$.  Then $m$ induces a normalized mean $\nu$ on the lattice of type-definable subsets $Y$ of $G$ with $(g_1E\cap \dots \cap g_nE) \circ Y=Y$ for some $g_1,\dots,g_n \in G$, in the following way 
$${\nu}(Y) := \inf \{ m(D): D \hbox{ definable, } Y \subseteq D  \}.$$
\end{cor}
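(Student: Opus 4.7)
The plan is to reduce the corollary to repeated applications of Lemma \ref{lemma: premeantomean}, one for each finite tuple $\bar g = (g_1, \ldots, g_n) \in G^{<\omega}$. Given such a tuple, set $E_{\bar g} := g_1 E \cap \dots \cap g_n E$. Writing $E = \bigcap_{i \in I} R_i$ with each $R_i$ reflexive and symmetric, put $R_i^{\bar g} := g_1 R_i \cap \dots \cap g_n R_i$. Each translate $g_j R_i$ is reflexive and symmetric (as the pushforward of such a relation under the bijection of left multiplication by $g_j$), hence so is the finite intersection $R_i^{\bar g}$; and we obtain the presentation $E_{\bar g} = \bigcap_i R_i^{\bar g}$. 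Being the intersection of finitely many translates of $E$, $E_{\bar g}$ is itself a type-definable equivalence relation on $G$.

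The key observation is that the defining condition for a $G$-pre-mean is exactly the statement that $m$ is a pre-mean, in the sense of Definition \ref{definition: premean}, for $G/E_{\bar g}$, simultaneously for every finite tuple $\bar g$. Indeed, the only nontrivial clause in Definition \ref{definition: premean} that needs verification with respect to the presentation $E_{\bar g} = \bigcap_i R_i^{\bar g}$ is the equality $m(Y \cup Y') = m(Y) + m(Y')$ when $(R_i^{\bar g} \circ Y) \cap Y' = \emptyset$ for some $i$, and this is exactly the $G$-pre-mean condition on $m$. Lemma \ref{lemma: premeantomean} then yields, for each $\bar g$, a normalized mean $\nu_{\bar g}$ on the lattice $L_{\bar g}$ of type-definable sets $Y$ with $Y E_{\bar g} = Y$, defined by the uniform formula $\nu_{\bar g}(Y) = \inf\{ m(D) : D \text{ definable}, \, Y \subseteq D \}$.

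Since this formula depends only on $Y$ and $m$, not on the choice of $\bar g$, the means $\nu_{\bar g}$ automatically agree on overlaps of their domains and glue to a single function $\nu$ on $L := \bigcup_{\bar g} L_{\bar g}$, which is precisely the lattice described in the corollary. For the lattice operations and modularity: given $Y_1 \in L_{\bar g_1}$ and $Y_2 \in L_{\bar g_2}$, let $\bar g$ be the concatenation of $\bar g_1$ and $\bar g_2$; then $E_{\bar g} \subseteq E_{\bar g_j}$ for $j = 1, 2$, so $Y_1, Y_2, Y_1 \cup Y_2$ and $Y_1 \cap Y_2$ all lie in $L_{\bar g}$, and the modularity identity for $\nu$ on this pair reduces to the corresponding identity for $\nu_{\bar g}$ already furnished by Lemma \ref{lemma: premeantomean}. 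The only delicate point in this outline is the recognition that the $G$-pre-mean axiom is engineered to make $m$ simultaneously a pre-mean for every refinement $E_{\bar g}$; once this is in hand, the remainder is a direct assembly.
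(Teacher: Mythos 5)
Your proof is correct and is precisely the argument the paper has in mind when it asserts that Corollary \ref{corollary: variant of premeantomean} ``follows from Lemma \ref{lemma: premeantomean}'': instantiate the lemma for each refined relation $E_{\bar g}$, observe that the defining formula for $\nu$ is $\bar g$-independent so the resulting means glue, and use concatenation of tuples to land any pair of sets (and their union and intersection) in a common lattice $L_{\bar g}$ where modularity is already known. The paper leaves all of this implicit; you have supplied the intended details.
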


\subsection{Means and measures}\label{subsection: means and measures}

In this subsection, we will prove that, in a certain general context, the existence of an invariant mean is equivalent to the existence of an invariant measure on an appropriate space. This is interesting in its own right, but also yields model-theoretic absoluteness of various notions of ``amenability'', i.e. the existence of invariant measures on appropriate spaces computed for a given model $M$ does not depend on the choice of $M$.

Let us recall some definitions from measure theory.

\begin{dfn}
Let $R$ be a ring of subsets of a given set $X$, namely closed under finite unions and differences; an example is a Boolean algebra of subsets of $X$.\\ 
1) A {\em content} on $R$ is a function $m \colon R \to [0,+\infty]$ which is finitely additive and satisfies $m(\emptyset)=0$.\\ 
2) A {\em pre-measure} on $R$ is a content which is $\sigma$-additive, namely  if $(A_n)_{n <\omega}$ is a sequence of pairwise disjoint members of $R$ whose union $A$ is also in $R$, then $m(A) = \sum_n m(A_n)$.\\
3) A {\em measure} is a pre-measure on a $\sigma$-algebra of subsets of a given set. 
\end{dfn}

A content $m$ on a ring $R$ of subsets of $X$ is called {\em $\sigma$-finite} if $X$
is the union of an increasing sequence $(X_n)_{n < \omega}$ of elements of $R$ with $m(X_n)<\infty$.

\begin{fct}[Carath\'{e}odory extension theorem]
Let $\nu$ be a $\sigma$-finite pre-measure on a ring $R$ of subsets of $X$. Then
there is a unique extension of $\nu$ to a measure on the $\sigma$-algebra $\sigma(R)$ generated by $R$. 
\end{fct}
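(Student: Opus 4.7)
The plan is to follow the classical outer-measure construction of Carath\'eodory, producing the extension by taking infima of countable covers from $R$, and then handling uniqueness separately by a monotone-class argument that exploits $\sigma$-finiteness. First I would define
\[ \mu^*(A) := \inf\Bigl\{ \textstyle\sum_{n<\omega} \nu(A_n) : A_n \in R, \ A \subseteq \textstyle\bigcup_n A_n \Bigr\} \]
for every $A \subseteq X$ (using the $\sigma$-finite cover $(X_n)$ to ensure such covers exist). A routine verification shows $\mu^*$ is an outer measure: $\mu^*(\emptyset) = 0$, monotonicity, and countable subadditivity via the standard $\varepsilon/2^n$ trick.

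Next I would introduce the class $\M$ of \emph{Carath\'eodory-measurable} sets, i.e.\ those $A \subseteq X$ such that $\mu^*(E) = \mu^*(E \cap A) + \mu^*(E \setminus A)$ for all $E$, and verify in the standard way that $\M$ is a $\sigma$-algebra on which $\mu^*$ is a measure. The crucial step is to show that $R \subseteq \M$ and that $\mu^*\restr R = \nu$. Containment $R \subseteq \M$ uses only finite additivity of $\nu$ and infimum manipulations, while the equality $\mu^*(A) = \nu(A)$ for $A \in R$ is precisely where the $\sigma$-additivity hypothesis on $\nu$ is used: given any cover $A \subseteq \bigcup_n A_n$ with $A_n \in R$, one disjointifies inside $R$ to obtain $B_n \subseteq A_n$ with $A = \bigsqcup_n (A \cap B_n)$, and then $\sigma$-additivity of $\nu$ on this disjoint union (which stays in $R$) gives $\nu(A) \leq \sum_n \nu(A_n)$; the reverse inequality is trivial. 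Restricting $\mu^*$ to $\sigma(R) \subseteq \M$ produces the desired measure extending $\nu$.

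For uniqueness, suppose $\mu_1, \mu_2$ are two measures on $\sigma(R)$ extending $\nu$. Using the $\sigma$-finite exhaustion $X = \bigcup_n X_n$ with $X_n \in R$ and $\nu(X_n) < \infty$, fix $n$ and consider
\[ \dD_n := \{ A \in \sigma(R) : \mu_1(A \cap X_n) = \mu_2(A \cap X_n) \}. \]
Then $\dD_n$ contains $R$ and is a Dynkin (monotone) class, since the relevant differences and increasing unions are handled by finiteness of $\mu_i(X_n)$. Because $R$ is closed under finite intersection, the $\pi$-$\lambda$ theorem yields $\dD_n = \sigma(R)$. Letting $n \to \infty$ and invoking continuity from below gives $\mu_1 = \mu_2$ on $\sigma(R)$.

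The main obstacle, and the place where care is needed, is the single step $\mu^*\restr R = \nu$: this is where $\sigma$-additivity of the pre-measure (as opposed to mere finite additivity of a content) is essential, and overlooking it would collapse the theorem to the much weaker existence of a finitely additive extension. Everything else is bookkeeping; $\sigma$-finiteness plays no role in existence but is indispensable for uniqueness, since without it one can have several distinct extensions even for very simple pre-measures.
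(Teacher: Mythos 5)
The paper does not prove this statement; it is quoted as a classical \textbf{Fact} from measure theory, and the remark immediately following it in the paper merely appeals to ``the proof'' (i.e.\ the standard one) to note that the extension inherits $G$-invariance. Your proof is exactly that standard proof --- outer measure via countable covers from $R$, Carath\'eodory measurability, agreement with $\nu$ on $R$ via disjointification and $\sigma$-additivity, and uniqueness by a $\pi$-$\lambda$ argument localized to the finite-measure pieces $X_n$ --- and it is correct. Two small points worth tightening if you were to write it out in full. First, the paper's follow-up remark about $G$-invariance drops out of your construction for free: if $R$ is a $G$-ring and $\nu$ is $G$-invariant then $\mu^*$ is manifestly $G$-invariant (the $G$-action permutes the admissible covers), so the restriction to $\sigma(R)$ is too; it would be natural to record this. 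Second, you conflate ``Dynkin class'' with ``monotone class'' in the uniqueness step; what you actually use is the Dynkin $\pi$-$\lambda$ theorem (the monotone class theorem requires an algebra, whereas a ring need not contain $X$, and you correctly work around this by checking $X \in \dD_n$ directly). Your closing observation that $\sigma$-finiteness is needed only for uniqueness, not existence, is also correct, provided one allows the extension to take the value $+\infty$ on sets not coverable by countably many members of $R$.
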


From the proof, or from a more precise statement which says that the extended measure (restricted to $\sigma(R)$) is just the outer measure induced by $\nu$, it follows that if $R$ is a $G$-ring (for an action of a group $G$ on $X$) and $\nu$ is $G$-invariant, then so is the extended measure.
It is clear that the converse of the above theorem is also true, i.e. if a content $\nu$ on $R$  extends to a measure on $\sigma(R)$, then $\nu$ is a pre-measure.

When $(X_n)_{n <\omega}$ is a descending sequence of sets whose intersection is empty, we will write $X_n \downarrow \emptyset$; when  $(X_n)_{n <\omega}$ is an ascending sequence of sets whose union is $X$, we will write $X_n \uparrow X$. 

\begin{rem}\label{remark: characterizations of sigma additivity}
Let $\nu$ be a content on a ring $R$ of subsets of $X$ taking only finite values. Then $\nu$ is a pre-measure if and only if for every sequence $(X_n)_{n < \omega}$ of sets from $R$ such that $X_n \downarrow \emptyset$ one has $\lim_n \nu(X_n) =0$ (in this case we say that $\nu$ is continuous at 0). If $R$ is a Boolean algebra, these conditions are also equivalent to the condition that for every sequence $(X_n)_{n < \omega}$ of sets from $R$ such that $X_n \uparrow X$ one has $\lim_n \nu(X_n) = \nu(X)$.
\end{rem}

\begin{prop}\label{proposition: extension of mean to a content}
If $\rho$ is a normalized mean on a lattice $(L,\cap,\cup)$ of subsets of a set $X$, then it extends uniquely to a content $\nu$ on the Boolean algebra $B(L)$ generated by $L$. If $L$ is a $G$-lattice and $\rho$ is $G$-invariant, then so is $\nu$.
\end{prop}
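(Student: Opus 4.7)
The plan is to extend $\rho$ atom-by-atom on finite Boolean subalgebras of $B(L)$, using modularity to guarantee consistency. First, I would establish by induction on $n$, from the modularity axiom, the full inclusion-exclusion formula
\[
\rho\left(\bigcup_{i=1}^n A_i\right) \;=\; \sum_{\emptyset \neq T \subseteq \{1,\dots,n\}} (-1)^{|T|+1}\rho\left(\bigcap_{i \in T}A_i\right)
\]
for $A_1,\dots,A_n \in L$, noting that all the intersections and unions involved lie in $L$ since $L$ is a lattice.

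Next, for each finite $\mathcal{A}=\{A_1,\dots,A_n\}\subseteq L$, consider the atoms $B_S^{\mathcal{A}}:=\bigcap_{i\in S}A_i \setminus \bigcup_{j\notin S}A_j$ (for $S\subseteq\{1,\dots,n\}$) of the finite Boolean subalgebra $B(\mathcal{A})\subseteq B(L)$, and set
\[
\mu_{\mathcal{A}}(B_S^{\mathcal{A}}) \;:=\; \rho\left(\bigcap_{i\in S}A_i\right) - \rho\left(\bigcup_{j\notin S}\left(A_j\cap \bigcap_{i\in S}A_i\right)\right).
\]
This value lies in $[0,1]$ by monotonicity of $\rho$. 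Extending additively over disjoint unions of atoms produces a content $\mu_{\mathcal{A}}$ on $B(\mathcal{A})$; using the inclusion-exclusion formula and Möbius inversion over the subset lattice of $\{1,\dots,n\}$, one verifies that $\mu_{\mathcal{A}}$ agrees with $\rho$ on every intersection $\bigcap_{i\in T}A_i$, and in particular on each $A_i$.

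The third step is compatibility under refinement: if $\mathcal{A}\subseteq \mathcal{A}'$ are finite subcollections of $L$, then $\mu_{\mathcal{A}'}|_{B(\mathcal{A})}=\mu_{\mathcal{A}}$. Both are contents on the finite Boolean algebra $B(\mathcal{A})$ agreeing with $\rho$ on every $\bigcap_{i\in T}A_i$ with $T\subseteq\{1,\dots,n\}$; but a content on a finite Boolean algebra is determined by its values on atoms, and these atom-values are recovered from the values on the intersection elements by Möbius inversion, so the two contents coincide on $B(\mathcal{A})$. Since $B(L)$ is the directed union $\bigcup_{\mathcal{A}\text{ finite}}B(\mathcal{A})$, the compatible family $\{\mu_{\mathcal{A}}\}$ glues to a well-defined finitely additive $\nu\colon B(L)\to[0,1]$ extending $\rho$, which is the desired content (with $\nu(X)=\rho(X)=1$).

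Uniqueness is immediate: any content extending $\rho$ must equal $\mu_{\mathcal{A}}$ on each $B(\mathcal{A})$ by the same Möbius-inversion argument applied to its restriction. For $G$-invariance, note that $gB_S^{\mathcal{A}}=B_S^{g\mathcal{A}}$ and $g\bigcap_{i\in T}A_i=\bigcap_{i\in T}gA_i$, so the defining formula for $\mu_{\mathcal{A}}$ is preserved under the $G$-action once $\rho$ is $G$-invariant on $L$; hence $\nu(gC)=\nu(C)$ for every $C\in B(L)$. The main obstacle is the refinement-compatibility in Step 3, together with the non-negativity of the atom values in Step 2; both are pinned down precisely by modularity, the only nontrivial algebraic constraint that distinguishes a normalized mean from an arbitrary monotone function on $L$.
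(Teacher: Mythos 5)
Your proposal is correct and is essentially the same strategy as the paper's: define $\nu$ on the cells $A_0^{\epsilon(0)}\cap\cdots\cap A_n^{\epsilon(n)}$ of a finite Boolean subalgebra by precisely the formula you give (the paper writes the same expression), extend additively, verify agreement with $\rho$, and then glue over all finite subalgebras using uniqueness. The one genuine difference is in the verification step: the paper proves that the additive extension of the cell formula actually returns $\rho(A_k)$ on each generator by a somewhat laborious induction on $n$, whereas you observe that inclusion--exclusion (itself an induction from modularity) converts the cell formula into the Möbius inversion of the map $T\mapsto\rho\bigl(\bigcap_{i\in T}A_i\bigr)$ over the subset lattice, so agreement with $\rho$ on all intersections, and the refinement-compatibility of the family $\{\mu_{\mathcal A}\}$, follow in one stroke. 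This is cleaner and more conceptual; the paper's induction is more elementary but obscures the Möbius structure. Both approaches handle uniqueness and $G$-invariance the same way (a content on a finite Boolean algebra is determined by its atom values, which are in turn pinned down by the intersection values). Your remark that non-negativity of the cell values comes from monotonicity of $\rho$ is the right observation, and it also handles the degenerate case of empty cells (where monotonicity forces the value to be exactly $0$).
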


\begin{proof}
{\bf Case 1:} $L$ is finite, say equal to $\{A_0,\dots,A_n\}$.

It is clear that there is a unique possible candidate for $\nu$, namely $\nu$ is determined by the formulas
$$\nu\left(A_0^{\epsilon(0)} \cap \dots \cap A_n^{\epsilon(n)}\right) = \rho\left(\bigcap_{i \in \Delta_{\epsilon}^+} A_i\right) - \rho\left((\bigcap_{i \in \Delta_{\epsilon}^+} A_i) \cap (\bigcup_{i \in \Delta_{\epsilon}^-} A_i)\right),$$
for any $\epsilon \in \{0,1\}^{n+1}$, where  $\Delta_{\epsilon}^+:= \{ i\leq n: \epsilon(i)= 1\}$, $\Delta_{\epsilon}^-:= \{ i\leq n: \epsilon(i)= 0\}$, and $A_i^{0}:=X \setminus A_i$, $A_i^{1}:= A_i$. This follows by finite additivity of $\rho$ and the fact that each element of $B(L)$ can be (uniquely) written as a (disjoint) union of sets of the form $A_0^{\epsilon(0)} \cap \dots \cap A_n^{\epsilon(n)}$. 

Conversely, it is clear that when we define $\nu$ be the above formulas on the atoms of $B(L)$ and then extend additively, then we get a content. It is also clear that if $\rho$ is $G$-invariant, so is $\nu$. The remaining thing to check is that $\nu$ extends $\rho$, i.e. $\nu(A_k) =\rho(A_k)$ for all $k \leq n$. 

We argue by induction on $n$, where the base induction step for $n=0$ is clear. Assume the conclusion holds for numbers less then a given $n>0$. It is enough to show that $\nu(A_n)=\rho(A_n)$. 

$$
\nu(A_n)= \sum_{\epsilon \in 2^{n}}\rho\left(A_n \cap \bigcap_{i \in  \Delta_{\epsilon}^+} A_i\right) - \rho\left(A_n \cap (\bigcap_{i \in \Delta_{\epsilon}^+} A_i) \cap (\bigcup_{i \in \Delta_{\epsilon}^-} A_i)\right)=S_1+S_2,$$

where 
$$S_1= \sum_{\epsilon \in 2^{n-1}} \rho\left(A_n \cap A_{n-1} \cap  \bigcap_{i \in  \Delta_{\epsilon}^+} A_i\right) -  \rho\left(A_n \cap A_{n-1} \cap (\bigcap_{i \in \Delta_{\epsilon}^+} A_i) \cap (\bigcup_{i \in \Delta_{\epsilon}^-} A_i)\right),$$
$$S_2 =  \sum_{\epsilon \in 2^{n-1}}\rho\left(A_n \cap \bigcap_{i \in  \Delta_{\epsilon}^+} A_i\right) - \rho\left(A_n \cap (\bigcap_{i \in \Delta_{\epsilon}^+} A_i) \cap (A_{n-1} \cup \bigcup_{i \in \Delta_{\epsilon}^-} A_i)\right).
$$
%
%
By the modularity of $\rho$, 
$$\begin{array}{lll}
S_2 & = & \sum_{\epsilon \in 2^{n-1}}\rho\left(A_n \cap \bigcap_{i \in  \Delta_{\epsilon}^+} A_i\right) - \rho\left(A_n \cap (\bigcap_{i \in \Delta_{\epsilon}^+} A_i) \cap (\bigcup_{i \in \Delta_{\epsilon}^-} A_i)\right)\\ 
&& -  \rho\left(A_n \cap A_{n-1} \cap \bigcap_{i \in \Delta_{\epsilon}^+} A_i\right) + \rho\left(A_n \cap A_{n-1} \cap (\bigcap_{i \in \Delta_{\epsilon}^+} A_i) \cap (\bigcup_{i \in \Delta_{\epsilon}^-} A_i)\right).
\end{array}
$$
Thus, $S_1+S_2 = \sum_{\epsilon \in 2^{n-1}}\rho\left(A_n \cap \bigcap_{i \in  \Delta_{\epsilon}^+} A_i\right) - \rho\left(A_n \cap (\bigcap_{i \in \Delta_{\epsilon}^+} A_i) \cap (\bigcup_{i \in \Delta_{\epsilon}^-} A_i)\right)$, which is equal to $\rho(A_n)$ by induction hypothesis. Thus, the induction step has been completed.\\[2mm]
{\bf Case 2:} $L$ is arbitrary.

For uniqueness notice that any content on $B(L)$ extending $\rho$ is determined by its restrictions to all Boolean algebras generated by finite sublattices of $L$ and that these restrictions are unique by Case 1. To show existence, for any finite sublattice $L_0 \subseteq L$ let $\nu_{L_0}$ be the unique content on $B(L_0)$ extending $\rho|_{L_0}$, which exists by Case 1. Then note that by uniqueness in Case 1, $\bigcup_{L_0} \nu_{L_0}$ is the desired content.
 %
%
\end{proof}

The next easy example shows that it may happen that a mean $\rho$ is continuous at $0$, but the unique extension $\nu$ to a content on the generated Boolean algebra is not continuous at $0$, i.e. $\nu$ is not a pre-measure and so it cannot be extended to a measure.

\begin{ex} Take any infinite set $X$ and present it as an increasing union of sets $X_n$. Let the lattice $L$ consist of $\emptyset, X_0,X_1,\dots,X$. Define a mean on $L$ by: $\rho(\emptyset)=0$, $\rho(X)=1$, $\rho(X_n)=\frac{1}{2}$. Then, if $A_n \downarrow \emptyset$, where $A_n \in L$, then eventually $A_n=\emptyset$, so $\rho$ is continuous at $0$. Let $\nu$ be the unique extension of $\rho$ to a content on $B(L)$. Then $X \setminus X_n \downarrow \emptyset$, but $\lim \nu(X \setminus X_n) = \frac{1}{2} \ne 0$, so $\nu$ is not a pre-measure.
\end{ex}

The means we are interested in come from pre-means, and we will see that this rules out obstacles as in the above example.

From now on, we work in models of a given theory $T$.  As is well-known, a  definable family of definable sets  is given by a formula $\varphi(\bar x, \bar y)$, in the sense that the family is precisely the collection of sets defined by the formulas  $\varphi(\bar x, \bar b)$ as $\bar b$ varies (over a given model, or over the monster model, or over all models).  We generalize this to the 
notion of a {\em $\bigvee$-definable family} (of definable sets), given now by  a collection $\{\varphi_{i}(\bar x, \bar z_i):i\in I\}$ of formulas.  
Namely, the family is $\{\varphi_{i}(\bar x, \bar z_i):i\in I, \bar z_i \textrm{ belongs to any model}\}$. We call it a $\bigvee$-definable family, as it is a union of definable families (so it can also be seen as an inductive limit of definable families). 

\begin{dfn}
We will say that a $\bigvee$-definable family $\eE:=\{\varphi_i(x,y,\bar z_i): i \in I, \bar z_i \; \mbox{belongs to any model}\}$ {\em defines} an equivalence relation if for every model $M$, $E_M:= \bigcap \eE_M$ is an ($M$-type-definable) equivalence relation, where $\eE_M := \{\varphi_i(x,y,\bar b_i) : i \in I, \bar b_i \subseteq M\}$. 
\end{dfn}

By a standard trick, we can and do assume that $I$ is a directed set and for every $i<j$, $(\forall \bar z_i)(\exists \bar z_j) (\varphi_j(x,y,\bar z_j) \rightarrow \varphi_i(x,y,\bar z_i))$.

The above definition is introduced in order to capture for example the following situations. A $\emptyset$-type-definable equivalence relation $E = \bigcap_{i \in I} R_i(x,y)$ is defined by the $\bigvee$-definable family $\{R_i(x,y): i \in I\}$ (so here there are no parameter variables $\bar z_i$). In particular, the relation of lying in the same left [resp. right] coset of a $\emptyset$-type-definable subgroup $H$ of a $\emptyset$-definable group $G$ is defined by a $\bigvee$-definable family (without parameter variables). To get another important example, consider any $\emptyset$-type-definable subgroup $H=\bigcap_{i \in I} X_i$ (where $I$ is directed and $X_j \subseteq X_i$ whenever $i <j$). 
Put $\eE = \{ G(x) \wedge G(y) \wedge z^{-1}(yx^{-1})z \in X_i : i \in I, z\}$ (where for $z \notin G$ or $a \notin G$ we put $z^{-1}az:=a$). Then, for any model $M$, $E_M$ is the $M$-type-definable equivalence relation of lying in the same right coset of $\bigcap_{g \in G(M)} H^g$. More generally, when $G$ is equipped with a $\bigvee$-definable group topology as in Subsection \ref{Subsection 2.5}, then the relation of lying in the same left [resp. right] coset of the infinitesimals (i.e. $\mu_M$ from Definition \ref{definition: infinitesimals over M}) is also naturally defined by a $\bigvee$-definable family.

From now on, let $G$ be a $\emptyset$-definable group and let $E$ be an equivalence relation on $G$ defined by a $\bigvee$-definable family $\eE:=\{\varphi_i(x,y,\bar z_i): i \in I, \bar z_i\}$; we assume that each $\varphi_i(x,y,\bar z_i)$ implies that $x,y \in G$. Work in a monster model $M^*$; so $G=G(M^*)$.

\begin{dfn}\label{definition: G-premean for vee-definable equivalence relations}
 By a {\em $G$-pre-mean} for $\eE_M$ we mean a $G$-pre-mean for $G/E_M$ (see Definition \ref{definition: G-premean}), i.e.  a  monotone  function $m$ on definable (with parameters) subsets of $G$  
into $[0,1]$,  with $m(\emptyset)=0, m(G)=1$,  
and  $m (Y \cup Y') \leq m (Y)+m (Y') $, such that equality holds
whenever $((g_1\varphi_i(x,y,\bar b_1) \cap \dots \cap g_n \varphi_i(x,y,\bar b_n)) \circ Y) \cap Y' = \emptyset$ for some $g_1,\dots,g_n \in G$, $i \in I$, and $\bar b_1,\dots,\bar b_n$ from $M$.
\end{dfn}

By the standard construction (by incorporating the mean into the language, as was recalled in the proof of Lemma \ref{lemma: key0}(2)), we have the following remark.
\begin{rem}\label{remark: extensions of premeans to premeans}
A $G$-pre-mean for $\eE_M$, but defined only on $M$-definable sets and satisfying the ``equality criterion'' only for $g_1,\dots,g_n \in G(M)$, extends to a $G$-pre-mean for $\eE_M$ (defined on all definable sets). In fact, it extends to a $G$-pre-mean for $\eE_{M^*}$, which is clearly also a $G$-pre-mean for $\eE_N$ for any $N\prec M^*$. If the initial $G$-pre-mean is $G(M)$-invariant, then the $G$-pre-mean for $\eE_{M^*}$ is $G(M^*)$-invariant, so it is also a $G$-invariant $G$-pre-mean for $\eE_N$ for any $N \prec M^*$.
\end{rem}

For a model $M$, let $\dD_{\eE_M}$ be the $G$-lattice of type-definable (with parameters) subsets $D$ of $G(M^*)$ such that $(g_1E_M \cap \dots \cap g_nE_M) \circ D =D$ for some $g_1,\dots,g_n \in G$.

From Corollary \ref{corollary: variant of premeantomean} and Remark \ref{remark: extensions of premeans to premeans}, we get:

\begin{cor}\label{corollary: extension of premean to mean}
Let $m$ be a $G$-pre-mean for $\eE_M$.  Then $m$ induces a normalized mean $\rho$ on the lattice $\dD_{\eE_M}$ via ${\rho}(Y) := \inf \{ m(D): D \hbox{ definable, } Y \subseteq D  \}$. If $m$ is $G(M)$-invariant, then we can replace it by a $G$-invariant pre-mean, and then the induced $\rho$ is $G$-invariant as well.  
\end{cor}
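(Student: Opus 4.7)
The plan is to derive the corollary directly from its two cited predecessors with minimal bookkeeping, so the proof will amount to matching up the definitions.

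First, I would observe that a $G$-pre-mean for $\eE_M$ in the sense of Definition \ref{definition: G-premean for vee-definable equivalence relations} is, in particular, a $G$-pre-mean for $G/E_M$ in the sense of Definition \ref{definition: G-premean}, where one enumerates the defining family of $E_M = \bigcap \eE_M$ as $\{\varphi_i(x,y,\bar b) : i \in I,\ \bar b \text{ from } M\}$. Indeed, the equality clause in Definition \ref{definition: G-premean for vee-definable equivalence relations} is strictly more permissive than the one in Definition \ref{definition: G-premean}, since it allows the $n$ parameter-tuples $\bar b_1,\dots,\bar b_n$ to vary while sharing a common index $i$, and in particular covers the special case in which all of them coincide with a single $\bar b$.

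With this identification, the first assertion of the corollary is immediate from Corollary \ref{corollary: variant of premeantomean} applied with $E = E_M$: the lattice of type-definable sets $Y$ with $Y(g_1 E_M \cap \dots \cap g_n E_M) = Y$ that the corollary produces is exactly $\dD_{\eE_M}$, and the infimum formula $\rho(Y) = \inf\{m(D) : D \text{ definable},\ Y \subseteq D\}$ matches the one in the statement. No further work is required.

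For the $G$-invariance claim I would invoke Remark \ref{remark: extensions of premeans to premeans}: a $G(M)$-invariant $G$-pre-mean $m$ for $\eE_M$ may be replaced by an extension $\tilde m$ which is a $G(M^*) = G$-invariant $G$-pre-mean for $\eE_{M^*}$, hence in particular a $G$-invariant $G$-pre-mean for $\eE_M$. Applying the previous step to $\tilde m$ then yields the required normalized mean $\rho$ on $\dD_{\eE_M}$, and its $G$-invariance is inherited from $\tilde m$ via the observation that the map $D \mapsto gD$ is a bijection $\{D \text{ definable} : Y \subseteq D\} \leftrightarrow \{D \text{ definable} : gY \subseteq D\}$ on which $\tilde m$ takes matching values, so the infimum is preserved. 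I do not anticipate any substantive obstacle: the whole argument is bookkeeping that reconciles the parameter-sensitive formulation of Definition \ref{definition: G-premean for vee-definable equivalence relations} with the parameter-free formulation of Definition \ref{definition: G-premean}, and then plugs into the already-established machinery.
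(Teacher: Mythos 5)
Your proof is correct and takes the same route the paper does: the paper offers no argument beyond the one-line remark that the corollary "follows from Corollary \ref{corollary: variant of premeantomean} and Remark \ref{remark: extensions of premeans to premeans}," and your write-up simply spells out that derivation. The two points you make explicit — that the equality clause in Definition \ref{definition: G-premean for vee-definable equivalence relations} is at least as strong as the one in Definition \ref{definition: G-premean} so Corollary \ref{corollary: variant of premeantomean} applies, and that the $G$-invariance is obtained by first replacing $m$ by the extension $\tilde m$ from Remark \ref{remark: extensions of premeans to premeans} and noting that the infimum formula then transports along left translation — are exactly the intended bookkeeping.
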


The converse is easy is check.

\begin{rem}
A normalized mean $\rho$ on the lattice $\dD_{\eE_M}$ induces a $G$-pre-mean $m$ for $\eE_M$ via $m(Y):=\inf \{\rho((g_1E_M \cap \dots \cap g_nE_M) \circ Y): g_1,\dots,g_n \in G\}$. If $\rho$ is $G(M)$-invariant [resp. $G$-invariant], so is $m$. 
\end{rem}

\begin{cor}\label{corollary: mean always induced by premean}
If $\dD_{\eE_M}$ carries a [$G(M)$-invariant], normalized mean, then it carries such a mean $\rho$ which is induced from a [$G$-invariant] $G$-pre-mean $m$ for $\eE_M$ via  ${\rho}(Y) := \inf \{ m(D): D \hbox{ definable, } Y \subseteq D  \}$.
\end{cor}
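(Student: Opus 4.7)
My plan is to chain together the two one-step results directly preceding the corollary. Start with any [$G(M)$-invariant] normalized mean $\rho_0$ on $\dD_{\eE_M}$.

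First, I would apply the remark immediately preceding the corollary to $\rho_0$ to produce a $G$-pre-mean $m_0$ for $\eE_M$, given on a definable set $Y\subseteq G$ by
\[ m_0(Y)\;=\;\inf\bigl\{\rho_0((g_1E_M\cap\dots\cap g_nE_M)\circ Y):n<\omega,\ g_1,\dots,g_n\in G\bigr\}. \]
If $\rho_0$ was $G(M)$-invariant, that same remark guarantees $m_0$ is $G(M)$-invariant; note also that the sets appearing under $\rho_0$ here lie in $\dD_{\eE_M}$ by construction, so $m_0$ is well defined.

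Second, I would upgrade $m_0$ to a $G$-invariant $G$-pre-mean $m$ for $\eE_M$ using Remark \ref{remark: extensions of premeans to premeans}: restrict $m_0$ to $M$-definable subsets of $G$ and then apply the remark, whose standard compactness-and-standard-part construction extends the restriction to a $G$-invariant $G$-pre-mean $m$ for $\eE_{M^*}$, which in particular is a $G$-invariant $G$-pre-mean for $\eE_M$. (In the non-invariant case, just take $m=m_0$.) Finally, I would feed $m$ into Corollary \ref{corollary: extension of premean to mean} to obtain a normalized [$G$-invariant] mean $\rho$ on $\dD_{\eE_M}$ defined by $\rho(Y)=\inf\{m(D):D\text{ definable},\ Y\subseteq D\}$; this $\rho$ is, by construction, induced from the $G$-pre-mean $m$ in the required sense.

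I do not expect a genuine obstacle here, since the corollary is essentially a packaging of the preceding remark (mean $\to$ pre-mean) and of Corollary \ref{corollary: extension of premean to mean} (pre-mean $\to$ mean). The only conceptual point worth flagging is that the output mean $\rho$ need not agree with the input $\rho_0$: the statement merely asserts the existence of \emph{some} normalized [$G(M)$-invariant] mean on $\dD_{\eE_M}$ arising via the infimum formula from a [$G$-invariant] pre-mean, and it is the composition of the two constructions, rather than any identification $\rho=\rho_0$, that delivers this.
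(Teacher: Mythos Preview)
Your proposal is correct and follows essentially the same route the paper intends: apply the preceding remark to pass from a mean to a $G$-pre-mean, then feed that pre-mean back through Corollary~\ref{corollary: extension of premean to mean} (with the invariance upgrade via Remark~\ref{remark: extensions of premeans to premeans} in the bracketed case) to obtain a mean of the required form. Your closing observation that the output $\rho$ need not coincide with the input $\rho_0$ is exactly right and worth keeping in mind.
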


\begin{cor}\label{corollary: absoluteness of the existence of mean}
The existence of a $G$-invariant normalized mean on  $\dD_{\eE_M}$ does not depend either on the choice of $M$ or the monster model $M^*$ in which the lattice is computed.
\end{cor}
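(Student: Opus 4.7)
The plan is to combine the pre-mean/mean correspondence of Corollaries \ref{corollary: extension of premean to mean} and \ref{corollary: mean always induced by premean} with the restriction/extension flexibility provided by Remark \ref{remark: extensions of premeans to premeans}. The central observation is that the genuinely $M$-dependent datum inside a $G$-invariant mean on $\dD_{\eE_M}$ is a $G(M)$-invariant pre-mean for $\eE_M$ defined only on $M$-definable sets, and such an object can be transported into any other monster model to produce a pre-mean for $\eE$ computed over any other base.

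First, I would settle independence from the monster model. Fix $M$ and two monster models $M^{*1}, M^{*2}$ with $M \preceq M^{*1}$ and $M \preceq M^{*2}$. Suppose $\dD_{\eE_M}$ computed in $M^{*1}$ carries a $G$-invariant normalized mean. By Corollary \ref{corollary: mean always induced by premean}, this mean is induced from a $G$-invariant $G$-pre-mean $m_1$ for $\eE_M$ on all definable subsets of $G(M^{*1})$. Restrict $m_1$ to $M$-definable sets; this yields a $G(M)$-invariant function on $M$-definable subsets of $G$ satisfying the pre-mean equality criterion for elements and parameters drawn from $G(M)$ and $M$ respectively. Applying Remark \ref{remark: extensions of premeans to premeans} inside $M^{*2}$ extends this restricted datum to a $G$-invariant $G$-pre-mean for $\eE_{M^{*2}}$ on all definable sets in $M^{*2}$; since $M \preceq M^{*2}$, this is in particular a $G$-invariant $G$-pre-mean for $\eE_M$ in $M^{*2}$. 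Corollary \ref{corollary: extension of premean to mean} then produces the required $G$-invariant normalized mean on $\dD_{\eE_M}$ in $M^{*2}$.

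Second, I would treat independence from the model $M$ itself. Given $M_1, M_2 \models T$, the previous step frees me to choose any monsters, so I pick a single monster $M^*$ sufficiently saturated that both $M_1$ and $M_2$ embed elementarily into $M^*$ (using that any model of $T$ elementarily embeds into a saturated enough $M^* \models T$), and without loss of generality treat them as $M_1, M_2 \preceq M^*$. Assume $\dD_{\eE_{M_1}}$ carries a $G$-invariant mean in $M^*$. Corollary \ref{corollary: mean always induced by premean} gives a $G$-invariant $G$-pre-mean for $\eE_{M_1}$ on all definable subsets of $G(M^*)$. Restrict it to $M_1$-definable sets and apply Remark \ref{remark: extensions of premeans to premeans} to obtain a $G(M^*)$-invariant $G$-pre-mean for $\eE_{M^*}$ on all definable sets in $M^*$; this same function is automatically a $G$-invariant $G$-pre-mean for $\eE_{M_2}$, since the equality criterion for $\eE_{M_2}$ uses parameters from $M_2 \subseteq M^*$ and is thus weaker than the one for $\eE_{M^*}$. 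One last application of Corollary \ref{corollary: extension of premean to mean} delivers a $G$-invariant mean on $\dD_{\eE_{M_2}}$ in $M^*$.

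The proof is essentially a diagram chase, so the only real obstacle is bookkeeping about which pre-mean corresponds to which equivalence relation and to which set of parameters. The one technically nontrivial ingredient is the middle step, where Remark \ref{remark: extensions of premeans to premeans} is invoked: that is precisely the point at which one incorporates the pre-mean into the language, passes to an elementary extension of the expanded structure, and takes standard parts --- a construction that encapsulates all the genuine content behind the corollary.
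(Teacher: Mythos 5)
Your proposal is correct and follows the route the paper clearly intends; the paper itself gives no explicit proof of this corollary, treating it as an immediate consequence of Remark \ref{remark: extensions of premeans to premeans} together with Corollaries \ref{corollary: extension of premean to mean} and \ref{corollary: mean always induced by premean}, which is exactly the chain you identify. Your central observation --- that the only genuinely $M$-dependent datum is a $G(M)$-invariant pre-mean on $M$-definable sets, and that such a datum can be extracted from a mean on $\dD_{\eE_M}$, transported, and re-expanded --- is precisely the intended mechanism, and your handling of the equality criterion (that a $G$-pre-mean for $\eE_{M^*}$ is automatically one for $\eE_N$ whenever $N\prec M^*$) is correct.

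One point deserves slightly more care than you give it: the phrase ``applying Remark \ref{remark: extensions of premeans to premeans} inside $M^{*2}$''. The standard construction behind that remark (incorporating the $m_\varphi$'s into an expanded language and passing to a monster of the expanded theory) a priori produces the extended pre-mean on the $\lL$-reduct of the \emph{new} monster, which need not literally coincide with the pre-specified $M^{*2}$. The fix is routine but worth noting: either take the new monster $M^{**}$ large enough that $M^{*2}$ elementarily embeds into it over $M$ and then restrict the resulting $G$-pre-mean to $M^{*2}$-definable sets before invoking Corollary \ref{corollary: extension of premean to mean}, or observe that once one monster model works then any model of the same cardinality/saturation does (by homogeneity). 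This does not affect the correctness of your argument, but without it the phrase ``inside $M^{*2}$'' is not quite justified on the face of the cited remark.
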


The next proposition is the main observation of this subsection.

\begin{prop}\label{proposition: main proposition}
Assume $E_M$ is $G(M)$-invariant. The following conditions are equivalent.
\begin{enumerate}
\item $S_{G/E_M}(M)$ carries a $G(M)$-invariant, Borel probability measure.
\item There is a $G(M)$-invariant [$G$-invariant] $G$-pre-mean for $\eE_M$.
\item The lattice $\dD_{\eE_M}$ carries a $G(M)$-invariant [$G$-invariant], normalized mean.
\end{enumerate}
\end{prop}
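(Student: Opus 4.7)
The proof will close the cycle $(1) \Rightarrow (2) \Rightarrow (3) \Rightarrow (1)$. The middle equivalence $(2) \Leftrightarrow (3)$ is essentially a restatement of Corollary~\ref{corollary: extension of premean to mean} (for $(2) \Rightarrow (3)$) and Corollary~\ref{corollary: mean always induced by premean} (for $(3) \Rightarrow (2)$), with Remark~\ref{remark: extensions of premeans to premeans} handling the upgrade from $G(M)$- to $G$-invariance in the bracketed strengthening.

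For $(1) \Rightarrow (2)$, I would proceed directly: given a $G(M)$-invariant Borel probability measure $\mu$ on $X := S_{G/E_M}(M)$, define $m(Y) := \mu([YE_M])$ for $M$-definable subsets $Y \subseteq G$, where $[YE_M]$ denotes the closed subset of $X$ corresponding to the $M$-type-definable, $E_M$-saturated set $YE_M$. Monotonicity, $m(\emptyset) = 0$, $m(G) = 1$, and subadditivity are inherited from $\mu$ together with the identity $(Y \cup Y')E_M = YE_M \cup Y'E_M$. For the equality clause, if $((g_1 \varphi_i(x,y,\bar b_1) \cap \dots \cap g_n \varphi_i(x,y,\bar b_n)) \circ Y) \cap Y' = \emptyset$ with $g_k \in G(M)$ and $\bar b_k$ from $M$, then each relation $g_k \varphi_i(x, y, \bar b_k)$ contains $E_M$ (using $G(M)$-invariance of $E_M$), hence $(E_M \circ Y) \cap Y' = \emptyset$, which forces $YE_M \cap Y'E_M = \emptyset$, and additivity of $\mu$ on disjoint closed sets supplies the equality. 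Remark~\ref{remark: extensions of premeans to premeans} then extends $m$ (which is $G(M)$-invariant from $\mu$) to a $G$-invariant $G$-pre-mean defined on all definable subsets of $G$.

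For $(3) \Rightarrow (1)$, given a $G(M)$-invariant normalized mean $\rho$ on $\dD_{\eE_M}$, its restriction to the sublattice of $M$-type-definable, $E_M$-saturated subsets of $G$ (which lie in $\dD_{\eE_M}$ by taking $n = 1$ and $g_1 = e$) corresponds, via the logic topology, to a monotone, modular, normalized, $G(M)$-invariant function $\bar\rho$ on the lattice of closed subsets of the compact Hausdorff space $X := S_{G/E_M}(M)$. The task is to upgrade $\bar\rho$ to a regular Borel probability measure $\mu$ on $X$. Compactness of $X$ immediately yields continuity of $\bar\rho$ at $\emptyset$ along decreasing nets of closed sets: any such net with empty intersection is eventually empty since its complements form an open cover of $X$. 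Then I would extend $\bar\rho$ to a finitely additive content $\nu$ on the Boolean algebra of constructible subsets of $X$ via Proposition~\ref{proposition: extension of mean to a content}, establish $\sigma$-additivity of $\nu$ by combining the above continuity at $\emptyset$ with inner regularity of $\nu$ by closed sets (itself coming from normality of $X$), and invoke the Carath\'eodory extension theorem to produce $\mu$. Uniqueness at each step propagates $G(M)$-invariance from $\bar\rho$ to $\mu$. The main technical obstacle is the inner regularity step --- promoting the modular valuation on closed subsets of a compact Hausdorff space to a $\sigma$-additive content on constructible sets --- which is classical topological measure theory (in the spirit of Topsøe-type extension theorems for $\tau$-smooth valuations) but is not formally set up in the earlier subsections, so it must either be cited or verified directly by induction on Boolean complexity using normality of $X$.
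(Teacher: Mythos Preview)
Your argument for $(1)\Rightarrow(2)$ and the equivalence $(2)\Leftrightarrow(3)$ matches the paper. The gap is in $(3)\Rightarrow(1)$, specifically in the ``inner regularity'' step that you flag as the main obstacle but then attribute to normality of $X$ or to Tops{\o}e-type theorems. This step is genuinely false as stated: a normalized modular valuation on the closed sets of a compact Hausdorff space need not extend to a Borel measure, and compactness plus normality do not supply the missing inner regularity. For a concrete counterexample, take $X=\mathbb{N}\cup\{\infty\}$ (the one-point compactification), fix a nonprincipal ultrafilter $\mathcal U$ on $\mathbb{N}$, and set $\bar\rho(C)=1$ if $C\cap\mathbb{N}\in\mathcal U$ and $\bar\rho(C)=0$ otherwise. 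One checks that $\bar\rho$ is monotone, modular, normalized, and (trivially, by compactness) $\tau$-smooth at $\emptyset$; yet it cannot extend to a Borel measure, since every singleton has $\bar\rho$-value $0$ while $X$ is countable. Inner regularity fails explicitly: the open set $U=\mathbb{N}$ has $\nu(U)=1-\bar\rho(\{\infty\})=1$, but every closed $K\subseteq U$ is finite, hence $\bar\rho(K)=0$. The Tops{\o}e-style extension theorems you have in mind require an additional tightness hypothesis (outer regularity of $\bar\rho$ by open sets, or equivalently $\tau$-smoothness at every closed set, not just at $\emptyset$), which does not follow from the bare mean axioms.

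The paper's proof of $(3)\Rightarrow(1)$ avoids this by not discarding the extra structure available. It first invokes Corollary~\ref{corollary: mean always induced by premean} to replace the given mean by one of the form $\rho(Y)=\inf\{m(D):D\text{ definable},\,Y\subseteq D\}$ for a $G$-pre-mean $m$; this outer-regularity formula is exactly what substitutes for the missing tightness. It then extends $\rho$ to a content $\nu$ on $B(\dD_{\eE_M})$ via Proposition~\ref{proposition: extension of mean to a content} and proves $\sigma$-additivity of $\nu$ by a direct compactness argument that uses saturation of $M^*$ together with the additivity clause of the pre-mean $m$ (the key computation is bounding $m(D_{k_j}\setminus F_{k_j})$ via the separation condition for $m$). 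Only after obtaining a genuine measure on $\sigma(B(\dD_{\eE_M}))$ does the paper push forward to $S_{G/E_M}(M)$. So the route through the pre-mean and the saturated monster model is not merely a stylistic choice: it supplies the regularity input that your purely topological argument lacks.
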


\begin{proof} 
First note that $E_M$ being $G(M)$-invariant guarantees that $G(M)$ acts naturally on $G/E_M$, which induces an action of $G(M)$ on $S_{G/E_M}(M)$.\\ 
(1) $\rightarrow$ (2). Let $\mu$ witnesses (1). For an $M$-definable subset $D$ of $G$ define $m(D):=\mu(\bar D)$, where $\bar D$ is the set of complete types (over $M$) of elements of $D/E_M$. Since $E_M$ is $G(M)$-invariant, we easily see that $m$ is a $G(M)$-invariant $G$-pre-mean for $\eE_M$, but defined only on $M$-definable sets and satisfying the ``equality criterion'' only for $g_1,\dots,g_n \in G(M)$. By Remark \ref{remark: extensions of premeans to premeans}, it extends to an actual $G$-invariant $G$-pre-mean (defined on all definable sets) for $\eE_M$.\\
(2) $\rightarrow$ (3). This follows from Corollary \ref{corollary: extension of premean to mean}.\\
(3) $\rightarrow$ (1). 
Take a $G(M)$-invariant, normalized mean on $\dD_{\eE_M}$. By Corollary \ref{corollary: mean always induced by premean}, there exists a $G$-invariant, normalized mean $\rho$ induced from a $G$-invariant $G$-pre-mean $m$ for $\eE_M$ via  ${\rho}(Y) := \inf \{ m(D): D \hbox{ definable, } Y \subseteq D  \}$. By Proposition \ref{proposition: extension of mean to a content}, let $\nu$ be the unique extension of $\rho$ to a $G$-invariant content on the Boolean algebra $B(\dD_{\eE_M})$. We will show that $\nu$ is a pre-measure, which by Carath\'{e}dory theorem can be further extended to a $G$-invariant measure $\bar\nu$ on the generated $\sigma$-algebra $\sigma(B(\dD_{\eE_M}))$. 
Then $\bar \nu$ induces a $G(M)$-invariant, Borel probability measure on $S_{G/E_M}(M)$ via $\mu(P):= \bar \nu (\{a \in M^*: \tp((a/E_M)/M) \in P\})$ for any Borel subset $P$ of $S_{G/E_M}(M)$, and the proof will be complete. So it remains to show
\\[2mm]
\begin{clm}
$\nu$ is a pre-measure.
\end{clm}

\begin{clmproof}
Put $R: =B(\dD_{\eE_M})$.
By Remark \ref{remark: characterizations of sigma additivity}, it is enough to show that for every sequence $(X_n)_{n < \omega}$ of sets from $R$ such that $X_n \uparrow G$ one has $\lim_n \nu(X_n) = 1$. Take any $\epsilon >0$. We need to show that $\nu(X_n) > 1 - \epsilon$ for some $n$. 

One can find sets $Z_k \subseteq Y_k$ (for $k \in \omega$) from $\dD_{\eE_M}$ and natural numbers $n_0<n_1<\dots$ such that 
$$X_i = (Y_0 \setminus Z_0) \sqcup \dots \sqcup (Y_{n_i} \setminus Z_{n_i})$$
for every $i<\omega$ (where $\sqcup$ stands for disjoint union).  Then 
$$\nu (X_i)=\sum_{k=0}^{n_i} \rho(Y_k) - \rho(Z_k).$$

For each $k$ we can choose a definable set $D_k \supseteq Y_k$ such that 
$$\sum_{k=0}^\infty m(D_k) - \rho(Y_k) < \epsilon.$$

For each $k$ let $\fF_k$ be the family of all sets $F$ definable over the set of parameters over which $Z_k$ is defined and such that $Z_k \subseteq F \subseteq D_k$. Then $\bigcap \fF_k = Z_k$ for every $k$. Therefore,
$$G = \bigcup_i X_i = \bigcup_k Y_k \setminus Z_k \subseteq \bigcup_k D_k \setminus Z_k = \bigcup_k \left(D_k \setminus \bigcap \fF_k\right) = \bigcup_k \bigcup_{F \in \fF_k} D_k \setminus F,$$
so by the saturation of $M^*$, there are $k_1<\dots<k_n <\omega$ and $F_{k_1} \in \fF_{k_1}, \dots, F_{k_n} \in \fF_{k_n}$ such that 
$$G=(D_{k_1} \setminus F_{k_1}) \cup \dots \cup (D_{k_n} \setminus F_{k_n}).$$
(Note that this is not necessarily a disjoint union.) 

We also have $Z_{k_j} \subseteq F_{k_j}$. 
Since $Z_{k_j}\in \dD_{\eE_M}$, by compactness, it is easy to see that there are definable sets $F_{k_j}' \in \fF_{k_j}$ contained in $F_{k_j}$ such that $((g_1\varphi_i(x,y,\bar b_1) \cap \dots \cap g_n \varphi_i(x,y,\bar b_n)) \circ F_{k_j}') \cap (D_{k_j} \setminus F_{k_j}) = \emptyset$ for some $g_1,\dots,g_n \in G$, $i \in I$, and $\bar b_1,\dots,\bar b_n$ from $M$  (all depending on $j$ of course).
Hence, $m((D_{k_j} \setminus F_{k_j}) \cup F_{k_j}') = m(D_{k_j} \setminus F_{k_j})+m(F_{k_j}')$, which implies that $m(D_{k_j} \setminus F_{k_j}) \leq m(D_{k_j}) -m(F_{k_j}')$.

From all these observations, we get
$$
\begin{array}{l}
1 = m(G)=m(\bigcup_{j=1}^n D_{k_j} \setminus F_{k_j}) \leq \sum_{j=1}^n m(D_{k_j} \setminus F_{k_j}) \leq\\
\sum_{j=1}^n m(D_{k_j}) -m(F_{k_j}')< (\sum_{j=1}^n \rho(Y_{k_j}) - \rho (Z_{k_j})) + \epsilon.
\end{array}
$$ 
Hence, $\nu(X_{k_n}) \geq \nu((Y_{k_1} \setminus Z_{k_1}) \sqcup \dots \sqcup (Y_{k_n} \setminus Z_{k_n})) = \sum_{j=1}^n \rho(Y_{k_j}) - \rho (Z_{k_j}) > 1- \epsilon$.
\end{clmproof}
The proof of the proposition is complete
.\end{proof}

\begin{rem}
In Proposition \ref{proposition: main proposition}, one can add one more equivalent condition:
\begin{enumerate}
\item[(4)] The $G(M)$-lattice $\dD_{E_M}$ of type-definable subsets $D$ of $G(M^*)$ such that $E_M \circ D = D$ carries a $G(M)$-invariant, normalized mean. 
\end{enumerate}
\end{rem}

\begin{proof}
The implication (3) $\rightarrow$ (4) is trivial, while (4) $\rightarrow$ (1) follows similarly to (3) $\rightarrow$ (1) (note that, in the proof of $(3) \to (1)$, it is enough to work with $G(M)$-invariant pre-means, means and contents in order to get that $\mu$ is $G(M)$-invariant).
\end{proof}

The reason why we work with the more complicated lattice $\dD_{\eE_M}$ instead of $\dD_{E_M}$ is that the former is a $G$-lattice which is needed in Case 2 in Subsection \ref{section: main results on components}.

From Corollary \ref{corollary: absoluteness of the existence of mean} and Proposition \ref{proposition: main proposition}, we get

\begin{cor}\label{corollary: absoluteness of the existence of measure}
Assume $E_M$ is $G(M)$-invariant for every model $M$. Then, the existence of a $G(M)$-invariant, Borel probability measure on $S_{G/E_M}(M)$ does not depend on the choice of $M$.
\end{cor}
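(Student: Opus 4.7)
The plan is to chain Proposition~\ref{proposition: main proposition} with Corollary~\ref{corollary: absoluteness of the existence of mean}. Fix two models $M_1, M_2 \models T$ and suppose that $S_{G/E_{M_1}}(M_1)$ carries a $G(M_1)$-invariant, Borel probability measure. We want to deduce that $S_{G/E_{M_2}}(M_2)$ carries a $G(M_2)$-invariant, Borel probability measure.

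First, apply the equivalence $(1)\Leftrightarrow(3)$ of Proposition~\ref{proposition: main proposition} to $M_1$: the existence of a $G(M_1)$-invariant measure on $S_{G/E_{M_1}}(M_1)$ is equivalent to the existence of a $G$-invariant, normalized mean on the lattice $\dD_{\eE_{M_1}}$ (note that the version of Proposition~\ref{proposition: main proposition} with a $G$-invariant mean on $\dD_{\eE_{M_1}}$ is exactly one of the stated equivalent forms, since the bracketed ``$G$-invariant'' appears there). Then Corollary~\ref{corollary: absoluteness of the existence of mean} says that the existence of a $G$-invariant, normalized mean on $\dD_{\eE_{M}}$ is independent of the choice of $M$ and of the monster model in which the lattice is computed; hence such a mean exists on $\dD_{\eE_{M_2}}$ as well. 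Finally, apply $(3)\Rightarrow(1)$ of Proposition~\ref{proposition: main proposition} to $M_2$: we conclude that $S_{G/E_{M_2}}(M_2)$ carries a $G(M_2)$-invariant, Borel probability measure.

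There is really no obstacle here beyond carefully citing the correct variants. The one subtlety worth mentioning explicitly is that Proposition~\ref{proposition: main proposition} is stated with bracketed ``$G(M)$-invariant / $G$-invariant'' alternatives, so one must make sure to pick the $G$-invariant versions on both ends of the chain, so as to match the formulation of Corollary~\ref{corollary: absoluteness of the existence of mean}, which is stated only in the $G$-invariant form. Symmetrically swapping the roles of $M_1$ and $M_2$ gives the converse implication, completing the proof.
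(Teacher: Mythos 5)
Your proof is correct and follows exactly the route the paper intends: the statement is introduced with the phrase ``From Corollary~\ref{corollary: absoluteness of the existence of mean} and Proposition~\ref{proposition: main proposition}, we get,'' which is precisely the chain you spell out, including the care about choosing the $G$-invariant (as opposed to merely $G(M)$-invariant) form in condition~(3) of Proposition~\ref{proposition: main proposition} so that it matches Corollary~\ref{corollary: absoluteness of the existence of mean}.
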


For the type-definable equivalence relation $E(x,y)$ given by $x^{-1}y \in H$, where $H$ is a $\emptyset$-type-definable subgroup of $G$, Corollary \ref{corollary: absoluteness of the existence of measure} specializes to

\begin{cor}
The existence of a $G(M)$-invariant, Borel probability measure on $S_{G/H}(M)$ does not depend on the choice of $M$.
\end{cor}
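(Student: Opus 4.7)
The plan is to obtain the corollary as a direct specialization of Corollary \ref{corollary: absoluteness of the existence of measure}. Accordingly, I would simply check that the hypotheses of that corollary are met by the equivalence relation $E(x,y) \equiv x^{-1}y \in H$, and then read off the conclusion.

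First, I would observe that $E$ is a $\emptyset$-type-definable equivalence relation on $G$, because $H$ is a $\emptyset$-type-definable subgroup. Writing $H = \bigcap_{i \in I} X_i$ with $\emptyset$-definable sets $X_i$, the relation $E$ is cut out by the $\bigvee$-definable family $\eE := \{G(x) \land G(y) \land x^{-1}y \in X_i : i \in I\}$, which has no parametric variables $\bar z_i$; this is exactly the first example of a $\bigvee$-definably defined equivalence relation that the paper lists just after the definition. In particular, for every model $M$, $E_M$ coincides with the usual $M$-type-definable relation ``lies in the same left coset of $H$'' on $G(M^*)$, and the quotient $G/E_M$ is (canonically in bijection with) $G/H$ as $G(M)$-sets.

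Next, I would verify the standing hypothesis of Corollary \ref{corollary: absoluteness of the existence of measure}, namely that $E_M$ is $G(M)$-invariant for every $M$. This is immediate from the left-invariance of left cosets: for any $g \in G(M)$, $(gx)^{-1}(gy) = x^{-1}y$, so $E(x,y)$ holds iff $E(gx,gy)$ holds. There is no subtlety here, since we are using left cosets and $G$ acts on the left; had $H$ failed to be normal this would still go through.

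The statement then follows verbatim: Corollary \ref{corollary: absoluteness of the existence of measure} applied to this $\eE$ says that the existence of a $G(M)$-invariant Borel probability measure on $S_{G/E_M}(M) = S_{G/H}(M)$ is independent of the choice of $M$. There is really no obstacle; the work was done in establishing Proposition \ref{proposition: main proposition} and Corollary \ref{corollary: absoluteness of the existence of measure}. The only thing worth remarking on is the mild identification between $S_{G/E_M}(M)$ and $S_{G/H}(M)$ (the space of complete types over $M$ of hyperimaginaries from $G/H$), which is immediate from the definition of $E$.
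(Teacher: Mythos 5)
Your proposal is correct and matches the paper's own (implicit) proof: the paper presents this corollary precisely as the specialization of Corollary \ref{corollary: absoluteness of the existence of measure} to the equivalence relation $E(x,y)$ given by $x^{-1}y \in H$, and your verification of the hypotheses (the $\bigvee$-definable family $\eE$ with no parametric variables, and $G(M)$-invariance of $E_M$ from left-translation invariance of left cosets) is exactly what is needed.
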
 

Corollary \ref{corollary: absoluteness of the existence of measure} specializes to more absolutness results in the context of $\bigvee$-definable group topologies, which will be discussed in Subsection \ref{Subsection 2.5} (see Corollary \ref{corollary: absoluteness of definable topological amenability}).

\begin{rem}
If $E_M$ is not $G(M)$-invariant, then there is no natural (left) action of $G(M)$ on $S_{G/E_M}(M)$. But we can always replace the family $\eE$, by $\eE':=\{\varphi_i(t_ix,t_iy,\bar z_i): i \in I, t_i,\bar z_i\}$ (where for $a \notin G$ or $b \notin G$ we put $ab:=b$). Then, for any model $M$, the induced equivalence relation $E'_M$ will be the intersection of all $gE_M$ for g ranging over $G(M)$. And, by Corollary  \ref{corollary: absoluteness of the existence of measure}, the existence of a $G(M)$-invariant, Borel probability measure on $S_{G/E_M'}(M)$ does not depend on the choice of $M$.
\end{rem}

\subsection{Measures, means, and connected components}\label{section: main results on components}

Now, consider a structure $M$, a $\emptyset$-definable group $G$, and an $M$-type-definable subgroup $H$ of $G$ (naming parameters, we can assume that $H$ is $\emptyset$-type-definable). Usually $G$ will stand for the interpretation of $G$ in a monster model $M^*$ (i.e. $G=G^*=G(M^*)$); by $G(M)$ we denote the interpretation of $G$ in $M$. 

We will be interested in the following two cases.\\

{\bf Case 1:}  The type space $S_{G/H}(M)$ (i.e. the space of complete types over $M$ of left cosets modulo $H$) carries a $G(M)$-invariant, Borel probability measure. 

The discussion below repeats some arguments from the previous subsection in a special case, but since this will be the context of the main results of Section \ref{Section 2}, we prefer to write it explicitly.

Let $\bar{m}$ be a $G(M)$-invariant, Borel probability measure on $S_{G/H}(M)$. 
We define a $G(M)$-invariant pre-mean  $m'$ (see Definition \ref{definition: premean}, where the equivalence relation is $xH=yH$)  on $M$-definable subsets of $G$, by $m'(Y) := \bar{m}(\bar{Y})$, where $\bar{Y}$ is the set of complete types over $M$ of elements of $Y/H$. 

As in the proof of Lemma \ref{lemma: key0}, the standard construction allows us to extend $m'$ to a $G$-invariant pre-mean on $M^*$-definable subsets of $G=G(M^*)$. Note that this extended pre-mean is definable over $\emptyset$ in some expansion of the language (meaning that for any closed interval $I$ and for any formula $\varphi(x,\bar y)$ of the original language the set $\{\bar b : m'(\varphi(x,\bar b)) \in I\}$ is $\emptyset$-type-definable in this expansion of the language), and $M^*$ can be chosen so that $M \prec M^*$ in the expanded language. Next, using Lemma \ref{lemma: premeantomean}, we obtain a normalized, $G$-invariant mean $m$ on $\dD_H$ -- the $G$-lattice of $M^*$-type-definable subsets $Y$ of $G$ satisfying $YH=Y$ -- which satisfies 
$m(Y) = \inf \{ m'(D): D \hbox{ definable, } Y \subseteq D  \}$.\\

{\bf Case 2:} $H$ is normalized by $G(M)$, and $S_{H \backslash G}(M)$ carries a $G(M)$-invariant, Borel probability measure, where the action of $G(M)$ on $S_{H \backslash G}(M)$ is induced by the action on $H \backslash G$ given by $g*(Ha):=gHa=H(ga)$.

Let $\bar{m}$ be a $G(M)$-invariant, Borel probability measure on $S_{H \backslash G}(M)$. As in Case 1, we obtain a $G(M)$-invariant pre-mean  (for the equivalence relation $Hx=Hy$) $m'$ on $M$-definable subsets of $G$. The standard construction allows us to extend it to a definable over $\emptyset$ (in some expansion of the language), $G$-invariant pre-mean $m'$ on $M^*$-definable subsets of $G=G(M^*)$, for some monster model $M^*$ such that $M^* \succ M$ also in the expanded language. Moreover, since $H$ is normalized by $G(M)$, the standard construction gives us the following additional property of $m'$: For any $Y$ and $Z$ definable subsets of $G$, $M$-definable superset $D$ of $H$, and $g_1,\dots,g_n \in G$, if $(D^{g_1}\cap \dots \cap D^{g_n})Y \cap Z =\emptyset$, then $m'(Y \cup Z)=m'(Y)+m'(Z)$, i.e. $m'$ is a $G$-pre-mean for $H \backslash G$, using the terminology from Definition  \ref{definition: G-premean} (which follows from the fact that the left translate by $g$ of the relation $Dx=Dy$ is precisely the relation $D^gx=D^gy$). By Corollary \ref{corollary: variant of premeantomean}, we obtain a normalized, $G$-invariant mean $m$ on $\dD_H'$ --- the $G$-lattice of $M^*$-type-definable subsets $Y$ of $G$ satisfying $(H^{g_1}\cap \dots\cap H^{g_n})Y=Y$ for some $g_1,\dots,g_n \in G$ --- which satisfies $m(Y) = \inf \{ m'(D): D \hbox{ definable, } Y \subseteq D  \}$.\\
 
 

 


We are ready to prove the main results of this section. They concern situations from the above Cases 1 and 2, respectively. We will give a detailed proof of the first theorem and only explain how to modify it to get the second one.

In the rest of this section, we will write $Z^4$ to mean $ZZ Z^{-1} Z^{-1} $; $Z^8$ denotes $Z^4 Z^4$.

 \begin{thm}\label{theorem: c4}  Let $H$ be a $\emptyset$-type-definable subgroup of $G$, normalized by $G(M)$.
 Let $N$ be  the normal subgroup generated by $H$.  
Then $(1)\leftrightarrow (2)\leftrightarrow(3)\leftrightarrow(4) \to (5)$:
\begin{enumerate}
\item $S_{G/H}(M)$ carries a $G(M)$-invariant, Borel probability measure.  
\item There is a $G(M)$-invariant pre-mean for $G/H$ on $M$-definable subsets of $G$.
\item There is a $G$-invariant pre-mean for $G/H$ which is definable over $\emptyset$ in some expansion of the language in which $M \prec M^*$ (enlarging $M^*$ if necessary).
\item  The lattice $\dD_H$  carries a normalized, $G$-invariant mean.
\item  $G^{00}_M    \leq  N G^{000}_M$.
\end{enumerate}
   \end{thm}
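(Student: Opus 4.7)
For the equivalences $(1)\leftrightarrow(2)\leftrightarrow(3)\leftrightarrow(4)$, the plan is to specialize Proposition \ref{proposition: main proposition}, together with Corollaries \ref{corollary: extension of premean to mean} and \ref{corollary: mean always induced by premean}, to the equivalence relation $E(x,y)\equiv x^{-1}y\in H$. This $E$ is defined by the parameter-free $\bigvee$-definable family $\{x^{-1}y\in H_i\}_{i\in I}$ where $H=\bigcap_i H_i$; $E_M$ is $G(M)$-invariant exactly because $G(M)$ normalizes $H$, and the lattice $\dD_H$ in the theorem coincides with $\dD_{E_M}$ of the proposition. So $(1)\leftrightarrow(2)$ and $(2)\leftrightarrow(4)$ fall out immediately. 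The remaining equivalence $(2)\leftrightarrow(3)$ is an application of Remark \ref{remark: extensions of premeans to premeans}: one incorporates a $G(M)$-invariant pre-mean on $M$-definable sets into the language via a family $\{m_\varphi\}$, passes to a monster model of the expanded theory in which $M\prec M^*$, and takes standard parts to obtain a $G$-invariant pre-mean definable over $\emptyset$ in the expansion.

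The heart of the theorem is $(4)\to(5)$, and the engine will be Corollary \ref{corollary: good for applications} (our variant of Massicot--Wagner). Given the $G$-invariant mean $m$ on $\dD_H$, I will apply the corollary with $N=8$, taking $A\in\dD_H$ of positive mean and $\mathcal{B}$ a family of $M$-definable approximations to $H$ (e.g.\ finite intersections of definable supersets of $G(M)$-conjugates of $H$, or related elements of $\dD_H$) with $m(B)>0$ for every $B\in\mathcal{B}$. The output is a symmetric, generic, $\bigvee$-definable-over-$M$ set $S\subseteq G$ with $1\in S$ and
\[ S^{8} \;\subseteq\; E'A(E'A)^{-1} \]
for some $E'$ a finite intersection of elements of $\mathcal{B}$. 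The point is that, since $H$ is normalized by $G(M)$, every conjugate and translate of $H$ by an element of $G(M)$ lies in $N$; with the correct choices of $A$ and $\mathcal{B}$, the right-hand side of the displayed inclusion reduces to a finite product of conjugates of $H$, hence lies in $N$.

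Once $S^{8}\subseteq N$ is secured, the subgroup $\langle S\rangle$ lies inside $N$ and, because $S$ is generic, has finite index in $G$. Being $\bigvee$-definable over $M$, $M$-invariant, and of finite index, $\langle S\rangle$ is also $\bigwedge$-definable (its finitely many cosets are all $\bigvee$-definable), so by saturation of $M^*$ it is actually definable, and definable over $M$ by $M$-invariance. Thus $\langle S\rangle$ is an $M$-definable bounded-index subgroup contained in $N$, and minimality of $G^{00}_M$ yields $G^{00}_M\leq\langle S\rangle\leq N\subseteq NG^{000}_M$, which is (5). The principal obstacle I anticipate is exactly the choice of $A$ and $\mathcal{B}$: $G$-invariance of $m$ on $\dD_H$ forces $m(H)=0$ whenever $[G:H]$ is infinite (the disjoint cosets in $\dD_H$ all have equal mean summing to at most $1$), so one cannot simply take $A=B=H$ in the infinite-index case. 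The challenge is to pick $M$-definable objects whose mean is positive and whose Massicot--Wagner product $E'A(E'A)^{-1}$ nevertheless remains inside $N$; the normalization of $H$ by $G(M)$ is precisely the structural hypothesis that should make this possible, since it is this that keeps products of translates and conjugates inside the normal subgroup $N$.
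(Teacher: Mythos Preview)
Your treatment of $(1)\leftrightarrow(2)\leftrightarrow(3)\leftrightarrow(4)$ is fine and matches the paper's.

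Your plan for $(4)\to(5)$ has a genuine gap, and it is exactly the one you flag as ``the principal obstacle'': it cannot be overcome. You are trying to produce a generic $S$ with $S^{8}\subseteq N$, hence a bounded-index $M$-definable subgroup $\langle S\rangle\leq N$, hence $G^{00}_M\leq N$. But the theorem does not assert $G^{00}_M\leq N$; it asserts only $G^{00}_M\leq NG^{000}_M$, and the stronger statement is simply false. Take $H=\{1\}$: then $N=\{1\}$, condition (4) is definable amenability of $G$, and you would be concluding $G^{00}_M=\{1\}$, which fails e.g.\ for the circle group in a real closed field. So there is no choice of $A,\mathcal{B}$ of positive mean with $E'A(E'A)^{-1}\subseteq N$ in general; the $G^{000}_M$ factor in (5) is essential.

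The missing idea is to bring $G^{000}_M$ into the picture via a wide type. Choose $p\in S_G(M)$ with $m(DH)>0$ for every $D\in p$, and set $P=HpH$ (an $M$-type-definable element of $\dD_H$). One then applies Corollary~\ref{corollary: good for applications} not with $\mathcal{B}$ approximating $H$, but with $A=P$ and $\mathcal{B}$ consisting of $M$-type-definable sets $HQH$ squeezed between $P$ and a given $M$-definable $P'\supseteq P$. The output is a generic, symmetric, $M$-type-definable $Y=HYH$ with $Y^{8}\subseteq P'^{4}$; iterating and intersecting yields an $M$-type-definable bounded-index subgroup inside $P'^{4}$, hence $G^{00}_M\subseteq P^{4}=(HpH)^{4}$. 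The other inclusion $(HpH)^{4}\subseteq NG^{000}_M$ uses normality of $N$ together with the model-theoretic fact that $pp^{-1}\subseteq G^{000}_M$ (any two realizations of a complete type over the model $M$ lie in the same coset of $G^{000}_M$), which gives $ppp^{-1}p^{-1}\subseteq G^{000}_M$. This is where $G^{000}_M$ enters, and it is precisely what your outline omits.
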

 
 \begin{proof} 
The equivalence of conditions (1)-(4) essentially follows from Proposition \ref{proposition: main proposition} applied to $\eE: = \{G(x) \wedge G(y) \wedge x^{-1}y \in X_i: i \in I\}$, where $H = \bigcap_{i \in I} X_i$  (with $I$ directed and $X_j \subseteq X_i$ whenever $i <j$). For that notice that the relation $E_M$ in this special case is just lying in the same left coset of $H$, so it is $G$-invariant, and the lattice $\dD_{\eE_M}$ coincides with $\dD_H$. 

However, for the reader's convenience, we explain some of these equivalences more explicitly.
By the above discussion of Case 1, any $G(M)$-invariant, Borel probability measure on $S_{G/H}(M)$ induces a $G(M)$-invariant pre-mean on $M$-definable subsets of $G$, which then can be extended to a $G$-invariant pre-mean $m'$ on definable subsets of $G$ which is definable over $\emptyset$ in some expansion of the language in which $M \prec M^*$ (enlarging $M^*$ if necessary).  This in turn induces a normalized, $G$-invariant mean $m$ on the lattice $\dD_H$, which satisfies $m(Y) = \inf \{ m'(D): D \hbox{ definable}, Y \subseteq D  \}$. So $(1) \to (2) \leftrightarrow (3) \to (4)$. 
The implication $(4) \to (1)$ follows from the implication $(3) \to (1)$ in Proposition \ref{proposition: main proposition}.

It remains to prove $(4) \to (5)$. So assume $(4)$. By $(4) \to (2)$ and the above discussion, we have a  $G$-invariant mean $m$ on $\dD_H$ given by $m(Y) = \inf \{ m'(D): D \hbox{ definable, } Y \subseteq D  \}$ for some pre-mean $m'$ satisfying (3).

 Let $p \in S_G(M)$ be a wide type of $G$, in the sense that $m(DH) >0$ for any $D \in p$. In order to finish the proof, it is enough to show that  $(HpH)^4$ contains $G^{00}_M$. Indeed, then, since $p p^{-1} \subseteq G^{000}_M$ implies $ppp^{-1} p^{-1}\subseteq G^{000}_M$, and so $(HpH)^{4} \subseteq NG^{000}_M$, we get  $G^{00}_M \leq NG^{000}_M$ which is the desired conclusion.

 As $HpH$ is an intersection of partial types $P$ over $M$  satisfying $HPH=P$ and $m(P)>0$
 (namely the appropriate $HDH$ with $D$ $M$-definable),  it suffices to show that for each such $P$, $P^4$  contains $G^{00}_M$.
For this, it suffices to find for any $M$-definable set $P'$ containing $P$ a generic, $M$-type-definable set $Q=HQH$ with $Q^8 \subseteq P'^4$, for then $m(Q)>0$ and we can find an $M$-definable set $Q'$ containing $Q$ such that $Q'^8 \subseteq P'^4$, and we can iterate: find a generic, $M$-type-definable $R=HRH$ with $R^8 \subseteq Q'^4$ and an $M$-definable $R'$ containing $R$ and satisfying $R'^8 \subseteq Q'^4$, etc., and at the limit take the intersection $P'^4 \cap Q'^4 \cap R'^4 \cap \dots$ -- an $M$-type-definable, bounded index, subgroup contained in $P'^4$, which clearly contains $G^{00}_M$. Since this is true for any $M$-definable $P'$ containing $P$, we get $G^{00}_M \subseteq P^4$.

So consider a partial type $P$ over $M$ satisfying $HPH=P$ and $m(P)>0$. Consider any $M$-definable $P'$ containing $P$. We will apply 
Corollary \ref{corollary: good for applications} to: $X:=G$, $A:=P$, and the family 
$$\mathcal{B}: = \{HQH : P \subseteq HQH \subseteq P'\; \textrm{and $Q$ is $M$-definable}\}$$ 
of subsets of $G$. 
Recall that $\dD'$ is the collection of all intersections $g_1B \cap \dots \cap g_nB$, where $B \in \mathcal{B}$ and $g_1,\dots,g_n \in G$, and as $\dD$ take the $G$-lattice generated by: $\dD'$, the set $A$, and all sets $B'A$ for $B' \in \dD'$.
Note that $\dD \subseteq \dD_H$, so our mean $m$ is defined on $\dD$.
%
%
By Corollary \ref{corollary: good for applications}, we find $l \in \mathbb{N}$, $\lambda \in \mathbb{R}$, $B \in \mathcal{B}$, $n \in \mathbb{N}_{>0}$, and $g_1,\dots,g_n \in G$ such that for $B':= B \cap g_1B \cap \dots \cap g_nB$ we have\\[-2mm] 

  (***) $\lL_{l}(B')$ (working in $(G,\cdot,B)$) and  $m(B'A) < \lambda$,\\[2mm]
and whenever $E \in \mathcal{B}$ and $h_1,\dots,h_{n'} \in G$ (for some $n' \in \mathbb{N}_{>0}$) are chosen so that for $E':= E \cap h_1E \cap \dots \cap h_{n'}E$ one has  $\lL_{l}(E')$ (working in $(G,\cdot,E)$) and  $  m(E'A) < \lambda$, then $\St_{\lL_{l-1}}(E')$ is generic (as a set $\bigvee$-definable in $(G,\cdot,E)$), symmetric and has 8th power contained in $E'P(E'P)^{-1} \subseteq E^4$. 

We can find $M$-definable sets $C$ and $D$ such that $B \subseteq C \subseteq P'$, $P \subseteq D$ and  for $C':= C \cap g_1C \cap \dots \cap g_nC$, we have $m'(C'D) < \lambda$ (where $m'$ is the pre-mean on definable subsets of $G$ chosen at the beginning of the proof of $(4) \to (5)$). Now, choose any $M$-definable set $Q$ such that $B \subseteq Q \subseteq HQH \subseteq C$. Let $Q'=Q \cap g_1Q \cap \dots \cap g_nQ$. 
Then $B' \subseteq Q'$, so, by (***) and Remark \ref{independence of largness of the language}, we get $\lL_{l}(Q')$ (working in $(G,\cdot,Q)$). Since $\lL_{l}(Q')$ is a $\bigvee$-definable (over $\emptyset$) condition on $g_1,\dots,g_n$ in the structure $(G,\cdot,Q)$ and $Q$ is $M$-definable in the original theory, we see that $\lL_{l}(Q')$ is an $M$-$\bigvee$-definable condition on $g_1,\dots,g_n$ in the original theory. On the other hand, $m'(C'D) < \lambda$ is a $\bigvee$-definable (over $\emptyset$) condition on $g_1,\dots,g_n$ in the expanded language (in which $m'$ is definable over $\emptyset$). Since $M \prec M^*$ also in this expanded language, we can find $g_1,\dots,g_n \in G(M)$ such that  $\lL_{l}(Q')$ and $m'(C'D) < \lambda$ still holds  for the corresponding $Q'$ and $C'$. Finally, take $E:=HQH$ and $E':= E \cap g_1E \cap \dots \cap g_nE$. 
We see that $E \in \mathcal{B}$,  $\lL_{l}(E')$ (working in $(G,\cdot,E)$) and $m(E'A)<\lambda$.

Define $Y:=\St_{\lL_{l-1}}(E')$. By the the choice of $l$ and $\lambda$, we have that $Y = \bigvee_\nu Y_\nu$ is generic and $Y^8 \subseteq E^4 \subseteq P'^4$.

As $H$ is normalized by $g_1,\dots,g_n$, we have $HE'H=E'$. Since $HE' =E'$, we have $HYH= Y$, and moreover $Y$ is a disjunction of sets $Y_\nu$ positively $M$-definable in $(G,\cdot,E)$ and satisfying $HY_\nu H=Y_\nu$. Indeed,  let $R(x,\bar y)$ be a new predicate. By the approximations to $\lL_l$ mentioned in and after Remark \ref{remark: lL is vee definable}, we have that for any $s \in \omega$ there are increasing sets $P_{\nu,s}(R)(\bar y)$, $\nu \in \omega$, positively $\emptyset$-definable in $(G,\cdot,X,\cdot, R)$ such that $P_{\nu,s}(R(gx,\bar y))(\bar y) \iff  P_{\nu,s}(R(x,\bar y))(\bar y)$ for all $g \in G$,  and $\lL_s (R(x,\bar y))$ can be presented as the $\bigvee$-positively definable set $\bigvee_\nu  P_{\nu,s}(R(x,\bar y))(\bar y)$. In particular, if $R(x,\bar y)$ is positively definable in $(G,\cdot,X, \cdot, E)$, then the $P_{\nu,s}(R)(\bar y)$ are positively definable in $(G,\cdot,X, \cdot, E)$ over the same parameters over which $R(x,\bar y)$ is defined. Applying this to our situation for $s:=l-1$ and $R(x,y):= (x \in (yE' \cap E'))$, we get that $Y = \{y : \lL_s(yE' \cap E')\}$ can be presented as $\bigvee_\nu P_{\nu,s}(R(x,y))(y)$. Putting $Y_\nu(y)=P_{\nu,s}(R)(y)$, we have that $Y_\nu$ is positively $M$-definable in $(G,\cdot, E)$, and, since $HE'=E'$, we get that for any $h_1,h_2 \in H$:  $Y_{\nu}(h_1yh_2) \iff P_{\nu,s}(R(x,h_1yh_2))(y) \iff  P_{\nu,s}(x \in h_1yh_2E' \cap E')(y) \iff  P_{\nu,s}(x \in h_1yE' \cap E')(y) \iff P_{\nu,s}(x \in yE' \cap h_1^{-1}E')(y) \iff P_{\nu,s}(x \in yE' \cap E')(y) \iff Y_\nu(y)$. So $HY_\nu H=Y_\nu$ as was claimed at the beginning of this paragraph.

Since the $Y_\nu \subseteq G$ are positively $M$-definable in $(G,\cdot, E)$ and $E$ is $M$-type-definable in the original theory, we easily get that the $Y_\nu$ are $M$-type-definable in the original theory. Moreover, some $Y_\nu$ will be generic, and $HY_\nu H=Y_\nu$, and $Y_\nu^8 \subseteq P'^4$.   
\end{proof}

In the situation of Case 2, we have

 \begin{thm}\label{theorem: indeed c4}  Let $H$ be a $\emptyset$-type-definable subgroup of $G$, normalized by $G(M)$.
 Let $N$ be  the normal subgroup generated by $H$.  
Then $(1)\leftrightarrow (2)\leftrightarrow(3)\leftrightarrow(4) \leftrightarrow (5) \to (6)$:
\begin{enumerate}
\item $S_{H\backslash G}(M)$ carries a $G(M)$-invariant, Borel probability measure.  
\item There is a $G(M)$-invariant pre-mean for $H\backslash G$ on $M$-definable subsets of $G$.
\item There is a $G$-invariant $G$-pre-mean $m'$ for $H\backslash G$ (i.e. a  $G$-invariant pre-mean $m'$ for $H\backslash G$ such that $m'(Z \cup Z')=m'(Z) + m'(Z')$ whenever $D'Z\cap Z' = \emptyset$ for some  $M$-definable superset $D$ of $H$ and $D'=D^{g_1}\cap \dots \cap D^{g_n}$ for some $g_1,\dots, g_n \in G$).
\item There is a $G$-invariant $G$-pre-mean for $H\backslash G$ which is definable over $\emptyset$ in some expansion of the language in which $M \prec M^*$ (enlarging $M^*$ if necessary).
\item  The lattice $\dD_H'$  carries a normalized, $G$-invariant mean.
\item  $G^{00}_M    \leq  N G^{000}_M$.
\end{enumerate}
   \end{thm}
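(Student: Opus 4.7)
My plan is to deduce the equivalences $(1)\leftrightarrow(2)\leftrightarrow(3)\leftrightarrow(4)\leftrightarrow(5)$ from Proposition \ref{proposition: main proposition} applied to the $\bigvee$-definable family
\[
\eE = \{G(x) \wedge G(y) \wedge z^{-1}(yx^{-1})z \in X_i : i \in I, z\},
\]
where $H=\bigcap_{i\in I} X_i$ is the given presentation. Since $H$ is normalized by $G(M)$, the intersection $\bigcap_{g \in G(M)} H^g$ collapses to $H$, so the relation $E_M$ induced by $\eE_M$ is exactly lying in the same right coset of $H$; it is $G(M)$-invariant. Moreover, in this situation the $G$-lattice $\dD_{\eE_M}$ from Subsection \ref{subsection: means and measures} coincides with $\dD_H'$, and the ``equality criterion'' in Definition \ref{definition: G-premean for vee-definable equivalence relations} unwinds precisely to the one in (3), since the instances of $g_1\varphi_i(x,y,\bar b_1) \cap \dots \cap g_n\varphi_i(x,y,\bar b_n)$ range exactly over intersections $D^{g_1}\cap\dots\cap D^{g_n}$ of translates of $M$-definable supersets $D$ of $H$. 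Then Proposition \ref{proposition: main proposition}, combined with Remark \ref{remark: extensions of premeans to premeans} and Corollary \ref{corollary: extension of premean to mean}, delivers (1)$\leftrightarrow$(2)$\leftrightarrow$(3)$\leftrightarrow$(4)$\leftrightarrow$(5).

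For (5)$\to$(6), the plan is to run the argument used in the proof of Theorem \ref{theorem: c4}, with $\dD_H'$ replacing $\dD_H$. Starting from the normalized $G$-invariant mean $m$ on $\dD_H'$ given by $m(Y)=\inf\{m'(D):D \hbox{ definable, } Y\subseteq D\}$ for a $G$-invariant, $\emptyset$-definable (in an expansion in which $M\prec M^*$) $G$-pre-mean $m'$, I pick a ``wide'' type $p\in S_G(M)$, in the sense that $m(HDH)>0$ for every $M$-definable $D\in p$; such a $p$ exists because $m(G)=1$ and $G$ decomposes appropriately. Then $HpH$ is an intersection of partial $M$-types $P=HDH=HPH$ with $m(P)>0$, each lying in $\dD_H'$ since $H\cdot HPH=HPH$. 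It suffices to show $G^{00}_M\subseteq P^4$ for every such $P$: once this is known, $G^{00}_M\subseteq (HpH)^4\subseteq N\cdot pp p^{-1} p^{-1}\cdot N \subseteq NG^{000}_M$, using that $N$ and $G^{000}_M$ are normal in $G$ and that $pp^{-1}\subseteq G^{000}_M$ (types over $M$ refine Lascar strong types).

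To establish $G^{00}_M\subseteq P^4$ for an $M$-definable $P'\supseteq P$, I apply Corollary \ref{corollary: good for applications} with $X=G$, $A=P$, and
\[
\mathcal{B}=\{HQH:P\subseteq HQH\subseteq P',\ Q \hbox{ $M$-definable}\},
\]
taking $\dD$ to be the lattice generated by $\dD'$, $A$, and the sets $B'A$ for $B'\in\dD'$ (contained in $\dD_H'$, where $m$ is defined). This yields $l,\lambda,B=HQH$ and $g_1,\dots,g_n\in G$ with $\lL_l(B')$ and $m(B'A)<\lambda$ for $B':=B\cap g_1B\cap\dots\cap g_nB$, and guarantees that $Y=\St_{\lL_{l-1}}(E')$ is a symmetric generic $\bigvee$-definable set with $Y^8\subseteq E^4\subseteq P'^4$ for any $E\in\mathcal{B}$ and $h_i\in G$ producing $E'$ with the same two properties. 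I would then use the joint $M$-$\bigvee$-definability of the conditions $\lL_l(\cdot)$ (in the original language) and $m'(\cdot)<\lambda$ (in the expansion, where $M\prec M^*$) to transfer $g_1,\dots,g_n$ into $G(M)$ while preserving both conditions; this forces $HE'H=E'$, whence $HYH=Y$, and the approximations discussed after Remark \ref{remark: lL is vee definable} present $Y$ as a disjunction of positive formulas $Y_\nu$ with $HY_\nu H=Y_\nu$, type-definable over $M$ in the original language. Some $Y_\nu$ is generic and satisfies $Y_\nu^8\subseteq P'^4$, so $G^{00}_M\subseteq Y_\nu^8\subseteq P'^4$. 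Iterating the construction and intersecting produces an $M$-type-definable, bounded-index subgroup of $G$ contained in $P^4$, which thus contains $G^{00}_M$.

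The delicate step I expect to need care is the transfer of the witnesses $g_1,\dots,g_n$ into $G(M)$: one must verify that the conjunction of the $\bigvee$-definable largeness condition $\lL_l$ (in the original language) with the mean estimate $m'(C'D)<\lambda$ (in the expanded language) is still an $M$-$\bigvee$-definable condition in tuples $(g_1,\dots,g_n)$, so that saturation of $M^*$ in the combined expansion provides the realization in $M$. A subsidiary technical point is that the sets produced by Corollary \ref{corollary: good for applications} — which are intersections of $G$-translates of members of $\mathcal{B}$ — genuinely live in $\dD_H'$: this is exactly what the $G$-lattice (as opposed to merely left-$H$-invariant) structure of $\dD_H'$ buys, and it is the structural reason that a $G$-pre-mean rather than an ordinary pre-mean is demanded in (3).
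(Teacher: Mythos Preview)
Your proposal is correct and follows essentially the paper's approach. The only differences are cosmetic: the paper uses the simpler parameter-free family $\eE=\{G(x)\wedge G(y)\wedge yx^{-1}\in X_i:i\in I\}$ rather than your parametrized one (both yield the same $E_M$ and the same $\dD_{\eE_M}=\dD_H'$ since $H$ is $G(M)$-normalized), and for $(5)\to(6)$ the paper notes that it suffices to work with one-sided sets $P$ satisfying $(H^{g_1}\cap\dots\cap H^{g_n})P=P$ for $g_i\in G(M)$, in place of the two-sided $HPH$ you carry over from Theorem~\ref{theorem: c4} --- your version works (the crucial check $\dD\subseteq\dD_H'$ that you flag does go through), but the paper's variant is marginally leaner given how $\dD_H'$ is defined.
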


\begin{proof}
The equivalence of conditions (1)-(5) essentially follows from Proposition \ref{proposition: main proposition} applied to $\eE: = \{G(x) \wedge G(y) \wedge yx^{-1} \in X_i: i \in I\}$, where $H = \bigcap_{i \in I} X_i$  (with $I$ directed and $X_j \subseteq X_i$ whenever $i <j$). For that notice that the relation $E_M$ in this special case is just lying in the same right coset of $H$, so it is $G(M)$-invariant by the assumption that $H$ is normalized by $G(M)$, and the lattice $\dD_{\eE_M}$ coincides with $\dD_H'$. One should also use the above discussion of Case 2.
%

It remains to justify $(5) \to (6)$. 

So assume $(5)$. By $(5) \to (2)$ and the discussion of Case 2, we have a $G$-invariant mean $m$ on $\dD_H'$ given by $m(Y) = \inf \{ m'(D): D \hbox{ definable, } Y \subseteq D  \}$ for some pre-mean $m'$ satisfying (4). We follow the lines of the proof of $(4) \to (5)$ in Theorem \ref{theorem: c4}, but now it is enough to work with right cosets modulo $H^{g_1} \cap \dots \cap H^{g_n}$ for some $g_1,\dots,g_n \in G(M)$ (in place of two-sided cosets of $H$), e.g. $P$ is a partial type over $M$ satisfying $(H^{g_1} \cap \dots \cap H^{g_n})P=P$ (for some $g_1,\dots,g_n \in G(M)$) and $m(P)>0$. 
The way how $\dD_H'$ was defined is essential to ensure that $\dD \subseteq \dD_H'$ (and so $m$ is defined on $\dD$).
\end{proof}

Conjecture \ref{conjecture: Conjecture 0.4 of KrPi} follows immediately from Theorem \ref{theorem: indeed c4}, taking $H := \mu$:

\begin{cor}\label{corollary: main conjecture from KrPi}
1) Let $G(M)$ be a topological group and assume that the members of a basis of neighborhoods of the identity are definable in $M$. If $G$ is definably topologically amenable, then $G^{00}_{\defi,\topo}=G^{000}_{\defi,\topo}$. \\
2) Let $G(M)$ be a topological group. If $G(M)$ is amenable, then $G^{00}_{\topo}=G^{000}_{\topo}$.
\end{cor}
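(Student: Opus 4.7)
The plan is to derive both parts from Theorem \ref{theorem: indeed c4} applied with $H := \mu$. First I would pass to the expansion $\mathcal{L}'$ of $\mathcal{L}$ in which a basis of neighborhoods of the identity in $G(M)$ is definable, so that $\mu = \bigcap U^*$ becomes an $M$-type-definable subgroup of $G$; by incorporating $M$-parameters into the language, $\mu$ may be treated as $\emptyset$-type-definable for the purposes of the theorem. I would then note that $\mu$ is normalized by $G(M)$, because conjugation by any $g \in G(M)$ is a homeomorphism of $G(M)$ fixing $1$, hence permutes the basis of neighborhoods, and so $g\mu g^{-1} = \mu$.

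For part (1), the hypothesis of definable topological amenability says exactly that the $G(M)$-ambit $S_G^\mu(M) \cong S_{\mu \backslash G}(M)$ carries a $G(M)$-invariant Borel probability measure, which is condition~(1) of Theorem \ref{theorem: indeed c4}. The theorem then supplies condition (6): $G^{00}_M \leq N \cdot G^{000}_M$ where $N = \langle \mu^G \rangle$. Left-multiplying by $\mu$ and using $\mu \subseteq N$ (so $\mu N = N$) yields
$$G^{00}_{\defi,\topo} \; = \; \mu G^{00}_M \; \leq \; \mu N G^{000}_M \; = \; N G^{000}_M \; = \; G^{000}_{\defi,\topo}.$$
The reverse inclusion is standard: $G^{00}_{\defi,\topo}$ is a normal, $M$-invariant subgroup of $G$ of bounded index containing $\mu$, hence contains the smallest such subgroup, namely $G^{000}_{\defi,\topo}$.

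For part (2), classical amenability of the topological group $G(M)$ gives a $G(M)$-invariant Borel probability measure on every flow, in particular on the universal topological ambit $S_G^\mu(M)$, so after choosing $\mathcal{L}'$ to include predicates for all open subsets of $G(M)$ we land in the situation of part (1). By Remark \ref{rem: equality of various components}(ii) this choice forces $G^{00}_{\defi,\topo} = G^{00}_{\topo}$ and $G^{000}_{\defi,\topo} = G^{000}_{\topo}$, so part (1) immediately yields $G^{00}_{\topo} = G^{000}_{\topo}$.

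The only genuine obstacle here is the bookkeeping across $\mathcal{L}$ and $\mathcal{L}'$: one must keep in mind that $G^{00}_{\defi,\topo}$ remains $\mathcal{L}$-type-definable over $M$ even when $\mu$ itself is not, and that the identifications $\mu G^{00}_M = G^{00}_{\defi,\topo}$ and $\langle \mu^G \rangle G^{000}_M = G^{000}_{\defi,\topo}$ really do hold as stated in the definitions. Once Theorem \ref{theorem: indeed c4} is granted, the deduction is essentially formal.
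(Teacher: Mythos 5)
Your proof is correct and follows exactly the paper's approach, which deduces the corollary by applying Theorem \ref{theorem: indeed c4} with $H := \mu$. You simply spell out the algebraic bookkeeping (left-multiplying $G^{00}_M \leq N G^{000}_M$ by $\mu$ and the reduction of the topological case to the definable-topological case) that the paper treats as immediate.
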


\subsection{$\bigvee$-definable group topologies}\label{Subsection 2.5}


In Section \ref{Section 1}, we recalled two contexts to deal with topological groups model-theoretically: one with all open subsets being definable, and a more general one with a basis of open neighborhoods at the identity consisting of definable sets.  Notice, however, that in each of these contexts we do not get a natural group topology when passing to elementary extensions. In order to get a group topology in an arbitrary elementary extension, one usually considers a more special context with a uniformly definable basis of open neighborhoods at the identity (in other words, when a basis of open sets at the identity is a definable family).

As usual, let $G$ be a $\emptyset$-definable group, and $M$ or $N$ denotes a model.
Here, we extend the last context, for example to cover topologies induced on $G(M)$ by type-definable subgroups of $G$ normalized by $G(M)$.
Note that any $\emptyset$-type-definable subgroup $H= \bigcap_{i \in I} X_i$ 
(with the definable sets $X_i$, where without loss $I$ is a directed set such that $X_j \subseteq X_i$ for $i<j$), normalized by $G(M)$, can be viewed as topologizing $G(M)$ in the sense that the family $\{X_i(M) : i \in I\}$ is a basis of 
(not necessarily open!) neighborhoods at the identity; but on a bigger model it will not in general give a topology.  It is thus natural to consider a slightly stronger condition.

We first elaborate on some terminology introduced briefly in Subsection \ref{subsection: means and measures}. By a {\em $\bigvee$-definable family of definable subsets of $G$}, we mean a class $\tT=\{\varphi_i(x,\bar y_i): i \in I,\;\bar y_i \textrm{ belongs to any model}\}$, where $\varphi_{i}(x,\bar y_i)$ are some formulas implying $G(x)$. In any model $M$, 
$$\tT(M) := \{\varphi_i(M,\bar y_i): i \in I, \bar y_i \in M\}$$ 
is an actual collection of subsets of $G(M)$; also, put 
$$\tT_M := \{ \varphi_i(x,\bar y_i): i \in I, \bar y_i \in M\}.$$ 
By a standard trick, we can, and will from now on, assume that $I$ is a directed set, and for every $i<j$ we have $(\forall \bar y_i)(\exists \bar y_j)(\varphi_j(x,\bar y_j) \rightarrow \varphi_i(x,\bar y_i))$; the last condition is equivalent to the property that for every model $M$ and $i<j$, each member of the definable family  $\{\varphi_i(M,\bar y_i): \bar y_i \in M\}$ contains a member of the family $\{\varphi_j(M,\bar y_j): \bar y_j \in M\}$. (In fact, by the aforementioned standard trick, we could even replace the word ``contains'' by ``equals'', but we will not need it.)

\begin{dfn}  A {\em $\bigvee$-definable group topology} on $G$ is a $\bigvee$-definable family $\tT=\{\varphi_i(x,\bar y_i): i \in I, \bar y_i\}$ of definable subsets of $G$ containing $1$ such that in any model $M$, $\tT(M)$ forms a basis of (not necessarily open) neighborhoods of the identity for a topology on $G(M)$, making the group operations continuous.  
%
Equivalently, for any model $M$, $\tT(M)$ consists of subsets of $G(M)$ containing $1$, such that each of the following sets 
\begin{enumerate}
\item the intersection of any two members of $\tT(M)$, 
\item the inversion of any member of $\tT(M)$,
\item the conjugate of any member of $\tT(M)$ by an element of $G(M)$
\end{enumerate}
contains a member of $\tT(M)$, and, additionally, if $A \in \tT(M)$, then there exists $B \in \tT(M)$ with $B^2 \subseteq A$.   
\end{dfn}

It easy to check that in the above definition, it is enough to take a sufficiently saturated model $M$.

By compactness,  a  $\bigvee$-definable group topology on $G$ is a $\bigvee$-definable family $\tT=\{\varphi_i(x,\bar y_i): i \in I, \bar y_i\}$ of definable subsets of $G$ containing $1$ such that:

\begin{enumerate}
\item For every $i,j \in I$ there is $k \in I$ such that $(\forall \bar y_i  )(\forall \bar y_j)(\exists \bar y_k)(\varphi_k(x,\bar y_k) \rightarrow (\varphi_i(x,\bar y_i) \wedge \varphi_j(x,\bar y_j)))$. 
\item For every $i \in I$ there is $j \in I$ such that $(\forall \bar y_i )(\exists \bar y_j)(\varphi_j(x^{-1},\bar y_j) \rightarrow \varphi_i(x,\bar y_i))$.
\item For every $i \in I$ there is $j \in I$ such that $(\forall \bar y_i )(\forall z)(\exists \bar y_j) (\varphi_j(z^{-1}xz,\bar y_j) \rightarrow \varphi_i(x,\bar y_i))$.
\item For every $i \in I$ there is $j \in I$ such that $(\forall \bar y_i )(\exists \bar y_j)((\exists x_1,x_2)(\varphi_j(x_1,\bar y_j) \wedge \varphi_j(x_2,\bar y_j) \wedge x=x_1\cdot x_2) \rightarrow \varphi_i(x,\bar y_i))$.
\end{enumerate}


Let $\tT =\{ \varphi(x,\bar z_i): i \in I, \bar z_i\}$ be a $\bigvee$-definable group topology on $G$. Work in a fixed monster model $M^*$ (so $M \prec M^*$ by convention).

\begin{dfn}\label{definition: infinitesimals over M}
We let $\mu^\tT_M$ be the $M$-type-definable subgroup $\bigcap_{D \in \tT_M} D$.  When the identity of $\tT$ is
clear, we write $\mu_M$. 
\end{dfn}

It is clear that $\mu^\tT_M$ is normalized by $G(M)$.

 
\begin{rem}\label{remark: definition of cl}  For any $A$-definable set $D$, there exists an $A$-type-definable set $\cl(D)$ such that for any model $M$, 
the closure of $D(M)$ is $\cl(D)(M)$.   
Namely, $a \in \cl(D)$ iff  $\bigwedge_i (\forall \bar y_i)(a\cdot \varphi_i(x,\bar y_i) \cap D(x) \ne \emptyset$), more formally: $\bigwedge_i (\forall \bar y_i)(\exists x)(\varphi_i(a^{-1}x,\bar y_i) \wedge D(x)$).

For a type-definable set $D = \bigcap_i D_i$ (where $D_j \subseteq D_i$ for $i<j$),  let $\cl(D) = \bigcap_i \cl(D_i)$.  For any sufficiently saturated  model $M$, the closure of $D(M)$ is $\cl(D)(M)$. 
\end{rem}

\begin{rem} \label{remark: closed}  
For $P$ $M$-type-definable, 
$\cl(P)   \subseteq P  \mu^\tT_M $.  
Indeed, $\cl(P) = \bigcap \{PH:  H \in \tT\}$, which formally means that $\cl(P)(z)$ is the type $\bigwedge_i (\forall \bar y_i)(\exists x_1,x_2)(P(x_1) \wedge \varphi_i(x_2,\bar y_i) \wedge z=x_1 \cdot x_2)$. In particular,  $\mu^\tT_M$ is closed. Similarly, $\cl(P)   \subseteq  \mu^\tT_M P$. In fact, $\cl(P)$ is contained in both $P  ((\mu^\tT_M)^{g_1} \cap \dots  \cap (\mu^\tT_M)^{g_n})$ and $((\mu^\tT_M)^{g_1} \cap \dots  \cap (\mu^\tT_M)^{g_n})P$ for any $g_1,\dots,g_n \in G$.
\end{rem}

By $\mathcal{C}^\tT$ (or just $\mathcal{C}$) we will denote $\cl(1)$. Then $\mathcal{C} = \bigcap \tT$, so it is $\emptyset$-type-definable, and it is a normal subgroup of $G$. It is clear that $\mathcal {C} \leq \mu_M$ for any $M$.  Note that formally $\mathcal C$ coincides with  $\mu_{M^{*}}$ which happens to be $\emptyset$-type-definable in the monster model $M^{*}$. 

We say that $\tT$ is {\em strongly Hausdorff} if $G(M)$ is Hausdorff in every model $M$; equivalently $\mathcal {C} = \{ 1\}$; equivalently,  $\bigcap \mathcal{F}=\{1\}$ for some definable family $\mathcal{F} \subseteq \tT$.
Note that, in contrast with definable families, $\tT(M)$ may be Hausdorff for one $M$, without $\tT$ being strongly Hausdorff.  This occurs when $\mu^\tT_M(M)=\{1\}$, see Example \ref{example: explicit example}.  Note also that given a Hausdorff topological group, we can expand it to a first order structure in which there is a definable  $\tT$ which is strongly Hausdorff and induces the given topology on the group we started from.

Define the following $G$-lattices of subsets of $G$ (a $\emptyset$-definable group equipped with a $\bigvee$-definable group topology $\tT$). 
\begin{enumerate}
\item $\dD^{\mu_M}_{\tT}$ -- the $G$-lattice of sets $D$ type-definable in $M^*$ over arbitrary parameters, such that $D\mu_{M}^\tT=D$.
\item  ${\dD^{\mu_M}_{\tT}}'$ -- the $G$-lattice of sets $D$ type-definable in $M^*$ over arbitrary parameters, such that $((\mu^\tT_M)^{g_1} \cap \dots  \cap (\mu^\tT_M)^{g_n})D=D$ for some $g_1,\dots,g_n \in G$.
\item $\dD_{\tT}$ -- the $G$-lattice of closed sets $D$ type-definable in $M^*$ over arbitrary parameters.
\item $\dD_\mathcal{C}$ -- the $G$-lattice of sets $D$ type-definable in $M^*$ over arbitrary parameters, such that $D\mathcal{C}=D$. 
\end{enumerate}
By Remark \ref{remark: closed}, it is clear that  $\dD^{\mu_M}_{\tT}$ and  ${\dD^{\mu_M}_{\tT}}'$ are both contained in $\dD_{\tT} \subseteq \dD_\mathcal{C}$.

Now, we give an example showing that type-definable subgroups lead, in a natural way, to $\bigvee$-definable group topologies.

\begin{ex}\label{Example: Vee-topology given by a type-definable subgroup} 
Let $H=\bigcap_{i \in I} X_i$ be any $\emptyset$-type-definable subgroup of $G$  
(and without loss $I$ is directed, and $X_j \subseteq X_i$ whenever $i<j$).  Let $\tT$ be the union of all families $\tT_{i,m}$, where $\tT_{i,m}$ is the class of $m$-fold intersections of conjugates of $X_i$, for instance $\tT_{i,1} = \{ g^{-1} X_i g: g \in G\}$. It is clear that with the order $(i,m) <(j,n) \iff i<j \wedge m<n$, $\tT$ can be treated as a $\bigvee$-definable family of definable subsets of $G$ containing $1$. 
Clearly, for any model $M$, $\mu_{M}^\tT=\bigcap \tT_M$  is a type-definable subgroup of $G$ normalized by $G(M)$; 
it follows that $\tT(M)$ defines a group topology on $G(M)$.

In case when $H$ is invariant under conjugation by elements of $G(M)$, 
we can recover $H$ as the intersection of all $M$-definable
neighborhoods of the identity. 

All of this works also for $H$ type-definable over $M$ (allowing formulas with parameters from $M$ in the definition of $\bigvee$-definable group topology).

In case $H$ is a normal subgroup of $G$, the family  $\tT$ yields the same topology as the family $\{X_i : i \in I\}$ (where $X_i(x)$ are definable sets which do not depend on any parameters $\bar y_i$), $\mu_M =\mathcal{C}= H$ does not depend on $M$, and $\cl(P)=PH$ for any type-definable set $P$. In particular, 
$\dD^{\mu_M}_{\tT} = {\dD^{\mu_M}_{\tT}}' = \dD_{\tT} = \dD_\mathcal{C}$ for every $M$.

For an arbitrary $M$-type-definable subgroup $H$, the above $\bigvee$-definable group topology $\tT$ may be strictly weaker than the one given by the normal core of $H$, i.e. the type-definable normal subgroup $\Core(H)$ defined by the type $\{(\forall z)(x \in X_i^z): i \in I\}$. It may even happen that $H$ is normalized by $G(M)$, the topology on $G(M)$ induced by $H$ is non-discrete, whereas $\Core(H)$ is trivial and so induces the discrete topology. To give an example, one can start from the free abelian group $\mathbb{Z}$, and then use suitable HNN-extensions on even steps and direct products with $\Z$ on odd steps so that the resulting group has a single non-trivial conjugacy class and the centralizer of any finite subset is non-trivial. Let $M =G(M)$ be such a group, and $G \succ M$ a monster model. Let $H$ be the intersection of the centralizers of all elements of $G(M)$. It is an $M$-type-definable subgroup of $G$ normalized by $G(M)$. Since the intersection of any finitely many centralizers is non-trivial, the induced topology on $G(M)$ is non-discrete. On the other hand, since in $G(M)$, and so also in $G$, there is a single non-trivial conjugacy class, $\Core(H)$ coincides with the center of $G$ which is trivial. 
\end{ex}




\begin{rem} Example \ref{Example: Vee-topology given by a type-definable subgroup} shows a connection between 
 $\bigvee$-definable group topologies and $G(M)$-normal, $M$-type-definable subgroups:
\begin{itemize}
\item each $\bigvee$-definable group topology $\tT$ yields the $G(M)$-normal, $M$-type-definable subgroup $\mu_M^{\tT}$;
\item each $G(M)$-normal, type-definable over $\emptyset$ [or over $M$] subgroup $H$ yields the $\bigvee$-definable group topology $\tT$ on $G$ [defined over $M$, if $H$ is defined over $M$]  described in Example  \ref{Example: Vee-topology given by a type-definable subgroup}, such that $\mu_M^{\tT} =H$.
\end{itemize}
However, the former notion, namely that of a $\bigvee$-definable group topology is more precise, as it is a priori given without reference to the particular small model $M$.  Also, the map from $\bigvee$-definable group topologies to
$G(M)$-normal, $M$-type-definable subgroups (or topologies on $G(M)$), given by $\tT \mapsto \mu_M^{\tT}$, is not injective;  Example  \ref{Example: Vee-topology given by a type-definable subgroup} provides the smallest
$\bigvee$-definable group topology specializing to a given topology on $G(M)$, but there can certainly be others, e.g. in the Abelian case, non-discrete, strongly Hausdorff topologies are never deduced from a single model in this way (see also Example \ref{example: explicit example}).
\end{rem}

\begin{rem}\label{remark: smallest extension of topology}
Let us change the notation only for the purpose of this remark. Let $G$ be an arbitrary topological group. Choose a basis $\{X_i : i \in I\}$ of open sets at the identity, with $X_j \subseteq X_i$ whenever $i<j$. Expand the pure group language with predicates for all $X_i$'s, and denote the resulting structure by $M$ and the resulting language by $\mathcal{L}$. Let $M^*$ be a monster model, $G^*=G(M^*)$ and $X_i^*= X_i(M^*)$. Then $H:=\bigcap X_i^*$ is a $\emptyset$-type-definable group which is normalized by $G=G(M)$.
So Example \ref{Example: Vee-topology given by a type-definable subgroup} yields a $\bigvee$-definable group topology $\tT$ which specializes to the original topology on $G$. This is the smallest (in a strong sense) $\bigvee$-definable group topology which specializes to the original topology on $G$, namely, for any other such topology $\tT'$ which is $\bigvee$-definable in an expansion of the pure group structure on $G$ whose language is denoted by $\mathcal{L'}$, and for any model $N\succ M$ in the sense of 
$\mathcal{L}\cup \mathcal{L}'$, the topology on $G(N)$ given by $\tT$ is weaker than the one given by $\tT'$. This shows that Example \ref{Example: Vee-topology given by a type-definable subgroup} allows us to extend the given group topology on $G$ to the canonical (i.e. smallest among topologies $\bigvee$-definable in arbitrary expansions) group topology on elementary extensions.
\end{rem}

Let us look at a concrete example illustrating some of the above discussions.

\begin{ex}\label{example: explicit example}
Take $M:= (\mathbb{Z},+,\cdot)$ and $G(M) := (\mathbb{Z},+)$. Take the $\emptyset$-type-definable subgroup $H:=\bigcap_{n \in \mathbb{N}} n!G$.
The family $\tT$ from Example \ref{Example: Vee-topology given by a type-definable subgroup} coincides with the $\bigvee$-definable family $\{ n!G : n \in \mathbb{Z}\}$. So $\tT(M)$ is Hausdorff, but $\tT$ is not strongly Hausdorff. 
Now, consider the definable family $\mathcal {F}=\{g\cdot G: g \in G \setminus \{0\}\}$ of definable subsets of $G$ containing $0$. It is clear that the family $\tT'$ of finite intersections of members of $\mathcal{F}$ is a  strongly Hausdorff $\bigvee$-definable group topology on $G$, and $\tT(M)$ and $\tT'(M)$ induce on $G(M)$ the same topology. But for every $\aleph_0$-saturated model $M$, the topologies  $\tT(M)$ and $\tT'(M)$ on $G(M)$ are different (as the later is Hausdorff, but the former is not).
\end{ex}

We return to the general context where  $\tT =\{ \varphi(x,\bar z_i): i \in I, \bar z_i\}$ is a $\bigvee$-definable group topology on $G$.
Recall that the group $G(M)$, with the topology induced by $\tT(M)$, is said to be definably topologically amenable if there is a (left) $G(M)$-invariant, Borel probability measure on $S_{\mu_M \backslash G}(M)$  (equivalently, on $S_G^{\mu_M}(M)$). 
A natural question arises, whether the definable topological amenability of $(G(M),\tT(M))$ is independent of the choice of $M$. The positive answer follows from Corollary \ref{corollary: absoluteness of the existence of measure} applied to the family $\eE:=\{G(x) \wedge G(y) \wedge \varphi_i(yx^{-1}, \bar z_i): i \in I, \bar z_i\}$; similarly, applying Corollary \ref{corollary: absoluteness of the existence of measure} to the family $\eE:=\{G(x) \wedge G(y) \wedge \varphi_i(x^{-1}y, \bar z_i): i \in I, \bar z_i\}$, we get item (2) of the following corollary.

\begin{cor}\label{corollary: absoluteness of definable topological amenability}
Let $\tT$ be a $\bigvee$-definable group topology on $G$. Then:
\begin{enumerate}
\item the definable topological amenability of $(G(M),\tT(M))$ does not depend on the choice of $M$,
\item the existence of a $G(M)$-invariant, Borel probability measure on $S_{G/\mu_M}(M)$ does not depend on the choice of $M$.
\end{enumerate}
\end{cor}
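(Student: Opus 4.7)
The plan is to derive both items as direct instances of Corollary \ref{corollary: absoluteness of the existence of measure} applied to two natural $\bigvee$-definable families built from $\tT =\{\varphi_i(x,\bar z_i): i \in I, \bar z_i\}$. For item (2), I take
\[
\eE := \{G(x) \wedge G(y) \wedge \varphi_i(x^{-1}y, \bar z_i) : i \in I, \bar z_i\},
\]
so that for any model $M$ the induced equivalence relation $E_M$ is precisely ``$x^{-1}y \in \mu^\tT_M$'', i.e.\ ``$x$ and $y$ lie in the same left coset of $\mu^\tT_M$''. Then $G/E_M$ is canonically identified with $G/\mu^\tT_M$. The relation $E_M$ is $G(M)$-invariant under left translation, since for $g \in G(M)$ one has $(gx)^{-1}(gy)=x^{-1}y$, so this requires no extra hypothesis at all. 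Corollary \ref{corollary: absoluteness of the existence of measure} then gives (2) immediately.

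For item (1), recall that definable topological amenability of $(G(M),\tT(M))$ is by definition the existence of a $G(M)$-invariant Borel probability measure on $S_{\mu^\tT_M \backslash G}(M)$ (equivalently on $S_G^{\mu^\tT_M}(M)$, as noted in Section \ref{Section 1}). So I apply the same strategy with
\[
\eE := \{G(x) \wedge G(y) \wedge \varphi_i(yx^{-1}, \bar z_i) : i \in I, \bar z_i\},
\]
whose $E_M$ is the relation $yx^{-1} \in \mu^\tT_M$ of lying in the same right coset of $\mu^\tT_M$, so that $G/E_M = \mu^\tT_M \backslash G$. The $G(M)$-invariance of $E_M$ uses exactly the fact that $\mu^\tT_M$ is normalized by $G(M)$ (noted after Definition \ref{definition: infinitesimals over M}): for $g \in G(M)$, one has $(gy)(gx)^{-1}=g(yx^{-1})g^{-1}$, which lies in $\mu^\tT_M$ iff $yx^{-1}$ does. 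Corollary \ref{corollary: absoluteness of the existence of measure} then yields the desired absoluteness.

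No real difficulty is expected; the only points that have to be verified are that the chosen $\eE$'s do define the correct equivalence relations and that those relations are $G(M)$-invariant, both of which are routine once the normalization of $\mu^\tT_M$ by $G(M)$ is in hand. In effect, Corollary \ref{corollary: absoluteness of the existence of measure} was formulated exactly so that the present corollary comes out as a specialization in the two natural choices of $\bigvee$-definable coset relations.
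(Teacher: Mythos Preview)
Your proposal is correct and matches the paper's own argument essentially verbatim: the paper also derives both items by applying Corollary \ref{corollary: absoluteness of the existence of measure} to the families $\eE=\{G(x)\wedge G(y)\wedge\varphi_i(yx^{-1},\bar z_i)\}$ and $\eE=\{G(x)\wedge G(y)\wedge\varphi_i(x^{-1}y,\bar z_i)\}$, respectively. Your additional remarks on why $E_M$ is $G(M)$-invariant in each case (automatic for left cosets, via normalization of $\mu^\tT_M$ by $G(M)$ for right cosets) are exactly the verifications needed.
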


But the following question remains open.

\begin{ques}
1) Let $\tT$ be a $\bigvee$-definable group topology on $G$. Does amenability of $(G(M),\tT(M))$ as a topological group  depend on the choice of $M$?\\
2) Let $G$ be an arbitrary topological group. Let $\tT$ be the smallest topology among topologies $\bigvee$-definable in expansions of $(G,\cdot)$ which specialize to the given topology on $G$ (see Remark \ref{remark: smallest extension of topology}). Does amenability of $G$ (as a topological group) imply amenability of $(G(N),\tT(N))$ for $N \succ M$ (where $\tT$ is defined in $\Th(M)$).
\end{ques}

 It is clear that a negative answer to (1) implies a positive answer to (2). Question (2) is interesting, as it asks whether there is any chance to transfer (topological) amenability to elementary extensions. (Note that whenever we have two group topologies $\tT_1 \subseteq \tT_2$ on a given group $G$, then amenability of $(G,\tT_2)$ implies amenability of $(G,\tT_1)$.)

As before, $\tT=\{ \varphi(x,\bar z_i): i \in I, \bar z_i\}$ is a $\bigvee$-definable group topology on $G$.
%

\begin{dfn} \label{definition: premeanT}  1) A {\em right pre-mean} for $\tT_M$  is a $G$-pre-mean for $\eE_M$ (in the sense of Definition \ref{definition: G-premean for vee-definable equivalence relations}) with $\eE:=\{G(x) \wedge G(y) \wedge \varphi_i(x^{-1}y, \bar z_i): i \in I, \bar z_i\}$. Explicitly, it
is a  monotone  function $m$ on definable subsets of $G$  
into $[0,1]$,  with $m(\emptyset)=0$, $m(G)=1$,  
and  $m (Y \cup Y') \leq m (Y)+m (Y') $, such that equality holds whenever $YD \cap Y' = \emptyset$ for some $D \in \tT_M$.\\
2) A {\em left $G$-pre-mean} for $\tT_M$  is a $G$-pre-mean for $\eE_M$ (in the sense of Definition \ref{definition: G-premean for vee-definable equivalence relations}) with $\eE:=\{G(x) \wedge G(y) \wedge \varphi_i(yx^{-1}, \bar z_i): i \in I, \bar z_i\}$. Explicitly, it
is a  monotone  function $m$ on definable subsets of $G$  
into $[0,1]$,  with $m(\emptyset)=0$, $m(G)=1$,  
and  $m (Y \cup Y') \leq m (Y)+m (Y') $, such that equality holds whenever $(D^{g_1} \cap \dots \cap D^{g_n})Y \cap Y' = \emptyset$ for some $D \in \tT_M$ and $g_1,\dots,g_n \in G$.
\end{dfn}

Then Proposition \ref{proposition: main proposition} specializes to the following two statements.

\begin{cor}\label{corollary: added from the new subsection, no 1} 
The following conditions are equivalent.
\begin{enumerate}
\item $S_{G/\mu_M}(M)$ carries a $G(M)$-invariant, Borel probability measure.
\item There is a $G(M)$-invariant [$G$-invariant] right pre-mean for $\tT_M$.
\item The lattice $\dD_{\tT}^{\mu_M}$ carries a $G(M)$-invariant [$G$-invariant], normalized mean.
\end{enumerate}
\end{cor}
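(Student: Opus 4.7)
The plan is to derive Corollary \ref{corollary: added from the new subsection, no 1} as a direct specialization of Proposition \ref{proposition: main proposition} to the $\bigvee$-definable family
\[
\eE \;:=\; \{\,G(x) \wedge G(y) \wedge \varphi_i(x^{-1}y,\bar z_i) \,:\, i\in I,\;\bar z_i\,\},
\]
where $\tT = \{\varphi_i(x,\bar z_i): i \in I,\bar z_i\}$ and $\mu_M^\tT = \bigcap_{D \in \tT_M} D$. First I would verify that $\eE$ indeed defines an equivalence relation: for any model $M$, $E_M$ is precisely the relation $x^{-1}y \in \mu_M^\tT$, i.e.\ lying in the same left coset of $\mu_M^\tT$. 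Hence $S_{G/E_M}(M)$ is naturally identified with $S_{G/\mu_M}(M)$, matching condition (1) of the corollary with condition (1) of Proposition \ref{proposition: main proposition}.

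Next I would observe that $E_M$ is $G$-invariant (and in particular $G(M)$-invariant), since $(gx)^{-1}(gy)=x^{-1}y$, so the hypothesis of Proposition \ref{proposition: main proposition} is satisfied. This same left-invariance is the key to identifying the lattices and pre-mean notions on the two sides. Indeed, for any $g \in G$, the translated binary relation $gE_M$ (in the sense implicit in Definition \ref{definition: G-premean for vee-definable equivalence relations}) equals $E_M$, so for any $D \subseteq G$ one has
\[
(g_1 E_M \cap \dots \cap g_n E_M) \circ D \;=\; E_M \circ D \;=\; D\mu_M^\tT.
\]
Therefore the defining condition of $\dD_{\eE_M}$, namely $(g_1E_M \cap \dots \cap g_n E_M)\circ D = D$, collapses to $D\mu_M^\tT = D$, so $\dD_{\eE_M}$ coincides with $\dD_\tT^{\mu_M}$. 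This identifies condition (3) of the corollary with condition (3) of Proposition \ref{proposition: main proposition}.

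For condition (2), I would spell out Definition \ref{definition: G-premean for vee-definable equivalence relations} in this specific case. The additivity clause of a $G$-pre-mean for $\eE_M$ is the equality $m(Y\cup Y')=m(Y)+m(Y')$ whenever
$((g_1\varphi_i(x,y,\bar b_1) \cap \dots \cap g_n\varphi_i(x,y,\bar b_n)) \circ Y) \cap Y'=\emptyset$
for some parameters $\bar b_j\in M$ and $g_j\in G$. Again using left-invariance of the relations $\varphi_i(x^{-1}y,\bar b_j)$ under $G$-action, the translate by $g_j$ is trivial, and the intersection of finitely many relations $\varphi_i(x^{-1}y,\bar b_j)$ corresponds, via directedness of $\tT_M$, to a single relation $\varphi_{i'}(x^{-1}y,\bar b')$ with $\varphi_{i'}(M^*,\bar b') \subseteq \bigcap_j \varphi_i(M^*,\bar b_j)$, i.e.\ to a single $D \in \tT_M$. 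The composition $R \circ Y$ for $R = $ ``$x^{-1}y \in D$'' is exactly $YD$, so the additivity condition specializes to ``$YD \cap Y' = \emptyset$ for some $D \in \tT_M$'', which is Definition \ref{definition: premeanT}(1). Thus a $G$-pre-mean for $\eE_M$ is the same as a right pre-mean for $\tT_M$, identifying condition (2) of the corollary with condition (2) of Proposition \ref{proposition: main proposition}.

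The main point that requires care is this translation between the $\bigvee$-definable family data and the topology data; once that dictionary is set up, Proposition \ref{proposition: main proposition} applies verbatim and delivers the three equivalences. I do not expect any essential new difficulty beyond the bookkeeping of identifying the lattices and pre-mean conditions via the left-invariance of $E_M$.
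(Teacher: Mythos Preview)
Your proposal is correct and follows exactly the approach the paper takes: the paper simply states that Proposition \ref{proposition: main proposition} specializes to this corollary via the family $\eE=\{G(x)\wedge G(y)\wedge\varphi_i(x^{-1}y,\bar z_i):i\in I,\bar z_i\}$ (this is in fact built into Definition \ref{definition: premeanT}(1)), and you have carefully unpacked the dictionary showing that $E_M$ is left $G$-invariant, $\dD_{\eE_M}=\dD_\tT^{\mu_M}$, and the $G$-pre-mean condition reduces to the right pre-mean condition. Your verification of the details is accurate and nothing is missing.
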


\begin{cor}\label{corollary: added from the new subsection, no 2}
The following conditions are equivalent.
\begin{enumerate}
\item $(G(M),\tT(M))$ is definably topologically amenable (i.e. $S_{\mu_M \backslash G}(M)$ carries a $G(M)$-invariant, Borel probability measure).
\item There is a $G(M)$-invariant [$G$-invariant] left $G$-pre-mean for $\tT_M$.
\item The lattice ${\dD_{\tT}^{\mu_M}}'$ carries a $G(M)$-invariant [$G$-invariant], normalized mean.
\end{enumerate}
\end{cor}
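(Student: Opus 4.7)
The plan is to obtain this corollary as a direct specialization of Proposition \ref{proposition: main proposition} to the $\bigvee$-definable family of formulas
\[
\eE \;:=\; \{\, G(x)\wedge G(y)\wedge \varphi_i(yx^{-1},\bar z_i)\,:\, i\in I,\;\bar z_i\,\},
\]
where $\tT=\{\varphi_i(x,\bar z_i):i\in I,\,\bar z_i\}$ is our given $\bigvee$-definable group topology. The first step is to verify that for every model $M$ the induced type-definable relation $E_M$ is an equivalence relation, and that it is precisely the relation of lying in the same right coset of $\mu_M$; this is immediate from the fact that $\tT_M$ is a neighborhood basis of $1$ closed under products and inversion, which forces $\mu_M=\bigcap\tT_M$ to be a group. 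Since $\mu_M$ is normalized by $G(M)$ (as noted right after Definition \ref{definition: infinitesimals over M}), the relation $E_M$ is $G(M)$-invariant, so the hypothesis of Proposition \ref{proposition: main proposition} is satisfied, and $G/E_M$ identifies naturally with $\mu_M\backslash G$ as a $G(M)$-space. In particular $S_{G/E_M}(M)=S_{\mu_M\backslash G}(M)$, which matches condition (1).

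The second step is to match the notions of pre-mean on the two sides. Unwinding Definition \ref{definition: G-premean for vee-definable equivalence relations} for our $\eE$: given $g_1,\ldots,g_n\in G$ and parameters $\bar b_1,\ldots,\bar b_n$ from $M$, the relation $g_jR_{i,\bar b_j}$ (translation of $\varphi_i(yx^{-1},\bar b_j)$ by $g_j$) reduces, via the computation $(x,y)\mapsto(g_j^{-1}x,g_j^{-1}y)$, to the condition $yx^{-1}\in\varphi_i(\,\cdot\,,\bar b_j)^{g_j^{-1}}$. Therefore
\[
\bigl(\bigcap_j g_jR_{i,\bar b_j}\bigr)\circ Y \;=\; \bigl(D^{g_1^{-1}}\cap\cdots\cap D^{g_n^{-1}}\bigr)\cdot Y
\]
for the definable superset $D\in\tT_M$ given by $\varphi_i(\,\cdot\,,\bar b_j)$ (enlarged to a common $D$ using the directedness convention on $\tT_M$). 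Letting $g_j$ range over all of $G$, this is precisely the disjointness condition required in Definition \ref{definition: premeanT}(2) for a left $G$-pre-mean for $\tT_M$. Hence $G(M)$-invariant (resp.\ $G$-invariant) $G$-pre-means for $\eE_M$ in the abstract sense are the same objects as $G(M)$-invariant (resp.\ $G$-invariant) left $G$-pre-means for $\tT_M$, matching condition (2).

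The third step is to identify the lattice. By the same conjugation computation, the $G$-lattice $\dD_{\eE_M}$ consists of the type-definable sets $D\subseteq G$ satisfying $(\mu_M^{g_1^{-1}}\cap\cdots\cap\mu_M^{g_n^{-1}})D=D$ for some $g_1,\ldots,g_n\in G$; replacing each $g_j$ by its inverse and using that $g_j$ ranges over all of $G$, this is exactly the lattice ${\dD_{\tT}^{\mu_M}}'$ from Subsection \ref{Subsection 2.5}, matching condition (3). Putting these three identifications together, the three equivalences claimed in the corollary are literally the content of Proposition \ref{proposition: main proposition} for this $\eE$.

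I do not anticipate any real obstacle; the work is entirely bookkeeping, and the one place to be careful is the direction in which $G$ acts on the binary relation $\varphi_i(yx^{-1},\bar z_i)$, i.e.\ keeping track that a left translate $g\cdot R$ pulls back a condition ``$yx^{-1}\in D$'' to ``$yx^{-1}\in D^{g^{-1}}$''. Once this sign is pinned down, the match between ``$((g_1R_{i,\bar b_1}\cap\cdots\cap g_nR_{i,\bar b_n})\circ Y)\cap Y'=\emptyset$'' and ``$(D^{h_1}\cap\cdots\cap D^{h_n})Y\cap Y'=\emptyset$'' is a direct change of variables, and both the pre-mean correspondence and the lattice identification drop out simultaneously.
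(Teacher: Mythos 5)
Your proof is correct and follows exactly the route the paper itself takes: the paper presents this corollary (together with Corollary \ref{corollary: added from the new subsection, no 1}) as a direct specialization of Proposition \ref{proposition: main proposition} to the $\bigvee$-definable family $\eE=\{G(x)\wedge G(y)\wedge\varphi_i(yx^{-1},\bar z_i):i\in I,\bar z_i\}$, and your three identifications ($G/E_M\cong\mu_M\backslash G$, $G$-pre-means for $\eE_M$ = left $G$-pre-means for $\tT_M$, and $\dD_{\eE_M}={\dD_{\tT}^{\mu_M}}'$) are precisely what that specialization requires. The only cosmetic point is a choice of conjugation convention ($D^g=g^{-1}Dg$ vs.\ $gDg^{-1}$) which, as you already note, is immaterial once $g$ ranges over all of $G$.
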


By Corollary \ref{corollary: absoluteness of the existence of mean}, we get that the existence of a left-invariant mean on $\dD^{\mu_M}_{\tT}$ [or on ${\dD^{\mu_M}_{\tT}}'$, respectively] is independent of the choice of both $M$ and $M^*$. Similarly, the existence of an invariant mean on $\dD_{\mathcal{C}}$ is independent of the choice of $M^*$. A question is whether the existence of an invariant mean on $\dD_\tT$ depends on the choice of $M^*$.

Along with Remark \ref{remark: closed}, Corollaries \ref{corollary: added from the new subsection, no 1} and \ref{corollary: added from the new subsection, no 2}  seem to suggest that one can get (from amenability) a $G$-invariant, normalized mean on the lattice $\dD_{\tT}$ of closed, type-definable sets; but we do not quite see this. It is certainly not true about $\dD_\mathcal{C}$. To see this, it is enough to take an amenable (as a topological group) but not definably amenable group $G(M)$ such that $G$ is strongly Hausdorff (as then $\dD_{\mathcal{C}}$ consists of all type-definable subsets of $G=G(M^*)$, so the restriction of an invariant mean defined on $\dD_{\mathcal{C}}$ to the algebra of all definable subsets would be a left-invariant Keisler measure, contradicting the failure of definable amenability). 
As a concrete example with these properties one can take the group $S_\infty=\Sym(\mathbb{N})$ with the usual topology, considered as a group definable in a standard model $(M, \in)$ of a sufficient fragment of set theory.

The following is a corollary of the proofs of Theorems \ref{theorem: c4} and \ref{theorem: indeed c4} applied for $H := \mu_M$; the set $\hat{D}$ from the conclusion below will be $p^4:=ppp^{-1}p^{-1}$ for a type $p \in S_G(M)$ which is wide in the sense that $m(D\mu_M)>0$ [resp. $m(\mu_MD)>0$] for every $D \in p$.
\begin{cor}    
Let $\tT$ be a $\bigvee$-definable group topology.
Assume $\dD^{\mu_M}_\tT$ [or ${\dD^{\mu_M}_\tT}'$, respectively] carries a $G$-invariant mean $m$.  Then  $G^{00}_M \leq G^{000}_M \langle \mu_M^G\rangle$.  
 More precisely, there exists an $M$-type-definable set $\widehat{D} \subseteq G^{000}_M$, with $\widehat{D}\langle \mu_M^G \rangle \supseteq G^{00}_M$.  In fact, for any wide, $M$-type-definable set $P=\mu_M P \mu_M$ we have $P^4:=PP P^{-1}P^{-1} \supseteq G^{00}_M$.
\end{cor}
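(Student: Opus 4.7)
The plan is to reduce directly to Theorems \ref{theorem: c4} and \ref{theorem: indeed c4} applied with $H := \mu_M$. Although those theorems are stated for $\emptyset$-type-definable subgroups, $\mu_M$ is $M$-type-definable and normalized by $G(M)$, so, after absorbing the parameters from $M$ into the language (as is explicitly permitted in Subsection \ref{Subsection 2.1}), the hypotheses are satisfied. The $G$-lattice $\dD_{\mu_M}$ attached to $H = \mu_M$ in Subsection \ref{section: main results on components} coincides with $\dD^{\mu_M}_\tT$, and similarly $\dD_{\mu_M}' = {\dD^{\mu_M}_\tT}'$. Hence a $G$-invariant mean on $\dD^{\mu_M}_\tT$ [resp.\ on ${\dD^{\mu_M}_\tT}'$] is exactly condition $(4)$ of Theorem \ref{theorem: c4} [resp.\ $(5)$ of Theorem \ref{theorem: indeed c4}], so the implication to condition $(5)$ [resp.\ $(6)$] yields $G^{00}_M \leq N G^{000}_M$, where $N$ is the normal subgroup generated by $\mu_M$. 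Since $N = \langle \mu_M^G \rangle$ is normal in $G$, this is exactly $G^{00}_M \leq G^{000}_M \langle \mu_M^G \rangle$.

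For the more precise statement, I would reopen the proof of $(4)\to (5)$ of Theorem \ref{theorem: c4} (respectively $(5)\to(6)$ of Theorem \ref{theorem: indeed c4}). There one picks a complete type $p \in S_G(M)$ which is \emph{wide}, meaning $m(D\mu_M) > 0$ (resp.\ $m(\mu_M D)>0$) for every $D \in p$, and shows that $(\mu_M p \mu_M)^4 \supseteq G^{00}_M$. Set $P := \mu_M p \mu_M$, an $M$-type-definable set with $P = \mu_M P \mu_M$, wide by construction; this already gives the ``in fact'' clause of the corollary. To extract the intermediate set $\widehat{D}$, note that, because $\langle \mu_M^G\rangle$ is a normal subgroup of $G$ containing $\mu_M$, every occurrence of $\mu_M$ in the product $\mu_M p \mu_M p \mu_M p^{-1} \mu_M p^{-1} \mu_M$ can be conjugated past the powers of $p$ at the cost of staying inside $\langle \mu_M^G\rangle$; hence $P^4 \subseteq p^4 \cdot \langle \mu_M^G\rangle$, where $p^4 := p p p^{-1} p^{-1}$. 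Setting $\widehat{D} := p^4$, we obtain $\widehat{D}\langle \mu_M^G\rangle \supseteq G^{00}_M$.

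It remains to check $\widehat{D} \subseteq G^{000}_M$. For any $a_1,a_2,a_3,a_4 \models p$ one has $a_i a_j^{-1} \in G^{000}_M$ (since $p$ is a complete type over $M$), and $G^{000}_M$ is normal in $G$; therefore
\[ a_1 a_2 a_3^{-1} a_4^{-1} \;=\; \bigl(a_1 (a_2 a_3^{-1}) a_1^{-1}\bigr) \cdot (a_1 a_4^{-1}) \;\in\; G^{000}_M \cdot G^{000}_M \;=\; G^{000}_M, \]
so $p^4 \subseteq G^{000}_M$ as required.

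The step I expect to require the most care is verifying the absorption argument for the parameters from $M$ when invoking Theorems \ref{theorem: c4} and \ref{theorem: indeed c4}, since those were stated for subgroups type-definable over $\emptyset$; but this is essentially bookkeeping given the remarks on parameters in Subsection \ref{Subsection 2.1}. The genuine content is the calculation $P^4 \subseteq p^4 \langle \mu_M^G\rangle$, which is immediate from normality of $\langle \mu_M^G\rangle$ and is what separates the weaker conclusion $G^{00}_M \leq G^{000}_M\langle \mu_M^G\rangle$ from the sharper ``bounded witness'' $\widehat{D} \subseteq G^{000}_M$.
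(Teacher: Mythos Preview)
Your proposal is correct and follows exactly the route the paper takes: the corollary is stated there as a direct consequence of the \emph{proofs} of Theorems \ref{theorem: c4} and \ref{theorem: indeed c4} applied with $H := \mu_M$, and the paper likewise sets $\widehat{D} := p^4$ for a wide $p \in S_G(M)$. One small point of precision: the ``in fact'' clause is universally quantified over all wide $M$-type-definable $P$ with $\mu_M P \mu_M = P$, not just $P = \mu_M p \mu_M$; this stronger statement is already established inside the proof of $(4)\Rightarrow(5)$ of Theorem \ref{theorem: c4} (where it is shown that $P^4 \supseteq G^{00}_M$ for every partial type $P$ over $M$ with $HPH = P$ and $m(P)>0$), so you should cite that step rather than only the special case.
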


The main result of this subsection is the the following

\begin{prop}\label{proposition: normal2}
Let $\tT$ be a $\bigvee$-definable group topology such that for all $n \in \mathbb{N}$
the projections (to all subproducts) of every type-definable, closed set in $G^n$ are closed. 
Assume $\dD_{\tT}$ carries a $G$-invariant mean $m$.     Then    $\cl(G^{00}_M )  = \cl(G^{000}_M)$.  
 More precisely, there exists an $M$-type-definable set $\widehat{D} \subseteq G^{000}_M$, with $\cl(\widehat{D}) = \cl(G^{00}_M ) $.   In fact, for any closed, wide (i.e. of positive mean), $M$-type-definable set $P$ we have $P^4:=PP P^{-1} P^{-1} \supseteq G^{00}_M$.
\end{prop}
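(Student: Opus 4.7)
The plan is to reduce Proposition~\ref{proposition: normal2} to Theorem~\ref{theorem: c4} applied with the normal subgroup $H := \mathcal{C} = \cl(\{1\}) = \bigcap\tT$. Since $G = G(M^*)$ is a topological group under $\tT(M^*)$, $\mathcal{C}$ is a closed, $\emptyset$-type-definable, normal subgroup of $G$, and in particular is normalized by $G(M)$. The crucial identification is $\dD_\tT = \dD_\mathcal{C}$: as $\mu^\tT_{M^*} = \mathcal{C}$, Remark~\ref{remark: closed} gives $\cl(D) \subseteq D\mathcal{C}$ for every type-definable $D$, so $D\mathcal{C} = D$ already forces $D$ to be closed; conversely, if $D$ is closed and $d \in D$, $c \in \mathcal{C}$, then $dc \in \cl(\{d\}) \subseteq \cl(D) = D$, giving $D\mathcal{C} \subseteq D$. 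Thus the hypothesized $G$-invariant mean on $\dD_\tT$ is precisely the data of condition~(4) of Theorem~\ref{theorem: c4} for $H = \mathcal{C}$.

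Applying Theorem~\ref{theorem: c4}, and reading off from its proof the stronger fact that $P^4 \supseteq G^{00}_M$ for every partial type $P$ over $M$ satisfying $HPH = P$ and $m(P) > 0$, immediately yields the main claim: by normality, $\mathcal{C} P\mathcal{C} = P\mathcal{C}$, and $P\mathcal{C} = P$ is equivalent to $P \in \dD_\mathcal{C} = \dD_\tT$, i.e., to $P$ being closed; so for every closed wide $M$-type-definable $P$ we have $P^4 \supseteq G^{00}_M$. The conclusion of Theorem~\ref{theorem: c4} itself reads $G^{00}_M \leq N G^{000}_M = \mathcal{C} \cdot G^{000}_M$, and since $\mathcal{C} g = \cl(\{g\}) \subseteq \cl(G^{000}_M)$ for every $g \in G^{000}_M$, this gives $G^{00}_M \subseteq \cl(G^{000}_M)$, whence $\cl(G^{00}_M) = \cl(G^{000}_M)$.

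To produce $\widehat{D}$, I would pick a wide complete type $p \in S_G(M)$, which exists by a Zorn argument (whenever $p$ is a wide partial type and $\varphi$ a formula, the identity $\cl(p) = \cl(p \cup \{\varphi\}) \cup \cl(p \cup \{\neg\varphi\})$ combined with subadditivity of $m$ shows that at least one of $\cl(p \cup \{\varphi\})$, $\cl(p \cup \{\neg\varphi\})$ still has positive mean), and set $\widehat{D} := p^4 = p\,p\,p^{-1}p^{-1}$; then $\widehat{D} \subseteq G^{000}_M$ by the standard normality argument. The projection hypothesis enters at this step: it forces $\cl(p)^4$ to be closed (as a product and inverse image of closed type-definable sets), so continuity of the group operations gives $\cl(\widehat{D}) = \cl(p^4) = \cl(p)^4$, and applying the main claim to the closed wide $M$-type-definable set $\cl(p)$ produces $\cl(p)^4 \supseteq G^{00}_M$, hence $\cl(\widehat{D}) = \cl(G^{00}_M)$. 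The main obstacle is the identification $\dD_\tT = \dD_\mathcal{C}$: once that is established, all the Massicot--Wagner combinatorics and elementary-equivalence gymnastics are absorbed into the existing proof of Theorem~\ref{theorem: c4}, and the projection hypothesis only reappears to justify the closure equality $\cl(p^4) = \cl(p)^4$.
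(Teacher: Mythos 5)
Your proposal has a genuine gap at the crucial identification $\dD_\tT = \dD_\mathcal{C}$, which the rest of the argument depends on.

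The easy inclusion $\dD_\tT \subseteq \dD_\mathcal{C}$ (closed $\Rightarrow$ absorbs $\mathcal{C}$) is fine, and is exactly what the paper states after defining these lattices. But the reverse inclusion fails. Your justification is ``$\mu^\tT_{M^*}=\mathcal{C}$, so Remark~\ref{remark: closed} gives $\cl(D)\subseteq D\mathcal{C}$.'' This misapplies the remark: Remark~\ref{remark: closed} asserts $\cl(P)\subseteq P\mu^\tT_M$ for $P$ type-definable over a \emph{small} model $M$, and its proof is a compactness argument over $M$ — one reduces the intersection $\bigcap\{PH : H\in\tT\}$ (over \emph{all} parameters in $M^*$) to the small directed family $\{PH : H\in\tT_M\}$, where saturation over $M\cup(\mathrm{params\ of\ }P)$ applies. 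Taking ``$M=M^*$'' destroys this: $\tT(M^*)$ is a family of size $|M^*|$ over unbounded parameters, so the corresponding partial type (e.g.\ $\{P_k(x)\}\cup\{C_i(x^{-1}a)\}$, with $\mathcal{C}=\bigcap_i C_i$ the $\emptyset$-type-definable intersection) cannot be shown finitely satisfiable: the finite intersections $\bigcap_{\mathrm{fin}}C_i$ need not contain any $U\in\tT(M^*)$, so $a\in\bigcap_U DU$ does not yield the needed witness. In short, one only gets $\cl(D)\subseteq D\mu^\tT_N$ for each small $N$ over which $D$ is defined, and $\mu^\tT_N\supsetneq\mathcal{C}$ in general. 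That these lattices really differ is implicit in the paper's own discussion just before the proposition: in the strongly Hausdorff case $\mathcal{C}=\{1\}$, so $\dD_\mathcal{C}$ is \emph{all} type-definable sets and a $G$-invariant mean on it would be a left-invariant Keisler measure, which can fail even for amenable topological groups — whereas a mean on $\dD_\tT$ (closed sets only) is a strictly weaker hypothesis.

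Consequently, the mean $m$ hypothesized on $\dD_\tT$ does \emph{not} supply the datum required by Theorem~\ref{theorem: c4}(4) with $H=\mathcal{C}$ (which is a mean on $\dD_{H}=\dD_\mathcal{C}$, a strictly larger lattice), and the reduction collapses. The paper's proof avoids this obstacle precisely by \emph{not} trying to enlarge the mean: it passes to the Hausdorff quotient $G/\mathcal{C}$, pushes $m$ forward to the lattice of closed type-definable subsets of $G/\mathcal{C}$, and applies the Massicot--Wagner engine (Proposition~\ref{proposition: variant of Massicot-Wagner} and Corollary~\ref{corollary: good for applications}) directly to that lattice. The projection hypothesis is used already there, in Claim~1(i), to guarantee that this lattice is closed under the products $B'A$ required by the Massicot--Wagner argument — not only at the very end to get $\cl(p^4)=\cl(p)^4$ as in your proposal. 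Your remaining steps (producing $\widehat{D}=p^4$ from a wide type and using $\cl(p^4)=\cl(p)^4$) do parallel the paper's, but they sit downstream of a reduction that does not go through.
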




\begin{proof}

We start from\\[-2mm] 

\begin{clm}\label{claim: basic properties of cl}
i)  The product of any two closed, type-definable sets is always closed (and clearly type-definable).\\
ii) For all type-definable sets $P$ and $Q$, $\cl(\cl(P) \cdot \cl(Q))=\cl(P \cdot Q)$.\\
iii) For all type-definable sets $P$ and $Q$, $\cl(P)\cdot \cl(Q) = \cl(P\cdot Q)$.\\
iv) For every type-definable set $P = \bigcap P_i$ (where $P_j \subseteq P_i$ whenever $i<j$), $\cl(P) = \bigcap_i \cl(P_i)$.
\end{clm}

\begin{clmproof}
i) This would follow immediately from the assumption that projections of closed, type-definable sets are closed if the topology induced by $\tT$ on $G=G(M^*)$ was Hausdorff. But if it is not Hausdorff, we can always pass to the Hausdorff quotient $G/\mathcal{C}$, where $\mathcal{C} =\cl(1)$. Working with $G/\mathcal{C}$ in place of $G$, we still have that projections of closed, type-definable sets are closed, so the product of any two closed, type-definable subsets of $G/\mathcal{C}$ is closed. Now, take any two closed, type-definable subsets $P$ and $Q$ of $G$. Then $P=P\mathcal{C}$ and $Q=Q\mathcal{C}$. So $P/\mathcal{C}$ and $Q/\mathcal{C}$ are closed, type-definable subsets of $G/\mathcal{C}$, and so $PQ/\mathcal{C} = P/\mathcal{C} \cdot Q/\mathcal{C}$ is closed in $G/\mathcal{C}$, hence $PQ$ is closed in $G$.\\[1mm]
ii) is a general property of all topological groups.\\[1mm]
iii) Using (ii), we immediately see that (iii) is equivalent to (i).\\[1mm]
iv) follows from Remark \ref{remark: definition of cl}.
\end{clmproof}

\begin{clm}  
For any closed, $M$-type-definable set $P$ with $m(P)>0$, there exists a generic, closed set $Q$ type-definable over some parameters and such that
$Q^8 \subseteq P^4$.
\end{clm}

\begin{clmproof}
We use Proposition \ref{proposition: variant of Massicot-Wagner}, with $G=X$ the present $G/\mathcal{C}$ (where $\mathcal{C} = \cl(1)$),  $A=B=P/\mathcal{C}$, $N=8$, $\dD$ being the lattice of closed, type-definable subsets of $G/\mathcal{C}$, and $m=m'$ being the pushforward of the mean $m$ from the statement of Proposition \ref{proposition: normal2}. (Item (i) of the first claim is used to ensure that the assumptions of  Proposition \ref{proposition: variant of Massicot-Wagner} hold.)  So there exists a generic, symmetric $\bar Q  \subseteq G/\mathcal{C}$ positively definable in $(G/\mathcal{}{C},\cdot, P/\mathcal{C})$, and with $\bar Q^8 \subseteq (P/\mathcal{C})^4$.  By the assumption that projections of closed, type-definable sets are closed (and the fact that $G/\mathcal{C}$ is Hausdorff), it follows that $\bar Q$ is closed and type-definable in the original structure $M^*$. So the pullback $Q$ of $\bar Q$ by the quotient map $G \to G/\mathcal{C}$ is also generic, closed, type-definable, and $Q^8 \subseteq P^4$.
\end{clmproof}

Since we are going to deal with $G^{00}_M$, we need to be more careful about parameters, and force $Q$ to be defined over $M$.

First, we will prove the last statement of Proposition \ref{proposition: normal2}, and then we will quickly explain how to deduce the previous one. 

So take any closed, wide, $M$-type-definable set $P$ (where wide means  that $m(P) >0$). Consider any $M$-definable set $P'$ containing $P^4$. 

By the first and last item of the first claim, we can find an $M$-definable set $P''$ such that $P^4 \subseteq P'' \subseteq \cl(P'') \subseteq P'$. Let $Q$ be a set provided by the second claim. We can find an $M$-definable, generic set $Q_0$ such that $Q_0^8 \subseteq P''$, and so, by item (iii) of the first claim, $\cl(Q_0)^8 =\cl(Q_0^8)\subseteq \cl(P'') \subseteq P'$. By the last item of the first claim, we can find an $M$-definable set $Q_1$ such that $\cl(Q_0) \subseteq Q_1$ and $\cl(Q_1)^8 \subseteq P'$. Put
$$C_1:= \cl(Q_1)^4.$$

Now, apply the above argument to $\cl(Q_0)$ (which is $M$-type-definable by Remark \ref{remark: definition of cl}) in place of $P$, and $Q_1^4$ in place of $P'$. As a result, we obtain $M$-definable, generic sets $R_0$ and $R_1$ such that $\cl(R_0) \subseteq R_1$ and $\cl(R_1)^8 \subseteq Q_1^4$. Put
$$C_2:= \cl(R_1)^4.$$

Continuing in this way, we obtain a sequence $C_1,C_2,\dots$ of $M$-type-definable, generic and symmetric subsets of $P'$ such that $C_{i+1}^2 \subseteq C_i$ for all $i$. Then $\bigcap_i C_i$ is a bounded index, $M$-type-definable subgroup contained in $P'$. Therefore, $G^{00}_M \subseteq P'$. Since $P'$ was an arbitrary $M$-definable set containing $P^4$, we conclude that $G^{00}_M \subseteq P^4$, which is the desired conclusion.

Let us prove now the existence of $\widehat{D}$. Let $p$ be a wide type of $G$ over $M$, in the sense that $m(\cl(D)) >0$ for any $D \in p$.  For every $D \in p$, by what we have just proved applied to $P:=\cl(D)$, we have $G^{00}_M \subseteq \cl(D)^4$. Hence, by the last item of the first claim, we get $G^{00}_M\subseteq \cl(p)^4$.  Put $\widehat{D}:=p^4$. It is clearly contained in $G^{000}_M$. On the other hand, by item (iii) of the first claim, $\cl(\widehat{D}) =\cl(p^4) = \cl(p)^4 \supseteq G^{00}_M$.
\end{proof}



 \begin{rem}\label{remark: last remark in 2.5}
The assumption in Proposition \ref{proposition: normal2} 
that the projections of closed, type-definable sets are closed may seem a bit artificial, perhaps it can be changed.  At any rate, it holds in each of the following two situations.
\begin{enumerate}
\item The situation from the last paragraph of Example \ref{Example: Vee-topology given by a type-definable subgroup}, namely: $H=\bigcap_{i \in I} X_i$ is a normal, type-definable subgroup of $G$ (and without loss $X_j \subseteq X_i$ when $i<j$), and $\tT := \{ X_i : i \in I\}$.
\item $\tT$ is a definable family and $(G(M),\tT(M))$ is compact (Hausdorff) for some model $M$.
\end{enumerate}
 \end{rem}

\begin{proof}
(1) follows from the observation that $F \subseteq G^n$ is closed if and only if $F=F \cdot \mathcal{C}^n$, where $\mathcal{C}=\cl(1)$.\\
(2) By the compactness and Hausdorffness of $G(M)$, the projections of any closed subset of $G(M)^n$ are closed. Thus, since $\tT=\{\varphi(x,\bar y): \bar y\}$ is a definable family, we easily get that the projections of any closed and definable subset $F$ of $G^n$ are closed. On the other hand, for any type-definable, closed set $F =\bigcap F_i \subseteq G^n$ (where $F_j \subseteq F_i$ whenever $i<j$), using the last item of the first claim of the proof of Proposition \ref{proposition: normal2}, we get that $F=\bigcap_i \cl(F_i)$ and each $\cl(F_i)$ is definable (by the definability of $\tT$), and, by compactness, any projection of $F$ is the intersection of the projections of the $\cl(F_i)$'s. So the conclusion follows.
\end{proof}

By virtue of Remark \ref{remark: last remark in 2.5}(1), the following obvious corollary of Theorem \ref{theorem: c4} also follows from Proposition \ref{proposition: normal2}.

\begin{cor}
 Let $N$ be any normal, $\emptyset$-type-definable subgroup of $G$.  
 Assume the lattice $\dD_N$ (of type-definable subsets $Y$ of $G$ such that $YN=N$) carries a $G$-invariant mean.  
 Then $G^{00}_M  \leq N G^{000}_M$.  
\end{cor}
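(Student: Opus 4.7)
The plan is to note that this is an immediate specialization of Theorem \ref{theorem: c4} and also, as advertised, a consequence of Proposition \ref{proposition: normal2}. For the direct route, I would simply take $H := N$ in Theorem \ref{theorem: c4}: since $N$ is a normal, $\emptyset$-type-definable subgroup it is in particular normalized by $G(M)$, the normal subgroup generated by $H=N$ is $N$ itself, and the lattice $\dD_H$ defined before Theorem \ref{theorem: c4} coincides with the lattice $\dD_N$ in the statement. A $G$-invariant mean on $\dD_N$ is then condition (4) of Theorem \ref{theorem: c4}, and the implication $(4)\to(5)$ there yields $G^{00}_M \leq NG^{000}_M$.

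For the more substantial route via Proposition \ref{proposition: normal2}, I would write $N = \bigcap_{i\in I} X_i$ and use the $\bigvee$-definable group topology $\tT := \{X_i : i \in I\}$ furnished by the last paragraph of Example \ref{Example: Vee-topology given by a type-definable subgroup}. Normality of $N$ gives the three key coincidences recorded there: $\mathcal{C} = \mu_M = N$ for every $M$, $\cl(P) = PN$ for every type-definable $P$, and $\dD_\tT = \dD_N$. By Remark \ref{remark: last remark in 2.5}(1), $\tT$ meets the hypothesis that projections of closed, type-definable sets are closed. Thus Proposition \ref{proposition: normal2} applies and delivers $\cl(G^{00}_M) = \cl(G^{000}_M)$, which in our setting reads $G^{00}_M \cdot N = G^{000}_M \cdot N$; combined with $G^{000}_M \leq G^{00}_M$ this gives exactly $G^{00}_M \leq N G^{000}_M$.

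Neither approach requires new work: the content is purely in specializing the lattice-of-closures picture of Proposition \ref{proposition: normal2} to a normal type-definable subgroup, where the topology collapses to its simplest form and the ``closed projections'' hypothesis is automatic. The only point that needs a moment's care is checking that the normality of $N$ reduces all of $\cl$, $\mu_M$, $\mathcal{C}$, and $\dD_\tT$ to the objects named in the corollary, so that the conclusion of Proposition \ref{proposition: normal2} translates into the multiplicative statement $G^{00}_M \leq N G^{000}_M$ rather than a closure statement.
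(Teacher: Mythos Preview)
Your proposal is correct and matches the paper's own treatment exactly: the paper states that this is an ``obvious corollary of Theorem \ref{theorem: c4}'' which ``also follows from Proposition \ref{proposition: normal2}'' via Remark \ref{remark: last remark in 2.5}(1), and you have spelled out both routes precisely as intended. The only minor point worth noting is that in the second route $G^{000}_M$ is not type-definable, so strictly speaking one should invoke the ``more precisely'' clause of Proposition \ref{proposition: normal2} (the type-definable $\widehat{D}\subseteq G^{000}_M$ with $\cl(\widehat{D})\supseteq G^{00}_M$) rather than applying $\cl$ directly to $G^{000}_M$; but since in this topology closure is literally multiplication by $N$, your reading is harmless.
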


\section{Definable actions, weakly almost periodic actions, and stability}\label{Section 3}

One aim of this section is to give a negative answer to Conjecture \ref{con: the most general} about {\em definable actions} of definable groups on compact spaces: see Corollary \ref{corollary: conjecture 0.3 from [KrPi] is false} below.
But we go rather beyond this, discussing the relationships between the notions in the title of the section. Weakly almost periodic actions (or flows) of a (topological) group $G$ on a compact space $X$ are important  in topological dynamics. Weak almost periodicity (for functions on a topological group) was introduced in \cite{Eberlein}, and discussed later in  \cite{Grothendieck}. We will be referring to  \cite{Ellis-Nerurkar} where weak almost periodicity of $G$-flows is defined and studied.  The connection of weak almost periodicity with stability  is by now fairly well-known, although much of  what is in print or published,  such as \cite{BY-T}  and \cite{Ibarlucia}, deals with the case where   the relevant group $G$ is the (topological) automorphism group of  a countable  $\omega$-categorical structure. In contrast, we  are here concerned with an action of a group $G(M)$ definable in a structure $M$ on a compact space $X$ where $G(M)$ is viewed as a discrete group, but where the action on $X$ is assumed to factor through the action of $G(M)$ on its space $S_{G}(M)$ of types over $M$.

We will give some background below on both continuous logic (in an appropriate form) and weak almost periodicity. The connection between stability in continuous logic and weak almost periodicity goes through results of Grothendieck \cite{Grothendieck} in functional analysis, which have been commented on in several expository papers such as \cite{BY} and later \cite{Pillay}.   However, it is relative stability, namely stability of a formula {\em in a model $M$} which is relevant, and  only  equivalent to stability when the model is saturated enough. 

One of our main  structural results  is Theorem \ref{theorem: equivalence of wap, definability, and stability} below  characterizing when the action of $G(M)$ on $X$ is weakly almost periodic in terms of stable in $M$ formulas. When $M$ is $\omega_{1}$-saturated, another equivalent condition is that the action of $G(M)$ on $X$ is {\em definable}, which will yield  the desired conclusions (Theorem \ref{theorem: each saturated group is weakly definably amenable} and Corollary \ref{corollary: conjecture 0.3 from [KrPi] is false}).   

Although this is a model theory paper, it is convenient for us to quote heavily from the topological dynamics literature, especially for results which have not yet been developed in the parallel model-theoretic environment. 

We will generally be assuming any ambient theory $T$ to be countable.

\vspace{5mm}
\noindent

The notion of a definable  action of a definable group  on a compact space  was given in \cite{GPP} and explored in some degree of generality in \cite{Kr}.  We repeat the definition below. As was said in the introduction, it would be more appropriate to call it a ``separately definable action'', but for simplicity we are saying ``definable action''. 

\begin{dfn}\label{definition: definable function and action} (i) Let $X$ be a  set definable over $M$. A function $f$ from $X(M)$ to a compact space $C$ is said to be {\em definable} if
 if for every pair $C_{1}$, $C_{2}$ of closed disjoint subsets of $C$, there is a definable (in $M$) set $Z\subseteq X(M)$ such that $f^{-1}(C_{1})\subseteq Z$, and $f^{-1}(C_{2})\subseteq G(M)\setminus Z$. 
\newline
(ii)  Suppose $G$ is a group definable over $M$. A group action by $G(M)$ on  a compact space $X$ by homeomorphisms is said to be {\em definable} if for every $x\in X$ the map from $G$ to $X$ taking $g$ to $g\cdot x$ is definable.
\end{dfn}

When all types over $M$ are definable, then the natural action of $G(M)$ on $S_{G}(M)$ is a definable action and is moreover the universal definable $G(M)$-ambit (see \cite{GPP}).
This is interesting for structures $M$ such as the reals or $p$-adics where all types over $M$ are definable, although the complete theories are unstable. 
However, in general, definability of an action of $G(M)$ on a compact space $X$  is a rather restrictive condition.  In \cite{Kr}, it was shown that there is always a universal definable 
$G(M)$-ambit (which will of course factor through $S_{G}(M)$).  
Recall from Definition \ref{def: definition of weak definable amenability} that $G(M)$  is said to be {\em weakly definably amenable} if whenever $G(M)$ acts definably on a compact space $X$, 
then $X$ supports a $G(M)$-invariant, Borel probability measure, equivalently the universal definable $G(M)$-ambit supports a $G(M)$-invariant, Borel probability measure. 
A special case of Conjecture \ref{con: the most general} says that if  $G(M)$ is weakly definably amenable, then $G^{00}_{M} = G^{000}_{M}$.  
At the end of Subsection \ref{subsec: Subsection 3.2}, we will show that this fails drastically, by proving that when $M$ is sufficiently saturated, then $G(M)$ is {\em always} weakly definably amenable.

\begin{thm}\label{theorem: each saturated group is weakly definably amenable}  Suppose $M$ is $\omega_{1}$-saturated. Then $G(M)$ is weakly definably amenable: for any definable action of $G(M)$ on a compact space $X$, $X$ supports a $G(M)$-invariant, Borel probability measure.
\end{thm}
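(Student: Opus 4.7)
The plan is to reduce the theorem to two facts. The first, which is the technical heart and is supplied by the forthcoming Theorem~\ref{theorem: equivalence of wap, definability, and stability}, is that when $M$ is $\omega_{1}$-saturated every definable action of $G(M)$ on a compact space $X$ is automatically \emph{weakly almost periodic} (wap) in the sense of \cite{Ellis-Nerurkar}. The second is the classical fact from topological dynamics that every wap $G$-flow supports a $G$-invariant Borel probability measure. Granting these two ingredients, the theorem is immediate.

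For the first reduction, I would set up the following correspondence. A definable action of $G(M)$ on $X$ extends each orbit map $g \mapsto g\cdot x$ to a continuous map $S_{G}(M) \to X$, so that for every $f \in C(X)$ and $x \in X$ the function $\phi_{f,x}(g) := f(g\cdot x)$ factors through $S_{G}(M)$, and is in particular a continuous-logic formula over $M$. The content of the characterization theorem is that, under $\omega_{1}$-saturation, the formulas $\phi_{f,x}$ must be \emph{stable in} $M$ (no infinite alternating half-graph patterns realized by parameters from $M$). Saturation is essential here: alternating patterns that would witness instability in an elementary extension can be pulled back into $M$ and contradicted against definability. Granting stability in $M$, Grothendieck's double-limit criterion applied to sequences from $G(M)$ and from $X$ yields that the orbit $\{f\circ \mathrm{act}_{g} : g \in G(M)\}$ is relatively weakly compact in the Banach space $C(X)$ for every $f \in C(X)$; this is precisely the definition of the $G(M)$-action being wap.

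To finish, I would invoke the standard construction of an invariant measure on a wap flow. The enveloping semigroup $E(X) \subseteq X^{X}$ of a wap action is a compact semitopological semigroup (multiplication separately continuous) acting on $X$ by continuous self-maps; a classical theorem then ensures that its minimal two-sided ideal $\mathcal{M}(E)$ is a compact topological group, carrying a unique normalized Haar measure $\nu$. Choosing any base point $x_{0} \in X$ and pushing $\nu$ forward along the orbit map $p \mapsto p\cdot x_{0}$ produces a Borel probability measure $\mu$ on $X$, and left-invariance of $\nu$ under left translation by elements of $E(X) \supseteq G(M)$ translates into $G(M)$-invariance of $\mu$. The main obstacle is the stability-in-$M$ step: bridging the purely model-theoretic definability assumption with the functional-analytic Grothendieck criterion requires working in continuous logic with a fixed, non-monster model, and this is precisely where the hypothesis of $\omega_{1}$-saturation is spent.
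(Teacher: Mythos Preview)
Your proposal is correct and matches the paper's proof: reduce to Theorem~\ref{theorem: equivalence of wap, definability, and stability}(b) to conclude that a definable action over an $\omega_1$-saturated $M$ is wap, then invoke the classical fact (Fact~\ref{fact: wap systems carry measures}) that wap flows carry invariant Borel probability measures. The one step you gloss over is that Theorem~\ref{theorem: equivalence of wap, definability, and stability} assumes the action factors through a \emph{single} $G(M)$-equivariant surjection $\pi\colon S_G(M)\to X$, not merely through per-point orbit maps; the paper handles this by first replacing $X$ with an orbit closure (a definable $G(M)$-ambit), where such a $\pi$ exists by \cite{GPP} or \cite[Remark~3.2]{Kr}, and an invariant measure on this subflow is automatically one on $X$.
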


We deduce a negative answer to Conjecture \ref{con: the most general} (i.e. Conjecture 0.3 of \cite{KrPi}):
\begin{cor}\label{corollary: conjecture 0.3 from [KrPi] is false} There is a model $M$, and a group $G(M)$ definable in $M$ such that $G(M)$ is weakly definably amenable, but $G^{00}_{M} \neq G^{000}_{M}$.
\end{cor}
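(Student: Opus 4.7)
The plan is to combine Theorem \ref{theorem: each saturated group is weakly definably amenable} with a known example of a definable group $G$ for which $G^{00}_M \neq G^{000}_M$. The driving observation is that once $M$ is taken $\omega_1$-saturated, Theorem \ref{theorem: each saturated group is weakly definably amenable} delivers weak definable amenability of $G(M)$ for free; the entire burden of the corollary therefore reduces to exhibiting some $\omega_1$-saturated $M$ and some definable group $G$ in $M$ for which $G^{00}_M \ne G^{000}_M$. No new structural argument is required beyond the theorem already proved.

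For the concrete witness, I would invoke the universal cover $\widetilde{\mathrm{SL}_2}(R)$ of $\mathrm{SL}_2(R)$, where $R$ is a sufficiently saturated real closed field, as analysed by Conversano and Pillay in the o-minimal setting. There, $\widetilde{\mathrm{SL}_2}(R)$ is a group definable in a two-sorted expansion of $R$ for which the strict inclusion $G^{000} \subsetneq G^{00}$ is known. Alternatively one could invoke any other witness to the $G^{00} \ne G^{000}$ phenomenon available in the literature (for example Gismatullin-type constructions). Passing, if necessary, to an $\omega_1$-saturated elementary extension $M$ carrying this group, the inequality $G^{00}_M \ne G^{000}_M$ is preserved, and Theorem \ref{theorem: each saturated group is weakly definably amenable} then supplies the weak definable amenability half of the conclusion.

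The only point that requires some care is the compatibility between the saturation of $M$ (needed to apply Theorem \ref{theorem: each saturated group is weakly definably amenable}) and the presence in $M$ of a witness to $G^{00}_M \ne G^{000}_M$. But both components, and a fortiori the (in)equality between them, are invariants of the theory rather than of the particular monster used to compute them, and saturation is always achievable by passage to an elementary extension. Hence this is a cosmetic rather than genuine obstacle, and the proof of the corollary reduces essentially to citing the example together with Theorem \ref{theorem: each saturated group is weakly definably amenable}.
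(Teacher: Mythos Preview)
Your proposal is correct and follows essentially the same approach as the paper's own proof: combine Theorem \ref{theorem: each saturated group is weakly definably amenable} with the Conversano--Pillay example of the universal cover of $\Sl_2(\R)$, taking $M$ to be $\omega_1$-saturated. The paper makes one point slightly more explicit than you do: it notes that the ambient theory is NIP, which is what guarantees $G^{00}_M = G^{00}$ and $G^{000}_M = G^{000}$ (i.e., that the connected components do not depend on the choice of model $M$); your phrase ``invariants of the theory'' is correct here precisely because of NIP, and it would be worth saying so.
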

\begin{proof} In fact, whenever $G$ is a group definable in a NIP theory $T$ and $G^{00} \neq G^{000}$, then  choosing an $\omega_{1}$-saturated model $M$ of $T$, we see from  Theorem \ref{theorem: each saturated group is weakly definably amenable} that $G(M)$ is weakly definably amenable. Moreover $G^{00}_{M} = G^{00} \neq G^{000} = G^{000}_{M}$.  There are many such examples, such as from \cite{CP}:  $T$ is the theory of the $2$-sorted structure $M$ with sorts $(\R,+,\times)$ and $({\mathbb Z},+)$ and no additional structure.  As pointed out there, the universal cover of $\Sl(2,\R)$ is naturally definable in  $M$. $T$ is NIP, and if $G$ is the interpretation of this group in a saturated model, then $G^{00} \neq G^{000}$.
\end{proof}


\subsection{Continuous logic}

Continuous logic is about real-valued relations and formulas, or, more generally, formulas with values in compact spaces, and, as such,  is present in a lot of recent work which does not explicitly mention continuous logic (even in Definition \ref{definition: definable function and action} above).

There have been various  approaches  to continuous logic, starting with \cite{CK}.   An  attractive formalism was developed in  \cite{B-BY-H-U} and  \cite{BY-U}, and our set up will be  a special case. 
Here, we will give relatively self-contained proofs, for  reasons explained below.  

$T$ will be a complete  first order theory in the usual (non-continuous) sense, which is countable (for convenience)  and we work as earlier in a big saturated (or monster)  model $\C$.  We fix a sort $X$ (which will be a definable group $G$ in the applications).  As usual, $M, N, \dots$ denote small elementary submodels of $\C$, and $A, B, \dots$   small subsets of this monster model. There is no harm assuming that $T = T^{eq}$. 

\begin{dfn}\label{definition: stable formulas} (i) By a {\em continuous logic (CL) formula on $X$ over $A$}, we mean a continuous function $\phi \colon S_{X}(A)\to \R$.  
\newline
(ii) If $\phi$ is such a CL-formula, then for any $b\in X$ (in the monster model) by $\phi(b)$ we mean $\phi(\tp(b/A))$.  Hence, we have a map 
$\phi\colon X(N) \to \R$ for all models $N$, in particular a map $\phi\colon  X = X(\C) \to \R$.  As the notation suggests, we are identifying a CL-formula on $X$ over $A$ with the latter map, and so may write it as $\phi(x)$ where $x$ is a variable of sort $X$. 
\newline
(iii) We consider two such CL-formulas on $X$, $\phi$, $\psi$,   over sets $A, B$, respectively to be {\em equivalent} if they agree in the sense of (ii), namely if for all $a\in X$, $\phi(a) = \psi(a)$.
\end{dfn}

\begin{rem}\label{remark: basics on CL-formulas} (i) The range of any CL-formula is a compact subset of $\R$.
\newline
(ii) A CL-formula $\phi$ (over some $A$) is equivalent to a CL-formula over $B$ if $\phi$ is invariant under automorphisms of the monster model which fix $B$ pointwise. 
\newline
(iii) The maps $\phi\colon X(M)\to \R$ given by CL-formulas $\phi$ over $M$ are precisely the {\em definable} maps from $X(M)$ to $\R$ in the sense of Definition \ref{definition: definable function and action}.
\newline
(iv) Any CL-formula over a set $A$  is (equivalent to a CL-formula) over a countable subset of $A$.
\end{rem}

\begin{dfn} (i) Let $M$ be a model, and $\phi(x,y)$ a CL-formula over $M$, where $x,y$ are variables of sorts $X,Y$, respectively. Let $a\in X$. 
Then $\tp_{\phi}(a/M)$ is the function taking $b\in Y(M)$ to $\phi(a,b)$, and is called a {\em complete $\phi(x,y)$-type over $M$}. 
\newline
(ii) In the context of (i), $\tp_{\phi}(a/M)$ is {\em definable} if it is definable in the sense of Definition \ref{definition: definable function and action}, equivalently, by Remark \ref{remark: basics on CL-formulas}(iii), given by or rather induced by a CL-formula on $Y$ over $M$.
\end{dfn}

\begin{rem} Suppose $M$ is $\omega_{1}$-saturated, $\phi(x,y)$ is a CL-formula over $M$, and $a$ is in the big model. Then $\tp_{\phi}(a/M)$ is definable if and only if for each closed subset $C$ of $\R$, $\{b\in Y(M):\phi(a,b)\in C\}$ is type-definable over some countable subset of $M$. 
\end{rem}

\begin{proof} 
This follows from Remark \ref{remark: basics on CL-formulas}(iv). 
\end{proof}

\begin{dfn} Let $\phi(x,y)$ be a CL-formula over $M$.\\ 
(i) We say that $\phi(x,y)$ is {\em stable} (for the theory $T$)  if for all $\epsilon > 0$ there do not exist $a_{i}, b_{i}$ for $i<\omega$ (in the monster model) such that for all $i<j$, $|\phi(a_{i},b_{j}) - \phi(a_{j},b_{i})| \geq \epsilon$.
\newline
(ii) We say that $\phi(x,y)$ is {\em stable in $M$} if for all $\epsilon> 0$ there do not exist $a_{i}, b_{i}$ for $i<\omega$ {\em in M} such that for all $i<j$, $|\phi(a_{i},b_{j}) - \phi(a_{j},b_{i})| \geq \epsilon$
\end{dfn}

\begin{rem}\label{remark: equivalent definitions of stability}
(i)  Routine methods show that $\phi(x,y)$ is stable (for $T$)  iff  whenever $(a_{i},b_{i})_{i<\omega}$ is indiscernible (over $M$), then $\phi(a_{i},b_{j}) = \phi(a_{j},b_{i})$ for $i< j$. 
\newline
(ii) On the other hand, stability of $\phi(x,y)$ in $M$ is easily seen to be equivalent to Grothendieck's double limit condition: given $a_{i}, b_{i}$ in $M$ for $i<
\omega$ we have that  $\lim_{i}\lim_{j}\phi(a_{i},b_{j}) = \lim_{j}\lim_{i}\phi(a_{i},b_{j})$ if both double limits exist.
\newline
(iii) A CL-formula $\phi(x,y)$ is stable [in $M$] iff $\phi^{op}(x,y):=\phi(y,x)$ is stable [in $M$]. 

\end{rem}

The following is due to Grothendieck (modulo a routine translation), and we give an explanation below.

\begin{prop}\label{proposition: stability in a model} Let $\phi(x,y)$ be a CL-formula over $M$. Then the following are equivalent.
\begin{enumerate}
\item[(i)]  $\phi(x,y)$ is stable in $M$.
\item[(ii)] Whenever $M \prec M^{*}$, and $\tp(a/M^{*})\in S_{x}(M^{*})$ is finitely satisfiable in $M$, then $\tp_{\phi}(a/M^{*})$ is definable over $M$,  namely the function taking $b\in M^{*}$ to $\phi(a,b)$ is given by a $CL$-formula $\psi(y)$ over $M$. 
\end{enumerate}
\end{prop}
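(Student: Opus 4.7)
The plan is to leverage Grothendieck's classical double-limit theorem characterizing relative weak compactness in $C(K)$ for $K$ compact Hausdorff. I apply it to the bounded family $F := \{\phi(a',\cdot) : a' \in X(M)\} \subseteq C(S_y(M))$. By Remark \ref{remark: equivalent definitions of stability}(ii), condition (i) is precisely Grothendieck's double-limit criterion for $F$, so (i) is equivalent to $F$ being relatively weakly compact in $C(S_y(M))$, and via Eberlein--Smulian this is in turn equivalent to the assertion that every pointwise ultrafilter limit of elements of $F$ again lies in $C(S_y(M))$.

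For (i) $\Rightarrow$ (ii): given $a$ with $p := \tp(a/M^*)$ finitely satisfiable in $M$, I pick an ultrafilter $\mathcal U$ on $X(M)$ concentrating on each definable set in $p$, so that $p = \lim_{a' \to \mathcal U}\tp(a'/M^*)$. For any $a' \in X(M)$ the value $\phi(a',b)$ depends only on $\tp(b/M)$, so setting $\tilde\psi(\tp(b/M)) := \lim_{a' \to \mathcal U}\phi(a',b)$ yields a well-defined bounded function $\tilde\psi \colon S_y(M) \to \R$. Stability of $\phi$ in $M$, via the Grothendieck/Eberlein--Smulian statement above, forces $\tilde\psi \in C(S_y(M))$, hence $\tilde\psi$ corresponds to a CL-formula $\psi(y)$ over $M$. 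By construction,
\[ \phi(a,b) \;=\; \lim_{a' \to \mathcal U}\phi(a',b) \;=\; \tilde\psi(\tp(b/M)) \;=\; \psi(b) \]
for every $b \in M^{*}$, which is (ii).

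For (ii) $\Rightarrow$ (i), I argue contrapositively. If $\phi$ is not stable in $M$ there exist $\epsilon$-witnesses $(a_i),(b_i) \subseteq M$; passing to a subsequence using compactness of the range of $\phi$, I may assume the iterated quantities $\alpha_i := \lim_j \phi(a_i,b_j)$, $\beta_j := \lim_i \phi(a_i,b_j)$, $\alpha := \lim_i \alpha_i$ and $\beta := \lim_j \beta_j$ all exist, necessarily with $|\alpha-\beta| \geq \epsilon$. Fix nonprincipal ultrafilters $\mathcal U,\mathcal V$ on $\omega$. In a sufficiently saturated $M^{*}\succ M$ I realize $b^{*}$ with $\tp(b^{*}/M) = \lim_{j\to \mathcal V}\tp(b_j/M)$, and then in an elementary extension $M^{**}\succ M^{*}$ I realize $a$ with $\tp(a/M^{*}) = \lim_{i\to\mathcal U}\tp(a_i/M^{*})$, which is automatically finitely satisfiable in $M$. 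Assumption (ii) supplies $\psi(y)$ over $M$ with $\psi(b) = \phi(a,b)$ for all $b \in M^{*}$. I then evaluate $\phi(a,b^{*})$ in two ways. On one side, $\psi(b_j) = \phi(a,b_j) = \lim_{\mathcal U}\phi(a_i,b_j) = \beta_j$, so continuity of $\psi$ on $S_y(M)$ gives $\phi(a,b^{*}) = \psi(b^{*}) = \lim_{\mathcal V}\beta_j = \beta$. On the other side, since $a_i \in M$ the function $\phi(a_i,\cdot)$ is a CL-formula over $M$, whence $\phi(a_i,b^{*}) = \lim_{\mathcal V}\phi(a_i,b_j) = \alpha_i$, and therefore $\phi(a,b^{*}) = \lim_{\mathcal U}\phi(a_i,b^{*}) = \alpha$. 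Thus $\alpha = \beta$, contradicting $|\alpha-\beta| \geq \epsilon$.

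The main substantive input is Grothendieck's theorem (with its Eberlein--Smulian upgrade) guaranteeing that pointwise ultrafilter limits of a relatively weakly compact family in $C(K)$ remain continuous; the model-theoretic side is then essentially bookkeeping with finite satisfiability and ultrafilter limits. One subtle point in (ii) $\Rightarrow$ (i) is the order of realizations: $b^{*}$ must be added before $a$, so that $b^{*} \in M^{*}$ makes $\phi(a,b^{*}) = \psi(b^{*})$ a direct consequence of the hypothesis, while $a_i \in M$ keeps $\phi(a_i,\cdot)$ a CL-formula over $M$ so that $\phi(a_i,b^{*}) = \lim_{\mathcal V}\phi(a_i,b_j)$ is legitimate.
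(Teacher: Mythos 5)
Your proof is correct and in essence takes the same route as the paper: both derivations hinge on Grothendieck's criterion relating the double-limit condition to pointwise (equivalently weak) relative compactness of $\{\phi(a',\cdot):a'\in X(M)\}$ in $C(S_y(M))$, and both identify the functions in the pointwise closure with the maps $b\mapsto\phi(a,b)$ where $\tp(a/M^*)$ is finitely satisfiable in $M$. One small inaccuracy of attribution: the step ``relatively weakly compact $\Leftrightarrow$ every pointwise ultrafilter limit stays in $C(K)$'' is really the content of Grothendieck's theorem itself (equivalence of weak and pointwise relative compactness for bounded subsets of $C(K)$), not Eberlein--Smulian, which concerns weak sequential compactness; the paper's Remark~(a) after the proposition makes exactly this point. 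Where you genuinely diverge is in (ii)~$\Rightarrow$~(i): the paper disposes of it as the ``easy'' closing of the Grothendieck loop, whereas you give a self-contained contrapositive argument with explicit ultrafilter limits $\alpha$, $\beta$ and the two realization steps for $b^*$ and then $a$; this is a nice explicit version, and your remark about the order of realizations (so that $\phi(a,b^*)=\psi(b^*)$ is directly licensed by (ii), while $\phi(a_i,b^*)=\lim_{\mathcal V}\phi(a_i,b_j)$ is legitimate because $a_i\in M$) is precisely the point one has to get right. The subsequence extraction so that all iterated limits $\alpha_i$, $\beta_j$, $\alpha$, $\beta$ exist is a standard diagonalization using boundedness of the range of $\phi$, and then $|\alpha-\beta|\geq\epsilon$ follows from passing to the limit in the witnessing inequalities for the matched pairs $(\phi(a_i,b_j),\phi(a_j,b_i))$, so that part is fine as well.
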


\begin{proof}  
Consider the (compact) space $Z = S_{y}(M)$ of complete types  over $M$ in variable $y$, and let $C(Z)$ be the real Banach space of continuous real valued functions on $Z$ (equipped with the supremum norm).  Let $A$ denote the subset of $C(Z)$ consisting of the functions  $\phi(a,y)$ for $a\in M$.  Note that $A$ is bounded.  Let $Z_{0}$ be the set  of realized types, namely those 
$\tp(b/M)$ for $b\in M$, a dense subset of $Z$. With this notation,  Grothendieck's Theorem 6 in \cite{Grothendieck} says that the following are equivalent.
\begin{enumerate}
\item[(i)']  If $f_{i}\in A$ and $q_{i}\in Z_{0}$ for $i<\omega$, then $\lim_{i}\lim_{j}f_{i}(q_{j}) = \lim_{j}\lim_{i}f_{i}(q_{j})$ if both double limits exist.
\item[(ii)'] The closure of $A$ in the pointwise convergence topology on $C(Z)$ is compact.
\end{enumerate}

Now, if $f_{i}$ is $\phi(a_{i},y)$ and $q_{i} = \tp(b_{i}/M)$, then (i)'  says precisely that  $\lim_{i}\lim_{j}\phi(a_{i},b_{j}) = \lim_{j}\lim_{i}\phi(a_{i}.b_{j})$ for all sequences $a_{i},b_{i}\in M$ with $i<\omega$ for which both double limits exist, which by Remark \ref{remark: equivalent definitions of stability}(ii) says that $\phi(x,y)$ is stable in $M$, namely condition (i) in the proposition. 

On the other hand (ii)' implies that the closure of $A$ in $C(Z)$ (in the pointwise topology) is a compact, so closed,  subset of the space $\R^{Z}$ of {\em all}  functions from $Z$ to $\R$ (equipped with the pointwise, equivalently Tychonoff topology). So every function in the closure of $A$ in $\R^{Z}$ is already in $C(Z)$, so is continuous.  So it is clear that (ii)' is equivalent to 

\begin{enumerate}
\item[(ii)''] whenever $f\in \R^{Z}$ is in the closure of $A$, then $f$ is continuous. 
\end{enumerate}

It is now easy to see that if $f\in \R^{Z}$ is in the closure of $\{\phi(a,y):a\in M\}$, then $f$ is of the form $\phi(a^{*},y)$,  where $M^{*}$ is a saturated model containing $M$, and $\tp(a^{*}/M^{*})$ is finitely satisfiable in $M$. So
for $q\in Z = S_{y}(M)$, $f(q) = \phi(a^{*},b)$ for some (any) realization $b$ of $q$ in $M^{*}$.  The continuity of $f$ means that it is given by a CL-formula $\psi(y)$ over $M$, which precisely means that $\psi(y)$ is a definition over $M$ of $\tp_{\phi}(a^{*}/M^{*})$.  So we get that (ii) implies (ii)'', and it is again easy to see that they are equivalent. 
\end{proof}

\begin{rem} (a) Actually the original statement of (ii)' in \cite{Grothendieck} is that the closure of $A$ in the {\em weak} topology on $C(Z)$ is compact. The weak topology on $C(Z)$ is the one whose basic open neighbourhoods of a point $f_{0}$ are of the form $\{f\in C(Z): |g_{1}(f-f_{0})| < \epsilon, \dots, |g_{r}(f-f_{0})| < \epsilon\}$, where $g_{1},\dots,g_{r}$ are in $L(C(Z),\R)$ --- the space of bounded linear functions on $C(Z)$.  This weak topology is stronger than the pointwise convergence topology on $C(Z)$ whose basic open neighbourhoods of a point $f_{0}$ are as above but where $g_{i}$ is evaluation at some point $x_{i}\in Z$.  It is pointed out in \cite{Grothendieck} that relative compactness of a bounded subset $A$ of $C(Z)$ in the weak topology is equivalent to relative compactness of $A$ in the pointwise convergence topology,  yielding the statement (ii)' in the proof of Proposition \ref{proposition: stability in a model}. 
\newline
(b)  In \cite{BY}, which seems to be the first model theory paper to recognize Grothendieck's contribution, only the implication ``$\phi(x,y)$ stable in $M$ implies that all $\phi$-types over $M$ are definable'' is deduced from Grothendieck's theorem, rather than the stronger equivalence in Proposition \ref{proposition: stability in a model}. 
\newline
(c)  Grothendieck's proof in \cite{Grothendieck} is basically a model theory proof.  See \cite{Pillay} for the case of classical ($\{0,1\}$-valued) formulas.

\end{rem}

\begin{prop}\label{prop: stability iff definability over all models}
The CL-formula $\phi(x,y)$ is stable (for $T$)  if and only if every complete $\phi(x,y)$-type over any model over which $\phi$ is defined is definable.
\end{prop}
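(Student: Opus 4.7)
My plan is to reduce the proposition to Proposition~\ref{proposition: stability in a model}, which characterizes stability of $\phi$ in a single model $M$ via the definability over $M$ of $\phi$-types of coheirs of $M$. First I would note that $\phi$ is stable (for $T$) if and only if $\phi$ is stable in every model over which it is defined: the $(\Rightarrow)$ direction is immediate, while for $(\Leftarrow)$, witnesses to instability lying in the monster can always be absorbed into an elementary submodel containing them. So the task reduces to showing that stability of $\phi$ in every model is equivalent to definability of every $\phi$-type over every model.

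For the forward direction, fix a model $N$ over which $\phi$ is defined and a tuple $a$ in the monster. I would extend $\tp(a/N)$ to a global type $p$ finitely satisfiable in $N$ (a coheir, produced by a standard ultrafilter construction) and realize $p$ by some $a^*$ in a sufficiently saturated $M^* \succ N$. Then $\tp(a^*/M^*)$ is finitely satisfiable in $N$ while $\tp(a^*/N) = \tp(a/N)$. Stability of $\phi$ in $N$ together with Proposition~\ref{proposition: stability in a model}(i)$\Rightarrow$(ii) then yields a CL-formula over $N$ defining $\tp_\phi(a^*/M^*)$, whose restriction to $N$-parameters defines $\tp_\phi(a^*/N) = \tp_\phi(a/N)$.

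For the backward direction, assume every $\phi$-type over every model is definable and fix a model $M$. I plan to verify condition (ii) of Proposition~\ref{proposition: stability in a model}: given $M^* \succ M$ and $a^* \in M^*$ with $\tp(a^*/M^*)$ finitely satisfiable in $M$, the hypothesis supplies a CL-formula $\chi$ over $M^*$ defining $\tp_\phi(a^*/M^*)$, and the task is to show $\chi$ is equivalent to a CL-formula over $M$. The technical heart is to verify that $\chi$ is constant on fibers of the restriction $\pi\colon S_y(M^*) \to S_y(M)$, i.e.\ that $\phi(a^*,b) = \phi(a^*,b')$ whenever $b,b' \in M^*$ satisfy $\tp(b/M) = \tp(b'/M)$. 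I would extract this from finite satisfiability by applying it, for each $\epsilon > 0$, to the CL-condition $\max(|\phi(x,b)-\phi(a^*,b)|,\,|\phi(x,b')-\phi(a^*,b')|)<\epsilon$; the resulting witness $a' \in M$ satisfies $\phi(a',b) = \phi(a',b')$ because $\phi(a',\cdot)$ is a CL-formula over $M$ and $\tp(b/M)=\tp(b'/M)$, and the triangle inequality then forces $|\phi(a^*,b)-\phi(a^*,b')|<2\epsilon$; letting $\epsilon \to 0$ gives the claim.

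With this step in hand, $\chi$ descends to a function $g$ on $S_y(M)$; since $\pi$ is a continuous surjection of compact Hausdorff spaces, hence a quotient map, continuity of $g$ follows from continuity of $\chi = g \circ \pi$. Thus $g$ is a CL-formula over $M$ defining $\tp_\phi(a^*/M^*)$, establishing condition (ii) and hence stability of $\phi$ in $M$. Since $M$ was arbitrary, $\phi$ is stable by the initial reduction. The main obstacle I anticipate is the careful handling of the continuous-logic notion of finite satisfiability, which produces only approximate realizations of CL-conditions; the $\epsilon$-$\delta$ argument above is designed precisely to accommodate this approximation uniformly.
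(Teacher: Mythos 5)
Your forward direction is correct and is essentially the elaboration of the paper's one-line appeal to Proposition~\ref{proposition: stability in a model}: you observe that stability for $T$ is equivalent to stability in every model, then pass to a coheir over the model $N$ in question and restrict back. That is the intended argument.

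Your backward direction, however, takes a genuinely different route from the paper, and the route as written has a gap. The paper argues by contradiction via a Dedekind-cut counting trick: from instability one extracts $(a_i,b_i)_{i\in\mathbb{Q}}$ with values separated by $r<s$, places the $b_i$ in a countable model $M$, and for each cut $C$ the definability of $\tp_\phi(a_C/M)$ yields a classical formula $\psi_C(y)$ over $M$ separating the $\leq r$ and $\geq s$ values; since there are continuum-many cuts but only countably many formulas over $M$, this is a contradiction. You instead try to verify condition (ii) of Proposition~\ref{proposition: stability in a model} directly: you take the CL-formula $\chi$ over $M^*$ defining $\tp_\phi(a^*/M^*)$ and attempt to descend it to $M$. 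The problem is in the step ``$\chi$ descends to $g$ on $S_y(M)$; since $\pi$ is a quotient map, continuity of $g$ follows from continuity of $\chi = g\circ\pi$.'' What you actually prove is that $\phi(a^*,b)=\phi(a^*,b')$ for $b,b'\in M^*$ \emph{realizing} the same type over $M$; this shows $\chi$ is constant on the realized points of each $\pi$-fiber, not on the entire fiber. The identity $\chi=g\circ\pi$ requires constancy on the whole fiber, and realized types over $M^*$ need not be dense in a given fiber when $M^*$ is an arbitrary elementary extension (as condition (ii) allows). So $g$ is only known to be a possibly-discontinuous function that agrees with $\chi$ on realized points; you cannot conclude $g\circ\pi=\chi$ without first knowing $g$ is continuous, which is exactly what you are trying to prove.

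The gap is repairable but needs an explicit extra step: one should first reduce to the case that $M^*$ is $(|M|+|T|)^+$-saturated (which is legitimate because a coheir of $\tp(a^*/M^*)$ over a larger saturated $M^{**}$ restricts back correctly, so it suffices to verify (ii) there). In that saturated setting, realized types over $M^*$ \emph{are} dense in each $\pi$-fiber --- given $q\in S_y(M)$ and a formula $\theta$ over $M^*$ consistent with $q$, the partial type $q\cup\{\theta\}$ has only $|M|+\aleph_0$ parameters and so is realized in $M^*$ --- and then continuity of $\chi$ gives constancy on the full fiber, after which your quotient-map argument is valid. Note that this repaired argument is close in spirit to the proof of Proposition~\ref{proposition: saturation and definability implies stability}, which uses $\omega_1$-saturation precisely to make the corresponding descent work; your write-up does not invoke any saturation, which is where the argument currently fails.
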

\begin{proof}   In the more general metric structures formalism, this appears in \cite{BY-U} (Proposition 7.7 there) and  adapts to our context. However, we give a relatively self contained  account. Left implies right is given by Proposition \ref{proposition: stability in a model}.  The other direction is the easy one and can be seen as follows.  Assume $\phi(x,y)$ to be unstable (for a contradiction).  By (or as in) Remark \ref{remark: equivalent definitions of stability}, we can find  $a_{i},b_{i}\in \C$ for $i\in \mathbb{Q}$, and real numbers $r < s$ that  $\phi(a_{i},b_{j}) \leq r$ for $i<j$ and $\phi(a_{i},b_{j}) \geq s$ for $i>j$.  
Let $M$ be a countable model containing the $b_{i}$ for $i\in\mathbb{Q}$ over which $\phi$ is defined.
By compactness, for each cut $C$ in $\mathbb{Q}$ there is some $a_{C}\in \C$ such that $\phi(a_{C},b_{j})\geq s$ for $j<C$ and  $\phi(a_{C},b_{j})\leq r$ for $j>C$. Now, by assumption, each $\tp_{\phi}(a_{C}/M)$ is definable, so for each $C$ there is some (ordinary) formula $\psi_{C}(y)$ over $M$ such that for any $b\in M$, $\phi(a_{C},b)\leq r$ implies $\psi_{C}(b)$, and $\phi(a_{C},b)\geq s$ implies $\neg\psi_{C}(b)$. This is a contradiction, as there are continuum many distinct $C$'s but only countably many (ordinary) formulas over the countable model $M$. 
\end{proof}

\begin{prop}\label{proposition: saturation and definability implies stability} Suppose $M$ is $\omega_{1}$-saturated, $\phi(x,y)$ is a CL-formula over $M$, and every complete $\phi(x,y)$-type over $M$ is definable.
Then every complete $\phi(x,y)$-type over any model $N$ (over which $\phi$ is defined) is definable, and hence, by Proposition \ref{prop: stability iff definability over all models}, $\phi(x,y)$ is stable. 
\end{prop}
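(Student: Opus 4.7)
The plan is to reduce to Proposition 3.6 by showing that $\phi(x,y)$ is stable for $T$. This proceeds in two steps: first, establish stability of $\phi$ \emph{in $M$} (Definition 3.4(ii)) by combining the definability hypothesis with $\omega_{1}$-saturation of $M$; then bootstrap stability in $M$ to stability for $T$ by a standard saturation argument.

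\textbf{Stability in $M$.} I argue by contradiction. Suppose $\phi$ is not stable in $M$. After a Ramsey extraction there exist $c_i, d_i \in M$ for $i<\omega$ and real numbers $r<s$ such that $\phi(c_i,d_j)\approx r$ for $i<j$ and $\phi(c_i,d_j)\approx s$ for $i>j$. Fix non-principal ultrafilters $\mathcal{U},\mathcal{V}$ on $\omega$. By saturation of $\C$ over $M$, realize the $\mathcal{U}$-limit type $\lim_{\mathcal{U}}\tp(c_i/M)\in S_{x}(M)$ by some $a^{*}\in\C$; then $\tp(a^{*}/M)$ is finitely satisfiable in $\{c_i\}$, so $\phi(a^{*},b)=\lim_{\mathcal{U}}\phi(c_i,b)$ for every $b\in M$, and in particular $\phi(a^{*},d_j)\approx s$ for each $j$. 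By hypothesis there is $\bar\psi\in C(S_{y}(M))$ defining $\tp_{\phi}(a^{*}/M)$, and by Remark 3.2(iv) one may write $\bar\psi=\bar\psi'\circ\pi$ for some $\bar\psi'\in C(S_{y}(M^{\#}))$, with $\pi\colon S_{y}(M)\to S_{y}(M^{\#})$ the restriction map and $M^{\#}\prec M$ a countable elementary submodel containing the parameters of $\phi$ together with $\{c_i\}$ and $\{d_i\}$. Continuity of $\bar\psi'$ then gives $\bar\psi'(p^{\#})\approx s$, where $p^{\#}:=\lim_{\mathcal{V}}\tp(d_j/M^{\#})\in S_{y}(M^{\#})$ is the $\mathcal{V}$-limit in the compact Stone space $S_{y}(M^{\#})$. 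Invoking $\omega_{1}$-saturation of $M$ over the countable parameter set $M^{\#}$, realize $p^{\#}$ by some $b^{**}\in M$ with $\tp(b^{**}/M^{\#})=p^{\#}$.

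Evaluating $\phi(a^{*},b^{**})$ in two ways now yields the contradiction. On the one hand, since $b^{**}\in M$, the hypothesis gives $\phi(a^{*},b^{**})=\bar\psi(\tp(b^{**}/M))=\bar\psi'(p^{\#})\approx s$. On the other hand, since $b^{**}\in M$ and $\tp(a^{*}/M)$ is finitely satisfiable in $\{c_i\}$, one has $\phi(a^{*},b^{**})=\lim_{\mathcal{U}}\phi(c_i,b^{**})$; and each $\phi(c_i,\cdot)$ is a CL-formula over $M^{\#}$, hence factors continuously through $S_{y}(M^{\#})$, so $\phi(c_i,b^{**})=\lim_{\mathcal{V}}\phi(c_i,d_j)\approx r$ (because $j>i$ eventually along the non-principal $\mathcal{V}$). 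Consequently $\phi(a^{*},b^{**})\approx r$, which together with the previous evaluation forces $r\approx s$, contradicting $r<s$. Therefore $\phi$ is stable in $M$.

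\textbf{Bootstrap and conclusion.} If $\phi$ were unstable for $T$, then for some $\varepsilon>0$ the countable partial type in the variables $(x_i,y_i)_{i<\omega}$ over the countable parameter set of $\phi$ asserting $|\phi(x_i,y_j)-\phi(x_j,y_i)|\ge\varepsilon$ for $i<j$ would be consistent, and by $\omega_{1}$-saturation of $M$ applied inductively over countable parameter sets it would be realized inside $M$, contradicting the stability of $\phi$ in $M$ just proved. Hence $\phi$ is stable for $T$, and Proposition 3.6 yields that every complete $\phi$-type over every model $N$ is definable. The main technical obstacle is the two-ultrafilter construction: the passage to the countable submodel $M^{\#}$ supplied by Remark 3.2(iv) is exactly what allows $\omega_{1}$-saturation of $M$ to realize the limit type $p^{\#}$ inside $M$, and without this countability reduction the crucial identification of $\phi(a^{*},b^{**})$ through $\bar\psi$ and through the ultrafilter limit cannot both be carried out.
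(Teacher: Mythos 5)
Your proposal is correct, but it takes a genuinely different route from the paper. The paper's proof is type-theoretic: given a $\phi$-type $p$ over a countable $M_0\prec M$ containing the parameters of $\phi$, it passes to a coheir $p'$ of $p$ over $M$; by hypothesis $p'$ has a definition over $M$, and the key claim is that any defining partial type $\Psi(y,\bar b)$ (with $\bar b$ a countable tuple from $M$) is actually equivalent to one over $M_0$. This is shown directly by finite satisfiability of $\tp(a/M)$ in $M_0$: conjugate parameters $\bar b'\equiv_{M_0}\bar b$ give equivalent definitions, so $\Psi$ descends to $M_0$. Thus $p$ itself is definable, and the remaining reductions (any countable model embeds into $M$ over the parameter set by $\omega_1$-saturation; definability over all countable models gives stability for $T$) are handled implicitly. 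Your proof instead attacks instability head-on with a two-ultrafilter Grothendieck-style argument: you realize $a^*=\lim_{\mathcal U}c_i$, extract the countable base $M^\#$ of the definition $\bar\psi$ of $\tp_\phi(a^*/M)$, and then use $\omega_1$-saturation to realize $p^\#=\lim_{\mathcal V}\tp(d_j/M^\#)$ by some $b^{**}\in M$, at which point evaluating $\phi(a^*,b^{**})$ once through $\bar\psi$ and once by commuting the two limits forces $r\approx s$. You then make explicit a separate bootstrap step (stability in $M$ $\Rightarrow$ stability for $T$, via $\omega_1$-saturation pulling countably many witnesses into $M$) that the paper leaves implicit in its opening reduction. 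The paper's route isolates a cleaner-to-state claim (descent of the defining schema to $M_0$), while yours is more self-contained about where $\omega_1$-saturation enters and spells out the chain from ``definable $\phi$-types over $M$'' through ``stable in $M$'' to ``stable for $T$''; the cost is more bookkeeping with the two ultrafilters and the $\varepsilon$-margins in the Ramsey extraction. Both are correct; one small citation note: your final appeal to conclude definability over all $N$ from stability for $T$ is really the unlabeled proposition immediately before the statement you are proving (``stable for $T$ iff every $\phi$-type over any model is definable''), not Proposition~\ref{proposition: stability in a model} itself, though the former is of course deduced from the latter.
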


\begin{proof}  Let $A\subset M$ be countable such that $\phi(x,y)$ is over $A$. 
By Proposition \ref{prop: stability iff definability over all models} and by the proof of the right to left implication in Proposition \ref{prop: stability iff definability over all models}, it suffices to prove that every complete $\phi$-type over a countable model containing $A$ is definable. As $M$ is $\omega_{1}$-saturated, it is enough to prove that every complete $\phi$-type over any countable submodel $M_{0}$ of $M$ which contains $A$ is definable.
So let $p(x)$ and $M_{0}$ be such. Let $p'$ be a coheir of $p$ over $M$, namely $p' = \tp_{\phi}(a/M)$, $p = p'|M_{0}$, and $\tp(a/M)$ is finitely satisfiable in $M_{0}$.  By our assumptions, $p'$ is definable.  So to prove that $p$ is definable it suffices to prove:\\[-2mm]

\begin{clm}
$p'$ is definable over $M_{0}$.
\end{clm}

\begin{clmproof}
Let $C$ be a compact subset of $\R$, and let $\Psi(y,b)$ be a partial type over a countable sequence $b$ from $M$ such that for all $c\in M$, $\phi(a,c)\in C$ iff
$M\models \Psi(c,b)$.  We will show that in fact $\Psi(y,b)$ is equivalent to a partial type over $M_{0}$.  For this it is enough to show that if 
$b'$ realizes $\tp(b/M_{0})$ in $M$, then 
$\Psi(y,b')$ is equivalent to $\Psi(y,b)$. 

Suppose $c'\in M$ and suppose $M\models \Psi(c',b')$. Let $c\in M$ be such that $\tp(c,b/M_{0}) = \tp(c',b'/M_{0})$. As $\tp(a/M)$ is finitely satisfiable in $M_{0}$, $\phi(a,c) = \phi(a,c')$. As $M\models \Psi(c,b)$, we have that $\phi(a,c)\in C$. Hence, $\phi(a,c')\in C$, whereby $M\models \Psi(c',b)$.  Hence, $\Psi(y,b')$ is equivalent to $\Psi(y,b)$, as required. This finishes the proof of the claim.  
\end{clmproof}
Hence, the proof of the proposition is also finished. \end{proof}

\subsection{Weakly almost periodic actions}\label{subsec: Subsection 3.2}

The context here is a $G$-flow $(X,G)$, where $X$ is a compact space and $G$ a topological group.  For $f$ a continuous function from $X$ to $\R$ and $g\in G$, $gf$ denotes the (continuous) function taking $x\in X$ to $f(gx)$. 
We will take our definition of a weakly almost periodic $G$-flow  from Theorem II.1 of \cite{Ellis-Nerurkar}.
\begin{dfn} (i) A continuous function $f\colon X\to \R$ is {\em weakly almost periodic} (or wap) if whenever $h\in \R^{X}$ is in the closure of $\{gf:g\in G\}$ (in the pointwise convergence topology) then $h$ is continuous.
\newline
(ii) The $G$-flow $(X,G)$ is {\em weakly almost periodic} (or wap),  if every continuous function $f\colon X\to\R$ is weakly almost periodic. 
\end{dfn}

\begin{fct}\label{fact: wap systems carry measures} Suppose that $(X,G)$ is wap. Then there is a $G$-invariant, Borel probability measure on $X$.
\end{fct}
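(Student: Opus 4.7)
My plan is to locate the desired invariant measure as a fixed point of the natural $G$-action on $P(X)$, the weak${}^*$-compact convex set of regular Borel probability measures on $X$ sitting inside $C(X)^*$. The group $G$ acts affinely and weak${}^*$-continuously via $(g\mu)(f):=\mu(f\circ g)$, and a fixed point of this action is, by definition, a $G$-invariant probability measure. So the problem reduces to proving an appropriate fixed-point theorem for this affine action.

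The natural tool is the Ryll--Nardzewski fixed-point theorem: an affine action of a group by weak${}^*$-continuous maps on a nonempty weak${}^*$-compact convex set in a dual Banach space that is \emph{noncontracting} (meaning that for all $\mu\neq\nu$ in the set, $0\notin\overline{\mathrm{conv}}\{g\mu-g\nu:g\in G\}$) admits a fixed point. The way weak almost periodicity enters is through Grothendieck's double-limit criterion --- the same theorem invoked in Proposition \ref{proposition: stability in a model}: wap is equivalent to saying that for every $f\in C(X)$ the orbit $\{gf:g\in G\}$ is relatively weakly compact in $C(X)$. This weak compactness on the predual side is precisely what rules out contraction towards $0$ on the measure side, so supplies the noncontracting hypothesis.

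An equivalent and slightly more concrete route avoids the abstract fixed-point theorem and argues as follows. Fix any $x_{0}\in X$ and define a $G$-equivariant, unital, positive linear map $\Phi\colon C(X)\to\ell^{\infty}(G)$ by $\Phi(f)(g):=f(g\cdot x_{0})$, where $G$ acts on $\ell^{\infty}(G)$ by left translation. The wap assumption combined with Grothendieck's theorem says that the image of $\Phi$ lies in the closed $G$-invariant subalgebra $\mathrm{WAP}(G)$ of weakly almost periodic functions on $G$. Classically (Ryll--Nardzewski), $\mathrm{WAP}(G)$ admits a two-sided invariant mean $m$, and then $f\mapsto m(\Phi(f))$ is a $G$-invariant positive linear functional of norm one on $C(X)$; the Riesz representation theorem converts this into the required $G$-invariant Borel probability measure on $X$.

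The main obstacle in this plan is precisely the noncontracting condition, equivalently the existence of an invariant mean on $\mathrm{WAP}(G)$; this is the substantive ingredient of the Ryll--Nardzewski theorem and I would cite it rather than reprove it. Everything else --- weak relative compactness of orbits via Grothendieck, $G$-equivariance of $\Phi$, and Riesz representation --- is routine, and the parallel with the role that Grothendieck's theorem already plays in the model-theoretic half of the paper makes it natural to import it here as well.
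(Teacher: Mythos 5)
Your second route (orbit map into $\mathrm{WAP}(G)$, then the Ryll--Nardzewski invariant mean) is correct and gives a genuinely different proof from the paper's. But your first route, applying the Ryll--Nardzewski fixed-point theorem directly to $P(X)\subseteq C(X)^{*}$, fails as stated, and the claim that the two routes are ``equivalent'' conceals a real gap: the noncontracting hypothesis is not automatic for wap flows. Take $G={\mathbb Z}$ acting on the one-point compactification $X={\mathbb Z}\cup\{\infty\}$ by translation, fixing $\infty$. For any $f\in C(X)$, the pointwise closure of $\{nf:n\in{\mathbb Z}\}$ is $\{nf:n\in{\mathbb Z}\}$ together with the constant function with value $f(\infty)$, all continuous, so the flow is wap. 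Yet with $\mu=\delta_{0}$ and $\nu=\delta_{\infty}$ one gets $n\mu-n\nu=\delta_{n}-\delta_{\infty}\to 0$ weak$^{*}$ as $n\to\pm\infty$, so $0$ is in the weak$^{*}$ closure of $\{g\mu-g\nu:g\in G\}$ and the action is contracting. (The invariant measure $\delta_{\infty}$ exists all the same, just not for the reason you give.) Note also that noncontraction on $P(X)$ is not ``equivalent to the existence of an invariant mean on $\mathrm{WAP}(G)$'': the latter always holds --- that is the content of Ryll--Nardzewski's theorem --- while the former is a flow-dependent condition that can fail.

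The second route, by contrast, is sound: the identity $\{\lambda_{h}\Phi(f):h\in G\}=\Phi(\{hf:h\in G\})$, Grothendieck's criterion converting relative pointwise compactness of $\{hf\}$ in $C(X)$ into relative weak compactness, and the fact that bounded linear maps preserve relative weak compactness together show $\Phi(f)\in\mathrm{WAP}(G)$; the invariant mean on $\mathrm{WAP}(G)$ and Riesz representation then finish the job. Comparing with the paper: the paper passes to a minimal subflow, invokes Ellis--Nerurkar (Proposition II.8) to get equicontinuity, and then uses the classification of minimal equicontinuous flows as homogeneous spaces of compact groups, pushing forward Haar measure. Your approach transports the problem to $G$ via the orbit map and black-boxes the invariant mean on $\mathrm{WAP}(G)$, avoiding minimality and the structure theory of equicontinuous flows entirely. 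Both ultimately rest on Ryll--Nardzewski-type results and are legitimate imports; yours is arguably more self-contained given that Grothendieck's theorem is already in play in this section of the paper.
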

\begin{proof}  This is well-known within topological dynamics, but we nevertheless give an account with some references.  We may assume that $(X,G)$ is minimal  (by passing to a minimal subflow).  By Proposition II.8 of \cite{Ellis-Nerurkar}, the flow 
$(X,G)$ is almost periodic (also known as equicontinuous).  The minimal equicontinuous flows have been classified in \cite{Auslander} for example (see \cite[Chapter 3, Theorem 6]{Auslander}), as homogeneous spaces for compact groups (on which $G$ acts as subgroups of the compact groups in question), whereby the Haar measure induces the required $G$-invariant measure on $X$.
\end{proof}

We now pass to the model-theoretic context, which here means that we consider actions of a definable group $G(M)$ on a compact space $X$ which factor through $S_{G}(M)$.

\begin{thm}\label{theorem: equivalence of wap, definability, and stability}
 Let $M$ be a structure, $G(M)$ a group definable in $M$, and let a $G(M)$-flow $(X,G(M))$ be given, which factors through the action of $G(M)$ on $S_{G}(M)$ via a continuous surjective ($G(M)$-equivariant) map $\pi\colon S_{G}(M)\to X$. Consider the following three conditions:
\begin{enumerate}
\item[(i)] $(X,G(M))$ is wap,
\item[(ii)]  for each continuous function $F\colon  S_{G}(M) \to \R$ of the form $f\circ \pi$ for $f\colon X\to \R$ continuous, the CL-formula $F(yx)$  is stable in $M$,
\item[(iii)] the action of $G(M)$ on $X$ is definable.
\end{enumerate}
Then: 
\begin{enumerate}
\item[(a)] (i) and (ii) are equivalent, and imply (iii),
\item[(b)]  if $M$ is $\omega_{1}$-saturated, then  (i), (ii), (iii) are equivalent, and, moreover, in (ii) we have that $F(yx)$ is stable for $T$ (not just in $M$). 
\end{enumerate}
\end{thm}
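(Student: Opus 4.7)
My plan is to run each implication through Grothendieck's theorem in the form given by Propositions \ref{proposition: stability in a model} and \ref{proposition: saturation and definability implies stability}, exploiting that the realized types are dense in $S_G(M)$ and $\pi$ is continuous and surjective. For (i) $\Leftrightarrow$ (ii), fix a continuous $f\colon X \to \R$ and set $F := f \circ \pi$. Because $\pi$ is $G(M)$-equivariant, the orbit $\{gf : g \in G(M)\}$ pulls back to the orbit $\{gF : g \in G(M)\}$ in $C(S_G(M))$, and a pointwise limit of functions factoring through $\pi$ again factors through $\pi$. Hence $(X,G(M))$ is wap iff for every continuous $F$ factoring through $\pi$ the orbit $\{gF\}$ is relatively compact in the pointwise topology on $C(S_G(M))$. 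By Grothendieck's theorem (exactly as invoked in the proof of Proposition \ref{proposition: stability in a model}) this relative compactness of a bounded subset of $C(S_G(M))$ is equivalent to the double-limit condition tested on any dense subset of $S_G(M)$. Using realized types as the dense set and the identity $(gF)(\tp(h/M)) = F(\tp(gh/M)) = F(gh)$ for $g,h\in G(M)$, the condition reads: for all sequences $g_i,h_j$ in $G(M)$ the double limits $\lim_i\lim_j F(g_ih_j)$ and $\lim_j\lim_i F(g_ih_j)$ agree whenever both exist, which by Remark \ref{remark: equivalent definitions of stability}(ii) is exactly stability of $F(yx)$ in $M$. Letting $f$ range over $C(X)$ finishes the equivalence.

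For (ii) $\Rightarrow$ (iii), fix $x = \pi(p) \in X$ and a continuous $f\colon X\to \R$, set $F = f\circ\pi$, and let $\tilde p$ be a global coheir of $p$ over some $M^* \succ M$, i.e., $\tilde p$ is finitely satisfiable in $M$; pick $a \models \tilde p\restr M^*$. Applying Proposition \ref{proposition: stability in a model} to $\phi(x,y) := F(yx)$, which is stable in $M$ by (ii), yields that $\tp_\phi(a/M^*)$ is definable over $M$. Restricting its $M$-definition to $y \in G(M)$ shows that $g \mapsto F(ga) = F(g\cdot p) = f(g\cdot x)$ is a CL-formula over $M$. As this holds for every continuous $f$, the orbit map $g \mapsto g\cdot x$ is definable in the sense of Definition \ref{definition: definable function and action}, giving (iii).

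For part (b), both (i) $\Leftrightarrow$ (ii) and (ii) $\Rightarrow$ (iii) hold in general, so it remains to deduce (ii)—indeed stability of $F(yx)$ for $T$—from (iii) under $\omega_1$-saturation of $M$. For each $p \in S_G(M)$ and each continuous $f \colon X \to \R$, definability of the action at $\pi(p)$ means precisely that $g \mapsto F(g\cdot p)$ is a CL-formula over $M$; picking any $a \models p$ in the monster, this says that $\tp_{F(yx)}(a/M)$ is definable. Hence every complete $F(yx)$-type over the $\omega_1$-saturated model $M$ is definable, and Proposition \ref{proposition: saturation and definability implies stability} then yields that $F(yx)$ is stable for $T$. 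I expect the main obstacle to be the bookkeeping around left/right sides of the composite $F(yx)$ and verifying carefully that Grothendieck's double-limit condition tested on realized types translates into stability of $F(yx)$ in $M$; beyond this the argument is a direct application of the two Grothendieck-style propositions of Section \ref{Section 3}.
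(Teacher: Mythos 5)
Your proof is correct and takes essentially the same route as the paper: Grothendieck's theorem (Proposition \ref{proposition: stability in a model}) via pullback/pushforward along $\pi$ for (i)\,$\Leftrightarrow$\,(ii), coheir extensions plus Proposition \ref{proposition: stability in a model} and Urysohn for (ii)\,$\Rightarrow$\,(iii), and Proposition \ref{proposition: saturation and definability implies stability} for the upgrade to stability for $T$ in (b). The only small wrinkle is that the double-limit condition $\lim_i\lim_j F(g_ih_j)=\lim_j\lim_i F(g_ih_j)$ is literally stability of $F(xy)$ in $M$ rather than $F(yx)$; one must then invoke Remark \ref{remark: equivalent definitions of stability}(iii) to pass between the two, which is exactly the bookkeeping step the paper performs.
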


\begin{proof} (a)  Suppose $(X,G(M))$ is wap.  Let $F = f\circ\pi$ for some $f\in C(X)$.  Let  $h\colon S_{G}(M)\to \R$ be in the pointwise closure of $\{gF: g\in G(M)\}$.  Then clearly for any $p\in S_{G}(M)$, $h(p)$ depends only on $\pi(p)$, so $h = h_{1}\circ \pi$ for a unique $h_{1}\colon X\to \R$. But $h_{1}$ is in the pointwise closure of $\{gf:g\in G(M)\}$, so, by assumption, $h_{1}$ is continuous. Hence, $h$ is continuous.  
By the proof of Proposition \ref{proposition: stability in a model}, or, more precisely, by the equivalence of (i) and (ii)'' in there, the CL-formula $F(xy)$ is stable in $M$, and so is $F(yx)$ by Remark \ref{remark: equivalent definitions of stability}(iii).

The converse goes the same  way: Let $f\in C(X)$, and  $h\colon X\to \R$ be in the closure, again in the pointwise topology,  of $\{gf:g\in G(M)\}$.  Let $F  = f\circ\pi  \in C(S_{G}(M))$. Let $h_{1} = h\circ \pi$. Then clearly $h_{1}$ is in the closure of $\{gF: g\in G(M)\}$.  
As $F(yx)$ is assumed to be stable in $M$, by  Remark \ref{remark: equivalent definitions of stability}(iii) and the equivalence of (i) and (ii)'' in the proof of Proposition \ref{proposition: stability in a model}, $h_{1}$ is continuous, and so $h$ is continuous. 

So far we have shown (i) if and only if (ii).  We now show that either of these equivalent conditions imply that the action of $G(M)$ on $X$ is definable.
Let $x_{0}\in X$.\\[-2mm]

\begin{clm}
For any continuous function $f\colon X\to \R$, the function from $G(M)\to \R$ taking $g$ to $f(gx_{0})$ is definable  (over $M$). 
\end{clm}

\begin{clmproof} Let $p\in S_{G}(M)$ be such that $\pi(p) = x_{0}$.  Consider the lift $F$ of $f$ to $S_{G}(M)$ via $\pi$.  We use $x,y$ to denote variables of sort $G$.  By (ii), the formula $F(yx)$ (in variables $x,y$) is stable in $M$, so, by Proposition  \ref{proposition: stability in a model}, the function taking $g\in G(M)$ to $F(gp)$ is definable over $M$, namely induced by a CL-formula $\psi(y)$ over $M$.  But $F(gp) = f(gx_{0})$. Hence, the claim is proved. 
\end{clmproof}

Definability of the action of $G(M)$ on $X$ now follows from the claim and Urysohn's lemma:    Let $X_{0}$, $X_{1}$ be disjoint closed subsets of $X$. By Urysohn, there is a continuous 
function $f\in C(X)$ such that $f$ is $0$ on $X_{0}$ and $1$ on $X_{1}$.  By the claim, there is some definable (in $M$) subset $Z$ of $G(M)$, such that  for all $g\in G(M)$, if 
$f(gx_{0}) = 0$ then $g\in Z$, and if $f(gx_{0}) = 1$ then $g\notin Z$. But this implies that if $gx_{0}\in X_{0}$ then $g\in Z$, and if $gx_{0}\in X_{1}$ then $g\notin Z$. As 
$x_{0}\in X$ was arbitrary, this shows that the action of $G(M)$ on $X$ is definable.

\vspace{5mm}
\noindent
(b) We assume now that $M$ is $\omega_{1}$-saturated. All we have to do is to prove that (iii) implies the stronger version of (ii) (with stability for $T$).  Now, exactly as in the previous paragraph, definability of the action of $G(M)$ on $X$ means precisely that whenever $F\colon S_{G}(M) \to \R$ lifts some continuous function $f$ on $X$, then every complete $F(yx)$-type over $M$ is definable. By Proposition \ref{proposition: saturation and definability implies stability}, each such $F(yx)$ is stable (for $T$). 
\end{proof}

\begin{proof}[Proof of Theorem \ref{theorem: each saturated group is weakly definably amenable}] We may assume that  $X$ is a (definable) $G(M)$-ambit, in which case, by \cite{GPP} or \cite[Remark 3.2]{Kr}, the action factors through the action of $G(M)$ on $S_{G}(M)$. By Theorem \ref{theorem: equivalence of wap, definability, and stability}(b), and $\omega_{1}$-saturation of $M$,  the action of $G$ on $X$ is wap, so, by Fact \ref{fact: wap systems carry measures}, $X$ has a $G(M)$-invariant,  Borel probability measure. 
\end{proof}

\subsection{On universal ambits and minimal flows}

We give a description of the universal definable wap ambit and universal minimal definable wap flow for a group $G(M)$ definable in a structure $M$.  As seen by the material above, this is closely connected to stable group theory in the continuous logic sense, but unless $M$ is saturated enough, it will be stability in $M$. Actually, even in  the classical case,  stable group theory relative to a model $M$ (i.e. where relevant formulas $\phi(x,y)$ are stable in $M$) has not been written down, so it is not surprising if we happen to rely on the topological dynamical literature.   By $G$ we mean $G(M^{*})$ for a suitably saturated elementary extension $M^{*}$ of $M$. 

$M$ will be an arbitrary structure and $G(M)$ a group definable in $M$. Following on from notation in the previous section, if $F(x)$ is a CL-formula on $G$ (i.e. where the variable $x$ ranges over $G$) over $M$, then we will call $F$ {\em stable in $M$}  if the CL-formula $F(yx)$ (in variables $x,y$)  is stable in $M$. Let $\mathcal A$  be the collection (in fact algebra) of such stable in $M$, CL-formulas $F(x)$ on $G$.
Let ${\mathcal S}$ be the quotient of $S_{G}(M)$ by the closed equivalence relation $\sim_{{\mathcal A}}$ given by
$$p \sim_{{\mathcal A}} q \iff (\forall F \in {\mathcal A}) (F(p)=F(q)).$$ 
${\mathcal S}$ is naturally a compact space which we call the type space over $M$ of the stable in $M$, CL-formulas over $M$.
Let $\pi_{0}\colon S_{G}(M)\to\mathcal{S}$ be the canonical surjective continuous map. Note that $G(M)$ acts on $\mathcal S$, and that $\pi_{0}$ is a map of $G(M)$-flows (in fact ambits, where $\pi_{0}(e)$ is taken as the distinguished point of $\mathcal S$).  With the above notation we have:

\begin{prop}\label{proposition: universal definable wap ambit}
(i) $({\mathcal S}, G(M))$  is the (unique) universal definable wap ambit of $G(M)$.
\newline
(ii) 
 $G/G^{00}_{M}$ is the unique universal minimal definable wap flow of $G(M)$.
\end{prop}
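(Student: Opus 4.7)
For part (i), the plan is first to check that $\mathcal{A}$ is a closed, unital, $G(M)$-invariant subalgebra of $C(S_{G}(M))$. Closure of stability-in-$M$ of the auxiliary formula $F(yx)$ under sums, products, scalar multiplication and uniform limits is standard (Grothendieck's double-limit criterion in Remark~\ref{remark: equivalent definitions of stability}(ii) makes this transparent), and $G(M)$-invariance of $\mathcal{A}$ is automatic from the construction. Since $\mathcal{A}$ separates points of $\mathcal{S}$ by definition, Stone--Weierstrass identifies $C(\mathcal{S})$, pulled back along $\pi_{0}$, with $\mathcal{A}$. Consequently, every $f\in C(\mathcal{S})$ lifts to some $F\in\mathcal{A}$, and Theorem~\ref{theorem: equivalence of wap, definability, and stability}(a) (both the equivalence of (i) and (ii) and the implication to (iii)) then yields that $(\mathcal{S},G(M))$ is a definable wap $G(M)$-ambit.

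For the universal property in (i), let $(X,G(M))$ be any definable wap $G(M)$-ambit. Its definability forces the action to factor as a surjective ambit map $\pi\colon S_{G}(M)\to X$. For each $f\in C(X)$, Theorem~\ref{theorem: equivalence of wap, definability, and stability}(a) applied to $X$ shows $(f\circ\pi)(yx)$ is stable in $M$, so $f\circ\pi\in\mathcal{A}$ and hence factors through $\pi_{0}$. Running this over all $f\in C(X)$ and using compactness and Hausdorffness of $X$, $\pi$ itself factors as $\bar\pi\circ\pi_{0}$; this $\bar\pi$ is the required ambit map and is unique because $\pi_{0}$ is surjective.

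For (ii), the plan is to realise $G/G^{00}_{M}$ as the unique minimal subflow of $\mathcal{S}$. First, $G/G^{00}_{M}$, with distinguished point $eG^{00}_{M}$, is itself a definable wap minimal $G(M)$-ambit: $G^{00}_{M}$ is $M$-type-definable; left translation on a compact group is equicontinuous, hence wap by Proposition~II.8 of \cite{Ellis-Nerurkar}; and minimality follows because any proper closed subset of $G/G^{00}_{M}$ pulls back to an $M$-type-definable proper subset of $G$, whose complement is an $M$-definable set meeting $G(M)$. By (i), we obtain a surjective ambit map $\rho\colon\mathcal{S}\twoheadrightarrow G/G^{00}_{M}$. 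On the other hand, since $\mathcal{S}$ is wap, its enveloping semigroup is a compact (semi)topological group --- a classical feature of wap flows --- whence $\mathcal{S}$ has a unique minimal subflow $Y$, which is itself equicontinuous. By the classification of minimal equicontinuous flows (cf.\ \cite[Chapter~3, Theorem~6]{Auslander}, already used in Fact~\ref{fact: wap systems carry measures}), $Y$ is homogeneous under a compact group $K$ on which $G(M)$ acts through a continuous homomorphism with dense image, exhibiting $K$ as a definable group compactification of $G(M)$.

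The universal property of $G/G^{00}_{M}$ as the universal definable group compactification of $G(M)$ (Section~\ref{Section 1}, in the discrete case) then produces a continuous surjection $G/G^{00}_{M}\twoheadrightarrow K$, while $\rho$ restricted to $Y$ gives a surjection $Y\cong K\to G/G^{00}_{M}$ by minimality of $G/G^{00}_{M}$; combining these, $Y\cong G/G^{00}_{M}$ as $G(M)$-flows. Universality is then routine: any definable wap minimal $G(M)$-flow $Z$, equipped with any base point, is a definable wap ambit, hence a factor of $\mathcal{S}$; the minimal subflow $Y$ of $\mathcal{S}$ surjects onto a minimal subflow of $Z$, which equals $Z$ by minimality. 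The main obstacle is therefore the invocation of classical wap structure theory --- uniqueness of the minimal subflow and its compact-group structure --- rather than anything model-theoretic; the model-theoretic heart of the argument is entirely packaged inside Theorem~\ref{theorem: equivalence of wap, definability, and stability}.
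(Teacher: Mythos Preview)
Your argument for (i) is essentially the paper's: Stone--Weierstrass identifies $C(\mathcal{S})$ (pulled back) with $\mathcal{A}$, and then Theorem~\ref{theorem: equivalence of wap, definability, and stability}(a) does the rest, in both directions.

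For (ii) there is a genuine gap. You assert that the compact group $K$ under which $Y$ is homogeneous is a \emph{definable} group compactification of $G(M)$, i.e.\ that the homomorphism $G(M)\to K$ is definable in the sense of Definition~\ref{definition: definable function and action}. This is precisely the nontrivial model-theoretic step, and you give no argument for it; Auslander's classification alone says nothing about definability. The paper addresses exactly this point (for an arbitrary minimal definable wap flow $X$, with $K=E(X)$): $E(X)$ sits as a subflow of the product flow $X^{X}$, and a product of definable flows is definable \cite[Remark~1.12]{KrPi0}, so the map $G(M)\to E(X)$ is definable. Only then may one invoke \cite[Proposition~3.4]{GPP} to produce $G/G^{00}_{M}\to E(X)$.

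There are also smaller slips. The enveloping semigroup of a wap flow is a compact semitopological \emph{semigroup}, not a group; what you need is that it has a unique minimal ideal which is a compact group, and that is what yields the unique minimal subflow. You also write ``$Y\cong K$'' where you mean $Y\cong K/H$. Finally, ``combining these, $Y\cong G/G^{00}_{M}$'' is not automatic from surjections both ways; one should observe that the composite $G/G^{00}_{M}\to Y\to G/G^{00}_{M}$ is a $G(M)$-equivariant continuous self-map of a compact group with dense $G(M)$-image, hence a right translation and therefore a homeomorphism, which then forces $G/G^{00}_{M}\to Y$ to be injective.

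Structurally the paper is more direct in (ii): it never routes through $\mathcal{S}$ or its minimal subflow. It first checks that $G/G^{00}_{M}$ is minimal definable wap (wap via a short indiscernibility argument showing the relevant $F(yx)$ is stable for $T$, rather than your equicontinuity route), and then, given \emph{any} minimal definable wap flow $X$, uses Glasner to write $X\cong E(X)/H$, proves $G(M)\to E(X)$ is definable via the $X^{X}$ embedding, and applies the universal property of $G/G^{00}_{M}$ to map onto $X$. This avoids the back-and-forth with $Y$ entirely.
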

\begin{proof} Let  us first note:\\[-2mm]

\begin{clm}
With above notation, a continuous function $F\colon S_{G}(M)\to \R$ is stable in $M$  if and only if it is induced, via $\pi_{0}$,  by a continuous function from $\mathcal S$ to $\R$.  
\end{clm}

\begin{clmproof} 
%
This follows from the Stone-Weierstrass theorem and the easy fact that ${\mathcal A}$ is a closed subalgebra of the Banach algebra $C(S_G(M))$ of all real valued continuous functions on $S_G(M)$ (where $C(S_G(M))$ is equipped with the uniform convergence topology).
\end{clmproof}
\noindent
(i) follows easily from the claim and previous results.  First, by Theorem \ref{theorem: equivalence of wap, definability, and stability} and the claim, $({\mathcal S}, G(M))$ (with distinguished point $s_{0} = \pi_{0}(e)$)  is definable and wap.  Secondly, suppose $(X,G(M))$ is a definable wap ambit with distinguished point $x_{0}$, and corresponding $\pi \colon S_{G}(M)\to X$ (taking $e$ to $x_{0}$).  By Theorem \ref{theorem: equivalence of wap, definability, and stability} again, for every continuous function $f$ on $X$, $F= f\circ \pi$ is stable, hence is in the algebra $\mathcal A$.  This easily induces a surjective, continuous, $G(M)$-equivariant map from $\mathcal S$ to $X$ taking $s_{0}$ to $x_{0}$.
\newline
(ii)  The action of $G(M)$ on $G/G^{00}_{M}$ is induced by multiplication on the left. Clearly every orbit is dense (i.e. the $G(M)$-flow $G/G^{00}_M$ is minimal), in particular the image of $G(M)$ in $G/G^{00}_{M}$ under the canonical homomorphism $\iota$
  taking $g$ to $g/G^{00}_{M}$ is dense.  The action factors through the type space. Why is it wap?  Let $f$ be a continuous function from $G/G^{00}_{M}$ to $\R$, and $F\colon G\to \R$ be  $f\circ\pi$, where $\pi\colon G\to G/G^{00}_{M}$ is the canonical homomorphism.  
So $F$ is a CL-formula on  $G$ over $M$. 
We claim that the CL-formula $F(yx)$ in variables $x,y$  (so on $G\times G$) is stable for the theory, in particular $F$ is stable in $M$. 
If not, we can  find a large indiscernible over $M$ sequence  $((g_{i},h_{i}): i\in I)$ such that for $i<j$, $F(g_{i}h_{j}) \neq  F(g_{j}h_{i})$; but this is 
impossible, as $\tp(g_i/M)=\tp(g_j/M)$ and  $\tp(h_i/M)=\tp(h_j/M)$ implies $\pi(g_i)=\pi(g_j)$ and   $\pi(h_i)=\pi(h_j)$, and so $\pi(g_{i}h_{j}) =  \pi(g_{j}h_{i})$.
Thus, using Theorem \ref{theorem: equivalence of wap, definability, and stability}, we have shown that the action of $G(M)$ on $G/G^{00}_{M}$ is a minimal  wap flow which factors through $S_{G}(M)$, so is also definable by Theorem \ref{theorem: equivalence of wap, definability, and stability}.

To see that it is universal such, we will appeal  again to the topological dynamics literature.  So let $(X,G(M))$ be a minimal definable wap flow. As already remarked, we deduce from II.8 of \cite{Ellis-Nerurkar} that the flow  $(X,G(M))$ is equicontinuous. By Theorem 3.3 from Chapter I of \cite{Glasner}, the Ellis semigroup  $E(X)$ is a compact topological group acting by homeomorphisms
on $X$, and, moreover, $(X,G(M))$ is isomorphic to $E(X)/H$ for a suitable closed subgroup $H$ of  $E(X)$ (with the action of $G(M)$ on $E(X)/H$ given by $g(\eta H)=(g\eta)H$).  So it  remains to show that the natural homomorphism $h\colon G(M) \to E(X)$ is definable, because in that case  $E(X)$ will be a definable group compactification  of $G(M)$  (in the sense of  \cite{GPP}) and we know from Proposition 3.4 of \cite{GPP}  that $G/G^{00}_{M}$ is the universal such definable group compactification. 

The fact that $h\colon G(M)\to E(X)$ is definable follows from the fact that $E(X)$ is a subflow of the product
$G(M)$-flow $X^X$ which is definable (because a product of definable flows is always definable \cite[Remark 1.12]{KrPi0}).

Finally, the uniqueness of a universal minimal definable wap $G(M)$-flow follows from the observation every endomorphism of the $G(M)$-flow $G/G^{00}_{M}$ is an automorphism, namely it is right translation by an element of $G/G^{00}_{M}$.
%
%
%
\end{proof}

The above proposition together with Theorem \ref{theorem: equivalence of wap, definability, and stability} yields

\begin{cor}\label{corollary: universal definable ambit in the saturated context} When $M$ is $\omega_{1}$-saturated, the universal definable wap ambit coincides with the universal definable ambit and can be described as the type space of the collection (algebra) of CL-formulas $F$ on $G$ over $M$ which are stable (for $T$). 
\end{cor}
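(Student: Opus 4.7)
The plan is to combine Proposition \ref{proposition: universal definable wap ambit}(i), which identifies the universal definable wap ambit with the type space $\mathcal{S}$ of the algebra $\mathcal{A}$ of CL-formulas $F$ on $G$ over $M$ with $F(yx)$ stable in $M$, together with Theorem \ref{theorem: equivalence of wap, definability, and stability}(b), which under $\omega_1$-saturation upgrades both the wap conclusion and the stability conclusion: every definable action factoring through $S_G(M)$ is automatically wap, and the associated lifted formulas are stable for $T$ (not merely in $M$).

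For the coincidence with the universal definable ambit, I would observe that the universal definable $G(M)$-ambit $S_G(M)/E$ is by construction a definable $G(M)$-ambit factoring through $S_G(M)$, so Theorem \ref{theorem: equivalence of wap, definability, and stability}(b) forces it to be wap. Hence it is already a definable wap ambit; universality in both the definable and the definable wap categories then produces a pair of $G(M)$-equivariant continuous base-point preserving surjections between $S_G(M)/E$ and $\mathcal{S}$ that must be inverse isomorphisms.

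For the description as the type space of CL-formulas stable for $T$, the key observation is that by the Stone--Weierstrass-style claim inside the proof of Proposition \ref{proposition: universal definable wap ambit}, $\mathcal{A}$ consists precisely of the continuous functions on $S_G(M)$ pulled back from $C(\mathcal{S})$ along the canonical map $\pi_0\colon S_G(M)\to\mathcal{S}$. Since the $G(M)$-action on $\mathcal{S}$ is definable, the strengthened part of Theorem \ref{theorem: equivalence of wap, definability, and stability}(b) applies to every $F = f\circ \pi_0$ with $f\in C(\mathcal{S})$, so each such $F$ has $F(yx)$ stable for $T$; the reverse inclusion is trivial. This identifies $\mathcal{A}$ with the algebra of CL-formulas $F$ over $M$ with $F(yx)$ stable for $T$, yielding the description in the corollary. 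The only nontrivial ingredient is Theorem \ref{theorem: equivalence of wap, definability, and stability}(b) itself, whose $\omega_1$-saturation hypothesis is precisely what legalises the upgrade from ``stable in $M$'' to ``stable for $T$''.
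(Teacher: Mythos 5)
Your argument is correct and it follows exactly the route the paper intends: the paper gives no explicit proof beyond ``The above proposition together with Theorem \ref{theorem: equivalence of wap, definability, and stability} yields,'' and your unpacking — using Proposition \ref{proposition: universal definable wap ambit}(i) for the identification of the universal definable wap ambit with $\mathcal{S}$, Theorem \ref{theorem: equivalence of wap, definability, and stability}(b) to show every definable $G(M)$-ambit factoring through $S_G(M)$ is wap (giving the coincidence via the uniqueness of morphisms out of a universal ambit), and the Stone--Weierstrass claim together with the ``stable for $T$'' strengthening in (b) to identify $\mathcal{A}$ with the stable-for-$T$ algebra — is precisely the intended chain of reasoning.
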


We can also give a description of the universal definable $G(M)$-ambit for an arbitrary (not necessarily $\omega_1$-saturated) $M$. For this recall that in the proof of Theorem \ref{theorem: equivalence of wap, definability, and stability} (Claim 1 and the paragraph afterwards; see also the proof of (b)) we showed that definability of the action of $G(M)$ on $X$ means precisely that whenever $F\colon S_{G}(M) \to \R$ lifts some continuous function $f$ on $X$, then every complete $F(yx)$-type over $M$ is definable. Thus, applying Stone-Weierstrass as in the proof of Claim 1 in the proof of Proposition \ref{proposition: universal definable wap ambit} and following the lines of the easy proof of item (i) of this proposition, we get

\begin{cor}
Let ${\mathcal D}$ be the quotient of $S_G(M)$ corresponding to the algebra ${\mathcal B}$ of all CL-formulas $F(x)$ on $G$ over $M$ for which every complete $F(yx)$-type over $M$ is definable. Then $G(M)$ acts naturally on ${\mathcal D}$, and $({\mathcal D},G(M))$ is the universal definable ambit.
\end{cor}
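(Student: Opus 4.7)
The plan is to mimic the proof of Proposition \ref{proposition: universal definable wap ambit}(i), replacing the algebra $\mathcal{A}$ of CL-formulas stable in $M$ with the larger algebra $\mathcal{B}$. First I would verify that $\mathcal{B}$ is a norm-closed, $G(M)$-invariant subalgebra of $C(S_G(M))$ containing the constants. Closure under addition, scalar multiplication and multiplication is routine: if the $F_i(yx)$-type of $a$ over $M$ is given by a CL-formula $\psi_i(y)$ over $M$, then the corresponding types for $F_1+F_2$, $cF_1$ and $F_1F_2$ are defined by $\psi_1+\psi_2$, $c\psi_1$ and $\psi_1\psi_2$ respectively, which are again CL-formulas over $M$. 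Norm-closure uses that a uniform limit of definable (in the CL sense) formulas over $M$ is again definable over $M$. Invariance under $g\in G(M)$ follows because the $(gF)(yx)$-type of $a$ over $M$ coincides with the $F(yx)$-type of $ga$, which is definable over $M$ precisely because $g\in G(M)$.

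Next, Gelfand duality (or Stone--Weierstrass) applied to $\mathcal{B}\subseteq C(S_G(M))$ produces a compact Hausdorff space $\mathcal{D}$ together with a continuous surjection $\pi_0\colon S_G(M)\to\mathcal{D}$ such that $\mathcal{B}=\{f\circ\pi_0:f\in C(\mathcal{D})\}$. The $G(M)$-invariance of $\mathcal{B}$ forces the action on $S_G(M)$ to descend to a $G(M)$-action by homeomorphisms on $\mathcal{D}$, with distinguished point $d_0:=\pi_0(\tp(e/M))$, turning $(\mathcal{D},d_0,G(M))$ into a $G(M)$-ambit and $\pi_0$ into a morphism of ambits.

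Then I would invoke the equivalence already established in the proof of Theorem \ref{theorem: equivalence of wap, definability, and stability} (Claim 1 together with the Urysohn argument that follows it): for any $G(M)$-ambit $(X,x_0)$ factoring through $S_G(M)$ via $\pi\colon S_G(M)\to X$, the action of $G(M)$ on $X$ is definable if and only if every $f\in C(X)$ lifts to $F:=f\circ\pi$ whose $F(yx)$-types over $M$ are all definable, equivalently $F\in\mathcal{B}$. Applied to $(\mathcal{D},G(M))$ itself this shows the ambit is definable, since by construction $f\circ\pi_0\in\mathcal{B}$ for every $f\in C(\mathcal{D})$. For universality, given any definable $G(M)$-ambit $(X,x_0)$ with the canonical map $\pi\colon S_G(M)\to X$ (every ambit factors through $S_G(M)$; cf.\ \cite[Remark 3.2]{Kr}), every pullback $f\circ\pi$ lies in $\mathcal{B}$ and hence factors through $\pi_0$; Gelfand duality then delivers a continuous $G(M)$-equivariant surjection $\mathcal{D}\to X$ sending $d_0$ to $x_0$.

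The main obstacle, such as it is, is largely notational: the substantive equivalence between definability of an action through $S_G(M)$ and definability of the corresponding $F(yx)$-types over $M$ is the real content and was already extracted in Theorem \ref{theorem: equivalence of wap, definability, and stability}, so here the work reduces to the (routine) algebra-closure verifications for $\mathcal{B}$ and the passage through Gelfand duality.
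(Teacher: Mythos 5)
Your proposal takes the same route the paper indicates for this corollary: invoke the equivalence established in the proof of Theorem \ref{theorem: equivalence of wap, definability, and stability} (definability of the action $\Leftrightarrow$ all lifted $F(yx)$-types over $M$ are definable), check that ${\mathcal B}$ is a norm-closed $G(M)$-invariant subalgebra of $C(S_G(M))$, apply Stone--Weierstrass/Gelfand to obtain ${\mathcal D}$, and then run the universality argument exactly as in Proposition \ref{proposition: universal definable wap ambit}(i). That is correct in substance.

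One detail is off. Under the paper's convention, $\tp_{F(yx)}(a/M)$ is the function $b\mapsto F(ba)$ with $a$ plugged into the $x$-slot. Then $\tp_{(gF)(yx)}(a/M)(b)=F(gba)$, while $\tp_{F(yx)}(ga/M)(b)=F(bga)$, so these two $\phi$-types do \emph{not} coincide in general; moreover, if they did, the hypothesis $g\in G(M)$ would never be needed, since $F\in{\mathcal B}$ already forces all $F(yx)$-types to be definable. The correct argument is: if $\psi(y)$ over $M$ defines $\tp_{F(yx)}(a/M)$, then $\tp_{(gF)(yx)}(a/M)(b)=\psi(gb)$, and $\psi(gy)$ is again a CL-formula over $M$ precisely because $g\in G(M)$. (One also needs $g\in G(M)$ just so that $gF$ is a CL-formula over $M$ at all.) This is a local fix and does not affect the rest of your argument.
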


Finally, as promised in the introduction, we give a negative answer to Problem 4.11 (1) from \cite{Kr}, concerning whether the natural map from $S_{G}(M)$ to $G/G^{000}_{M}$, given by  $\tp(g/M) \mapsto g/G^{000}_{M}$,  factors through the universal definable ambit. When $M$ is sufficiently saturated, Corollary \ref{corollary: universal definable ambit in the saturated context} says that the universal definable ambit is precisely the universal definable wap ambit. So we consider, as in the proof of Corollary \ref{corollary: conjecture 0.3 from [KrPi] is false}, a group $G$ definable in a countable NIP theory $T$ such that $G^{000}\neq G^{00}$.  Let $M$ be an $\omega_{1}$-saturated model over which $G$ is defined.  
Then $\mathcal S$, as defined above, is, by Proposition \ref{proposition: universal definable wap ambit},  the universal definable wap ambit of $G(M)$, and likewise, the universal minimal definable wap flow of $G(M)$ is $G/G^{00}_{M}$. 
The natural map $f\colon S_{G}(M) \to G/G^{000}_{M}$ referred to above takes $\tp(g/M)$ to $g/G^{000}_{M}$. 
If $I$ is a minimal subflow of $S_{G}(M)$, then $f[I]=G/G^{000}_{M}$. Indeed, take any $p =\tp(a/M) \in I$. Then $I$ is the closure of $\{ \tp(ga/M): g \in G(M)\}$.  Consider any $b \in G$, and choose $b' \equiv_M b$ with $\tp(b'/Ma)$ finitely satisfiable in $M$. It is easy to see that $\tp(b'a/M) \in I$. Hence, $ba/G^{000}_M = b'a/G^{000}_M \in f[I]$, which is enough.
 
Following earlier notation, let $\pi_{0}$ be the canonical map from $S_{G}(M)$ to $\mathcal S$.  
Then $\pi_{0}[I]$ is a minimal subflow of $\mathcal S$. 

Now, suppose for a contradiction that $f \colon S_{G}(M) \to G/G^{000}_{M}$ factors through $\pi_{0}$, i.e. there is a unique map $f_0 \colon {\mathcal S} \to G/G^{000}_M$ with $f=f_0\circ \pi_0$. Let $f_1 \colon G/G^{000}_M \to G/G^{00}_M$ be the obvious map sending $g/G^{000}_{M}$ to $g/G^{00}_{M}$, and put $f_2:= f_1 \circ f_0 \colon {\mathcal S} \to G/G^{00}_{M}$. All these maps are clearly $G(M)$-equivariant, and $f_2$ is a flow epimorphism. Since $\pi_0[I]$ is a minimal definable wap $G(M)$-flow, and $G/G^{00}_M$ is universal such (and with the property that each flow endomorphism is an automorphism), we get that $f_2|\pi_0[I]$ is an isomorphism. On the other hand, since $f|I$ is surjective, so is $f_0 |\pi_0[I]$. Therefore, $f_1$ is injective, hence $G^{00}_{M} = G^{000}_{M}$, a contradiction.


\section{Approximate subgroups and connected components}\label{Section 4}

As discussed in the introduction, the main goal here is to refute 
Wagner's conjecture on the existence of $H^{00}_\emptyset$ for $H:=\langle X \rangle $, where $X$ is an approximate subgroup\footnote{The definitions of an approximate subgroup 
and of ``existence of $H^{00}_\emptyset$'' are recalled in the introduction, p. \pageref{page: approximate subgroup}.} (in a monster model).
 We also clarify connections between approximate subgroups and the equality $H^{00}_\emptyset = H^{000}_\emptyset$.

\subsection{Connected components and thick sets}\label{section: thick}

We work in a monster model $\C$ of a first order theory $T$,

By a {\em $\bigvee$-definable group} we mean a group $(H,\cdot)$ of the form $\bigcup H_n$, where $(H_n)_{n < \omega}$ is an increasing union of definable sets, and $\cdot \colon H_n \times H_n \to H_{n+1}$ and  $^{-1} \colon H_n \to H_n$ are definable for every $n$. We will be interested in the special case where $G$ is a $\emptyset$-definable group, $X$ a $\emptyset$-definable approximate subgroup, and $H:= \langle X \rangle$; so here we can take $H_n: =X^{2^n}$. 
Note that, by compactness, a definable subset of $H$ is always contained in some power of $X$.

Recall again that if $H$ is definable, then a definable, symmetric subset $D$ of $H$ is called {\em thick} if there is $m$ such that for every $h_0,\dots,h_{m-1}$ there are $i<j<m$ with $h_i^{-1}h_j \in D$. 

If $H$ is a $\emptyset$-definable group, then $H^{000}_\emptyset$ exists and is generated by the intersection of all $\emptyset$-definable thick sets \cite[Lemmas 2.2 and 3.3]{Gis}. Now, we want to generalize it to $\bigvee$-definable groups.


From now on, let $X$ be a $\emptyset$-definable approximate subgroup, and $H:= \langle X \rangle$. In this $\bigvee$-definable context, we modify the definition of thick sets as follows.

\begin{dfn}\label{definition: thick}
A definable, symmetric subset $D$ of $H$ is {\em thick} if for every unbounded sequence $(h_i)_{i<\lambda}$ of elements of $H$ there are $i<j<\lambda$ with $h_i^{-1}h_j \in D$.
\end{dfn}

Using compactness, one can easily show 

\begin{rem}\label{remark: thick} A definable, symmetric subset $D$ of $H$ is thick if and only if  for every $n \geq 1$ there exists $m \in \N$ 
such that for every $h_0,\dots,h_{m-1} \in X^n$ there are $i<j<m$ with $h_i^{-1}h_j \in D$. 
\end{rem}

Using this remark together with finite Ramsey theorem (exactly as in the proof of \cite[Lemma 1.2]{Gis2}), we get that the class of thick subsets of $H$ is closed under finite intersections. Remark \ref{remark: thick} also implies that in Definition \ref{definition: thick}(2) the adjective ``unbounded'' can be replaced by ``uncountable''. 

Recall that a definable subset $D$ of $H$ is {\em generic} if finitely many left translates of $D$ by elements of $H$ cover $X$. 
As $X$ is an approximate subgroup, $D$ being generic is equivalent to any of the conditions:
\begin{enumerate}
\item for every $n$ finitely many left translates of $D$ cover $X^n$,
\item every definable subset of $H$ is covered by finitely many left translates of $D$,
\item  countably many left translates of $D$ cover $H$.
\end{enumerate}
Note that if $D \subseteq H$ is generic, then $D^{-1}D$ is thick. Conversely, if $D\subseteq H$ is thick, then $D$ is generic.

For $a,b \in \C$ we write $a\Theta b$ if the pair $(a,b)$ can be extended to an infinite indiscernible sequence. It is well-known and easy to check that
the transitive closure of $\Theta$ is the finest bounded, invariant equivalence relation on $\C$. This relation is said to be the relation of having the same {\em Lascar strong type} and is denoted by $E_L$. It is also clear that for any invariant subset $Y$ of $\C$, the relation $E_L$ restricted to $Y$ is the finest bounded, invariant equivalence relation on $Y$, and that it is the transitive closure of the relation $\Theta$ restricted to $Y$.

Now, in the context of our $H = \langle X \rangle$, which is clearly invariant, we easily get that $H^{000}_\emptyset$ exists and is exactly the group generated by the set 
\[P:=\{a^{-1}b: a,b \in H \textrm{ and } a\Theta b\}.\]
Namely, we have


\begin{prop}\label{proposition: H000=<P>}
 $H^{000}_\emptyset = \langle P \rangle$
\end{prop}

\begin{proof}
 The group $\langle P \rangle$ is clearly invariant and of bounded index, because the induced relation of lying in the same left coset is coarser than $E_L$ (restricted to $H$). And, on the other hand, for every bounded index, invariant subgroup $K$ of $H$ the relation of lying in the same left coset of $K$ is bounded and invariant so coarser than $E_L$, and hence $a^{-1}b \in K$ for every $a,b \in H$ with $a \Theta b$. Thus, $\langle P \rangle$ is indeed the smallest invariant subgroup of $K$ of bounded index, which is $H^{000}_\emptyset$ by definition.
\end{proof}


\begin{lem}\label{lemma: P=P_X}
$P=   \{a^{-1}b: a,b \in X \textrm{ and } a\Theta b\}$. In particular, $P$ is $\emptyset$-type-definable and symmetric.
\end{lem}

\begin{proof}
Let $P_X:= \{a^{-1}b: a,b \in X \textrm{ and } a\Theta b\}$. The inclusion $P \supseteq P_X$ is obvious. 

For the opposite inclusion, consider any $a,b \in H$ with $a \Theta b$. We need to show that $a^{-1}b \in P_X$. Since $X$ is a generic subset of $H$, we can find a countable $C \subseteq H$ with $CX =H$. Let $(a_i)_{i<\omega}$ be an indiscernible sequence such that $a_0=a$ and $a_1=b$. By a standard application of Ramsey theorem and compactness, we can find a sequence $(a_i')_{i<\omega}$ which is indiscernible over $C$ and has the same type over $\emptyset$ as $(a_i)_{i<\omega}$. 

By indiscernibility over $C$ and the choice of $C$, there are $c \in C$ and an indiscernible sequence $(a_i'')_{i<\omega}$ of elements of $X$ such that $a_i' = c a_i''$ for all $i< \omega$. Then $a_0'^{-1}a_1' =  a_0''^{-1}a_1'' \in P_X$. Since $P_X$ is clearly invariant and $a_0'^{-1}a_1' \equiv a_0^{-1}a_1$, we get that $a_0^{-1}a_1 \in P_X$, i.e. $a^{-1}b \in P_X$.

The ``in particular'' part is now clear, as $P_X$ is easily seen to be $\emptyset$-type-definable and, of course, $P$ is symmetric.
\end{proof}

\begin{prop}\label{proposition: P intersection of thick sets}
$P$ coincides with the intersection of all $\emptyset$-definable, thick subsets of $H$.
\end{prop}

\begin{proof}
$(\subseteq)$ Take $\alpha \in P$, i.e. $\alpha :=a^{-1}b$ for some $a,b \in H$ starting an infinite indiscernible sequence $a_0=a, a_1=b, a_2,\dots$. Consider any  $\emptyset$-definable, thick subset $D$ of $H$. By compactness, we can extend $(a_i)_{i<\omega}$ to an unbounded indiscernible sequence $(a_i)_{i<\kappa}$. Then there are $i<j$ such that $a_i^{-1}a_j \in D$, and so $\alpha= a_0^{-1}a_1 \in D$ by invariance of $D$.

($\supseteq$) By Lemma \ref{lemma: P=P_X}, $P$ is a $\emptyset$-type-definable, symmetric subset of $H$, so it can be written as the intersection of a family $\{D_k\}_{k}$ of $\emptyset$-definable, symmetric subsets of $H$.

Observe that there is no unbounded sequence $(a_i)_{i<\lambda}$ of elements of $H$ such that $a_i^{-1}a_j \notin P$ for all $i<j<\lambda$.  Otherwise, by extracting indiscernibles, there is an unbounded indiscernible sequence $(a_i')_{i<\kappa}$ with $(a_0',a_1') \equiv (a_i,a_j)$ for some $i<j$. Then all $a_i'$ are in $H$ and $a_0'^{-1}a_1' \notin P$, a contradiction with the definition of $P$.  Thus, for every $k$, the set $D_{k}$ is thick. So the intersection of all $\emptyset$-definable, thick subsets of $H$ is contained in $P$.
\end{proof}

For completeness note that whenever $H^{00}_\emptyset$ exists, then $H^{000}_\emptyset \leq H^{00}_\emptyset$.

\subsection{Equivalent conditions}

Let $X$ be a $\emptyset$-definable approximate subgroup (in a $\emptyset$-definable group $G$), everything in a monster model. As before, $H := \langle X \rangle $. Let $m \geq 1$.
We are interested in the following conditions:

\[ (\diamond) \;\;\;\;\; H^{000}_\emptyset = H^{00}_\emptyset,\]
\[ (\diamond \diamond)_m \;\;\;\;\; H^{00}_\emptyset \subseteq Y^m \textrm{ for every $\emptyset$-definable, generic, symmetric subset $Y$ of $H$},\]
\[  (\diamond \diamond \diamond)_m \;\;\;\;\; H^{000}_\emptyset \subseteq Y^m \textrm{ for every $\emptyset$-definable, generic, symmetric subset $Y$ of $H$}.\]

The property $(\exists m) (\diamond \diamond)_m$ was crucial in \cite{MaWa} to find a ``locally compact model'' for $X$ under the definable amenability assumption, and in \cite{KrPi} as well as in Section \ref{Section 2} of this paper to prove the appropriate variants of the equality $H^{000}_\emptyset =H^{00}_\emptyset$ in the case when $H$ is definable and satisfies various kinds of amenability assumptions.

Here, we prove

\begin{prop}\label{proposition: three equivalent conditions}
The following conditions are equivalent:
\begin{enumerate}
\item  $H^{000}_\emptyset = H^{00}_\emptyset$,
\item $(\exists m) (\diamond \diamond)_m$,
\item  $(\exists m) (\diamond \diamond \diamond)_m$.
\end{enumerate}
\end{prop}

\begin{proof}
(1) $\rightarrow$ (2). Assume  that $H^{000}_\emptyset = H^{00}_\emptyset$, so $H^{000}_\emptyset$ is type-definable. On the other hand, by Propositions \ref{proposition: H000=<P>} and \ref{proposition: P intersection of thick sets}, we know that $H^{000}_\emptyset = \langle P \rangle$ and $P$ is type-definable and symmetric. Therefore, by \cite[Theorem 3.1]{Ne}, we get that $H^{000}_\emptyset =P^k$ for some $k \geq 1$. We will show that $(\diamond \diamond)_{2k}$ holds.

So take any $\emptyset$-definable, generic, symmetric subset $Y$ of $H$. Then $Y^2$ is thick. Hence, $P \subseteq Y^2$ by Proposition \ref{proposition: P intersection of thick sets}. Thus, $P^k \subseteq Y^{2k}$, i.e. $H^{00}_\emptyset \subseteq Y^{2k}$ by the previous paragraph. 

(2) $\rightarrow$ (3) is trivial.

(3) $\rightarrow$ (1). 
Let $m \geq 1$ be such that $(\diamond \diamond \diamond)_m$ holds. Then, by Proposition \ref{proposition: P intersection of thick sets}, compactness, and the fact that the class of thick subsets of $H$ is closed under finite intersections, we get  $H^{000}_\emptyset \subseteq P^m$. On the other hand, by Proposition \ref{proposition: H000=<P>},  $H^{000}_\emptyset$ is the union of the $P^n$'s. Thus, we conclude that $H^{000}_\emptyset =P^m$, so $H^{000}_\emptyset$ is type-definable and hence equals $H^{00}_\emptyset$.
\end{proof}

So the conditions $(\exists m)(\diamond \diamond)_m$ and $(\exists m)(\diamond \diamond \diamond)_m$ fail in any example where $H^{000}_\emptyset \ne H^{00}_\emptyset$. As mentioned in the introduction, \cite{Gismatullin-Krupinski} yields many such examples with definable $H$, e.g. $X=H$ being a sufficiently saturated extension of the free group $\F_2$ considered with the full structure (i.e. with predicates for all subsets of all finite Cartesian powers).  This already shows that there are $\emptyset$-definable approximate subgroups $Y_m$, $m \in \N$, of the monster model $\F_2^*$ (which are generic subsets of $\F_2^*$ so generate $\emptyset$-definable, finite index subgroups in finitely many steps) such that $(H_m)^{00}_\emptyset=(\F_2^*)^{00}_\emptyset \nsubseteq Y_m^m$, where $H_m: = \langle Y_m \rangle$. On the other hand, under the definable amenability assumption, the above equivalent conditions hold (as $(\diamond \diamond)_8$ holds by \cite{MaWa} and \cite[Theorem 5.2]{Mas}). 

The above equivalent conditions imply in particular that $H^{00}_\emptyset$ exists.  As mentioned in the introduction, if $H^{00}_\emptyset$ exists, then $H^{00}_\emptyset \subseteq X^m$ (so also  $H^{000}_\emptyset \subseteq X^m$) for some $m \geq 1$.

\begin{rem}
If $H^{00}_\emptyset$ exists, then for every $\emptyset$-definable, generic, symmetric subset $Y$ of $H$ and $H':= \langle Y \rangle$, the component $H'^{00}_\emptyset$ also exists and coincides with $H^{00}_\emptyset$.
\end{rem}

\begin{proof}
$H'$ is the increasing union $\bigcup_{n \geq 1} Y^n$ and it has bounded index in $H$. Hence, $H^{00}_\emptyset \cap H'$ is the  increasing union $\bigcup_{n \geq 1} H^{00}_\emptyset \cap Y^n$ and the index $[H^{00}_\emptyset : H^{00}_\emptyset \cap H']$ is bounded.
Therefore, both $H^{00}_\emptyset \cap H'$ and $H^{00}_\emptyset \setminus H'$ are $\bigvee$-relatively definable subsets of $H^{00}_\emptyset$, and so, by compactness, they are relatively definable in $H^{00}_\emptyset$. In particular, $H^{00}_\emptyset \cap H'$ is $\emptyset$-type-definable and of bounded index in $H'$ so also in $H$. Hence, $H^{00}_\emptyset \cap H' = H^{00}_{\emptyset}$ does not have proper $\emptyset$-type-definable subgroups of bounded index. Thus,  $H^{00}_\emptyset = H'^{00}_\emptyset$.
\end{proof}

\begin{cor}\label{corollary: H00 exists implies its is contained in Ym}
If $H^{00}_\emptyset$ exists, then for every for every $\emptyset$-definable, generic, symmetric subset $Y$ of $H$ there exists $m \geq 1$ such that $H^{00}_\emptyset \subseteq Y^m$ (so also  $H^{000}_\emptyset \subseteq Y^m$).
\end{cor}

We leave as a question whether the converse holds, i.e.

\begin{ques}
 Assume that for every $\emptyset$-definable, generic, symmetric subset $Y$ of $H$ there exists $m \geq 1$ such that $H^{000}_\emptyset \subseteq Y^m$. Does it imply that $H^{00}_\emptyset$ exists?
\end{ques}

\subsection{A counter-example}\label{section: counter-example}

In this subsection, we we give an example of a $\emptyset$-definable approximate subgroup $X$ for which $H^{00}_\emptyset$ does not exist (where $H = \langle X \rangle$).

A function $f \colon G \to A$ from a group $G$ to an abelian group $A$ is called a {\em quasi-homomorphism} if the image of the function $f'(x,y): = f(x) + f(y) -f(xy)$ is finite. Whenever $f$ is a quasi-homomorphism satisfying $f(g^{-1})=-f(g)$ for all $g \in G$, then the graph of $f$ is an approximate subgroup in $G \times A$, because it is symmetric and $\graph(f)^2 \subseteq (\{e\} \times \im(f')) \cdot \graph(f)$.

Let $\F_2$ be the free group with the free generators $a$ and $b$.  Let $\alpha, \beta \in l^{\infty}_{\odd}(\Z)$, where $l^{\infty}_{\odd}(\Z)$ is the set of bounded, odd, integer valued functions on $\Z$. Let $f \colon \F_2 \to \Z$ be given by 
\[ f(a^{n_1}b^{m_1} \dots a^{n_k}b^{m_k}):= \sum_{i=1}^k \alpha(n_i) + \beta(m_i),\]
where $n_i \ne 0$ for $1<i \leq k$ and $m_i \ne 0$ for $1 \leq i <k$. By \cite[Proposition 4.1]{Ro}, $f$ is a quasi-homomorphism. Let us choose $\alpha$ which is non-zero for almost all arguments $n \in \Z$. For example, one can take $\alpha =\beta := \sgn$, where $\sgn(n)$ is $1$ if $n$ is postive, $-1$ if $n$ is negative, and $0$ if $n=0$; then the fact that the resulting $f$ is a quasi-homomorphism follows from \cite[Proposition 2.1]{Ro}. Let $F$ be the graph of $f$. It is an approximate subgroup in $\F_2 \times \Z$.

Let $M$ be the two-sorted structure with the sorts being the pure groups $(\F_2, \cdot)$ and $(\Z,+)$ together with the above function $f: \F_2 \to \Z$ (or any expansion of this structure). Then $F$ is $\emptyset$-definable in $M$. We will be working in a monster model $M^* \succ M$. Let $F^*:=F(M^*)$ (i.e. the interpretation of $F$ in $M^*$), and  similarly we have $\F_2^*$, $\Z^*$, and $f^*$. Then $f^*\colon \F_2^* \to \Z^*$ is clearly a quasi-homomorphism for which ${f^*}'$ has the same finite image as $f'$, and $F^*$ is the graph of $f^*$ and so a $\emptyset$-definable approximate subgroup in $\F_2^* \times \Z^*$. To be consistent with the notation from previous subsections, let us denote $F^*$ by $X$, and put $H:=\langle X \rangle$. 
We will be using the set $P$ defined and considered in Subsection \ref{section: thick}.

\begin{prop}\label{proposition: H000 not contained in Xm}
For every $m \geq 1$, $H^{000}_\emptyset$ is not contained in $X^m$.
\end{prop}

\begin{proof}
Suppose for a contradiction that $H^{000}_\emptyset \subseteq X^m$ for some $m \geq 1$. 

For $B \subseteq \Z$ and $n \in \N$, let $B^{+n}:= B + \dots +B$ be the $n$-fold sum. Note that
\[ (*)\;\;\:\;\; X^m \subseteq \left(\{e\} \times \im(f')^{+(m-1)}\right)X.\]

Consider any $\emptyset$-definable, thick subset $D$ of $H$. 
Let $\pi_1: \F_2^* \times \Z^* \to \F_2^*$ be the projection. 
By thickness of $D$ and Remark \ref{remark: thick}, there are infinitely many $k \in \N$ with $a^k \in \pi_1[D]$. 
So, by the definition of $f$ and the assumption on $\alpha$, we can find $k \geq 1$ such that
\[ a^k \in \pi_1[D] \; \textrm{ and } \; f(a^k) \ne 0 \; \textrm{ and } \; f((a^k)^n) \in \im(\alpha) \textrm{ for all } n \in \Z.\]
Similarly, we can find $l \ne 0$ such that
\[ b^l \in \pi_1[D] \; \textrm{ and }\;  f((b^l)^n) \in \im(\beta) \textrm{ for all } n \in \Z.\]
Since $f(a^kb^l) \ne 0$ or $f(a^kb^{-l}) \ne 0$, replacing $l$ by $-l$ if necessary, we can assume that
\[ f(a^kb^l) \ne 0.\] 
By the definition of $f$ and the fact that $k,l \ne 0$, we also have
\[ f((a^kb^l)^n) = nf(a^kb^l) \textrm{ for all } n \in \Z.\]

Using the last four displayed conditions, the assumption that $\im(\alpha)$ and $\im(\beta)$ are finite, Proposition \ref{proposition: P intersection of thick sets}, the fact that the class of thick subsets of $H$ is closed under finite intersections, and compactness, we can find $s,t \in \F_2^*$ and $\epsilon, \delta \in \Z^*$ such that:
\[ 
(**) \;\;\;\;\;  \left\{ \begin{array}{ll}
(s,\epsilon) \in P \; \textrm{ and } \; f^*(s) \ne 0 \; \textrm{ and } \; f^*(s^n) \in \im(\alpha) \textrm{ for all } n \in \Z,\\
(t,\delta) \in P \; \textrm{ and } \; f^*(t^n) \in \im(\beta) \textrm{ for all } n \in \Z,\\
f^*(st) \ne 0 \; \textrm{ and } \; f^*((st)^n) = nf^*(st)  \textrm{ for all } n \in \Z.
\end{array} \right.
\]

Since by Proposition \ref{proposition: H000=<P>} we have $H^{000}_\emptyset = \langle P \rangle$, we get that $(s,\epsilon)$ and $(t,\delta)$ are in $H^{000}_\emptyset$, and so are all their integer powers. Thus, by the first sentence of the proof,  we have $(s,\epsilon)^n, (t,\delta)^n \in X^m$ for all $n \in \Z$. As $(s,\epsilon)^n = (s^n,n\epsilon)$ and  $(t,\delta)^n =(t^n,n\delta)$, using $(*)$ and $(**)$, we conclude that $n\epsilon \in \im(f')^{+(m-1)} +f^*(s^n) \subseteq  \im(f')^{+(m-1)} + \im(\alpha)$ and $n\delta \in \im(f')^{+(m-1)} +f^*(t^n) \subseteq  \im(f')^{+(m-1)} + \im(\beta)$. Since this holds for all $n \in \Z$, and the right hand sides of both inclusions are finite, we conclude that $\epsilon = \delta =0$.

Thus, $(st,0) =(s,0)(t,0) \in H^{000}_\emptyset$, and so $((st)^n,0) = (st,0)^n \in H^{000}_\emptyset \subseteq X^m$ for all $n \in \Z$. Hence, using $(*)$ and $(**)$, we get that $0 \in \im(f')^{+(m-1)} + f^*((st)^n) = \im(f')^{+(m-1)} + nf^*(st)$, and so  $nf^*(st) \in - \im(f')^{+(m-1)}$, for all $n \in \Z$. But from $(**)$ we also know that $f^*(st) \ne 0$. Since  $\im(f')^{+(m-1)}$ is finite, we get a contradiction.
\end{proof}

Proposition \ref{proposition: H000 not contained in Xm} implies the main result of Section \ref{Section 4}, which refutes Wagner's conjecture.

\begin{cor}
$H^{00}_\emptyset$ does not exist.
\end{cor}

As mentioned in the introduction, by \cite[Corollary 5.14]{Mas}, this implies that $H^{00}_C$ does not exist for any small set of parameters $C$ and even working with any expansion of the structure $M$. But in our particular example this also follows from our proof above, where the assumption that $C=\emptyset$ and the choice of the structure expanding $M$ are irrelevant.

\section{Final remarks}\label{section: final remarks}

\subsection{Connected components and approximate subgroups}

Let us work here over a small model $M$ (as a set of parameters), i.e. definability is over $M$. Let $G$ be a definable group, and $X$ a definable approximate subgroup; put $H:= \langle X \rangle$. In this context, and under various auxiliary amenability-type hypotheses, one proves the ``stabilizer theorem''

\[ (\dagger) \\\\\\\  H^{00}_M\subseteq X^{4}.\]

This leads to a connection with locally compact groups $L$, and through them Lie groups. (See \cite{Hrushovski-approximate}, \cite{Montenegro-Onshuus-Simon}, \cite{Sanders}, and most relevant to us \cite{MaWa}.)  In Section \ref{Section 4}, we gave an example showing that in general $(\dagger)$ may drastically fail, namely $H^{00}_M$ need not exist, which refutes Wagner's conjecture (i.e. \cite[Conjecture 0.15]{Mas}). In \cite{MaWa}, Massicot and Wagner conjectured that ``even without the definable amenability assumption a suitable Lie model exists''. Our counter-example shows that in order to find such a model, one should use different methods (i.e. not involving $H^{00}_M$).

In Sections \ref{Section 2} and \ref{Section 3} of this paper, we have restricted to the case where the $\bigvee$-definable group $H$ is actually definable.
In this case, the locally compact group $L$ is compact. This case is not ruled out as trivial, and indeed is of considerable interest; for instance some of the first theorems in this line, by Gowers and Helfgott, asserted in effect that generic definable subsets of certain pseudofinite groups generated the group in boundedly many steps (3 or 4), and were in turn important in further developments by Bourgain-Gamburd and many others. 

Let us briefly discuss a connection between $(\dagger)$ (or rather $(\exists m )(\diamond \diamond)_m$ from Section \ref{Section 4}) with the main results of this paper. In the proof of Theorems \ref{theorem: c4}, \ref{theorem: indeed c4}, and \ref{proposition: normal2}, we showed that under an appropriate amenability assumption and for suitable sets $X$ we have $G^{00}_M \subseteq X^4$, from which we easily deduced the appropriate variants of the equality $G^{00}_M = G^{000}_M$. This general idea was first used in \cite{KrPi} under the full definable amenability assumption of $G$ (where $G^{00}_M \subseteq X^4$ for any definable, generic and symmetric $X$ was easily deduced from \cite{MaWa}). Also in \cite{KrPi}, this idea was extended to some situations when an invariant probability measure is available only on a suitable subalgebra of the Boolean algebra of all definable subsets of $G$. A similar observation (i.e. a version of the ``stabilizer theorem'' assuming a measure on a suitable subalgebra) appeared also  in \cite{Mas}. In order to prove the above main results of this paper, we essentially extended this idea. This required a more precise version of the ``stabilizer theorem'', using $\bigvee$-positively definable sets. The key application of positive definability appears in the last paragraph of the proof of Theorem \ref{theorem: c4}, where we used positive $M$-definability in $(G, \cdot , E)$ of a certain set $Y_\nu$ produced by our version of the ``stabilizer theorem'', where $E$ is $M$-type-definable in the original structure, to deduce that $Y_\nu$ is $M$-type-definable in the original structure. Alternatively, to prove the implication (4) $\rightarrow$ (5) in Theorem \ref{theorem: c4}, one could use the version of the ``stabilizer theorem'' established in \cite[Corollary 4.14]{Mas}, and then, instead of positive definability, use \cite[Theorem 5.2]{Mas} whose proof involves a variant of Beth's theorem and a new version of Schlichting's theorem. 
The two approaches (i.e. Massicot's  based on Schlichting's theorem and ours using positive definability) are different and were developed independently.\\

\subsection{Connected components and complexity}

Let us consider these notions from the point of view of descriptive set theory (see for example \cite{Moschovakis} for the terms below.) Fix a countable language $L$ with distinguished sort $G$ (with a binary operation), and consider the space of complete theories $T$ (with $G$ a group).
For now, $G^{000}$ etc. will mean $G^{000}_{\emptyset}$ etc.   

The condition $G = G^{000}$ is at the finite level of the Borel hierarchy (``arithmetic''), and is  in fact a countable union of closed sets. 
This can be seen as follows. 
First, it is known that $G = G^{000}$ is equivalent to  $P^{n} = G$ for some $n$ (see \cite[Theorem 3.1]{Ne}), where $P:=\{a^{-1}b: a,b \in G \textrm{ and } a\Theta b\}$, and $a \Theta b$ means that $a,b$ start an infinite indiscernible sequence (as in Subsection \ref{section: thick}).  We can now unwind the statement $P^{n} = G$, using compactness: for any approximation $I_{k}$ to indiscernibility, 
$$T\models (\forall x)(\exists y_{i1}, y_{i2})_{i\leq n} \left( x = y_{11}y_{12}^{-1} \dots y_{n1}y_{n2}^{-1}\wedge \bigwedge_{i}I_{k}(y_{i1},y_{i2})\right).$$


The condition  $G^{00}=G^{000}$ is also Borel. Namely, $G^{00}=G^{000}$ is equivalent to saying that  for some $n$, $P^{n+1} = P^n$ (by \cite[Theorem 3.1]{Ne} and the fact that the sequence $(P^n)_n$ is ascending). The last equality can be expressed by: for every $k$ there is $l$ such that 
\begin{align*}
T\models \left( (\exists z_{i1}, z_{i2})_{i\leq n+1} \left( x = z_{11}z_{12}^{-1} \dots z_{(n+1)1}z_{(n+1)2}^{-1}\wedge \bigwedge_{i}I_{l}(z_{i1},z_{i2})\right)\right) \longrightarrow\\
 \left( (\exists y_{i1}, y_{i2})_{i\leq n} \left( x = y_{11}y_{12}^{-1} \dots y_{n1}y_{n2}^{-1}\wedge \bigwedge_{i}I_{k}(y_{i1},y_{i2})\right)\right).
\end{align*}

The equality $G=G^0$  requires only one integer quantifier:  for all formulas $\phi(x)$, and each $n>1$,
the sentence ``$\phi$ defines a subgroup of index $n$'' is false in $T$.

The condition  $G=G^{00}$ is more mysterious.  It is clearly at worst $\Pi^1_1$, as it can be expressed as the non-existence of a chain of proper, generic, definable subsets $D_i \subseteq G$ with $D_{i+1}^{-1}D_{i+1} \subseteq D_i$.   
 This together with the previous paragraphs suffices to  see that the various connectedness properties (equalities among the various connected components) are all  properties of $T$ itself and do not depend on the ambient model of set theory.   But it remains quite interesting to know if such a chain, when it exists, 
  can be constructed in some explicit way.  In particular, is the condition $G=G^{00}$ Borel?   

If $G$ is definably amenable and we compute everything over parameters from a given model, then, by \cite[Theorem 12]{MaWa} and the short argument in Case 2 on page 1282 in \cite{KrPi}, one easily gets that the condition $G=G^{00}$ is equivalent to the (Borel) statement that for any generic, symmetric, definable set $X$, $X^4=G$ (the method from \cite{MaWa} concretely constructs a sequence of generic, symmetric, definable subsets $D_i$ with $D_{i+1}D_{i+1} \subseteq D_i \subseteq X^4$). However, in general, the latter condition may be strictly stronger, which is always the case when $G=G^{00} \ne G^{000}$ (as $G \ne G^{000}$ implies $G \ne P^n$ for every $n$). 
So the question whether $G^{00}=G$ is Borel remains open and quite interesting. 
 

\section*{Acknowledgments}
	
We would like to thank the referee for careful reading and all the comments and suggestions.

\end{document}